\setlist[enumerate]{leftmargin=.5in}
\setlist[itemize]{leftmargin=.5in}
\crefname{assumption}{Assumption}{Assumptions}
\crefname{hypothesis}{Hypothesis}{Hypotheses}
\crefname{scenario}{Scenario}{Scenarios}
\crefname{section}{Section}{Sections}
\crefname{subsection}{Section}{Sections}
\title{Risk-averse mean field games: exploitability and non-asymptotic analysis \thanks{
SJ  would like to acknowledge support from the Natural Sciences and Engineering Research Council of Canada (grants RGPIN-2018-05705 and RGPAS-2018-522715). ZC would like to acknowledge support from the Guangzhou-HKUST(GZ) Joint Funding Program (No. 2024A03J0630). ZC's work on this project was mostly done during his postdoctoral appointment at UofT.}}
\author{
Ziteng Cheng
\thanks{Financial Technology Thrust, The Hong Kong University of Science and Technology (Guangzhou)
  (\email{zitengcheng@hkust-gz.edu.cn}).}
 \and
Sebastian Jaimungal
\thanks{Department of Statistical Sciences, University of Toronto (\email{sebastian.jaimungal@utoronto.ca}, \url{http://sebastian.statistics.utoronto.ca})}
}
\tikzstyle{reward}=[shape=circle,draw=blue!50,fill=blue!10]
\tikzstyle{action}=[shape=circle,draw=green,fill=green!10]
\tikzstyle{state}=[shape=circle,draw=red!50,fill=red!10]
\tikzstyle{gru}=[shape=rectangle,draw=black!50,fill=lime!10]
\tikzstyle{obs}=[shape=circle,draw=blue!50,fill=blue!10]
\tikzstyle{lightedge}=[<-,dotted]
\tikzstyle{mainstate}=[state,thick]
\tikzstyle{mainedge}=[<-,thick]
\newcounter{example}[section]
\newcommand{\wt}{\widetilde}
\newcommand{\1}{\mathbbm{1}}            
\newcommand{\set}[1]{\{#1\}}            
\DeclareMathOperator{\dif}{d \!}        
\DeclareMathOperator*{\argmin}{arg\,min} 
\def\cB{\mathcal{B}}
\def\cC{\mathcal{C}}
\def\cE{\mathcal{E}}
\def\cN{\mathcal{N}}
\def\cP{\mathcal{P}}
\def\cR{\mathcal{R}}
\def\cS{\mathcal{S}}
\def\cT{\mathcal{T}}
\def\bA{\mathbb{A}}
\def\bE{\mathbb{E}}
\def\bG{\mathbb{G}}
\def\bM{\mathbb{M}}
\def\bN{\mathbb{N}}
\def\bP{\mathbb{P}}
\def\bQ{\mathbb{Q}}
\def\bR{\mathbb{R}}
\def\bS{\mathbb{S}}
\def\bT{\mathbb{T}}
\def\bX{\mathbb{X}}
\def\bY{\mathbb{Y}}
\def\bZ{\mathbb{Z}}
\def\sA{\mathscr{A}}
\def\sD{\mathscr{D}}
\def\sF{\mathscr{F}}
\def\sX{\mathscr{X}}
\def\sY{\mathscr{Y}}
\def\fA{\mathfrak{A}}
\def\fE{\mathfrak{E}}
\def\fK{\mathfrak{K}}
\def\fN{\mathfrak{N}}
\def\fP{\mathfrak{P}}
\def\fR{\mathfrak{R}}
\def\fc{\mathfrak{c}}
\def\fe{\mathfrak{e}}
\def\fk{\mathfrak{k}}
\def\fm{\mathfrak{m}}
\def\fn{\mathfrak{n}}
\def\fp{\mathfrak{p}}
\def\fr{\mathfrak{r}}
\begin{document}

\maketitle
\begin{abstract}
In this paper, we use mean field games (MFGs) to investigate approximations of $N$-player games ($N$pGs) with uniformly symmetrically continuous heterogeneous closed-loop actions. To incorporate agents' risk aversion (beyond the classical expected utility of total costs), we use an abstract evaluation functional for their performance criteria. Centered around the notion of exploitability, we conduct non-asymptotic analysis on the approximation capability of MFGs from the perspective of state-action distributions without requiring the uniqueness of equilibria. Under suitable assumptions, we first show that scenarios in the $N$pGs with large $N$ and small average exploitabilities can be well approximated by approximate solutions of MFGs with relatively small exploitabilities. We then show that $\delta$-mean field equilibria can be used to construct $\varepsilon$-equilibria in $N$pGs. Furthermore, in this general setting, we prove the existence of mean field equilibria. This proof reveals a possible avenue for incorporating penalization for randomized action into MFGs.
\end{abstract}

\begin{keywords}
Mean field games, risk averse, exploitability, non-asymptotics, closed loop controls, non-uniqueness

\end{keywords}

\begin{AMS}
91A16, 91A06, 91A25, 93E20
\end{AMS}

\tableofcontents

\section{Introduction}

Since the pioneering work of \cite{Huang2006Large,huang2007large} and \cite{Lasry2007Mean}, the literature on mean field games (MFGs) has gone through, and continues to experience, a massive expansion. MFGs can be viewed as symmetric games with infinitely many players and are primarily used to approximate $N$-player games ($N$pGs) with large population where the impact of each individual is ``small''. The majority of the literature focuses on analysis in continuous time. We refer to \cite{Bensoussan2016Linear,Ma2020Linear,Moon2017Risk,Firoozi2021Epsilon,Firoozi2022Exploratory} and the reference therein for studies of MFGs under different settings in continuous time, such as linear-quadratic games, major-minor agent games, robust games, partially observed games, and exploratory games. We also refer to \cite{Caines2018Mean} and \cite{Carmona2018book} for overviews.

While MFGs are used to approximate large games, it is crucial to understand their approximation capabilities. Following \cite{Huang2006Large}, there has been enormous work developing the idea that uses mean field equilibriia (MFEs) to construct $\varepsilon$-Nash equilibria in $N$pGs. Conversely, the results of \cite{Lasry2007Mean} suggests that certain equilibria in $N$pGs, for large $N$, can be captured by MFEs. Research in this direction is indispensable because it helps to answer whether using MFGs to approximate $N$pGs misses any equilibria. Below we highlight some work that progress the understanding on this matter. Assuming the uniqueness of MFEs, \cite{Cardaliaguet2015book} uses the master equation to study closed-loop\footnote{Roughly speaking, closed-loop means each player has access to information of other players, while open-loop assumes no access to such information. The precise definitions vary across the literature. We refer to \cite{Fudenberg1988Open} and \cite[Definition 2.4 and 2.7]{Carmona2018book} for more discussions.} $N$pGs, and shows that the equilibria converge to the MFE as $N\to\infty$. \cite{Lacker2016General} and \cite{Fischer2017Connection} study the convergence of open-loop equlibria under various settings without assuming uniqueness. \cite{Lacker2020Converrgence} and \cite{Lacker2022Closed} further extend the convergence results from \cite{Lacker2016General} to closed-loop equilibria with or without common noise. We note that the convergence results in \cite{Lacker2020Converrgence} and \cite{Lacker2022Closed} are established for weak MFEs. Weak MFEs consists of a random flow of measures. Convergence to weak MFEs is possible even with considerably weak assumptions on the actions of players. On the contrary, the more common-seen MFEs in \cite{Huang2006Large,Lasry2007Mean,Fischer2017Connection}, among many others, consists of a deterministic flow of measures, and is called strong MFEs, or simply, MFEs. It often requires suitable assumptions on players' actions to facilitate the convergences to MFEs. Apart from the above,  \cite{Iseri2022Set} pioneers the study of the value function approximation of MFGs within the discrete time framework without requiring the uniqueness of MFEs. Unlike previous studies that have focused on the capturing capabilities of MFGs, the analysis conducted in \cite{Iseri2022Set} is predominantly non-asymptotic. Our work draws inspiration from \cite{Lacker2020Converrgence} and \cite{Iseri2022Set}, among others, and we provide a detailed comparison in \cref{subsec:HeurComment}. For completeness, we also refer to \cite{Bayraktar2020Non,Cecchin2022Selection,Dianetti2023Unifying,Sanjari2024Optimality} and the references therein for other studies that address the non-uniqueness of equilibria, although capturing capability is not their primary focus. Finally, we would like to point to \cite{Djete2021Large,Djete2021Mean,Lauriere2022Convergence} for recent work on approximation analysis on extended MFGs with unique equilibrium. Extended MFGs consider interactions through players' actions.

This paper's goal is to further explore the approximation capability of MFGs from the perspective of state-action distributions -- with a focus on (strong) MFEs and non-asymp-\\totics description, while allowing for multiple equilibria, certain heterogeneous closed-loop actions, as well as $\varepsilon$-relaxation in individual decision making. Readers familiar with MFGs may realize that some features in our exploration has appeared in the extant literature, however, not all together in a single setting. To avoid excessive technicalities, we choose to work in discrete time and finite horizon with state and action spaces being Polish. 

The idea of MFGs in discrete time dates back to \cite{Jovanovic1988Anonymous}, although the connection to $N$pGs was left unstudied for quite a while. Notably,  \cite{Saldi2018Markov} considers an infinite horizon $N$pG with discounted unbounded costs on Polish state and action spaces, proves the existence of MFEs, and constructs $\varepsilon$-equilibra in $N$pGs using MFEs. Analogous results are established for risk sensitive games \cite{Saldi2020Approximate}, partially observed games \cite{Saldi2022Partially}, and risk averse games \cite{Bonnans2021Discrete}. There has been recent progress on machine learning algorithms for MFEs in discrete time, see, e.g., \cite{Guo2022General} and \cite{Hu2022Recent}. We would also like to point out \cite{Acciaio2021Cournot} for a causal transport-based method potentially useful for handling weak MEFs. 

Another aspect we aim to address in our study is the incorporation of risk aversion into MFG theory. In practical scenarios, it is often observed—and sometimes preferred—that decision-makers prioritize avoiding unfavorable outcomes over pursuing favorable ones. A performance criterion that aligns with this concept is introduced by \cite{Ruszczynski2010Risk}, with further extensions allowing for randomized actions discussed in \cite{Chu2014Markov}. This criterion offers the advantage of inherent time consistency, and in the meantime, naturally generalizes the standard risk-neutral framework, which uses expected total cost as the performance metric. This setup facilitates subsequent analyses from the standpoint of dynamic programming. For related applications in areas such as automation and finance, we refer to \cite{Majumdar2019How, Wang2022Risk, Coache2023conditionally} and the references therein. Although risk-aversion theory has been extensively explored in the context of single-agent decision-making, risk aversion in a MFG framework remains relatively uncharted. Whether one chooses to follow the principles proposed by \cite{Ruszczynski2010Risk} or not, the discourse on this topic in MFG literature is notably sparse. For an study that aligns with \cite{Ruszczynski2010Risk}, we direct readers to \cite{Bonnans2021Discrete}. Regarding other forms of risk aversion within MFGs, we refer to \cite{Saldi20Approximate} and the reference therein.

In our setup, we assume all players are subject to the same Markovian controlled dynamics and consider weak interaction, that is, the impact of the other players on an individual is through the empirical distribution of the population states. Regarding the performance criteria, we abstract the procedure of backward induction, and define an evaluation functional as a composition of operators that maps value functions backward. This abstraction not only encompasses the Bellman operators commonly studied in risk-neutral mean field Markov Decision Process (MDP) frameworks (e.g., \cite[(3.1)]{Saldi2018Markov} and \cite[(16)]{Lauriere23Model}), but also facilitates the examination of risk-averse variants tied to the aforementioned risk-averse performance criteria. Additionally, it contributes to a more concise presentation and simplifies notation. To approximate $N$pGs with MFGs, we impose further assumptions. In particular, we consider $N$pGs where all players use $\vartheta$-symmetrically continuous policy (see \cref{assump:SymCont}). An example illustrating the necessity\footnote{It is necessary in the sense that, getting rid of this assumption requires adding other assumptions on other components of the games.} of this assumption is provided in an example in \cref{subsec:ExmpGame}. The example is constructed in a degenerate environment with absorption, which is in spirit similar to that in \cite[Section 7]{Campi2018N}.

Our main results are centered around the notion called exploitability. In $N$pGs, exploitability quantifies the best possible improvement a player can achieve by revising her original policy, while assuming the policies of other players remain unchanged. It effectively describes the scale of disequilibrium in a game scenario. Note that $\varepsilon$-equilibria in an $N$pG can be equivalently defined via exploitability. The introduction of exploitability paves ways for investigating the global properties of games, in that it provides insights beyond the existence and uniqueness of equilibria. In MFGs, exploitability can be defined analogously. More precisely, exploitability of a policy is defined as the difference between the outcome under the considered policy and the outcome under the optimal policy, where both outcomes are computed under the mean field flow induced by the considered policy. $\varepsilon$-MFE can be defined using exploitability.

The notion of exploitability is frequently used for analyzing the convergence of algorithms that approximate MFEs, e.g.,  \cite{Perrin2020Fictitious,Cui2021Approximately,Bonnans2021Generalized,Dumitrescu2022Linear,Guo2022MF,Hu24MF}. Notably, \cite{Guo2022MF} develops an algorithm that approximate MFEs by minimizing exploitability. This algorithm is capable of handling the presence of multiple MFEs.

Despite its widespread use in equilibrium-finding algorithms, exploitability has rarely been the focal point in the approximation analysis of MFGs, as evidenced by the literature review above. This oversight may stem from the current focus in approximation analysis, which often overlooks non-asymptotic considerations surrounding approximate $N$-player equilibria ($N$pEs). To the best of our knowledge, the only relevant study that concerns capturing approximate $N$pEs non-asymptotically thus far is \cite{Iseri2022Set}. They primarily concentrates on value functions and does not involve exploitability. 

To advocate for the use of exploitability, we contend that in more realistic considerations of $N$pGs, equilibria are often attained in an approximate sense. Without sufficiently strong assumptions, there is no assurance that an approximate equilibrium closely aligns with any exact equilibrium (in terms of, for example, the empirical state distribution), even when there is a unique equilibrium. To substantiate that MFGs approximate $N$pGs effectively, one approach is to relate each approximate $N$pE to an approximate MFE, if feasible, or to identify the reasons for failure if not. The use of exploitability is naturally justified, as it is anticipated that related approximate equilibria, if they exist, should exhibit comparable scales of exploitability.

The main contributions of this paper can be summarized as follows:
\begin{enumerate}
    \item Various types of exploitabilities are proposed and their interrelationships are scrutinized. In particular, to incorporate the notion of exploitability in $N$pGs and MFGs into our setting that involves abstract evaluation operations, we introduce the \textit{total (stepwise) exploitability} (see \eqref{eq:DefStepExploitability} and \eqref{eq:DefMeanStepExploitability}). The relationship between total exploitability and the standard notion of exploitability, here called \textit{end exploitability} (see \eqref{eq:DefEndExploitability} and \eqref{eq:DefMeanEndExploitability}), is examined in \cref{prop:EstEndExploitabilityCont} and \cref{prop:EstMFEndExploitability}. We also consider end exploitability based on the best symmetrically continuous policy in $N$pG (see \eqref{subsec:NplayerExploitability}), as this aligns naturally with our setup. In \cref{prop:EstEndExploitabilityCont}, we demonstrate that the difference between the constrained end exploitability and the unconstrained one is negligible under suitable conditions.       
    
    \item We construct a state-action mean field flow from an $N$-player scenario, and show that the expected bounded-Lipschitz distance between the empirical flow in the $N$-player scenario and the mean field flow is small when $N$ is large. As long as certain continuity assumptions hold true, we prove that the total exploitability in a mean field scenario is dominated by the average of the total exploitabilities of all $N$ players plus a small error term. Under stronger conditions, the difference between two quantities can be bounded by the same error term. Detailed statements are presented in \cref{thm:MFApprox}. Conversely, in \cref{thm:MFConstr}, we use $\varepsilon$-MFEs to construct an open-loop $N$pG where all players have exploitabilities bounded by $\varepsilon$ plus a small error term. These results collectively lead to a non-asymptotic description on the approximation capability of MFGs.
    
    \item We prove in an abstract formulation the existence of MFEs. This formulation encompasses a range of performance criteria beyond the classical expected total cost, such as risk aversions via nested compositions (see \cref{subsec:ExmpG} for example), certain extended forms of cost (e.g., \eqref{eq:ExmpExtendedCost}).
    The proof uses the Kakutani fixed point theorem, and is motivated by earlier work on the existence of MFEs in discrete time (cf. \cite{Jovanovic1988Anonymous} and \cite{Saldi2018Markov}). 
    Due to our abstract formulation, however, we must develop significant modifications to the original analysis. Our approach provides substantial extensions of the previous methods and our techniques could be of interest on their own. As an unexpected byproduct, our proof uncovers a potential avenue for incorporating penalization for randomized actions within the framework of MFGs. We refer to \cref{thm:ZeroExploitability} and thereafter for related discussions.
\end{enumerate}

The organization of the remainder of this paper is as follows. To facilitate understanding, we begin with heuristic discussions of our approximation results in \cref{sec:Heur}. In \cref{sec:Setup}, we describe our model setup and introduce the necessary notations. Specifically, the technical assumptions are detailed in \cref{subsec:Assumptions}. We then define and examine various notions of exploitability for $N$pGs and MFGs in \cref{sec:Exploitability}. Our findings on the approximation capabilities of MFGs are presented in \cref{sec:MFApprox}. The subsequent section, \cref{sec:MFE}, addresses the existence of MFE. The proofs for the results in \cref{sec:Exploitability}, \cref{sec:MFApprox}, and \cref{sec:MFE} are provided in \cref{sec:Proofs}. Additionally, \cref{sec:Exmp} contains several examples, and \cref{sec:Lemmas} presents technical results. The proofs of preliminary results mentioned in \cref{sec:Setup} are included in \cref{sec:Suppl}. Finally, a glossary of notations is provided for reference in \cref{sec:GoN}.

\section{Heuristics on the approximation capability of mean field games}\label{sec:Heur}
In \cref{subsec:HeurDerv}, we heuristically derive our key findings in a discrete setting, focusing on how $N$pEs can be captured by MFEs, in the absence of uniqueness. We place more emphasis on this aspect rather than on constructing $N$pEs from MFEs, as the latter is generally better understood, while understanding of the former is still evolving. In \cref{subsec:HeurComment}, we comment on the approximation capabilities of MFGs. Additionally, we discuss how our approach differs from existing ones. Finally in \cref{subsec:Tech}, we highlight some of the technical challenges we face when rigorously establishing these heuristics in a more general setting. 

The discussion in this section primarily pertains to the results presented in \cref{sec:Exploitability} and \cref{sec:MFApprox}, which regards various notions of exploitability and the approximation capabilities of MFGs. For insights into the existence of MFE, we refer the reader to \cref{sec:MFE}.

\subsection{Derivation}\label{subsec:HeurDerv}
For simplicity, we consider finite state space $\bX$, finite action space $\bA$, and finite horizon $T\in\bN$.  Let $N$ represent the number of players. As we consider non-asymptotics, we suppress $N$ from the notations introduced below unless necessary. Suppose all players start from the same initial position $x_1\in\bX$ at $t=1$. We consider a Markov environment with transition kernels $P_t:\bX^N\times\bA\to\cP(\bX)$ for $t=1,\dots,T-1$. Here, the state transition of an individual player depends only on the positions of all players and the action of the player herself. The case where the transition is also affected by other players' actions will eventually lead to extended mean field games, which is out of the scope of this paper. Similarly, we consider cost functions $C_t:\bX^N\times\bA\to\bR$ for $t=1,\dots,T$. In what follows, we let $\fp^n_t:\bX^N\to\cP(\bA)$ represent the randomized actions of player-$n$ at time $t$, and let $\fp^n=(\fp^n_1,\dots,\fp^n_T)$ denote the player-$n$'s policy. By convention, we  use subscripts for the inputs of measure-valued functions. Moreover, we set $\bm\fP=(\fp^1,\dots,\fp^N)$ to be the collection of all player's policies, which we call a \textit{scenario} in the $N$pG.  

Let us consider a probability space $(\Omega,\sF,\bP^{\bm\fP})$. For notational convenience, we  suppress $\bm\fP$ from $\bP^{\bm\fP}$ and $\bE^{\bm\fP}$. Under $\bP$, we have the following dynamics 
\begin{align*}
\bm X_1=(x_1,\dots,x_1),\quad \bm A_t\sim\bigotimes_{n=1}^N\fp^n_{t,\bm X_t}\,,\quad \bm X_{t+1}\sim\bigotimes_{n=1}^N P_{t,\bm X_t, A^n_t}\,,
\end{align*} 
where $\bm X_t = (X^1_t,\dots,X^N_t)$ and $\bm A_t = \left(A^1_t,\dots,A^N_t\right)$ represent the state and actions of the players at time $t$, respectively. Furthermore, player-$n$ aims to find
\begin{align*}
\min_{\fp^n} \bE\left[ \sum_{t=1}^T C_t(\bm X_t, A^n_t) \right].
\end{align*}

To ensure that MFGs effectively approximate the aforementioned $N$pG, additional structure is needed. We impose the required structure shortly. For the moment, however, let us introduce a key concept in our analysis: the \textit{exploitability} of player-$n$ in an $N$pG. To do so, consider a hypothetical case where all other players adhere to their original policies, while player-$n$ has complete observation of $\bm X_t$ at each decision-making point in time $t$ and is allowed to consider a different policy than her original one. From player-$n$'s viewpoint, this problem may be posed as a MDP. We next define player-$n$'s exploitability as 
\begin{align}\label{eq:HeurDefEndExploitability}
R^n(\bm\fP) := \bE\left[ \sum_{t=1}^T C_t(\bm X_t, A^n_t) \right] - \min_{\fp^n} \bE\left[ \sum_{t=1}^T C_t(\bm X_t, A^n_t) \right].
\end{align}
It is straightforward to see that $\bm\fP$ is a $\varepsilon$-Nash equilibrium if and only if $R^n(\bm\fP)\le\varepsilon$ for all $n$. Later, we also use $\frac1N\sum_{n=1}^N R^n(\bm\fP)$ to characterize approximate equilibrium in the average sense. We introduce below a few terms for the Dynamic Programing Principle (DPP) related to the aforementioned MDP. For $\bm x = (x^1,\dots,x^N)$, we define
\begin{align*}
V^{n*}_t(\bm x) := \min_{\fp^n} \bE\left[ \sum_{r=t}^T C(\bm X_r,A^n_r) \,\Bigg|\,\bm X_r = \bm x \right], \quad t=1,\dots, T,
\end{align*}
i.e., $V^{n*}_t$ is the optimal value function of player-$n$ at time $t$. For convenience, we stipulate that $V^{n*}_t\equiv 0$ for $t>T$. The following DPP in terms of the Bellman equation is well-known
\begin{align}\label{eq:NpDPP} 
V^{n*}_t(\bm x) = \min_{\lambda\in\cP(\bA)} \bE\big[ C_t(\bm x, A^n) + V^{n*}_{t+1}(\bm X) \big],\quad t=1,\dots,T-1,
\end{align}
where we have employed dummy random variables $\bm A=(A^1,\dots,A^N)$ and $\bm X=(X^1,\dots,X^N)$ with dynamics:

\begin{align*}
\bm A\sim\fp^1_{t,\bm x}\otimes\dots\otimes \fp^{n-1}_{t,\bm x}\otimes\;\lambda\;\otimes\fp^{n+1}_{t,\bm x}\otimes\dots\otimes
\fp^{n+1}_{t,\bm x}\;,
\qquad
\bm X\sim\bigotimes_{i=1}^N P_{t,\bm x,A^i}\;.
\end{align*}

Next, in this risk neutral setting, we develop an equivalent form of $R^n$. We first expand $R^n$ by subtracting and adding auxiliary terms,
\begin{align}\label{eq:ExploitabilityTelescopingSum}
R^n(\bm\fP) &= \bE\left[ \sum_{r=1}^{T-1} C_r(\bm X_r, A^n_r) + C_T(\bm X_T, A^n_T) + V^{n*}_{T+1}(\bm X_{T+1}) \right] - \bE\left[ \sum_{r=1}^{T-1}C_r(\bm X_r, A^n_r) + V^{n*}_T(\bm X_T) \right] \nonumber\\
&\quad +  \bE\left[ \sum_{r=1}^{T-1}C(\bm X_r, A^n_r) + V^{n*}_T(\bm X_T) \right] - V^{n*}_1(x_1,\dots,x_1) \nonumber\\
&= \bE\left[  C_T(\bm X_T, A^n_T) + V^{n*}_{T+1}(\bm X_{T+1}) - V^{n*}_T(\bm X_T) \right]\nonumber\\
&\quad +\bE\left[ \sum_{r=1}^{T-2} C_r(\bm X_r, A^n_r) + C_{T-1}(\bm X_{T-1}, A^n_{T-1}) + V^{n*}_{T}(\bm X_{T}) \right] \!-\! \bE\left[ \sum_{r=1}^{T-2}C_r(\bm X_r, A^n_r) + V^{n*}_{T-1}(\bm X_{T-1}) \right] \nonumber\\
&\quad +  \bE\left[ \sum_{r=1}^{T-2}C_r(\bm X_r, A^n_r) + V^{n*}_{T-1}(\bm X_{T-1}) \right] - V^{n*}_1(x_1,\dots,x_1) \nonumber\\
&=\dots\dots 
\end{align} 
Eventually, we obtain the following expression,
\begin{align}\label{eq:HeurDefStepwiseExploitability}
R^n(\bm\fP) &= \sum_{t=1}^T \bE\left[ C_t(\bm X_t,A^n_t) + V^{n*}_{t+1}(\bm X_{t+1}) - V^{n*}_t(\bm X_t) \right]
\nonumber\\
&= \sum_{t=1}^T \bE\Big[ \bE\big[ C_t(\bm X_t,A^n_t) + V^{n*}_{t+1}(\bm X_{t+1}) \big| \bm X_t \big] - V^{n*}_t(\bm X_t) \Big] .
\end{align}
The reader might notice that \eqref{eq:HeurDefStepwiseExploitability} may be derived more concisely using telescoping sums under the expectation, rather than using  \eqref{eq:ExploitabilityTelescopingSum}. The approach we here take, however,  generalizes to the risk-averse case while the simpler approach does not. We refer to \eqref{eq:EndExploitabilityTelescope} for the related application in proofs. Moreover, we stress that the risk-averse analogue to the right hand side of \eqref{eq:HeurDefStepwiseExploitability}, see \cref{sec:Exploitability}, provides a crucial foundation for deriving our main results which concerns risk-aversion beyond expectation. Therefore, we maintain working through \eqref{eq:HeurDefStepwiseExploitability} although simpler approach is possible in the risk-neural setting.

To distinguish the two representations, we call the right hand side of \eqref{eq:HeurDefStepwiseExploitability} 
\textit{total (stepwise) exploitability}, while the original definition of $R^n$ in \eqref{eq:HeurDefEndExploitability} we call \textit{end exploitability}. For clarification, we note that end exploitability compares the (aggregated) outcome of the original policy against the optimal (aggregated) outcome. Contrastingly, total exploitability compares the expected one-step cost  plus the one-step ahead optimal value, conditional on the state at time $t$, against the optimal value at time $t$ (see also \eqref{eq:NpDPP}), and then averages over state at time $t$. The total stepwise exploitability is obtained by then summing over  these individual stepwise exploitabilities.

Next, we, heuristically, provide a few technical assumptions and lay the foundation for the upcoming discussion on how, in the absence of uniqueness, $N$pEs can be captured by MFEs. Let $\delta_x$ be the Dirac delta measure concentrated at $x$. For $\bm x=(x_1,\dots,x_n)$, we define $\overline\delta_{\bm x}:=\frac1N\sum_{n=1}^N\delta_{x^n}$ -- the empirical measure. The conditions below are used to ensure the impact of an individual player is small, given a sufficiently large $N$. 
\noindent 
\begin{assumption}\label{assump:Heur}
The following is true for all players at all $t$,
\begin{itemize}
\item[(i)] $P_{t,\bm x, a} = P_{t, x^n, \overline\delta_{\bm x}, a}$\,, 
\item[(ii)] $C_t(\bm x, a) = C_t(x^n, \overline\delta_{\bm x}, a)$\,,
\item[(iii)] $\fp^n_{t,\bm x} = \fp^n_{t,x^n,\overline\delta_{\bm x}}$,
\item[(iv)] $\xi\mapsto P_{x,\xi,a}$, $\xi\mapsto C_t(x,\xi,a)$, and $\xi\mapsto\fp^n_{t,x,\xi}$ are continuous in $\xi\in\cP(\bX)$.\footnote{Since in this section, $\bX$ and $\bA$ are finite, the continuity on $\bX$ and $\bA$ are given by default.}
\end{itemize}
\end{assumption}
Under this assumption, the dynamics of the $N$pG becomes 
\begin{align*}
\bm X_1=(x_1,\dots,x_1),\quad \bm A_t\sim\bigotimes_{n=1}^N\fp^n_{t,X^n_t,\overline\delta_{\bm X_t}}\,,\quad \bm X_{t+1}\sim\bigotimes_{n=1}^N P_{t,X^n_t,\overline\delta_{\bm X_t},A^n_t}\,.
\end{align*} 
We note that, eventually, the accuracy of the mean field approximation hinges on the modulus of continuity in \cref{assump:Heur} (iv). If the continuity is excessively rough, the approximation bound derived may become vacuous. For an illustrative example, we refer to \cref{subsec:ExmpGame}. 

Next, we introduce an analogous scenario but now in the MFG setting. We propose to characterize MFG from the perspective of state-action distribution, rather than through the policy of the representative player.  In this approach, the MFG is described by a time-indexed vector of state-action distributions from $\cP(\bX\times\bA)$. This perspective is frequently adopted in the discrete-time MFG literature for various reasons. For instance, in \cite{Saldi2018Markov}, it facilitates the application of the Kakutani fixed-point theorem, and in \cite{Guo2022MF}, it aids in algorithm development.

Based on $\bm\fP$, we introduce an auxiliary probability $\overline\bP$ and auxiliary $\overline\xi_1,\dots,\overline\xi_T\in\cP(\bX)$. As previously, we omit the explicit dependence of these terms on $\bm\fP$ in our notation for convenience. We construct $\overline\bP$ and $\overline\xi_1,\dots,\overline\xi_T\in\cP(\bX)$ such that, under $\overline\bP$,
\begin{gather}\label{eq:MFPDynamics}
\bm X_1=(x_1,\dots,x_1)\,,\quad\bm A_t\sim\bigotimes_{n=1}^N\fp^n_{t, X^n_t, \overline\xi_t}\,,\quad \bm X_{t+1}\sim\bigotimes_{n=1}^N P_{t, X^n_t, \overline\xi_t, A^n_t}\,.
\end{gather}
and
\begin{gather*}
\overline\xi_1:=\delta_{x_1}\,,\quad \overline\xi_{t+1}(\cdot) := \frac1N \sum_{n=1}^N \overline\bP\left[X^n_{t+1}\in\cdot\right]\,.
\end{gather*}
Note that $\overline\xi_t$'s are (deterministic) elements from $\cP(\bX)$. It follows that the players are independent under $\overline\bP$. In short, $\overline\xi_t$'s represent the average across players of their individual state distributions, where players are mutually independent and use $\bm\fP:=(\fp^1,\dots,\fp^N)$. For future reference, under $\overline\bP$, we have the following dynamics, independently across $n$, 
\begin{align}\label{eq:DynPbar}
X^n_1=x_1,\quad A^n_t\sim \fp^n_{t, X^n_t, \overline\xi_t},\quad X^{n}_{t+1}\sim P_{t, X^n_t, \overline\xi_t, A^n_t}\,.
\end{align}

We next introduce a heuristic proposition, which essentially follow from Assumption \ref{assump:Heur}, Hoeffding's inequality, and some meticulous application of induction. The ``$\approx$'' symbol below is a notation for approximation. For rigorous counterparts of \cref{prop:HeurMFApprox}, including a precise meaning of ``$\approx$", please see \cref{prop:ProcConc} and \cref{prop:EstDiffNPMean}.
We also refer to \cite[Theorem 5.3]{Iseri2022Set} for a similar result, where they consider deterministic Lipschitz path-dependent policies.
\begin{proposition}\label{prop:HeurMFApprox}
Under \cref{assump:Heur}, the following is true:
\begin{itemize}
\item[(a)] $\bP\left[\overline\delta_{\mathbf X_t} \approx \overline\xi_t\right]\approx 1$ for all $t$. Moreover, this approximation remains valid uniformly if we vary one $\fp^n$ in $\bP$,\footnote{Recall that $\bP$ is in fact $\bP^{\bm\fP}$.} while still defining $\overline\xi_t$ with the original $\bm\fP$.
\item[(b)] $\bP\left[X^n_{t}\in\cdot\right] \approx \overline\bP\left[X^n_t\in\cdot\right]$ for each $n$ and $t$.
\end{itemize}
\end{proposition}
Note in \cref{prop:HeurMFApprox} (a), we consider approximation under a $\bP$ where we may vary one $\fp^n$. This aligns with the aforementioned hypothetical case in defining the end exploitability, where all other players maintain their original policies, while player-$n$ may alter hers. 
The approximation accuracy relies
on the modulous of continuity in \cref{assump:Heur} (iv) as well as other model parameters, such as $N$ and the cardinality of $\bX$ and $\bA$. We refer to \cref{prop:ProcConc} and \cref{prop:EstDiffNPMean} for the related details. 

The construction above only specifies the state marginal distributions of a vector on $\cP(\bX\times\bA)$. To provide a complete description of the MFG scenario from the perspective of state-action distributions, we procedd to construct the full state-action distributions. For notational convenience, we write
\begin{align*}
\bP^{X^n_{t}}:=\bP[X^n_{t}\in\cdot] \quad\text{and}\quad \overline\bP^{X^n_t}:=\overline\bP[X^n_t\in\cdot].
\end{align*}
For $t=1,\dots,T$, we define $\overline\psi_t\in\cP(\bX\times\bA)$ by setting
\begin{align}\label{eq:HearDefpsi}
\overline\psi_t(B\times A) = \frac{1}{N} \sum_{n=1}^N \int_B \fp^n_{t,x,\overline\xi_t}(A)\; \overline\bP^{X^n_{t}}(\dif x).
\end{align}
Clearly, the marginal distribution of $\overline\psi_t$ on $\bX$, denoted by $\xi^{\overline\psi_t}$, 
equals $\overline\xi_t$. Additionally, by \eqref{eq:DynPbar},
\begin{align}\label{eq:MFScene}
\xi^{\overline\psi_{t+1}}(B) &= \frac{1}{N} \sum_{n=1}^N \overline\bP^{X^n_{t+1}}(B) = \frac1N \sum_{n=1}^N \int_\bX\int_\bA P_{x,\overline\xi_t,a}(B) \fp^n_{t,x,\overline\xi_t}(\dif a)\; \overline\bP^{X^n_{t}}(\dif x) \nonumber\\
&\quad = \int_{\bX\times\bA} P_{x,\overline\xi_t,a}(B) \; \overline\psi_t(\dif x \dif a).
\end{align}
We then construct the mean field scenario as
\begin{align}\label{eq:HeurDefMFF}
\overline\Psi:=(\overline\psi_1,\dots,\overline\psi_{T}).
\end{align}
Note that $\overline\Psi$ depends solely on $\bm\fP$. Moreover, $\overline\Psi$ indeed represents a scenario in MFGs: in light of the law of large numbers, when all the infinitely many players adopt the same (randomized) policy, it leads to a deterministic flow of state-action distributions, which in turn statistically summarizes the movement and behaviour of the players. Furthermore, by letting $\overline\pi^{\overline\psi_t}$ be the conditional distribution of action given the state induced by $\overline\psi_t$, we have
\begin{align}\label{eq:InducedMFPolicy}
\overline\pi^{\overline\psi_t}_{x}(\set{a}) = \sum_{n=1}^N \frac{ \overline\bP^{X^n_{t}}(\set{x})}{\sum_{m=1}^N \overline\bP^{X^m_{t}}(\set{x})} \fp^n_{t,x,\overline\xi_t}(\set{a}) ,\quad x\in\bX,\;a\in\bA.
\end{align}
This depicts the policy of the representative player in the MFG scenario. 
It can be shown that \eqref{eq:HeurDefMFF} and \eqref{eq:InducedMFPolicy} provide equivalence characterization of the mean field scenario (see also \cref{lem:MFF}).

In order to proceed, we introduce another auxiliary MDP in the MFG setting. Consider a $\overline\fp = (\overline\fp_1,\dots,\overline\fp_T)$, where $\overline\fp_t:\bX\to\cP(\bA)$ for $t=1,\dots,T$. Additionally, for the representative player in MFG scenario $\overline\Psi$, we introduce state-action processes $\overline X_t$ and $\overline A_t$. We extend the dependence of $\overline{\bP}$ from $\bm{\fP}$ to $(\bm{\fP}, \overline{\fp})$ so that, under $\overline{\bP}$, the following dynamics hold:
\begin{align}\label{eq:ReprDynamics}
\overline{X}_1 = x_1, \quad \overline{A}_t \sim \overline{\fp}_{t, \overline{X}_t}, \quad \overline{X}_{t+1} \sim P_{t, \overline{X}_t, \overline{\xi}_t, \overline{A}_t}.
\end{align}
Following the discussion leading to \eqref{eq:InducedMFPolicy}, the representative player naturally employs the policy induced by $\overline\Psi$. More precisely,
\begin{align}\label{eq:ReprPolicy}
\overline\fp = \left(\overline\pi^{\overline\psi_1},\dots,\overline\pi^{\overline\psi_T}\right).
\end{align}
However, this policy need not to be optimal in the hypothetical case where the representative player may revise her policy while the others maintain their original policies. This leads to the following auxiliary optimization problem
\begin{align*}
\min_{\overline\fp} \overline\bE\left[ \sum_{t=1}^T C_t(\overline X_t, \overline\xi_t, \overline A_t) \right],
\end{align*}
where we emphasize that $\overline\xi_1,\dots,\overline\xi_T$ depend solely on $\bm\fP$ but not $\overline\fp$. We subsequently introduce the corresponding optimal value function 
\begin{align*}
\overline V^*_t(x) := \min_{\overline\fp} \overline\bE\left[ \sum_{r=t}^T C_t(\overline X_r, \overline\xi_r, \overline A_r) \,\Bigg| \,\overline X_r=x  \right], \quad t=1,\dots,T.
\end{align*}
For convenience, we stipulate that $\overline V^*_t\equiv 0$ for $t>T$. We then have the following DPP
\begin{align}\label{eq:MFDPP} 
\overline V^*_t(x) = \min_{\lambda} \overline\bE\left[ 
\,
C_t(x,\overline\xi_t,\overline A) + \overline V^*_{t+1}(\overline X) 
\,\right], \quad t=1,\dots,T,
\end{align}
where we have employed dummy random variables with dynamics:
\begin{align*}
\overline A\sim\lambda,\quad \overline X\sim P_{t,x,\overline\xi_t,\overline A}.
\end{align*}
The related mean field exploitabilities can be defined in a manner similar to earlier discussions. However, their introduction is postponed to slightly later for smoother narrative flow.
Instead, we introduce below a crucial proposition. We refer to \cref{prop:EstfTDiffcSstar} for the corresponding formal statement. 
\begin{proposition}\label{prop:HeurApproxOptV}
Under \cref{assump:Heur}, we have $\bP\big[V^{n*}_t(\bm X_t) \approx \overline V^*_t(X^n_t)\big] \approx 1$.
\end{proposition}
Heuristically, establishing \cref{prop:HeurApproxOptV} requires examining an altered $\bm\fP$, where the original $\fp^n$ is replaced by a concatenation of the original $\fp^n$ up to time $t$ with the optimal policy obtained from \eqref{eq:NpDPP} after time $t$. Under this altered $\bm\fP$, \cref{prop:HeurMFApprox} (a) together with Assumption \ref{assump:Heur} and a careful backward induction allows, with a high probability, an approximate reduction of \eqref{eq:NpDPP} to \eqref{eq:MFDPP} when $\bm x$ in \eqref{eq:NpDPP} is substituted by $\bm X_t$, which eventually leads to \cref{prop:HeurApproxOptV}.

We are ready to heuristically derive one of our main results. By \eqref{eq:HeurDefStepwiseExploitability}, \cref{assump:Heur}, \cref{prop:HeurMFApprox} (a) and \cref{prop:HeurApproxOptV}, we have
\begin{align*}
R^n(\bm\fP) \approx \sum_{t=1}^T \bE\Big[ \overline{\bE}\big[ C_t(X^n_t,\overline\xi_t,A^n_t) + \overline V^{*}_{t+1}(X^n_{t+1}) \big| X^n_t \big] - \overline V^{*}_t(X^n_t) \Big].
\end{align*}
Above, note that the only randomness inside $\bE[\cdot]$ is $X^n_t$. Expanding the right hand side into integral form and invoking \cref{prop:HeurMFApprox} (b), we yield
\begin{align}
R^n(\bm\fP) &\approx \sum_{t=1}^T \int_{\bX} \left( \int_{\bA} \left( C_t(x,\overline\xi_t,a) + \int_{\bX}\overline V^*_{t+1}(y)P_{x,\overline\xi_t,a}(\dif y) \right) \fp^n_{t,x,\overline\xi_t}(\dif a) - \overline V^*_t(x) \right) \; \overline\bP^{X^n_{t}} (\dif x),
\label{eqn:average-stepwise-approx}
\end{align} 
Then, \eqref{eqn:average-stepwise-approx} together with \eqref{eq:HearDefpsi} and \eqref{eq:InducedMFPolicy} implies
\begin{align}\label{eq:HeurPreCapture}
\frac1N \sum_{n=1}^N R^n(\bm\fP) &\approx \sum_{t=1}^T \frac1N \sum_{n=1}^N \int_{\bX} \left( \int_{\bA} \resizebox{0.35\hsize}{!}{$ \left( C_t(x,\overline\xi_t,a) + \int_{\bX}\overline V^*_{t+1}(y)P_{x,\overline\xi_t,a}(\dif y) \right) $} \fp^n_{t,x,\overline\xi_t}(\dif a) - \overline V^*_t(x) \right) \; \overline\bP^{X^n_{t}} (\dif x) \nonumber\\
&= \sum_{t=1}^T \int_{\bX\times\bA} \left( C_t(x,\overline\xi_t, a) + \int_{\bX}\overline V^*_{t+1}(y)P_{x,\overline\xi_t,a}(\dif y) - \overline V^*_t(x) \right) \overline\psi_t(\dif x\dif a) \nonumber\\
&= \sum_{t=1}^T \int_{\bX} \left(  \int_\bA \left(C_t(x,\overline\xi_t, a) + \int_{\bX}\overline V^*_{t+1}(y)P_{x,\overline\xi_t,a}(\dif y) \right) \overline\pi^{\overline\psi_t}_x(\dif a) - \overline V^*_t(x) \right) \xi^{\overline\psi_t}(\dif x).
\end{align}
In view of \eqref{eq:ReprDynamics} and \eqref{eq:ReprPolicy}, we subsequently rewrite \eqref{eqn:average-stepwise-approx} into
\begin{align}\label{eq:HeurCaptureExpn}
\frac1N \sum_{n=1}^N R^n(\bm\fP) 
&\approx \sum_{t=1}^T \overline\bE\left[ \overline\bE\left[C_t(\overline X_t, \overline\xi_t, \overline A_t) + \overline V^*_{t+1}(\overline X_{t+1})\Big| \overline X_t \right] - \overline V^*_t(\overline X_{t}) \right],
\end{align}
In view of \eqref{eq:MFDPP}, using the same reasoning that leads to \eqref{eq:HeurDefStepwiseExploitability}, we obtain the following approximation as one of our keys results 
\begin{align}\label{eq:ApproxAveNplayerExploitability}
\frac1N \sum_{n=1}^N R^n(\bm\fP) &\approx \overline\bE\left[ \sum_{t=1}^T C_t(\overline X_t, \overline\xi_t, \overline A_t) \right] - \min_{\overline\fp} \overline\bE\left[ \sum_{t=1}^T C_t(\overline X_t, \overline\xi_t, \overline A_t) \right] =: \overline R(\overline\Psi).
\end{align}
Above, we call $\overline R$ the \textit{mean field end exploitability}. Here, we emphasize that that, in analogy with  \eqref{eq:HeurDefEndExploitability}, $\overline R$ is also computed under the hypothetical case in MFG setting where all (infinitely many) other players maintain their original policies while the representative player may vary hers. Analogously to $N$pG, we term the right hand side of \eqref{eq:HeurCaptureExpn} as \textit{mean field total (stepwise) exploitability}. Clearly, $\overline\Psi$ is a $\varepsilon$-MFE if and only if $\overline R(\overline\Psi)\le\varepsilon$.

To proceed, we briefly highlight two important results. These results complement \eqref{eq:ApproxAveNplayerExploitability} in offering a comprehensive description of the approximation capabilities of MFGs. Firstly, as a (partial) enhancement of \cref{prop:HeurMFApprox} (a), we have that 
\begin{align}\label{eq:ApproxStateActionEmp}
\bP\left[\overline\delta_{(\bm X, \bm A)_t} \approx \overline\psi_t\right] \approx 1,
\end{align}
where the notation $\overline\delta_{(\bm X, \bm A)_t}$ is the empirical measure of $(\bm X, \bm A)_t$. Secondly, we consider a $\Psi=(\psi_1,\dots,\psi_T)\in\cP(\bX\times\bA)^{T}$ that satisfies
\begin{align}\label{eq:HeurDefMFF2}
\xi^{\psi_1}=\delta_{x_1} \quad\text{and}\quad \xi^{\psi_{t+1}}(B)=\int_{\bX\times\bA} P_{x,\xi^{\psi_t},a}(B)\psi_t(\dif x \dif a),\;B\in\cB(\bX),\,t=1,\dots,T-1,
\end{align}
i.e., $\Psi$ can be related to some scenario in the MFG setting. For $\psi \in \cP(\bX \times \bA)$, we define $\overline{\pi}^{\psi}$ as the conditional distribution of actions given the state. This definition naturally leads to an action kernel that is solely dependent on the state of the representative player, i.e., the law of the random action remains invariant under changes in the empirical state distribution. We call $\fp^{\Psi}=(\overline\pi^{\psi_1},\dots,\overline\pi^{\psi_T})$ the the policy induced by $\Psi$. It can be shown that the homogeneous $N$-player scenario $\overline{\bm\fP}^{\Psi} = (\fp^{\Psi},\dots,\fp^{\Psi})$ satisfy
\begin{align}\label{eq:ApproxMFExploitability}
\overline R(\Psi) \approx R^n(\overline{\bm\fP}^{\Psi}),\quad n=1,\dots,N.
\end{align} 
Results such as \eqref{eq:ApproxStateActionEmp} and \eqref{eq:ApproxMFExploitability} are generally better understood in extant literature compared to \eqref{eq:ApproxAveNplayerExploitability}. Nonetheless, proving \eqref{eq:ApproxStateActionEmp} in the context of $N$pG with features like heterogeneous agents and non-asymptotics demands some additional treatment. Regarding \eqref{eq:ApproxMFExploitability}, while it is well-established in many risk-neutral settings (at least for when $\Psi$ is a MFE), it still requires case-by-case analysis in various risk-averse or risk-sensitive settings. Establishing \eqref{eq:ApproxMFExploitability} in a general setting is another objective of this paper. 

We summarize the discussion above into following heuristic theorems. For formal statements, we refer to \cref{thm:MFApprox} and \cref{thm:MFConstr}, respectively.
\begin{theorem}\label{thm:HeurNoMiss}
Suppose \cref{assump:Heur} and let $\overline\Psi$ be constructed in \eqref{eq:HeurDefMFF}. Then, we have $\bP\left[\overline\delta_{(\bm X, \bm A)_t} \approx \overline\psi_t\right] \approx 1$ and $\frac1N \sum_{n=1}^N R^n(\bm\fP)\approx\overline R(\overline\Psi)$.
\end{theorem}
\begin{theorem}\label{thm:HeurNoWaste}
For any $\Psi=(\psi_1,\dots,\psi_T)$ that satisfies \eqref{eq:HeurDefMFF2}, consider the induced $N$-player scenario $\overline{\bm\fP}^{\Psi} = (\fp^{\Psi},\dots,\fp^{\Psi})$. Then,  $\overline R(\Psi) \approx R^n(\overline{\bm\fP}^{\Psi})$ for $n=1,\dots,N$.
\end{theorem}

\subsection{Comments on \cref{thm:HeurNoMiss} and \cref{thm:HeurNoWaste}}\label{subsec:HeurComment}
In general, for an $N$pG with a large $N$ that allows heterogeneous policies, it is exceedingly challenging to characterize the entire spectrum of (approximate) equilibria. It is, however, plausible that different combinations of individual policies could still result in similar population statistics in terms of state-action distribution. This leads to the natural emergence of an approximate refinement of equilibria in $N$pGs via state-action distributions. Under \cref{assump:Heur}, in view of \cref{thm:HeurNoMiss}, we can categorize different ${\bm\fP}$'s based on the corresponding $\overline\Psi$'s defined in \eqref{eq:HeurDefMFF}. Such approximate refinement eradicates redundancy stemming from permutations of player labels. It also conserves effort by accounting for the situation where distinct $\fP$'s could lead to similar state-action distributions. 
 
Beyond the refinement perspective, \cref{thm:HeurNoMiss} and \cref{thm:HeurNoWaste} together advocate for the use of $\delta\text{-}\argmin_{\Psi}\overline R(\Psi)$, and ultimately the examination of $\Psi\mapsto\overline R(\Psi)$. Indeed, \cref{thm:HeurNoMiss} asserts that whenever an $\varepsilon$-equilibrium in the average sense is achieved in an $N$pG, there must be some 
$$\overline\Psi\in\delta\text{-}\argmin_{\Psi}\overline R(\Psi)$$ 
that approximates the (empirical) state-action distributions of said $\varepsilon$-equilibrium. In other words, MFGs do not overlook equilibria in $N$pGs. Conversely, \cref{thm:HeurNoWaste} demonstrates that any element from $\varepsilon\text{-}\argmin_{\Psi}\overline R(\Psi)$ can be associated with a scenario in an $N$pG that is $\delta$-equilibrium, implying that $\varepsilon\text{-}\argmin_{\Psi}\overline R(\Psi)$ does not produce nonsensical equilibria.

In concluding this section, we juxtapose \cref{thm:HeurNoMiss} with results established in \cite{Lacker2020Converrgence} and \cite{Iseri2022Set}. These results represent important progress in understanding the capturing capabilities of MFGs without requiring the uniqueness of equilibria, offering valuable insights that have informed our current research. \cite{Lacker2020Converrgence} explores continuous-time games driven by diffusion with controlled drift. It is shown in \cite[Theorem 2.7]{Lacker2020Converrgence} that, under suitable conditions, any subsequential limits of $N$-player approximate equilibria are weak MFE, a relaxed notation of MFE. A key strength of this result lies in its minimal assumptions on player policies in $N$pGs, in contrast to our \cref{assump:Heur} (iii) (iv). However, the asymptotic approach of \cite{Lacker2020Converrgence} leaves some ambiguity in the relationship between an $N$pE for a finite $N$ and the corresponding MFE. In contrast, \cref{thm:HeurNoMiss} offer a non-asymptotic perspective to this issue under additional assumptions.
\cite{Iseri2022Set} works in mostly in discrete-time setups, and utilizes a set of conditions that arguably parallels those in \cref{assump:Heur}. They focus on the similarity between value functions from $N$pGs and MFGs at approximate equilibria.  Our interest, however, lies more in linking (empirical) state-action distributions from both games with exploitabilities, although we do share some common heuristics as the concepts of propagation of chaos is usually indispensable in such analysis. We would also like to highlight our technical endeavor. For instance, we extend the discussion from discrete spaces to Polish spaces (though still in discrete time). We also remove some conditions that we do not deem necessary for our interest, such as the restriction to deterministic policies in $N$pGs.

\subsection{Technical concerns}\label{subsec:Tech}

In this section, we will outline some of the challenges associated with formally establishing the heuristic results presented in \cref{subsec:HeurDerv} in a more general setting.

\paragraph{Exploitability under continuity constrain} In light of \cref{assump:Heur}, it is natural to consider in $N$pGs an exploitability with candidate policies that also satisfy \cref{assump:Heur} (iii) (iv). Here we simply call this the constrained exploitability. Unfortunately, due to the lack of DPP for the related constrained problem in MDPs, the running optimal value function is not well-defined, hindering derivation of an analogue to \eqref{eq:ApproxAveNplayerExploitability} for constrained exploitability. However, this issue can be side-stepped by arguing that the constrained exploitability is approximately no different from the unconstrained one. We refer to \cref{sec:Exploitability} for more discussion. The proof of this side-stepping argument in fact largely overlaps with that of {eq:ApproxAveNplayerExploitability}.

\paragraph{Risk aversion} Following \cite{Ruszczynski2010Risk}, we would like to incorporate into our models risk averse agents. Since\cite{Ruszczynski2010Risk} does not consider randomized actions, we use the extended version in \cite{Chu2014Markov} for illustration. Heuristically, a typical form of policy evaluation in risk averse MDPs reads (likewise for the Bellman equation)
\begin{align*}
v_t(x) = \int_{\bA} \big(c( x,a) + \rho(v_{t+1}(X^{x,a})) \big) \fp_{t,x}(\dif a),
\end{align*}
where $X^{x,a}$ is generated from some distribution that may depends on $(x,a)$, and $\rho$ could be a convex risk measures, say average value-at-risk (see \cite[Section 6.2.4]{Shapiro2021book}). The usual absence of linearity in $\rho$ hinders the derivation of total stepwise exploitability, which is crucial for establishing \eqref{eq:ApproxAveNplayerExploitability}. This issue will be discussed with more details in \cref{sec:Exploitability}.

\paragraph{Probabilistic kernels on Polish spaces and related continuities} In the following analysis, we use complete separable metric spaces (e.g., $\bR^d$) for state and action spaces. This introduces additional technical complexity. In particular, we need to choose between weak or strong continuity in replacement of \cref{assump:Heur} (i) (iv), an issue that is trivial in finite environment as the two concepts coincides.\footnote{Roughly speaking, for measure-valued function, weak continuity associates the output (probability) space with weak convergence, while strong continuity associates the output space with set-wise convergence.} We note that, weak continuity are in general a weaker condition, allowing, e.g., dynamics with atoms, a feature that is often desirable but not compatible with strong continuity. However, equipping $({x,\xi, a})\mapsto P_{x,\xi, a}$ with weak continuity typically requires such continuity to present in not only all inputs jointly but also other components of the models, such as $C$ and $\fp^n_t$. In contrast, we later find out that imposing strong continuity on $(\xi,a)\mapsto P_{x,\xi,a}$ allows discontinuity in the $x$ argument of $P, C, \fp^n_t$, while still enabling certain desirable approximation property. This ultimately enables us to encompass in our approximation analysis a broader spectrum of $N$-player scenarios (at the expense of more stringent assumptions on $\xi,a$ inputs of $P$). In the meantime, we would like to point out that jointly weak continuity, though not a necessity in approximation analysis, will be eventually needed in showing the existence of MFEs via Kakutani-Fan-Glicksberg fixed point theorem. There is not a universal rule on which continuity is more suitable. In our approximation analysis, we have opted to work with a mixture of continuities, while assuming joint weak continuity for establishing the existence of MFEs. Of course, the ideal case would be that all continuity assumptions are satisfied simultaneously. 

\paragraph{Complex notations and formulas} 
The heuristic derivation in \cref{subsec:HeurDerv} already involves complex notations. A major reason for this complexity is the need to work with difference processes when comparing a scenario in $N$pGs against the counterpart in MFGs. These processes are potentially defined on different probability bases, which in turn demands further notations for handling related calculations. For the sake of simplicity, we have decided to focus solely on the operator perspective for the remainder of the paper. Another challenge arises from lengthy expressions like \eqref{eq:HeurPreCapture}, which, combined with numerous upcoming applications of the triangle inequality, as well as the consideration of risk aversion, add to the complexity. To reduce the notational complexity, we propose to adopt an abstract formulation. This approach proves particularly beneficial in the context of triangle inequalities, as it narrows down the search area for identifying key differences.

\vspace{.3in}

\section{Setup and Preliminaries}\label{sec:Setup}
We initiate our discussion by introducing some notations in \cref{subsec:Notation} that are used throughout this paper. The dynamics of players are summarized in \cref{subsec:Dynamics}, followed by the definitions of transition operators in \cref{subsec:TransOp}. In \cref{subsec:ScoreOp}, we present a series of score operators that formulate the performance criteria for the players. The concept of state-action mean field flow is formally introduced in \cref{subsec:MFF}. \cref{subsec:Scenarios} presents a crucial construction of the mean field flow associated with a specific $N$-player scenario. We compile the technical assumptions in \cref{subsec:Assumptions}.  An important technical lemma that underscores our main results on mean field approximation is presented in \cref{subsec:EmpMeasConc}. Lastly, in \cref{subsec:ErrorTerms}, we introduce several additional error terms.

\subsection{Preliminary notations}\label{subsec:Notation}

We first summarize the rules of notations. Bold-faced notations are associated with $N$pGs, while overlined notations pertain to MFGs. Subscripts typically refer to arguments or parameters whose effects are more directly defined—such as time or (empirical) population distributions. Conversely, superscripts indicate the involvement of more complex operations during definition, like computing marginal or conditional distributions. Additionally, superscripts are used to index individual players in $N$pGs.

In what follows, we let $t\in\set{1,2,\dots,T}$ be the time index. We fix the total number of players in the $N$pG for the remainder of the paper.

We say $\beta$ is a \textit{subadditive modulus of continuity} if $\beta:[0,\infty]\to\overline[0,\infty]$ is non-decreasing, subaditive and satisfies $\lim_{\ell\to0+}\beta(\ell) = 0$. It can be shown that $\beta$ is concave. For convenience, we sometimes view $\beta$ as an operator so that $\beta f=\beta(f(\cdot))$ for any $[0,\infty]$-valued function $f$. 

The state space $\bX$ is a complete separable metric space and $\cB(\bX)$ is the corresponding Borel $\sigma$-algebra. $\cP(\bX)$ is the set of probability measures on $\cB(\bX)$, we endow $\cP(\bX)$ with the weak topology, $\cB(\cP(\bX))$ is the corresponding Borel $\sigma$-algebra. We let $\cE(\cP(\bX))$ be the evaluation $\sigma$-algebra, i.e.,  $\cE(\cP(\bX))$ is the $\sigma$-algebra generated by sets $\set{\xi\in\cP(\bX):\int_{\bY}f(y)\xi(\dif y) \in B}\,$ for any real-valued bounded $\cB(\bX)$-$\cB(\bR)$ measurable $f$ and $B\in\cB(\bR)$. Due to \cref{lem:sigmaAlgBE}, $\cB(\cP(\bX))=\cE(\cP(\bX))$. 

The action domain $\bA$ is another complete separable metric space and $\cB(\bA)$ is the corresponding Borel $\sigma$-algebra. $\cP(\bA)$ is the set of probability measures on $\cB(\bA)$, we endow $\cP(\bA)$ with the weak topology, $\cB(\cP(\bA))$ is the corresponding Borel $\sigma$-algebra, $\cE(\cP(\bA))$ is the evaluation $\sigma$-algebra. Again, by  \cref{lem:sigmaAlgBE}, we have $\cB(\cP(\bA))=\cE(\cP(\bA))$.

The Cartesian product of metric spaces in this paper are always equipped with the metric equal to the sum of metrics of the component spaces and endowed with the corresponding Borel $\sigma$-algebra.

We let $\cP(\bX\times\bA)$ be the set of probability measures on $\cB(\bX\times\bA)$ endowed with weak topology. For $\psi\in\cP(\bX\times\bA)$, we let $\xi^{\psi}$ be the marginal measure of $\psi$ on $\bX$. Additionally, we find it convenient to denote $\Psi = (\psi_1, \dots, \pi_{T-1}) \in \cP(\bX \times \bA)^{T-1}$ and $\Xi^\Psi = (\xi^{\psi_1}, \dots, \xi^{\psi_{T-1}})$.

We let $\delta_y(B):=\1_{B}(y)$ be the Dirac measure at $y$. Additionally, we write $\overline\delta_{\bm y}:=\frac1N\sum_{n=1}^N\delta_{y^n}$, where $\bm y=(y^1,\dots,y^N)$.

Let $\bY$ be a complete separable metric space and $\cB(\bY)$ be the corresponding Borel $\sigma$-algebra. Let $\cP(\bY)$ be the set of probability measures on $\cB(\bY)$, endowed with weak topology and Borel $\sigma$-algebra. $B_b(\bY)$ is the set of Borel measurable real-valued bounded functions on $\bY$. Subsequently, $C_b(\bY)\subseteq B_b(\bY)$ is the set of continuous functions, equipped with sup-norm. Let $L>0$ and let $C_{L-BL}(\bY)$ be the set of $L$-Lipschitz continuous functions bounded by $L$. For finite measures $m,m'$ on $\cB(\bY)$, we define the following norm and metric:
\begin{gather*}
\|m\|_{L-BL} := \sup_{h\in C_{L-BL}}\frac12 \int_\bY h(y) m(\dif y),\\
d_{L-BL}(m,m') := \|m-m'\|_{L-BL} = \sup_{h\in C_{L-BL}(\bY)}\frac12\left|\int_{\bY}h(y)m(\dif y) - \int_{\bY}h(y)m'(\dif y)\right|. 
\end{gather*}  
We write $C_{BL}$ and $\|\cdot\|_{BL}$ for abbreviations of $C_{1-BL}$ and $\|\cdot\|_{1-BL}$. Clearly, $\|m-m'\|_{L-BL} = L \|m-m'\|_{BL}$ and $\|m-m'\|_{BL}\le 1$. Below we recall a well-known result regarding the relationship between $\|\,\cdot\,\|_{BL}$ and weak convergence of probabilities; see, for example, \cite[Theorem 8.3.2]{Bogachev2007book}.
\begin{lemma} Let $(\mu_n)_{n\in\bN}\subseteq\cP(\bY)$ and $\mu\in\cP(\bY)$. Then,  $\lim_{n\to\infty}\|\mu_n-\mu\|_{BL}=0$ if and only if $\mu_n$ converges weakly to $\mu$. 
\end{lemma}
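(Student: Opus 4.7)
The plan is to prove both implications of the equivalence, with the forward direction $\|\cdot\|_{BL}\to 0 \Rightarrow$ weak convergence being substantially easier than the converse.

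For the forward direction, suppose $\|\upsilon_n-\upsilon\|_{BL}\to 0$. Then $\int h\,\dif\upsilon_n\to\int h\,\dif\upsilon$ for every $h\in C_{BL}(\bY)$, and by rescaling, for every bounded Lipschitz $h$ (since $\|\upsilon_n-\upsilon\|_{L\text{-}BL}=L\|\upsilon_n-\upsilon\|_{BL}$). I would upgrade this to weak convergence via the Portmanteau theorem, verifying $\liminf_n\upsilon_n(U)\ge \upsilon(U)$ for each open $U\subseteq\bY$ by approximating $\1_U$ from below by the Lipschitz cutoffs $g_k(y):=\bigl(k\, d(y,U^c)\bigr)\wedge 1$ and sending $k\to\infty$ by monotone convergence.

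For the converse direction, suppose $\upsilon_n$ converges weakly to $\upsilon$. The strategy is to combine tightness with Arzel\`a-Ascoli. Since $\bY$ is Polish, Prokhorov's theorem yields tightness of $\{\upsilon_n\}\cup\{\upsilon\}$, so for any $\epsilon>0$ I can select a compact $K\subseteq\bY$ with $\upsilon_n(K^c),\upsilon(K^c)<\epsilon$ uniformly in $n$. The restriction of $C_{BL}(\bY)$ to $K$ is uniformly bounded by $1$ and equicontinuous (each element is $1$-Lipschitz), hence relatively compact in $C(K)$, so it admits a finite $\epsilon$-net $\{h_1,\dots,h_m\}$ in the $C(K)$-norm. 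For arbitrary $h\in C_{BL}$, I pick the nearest $h_i$ in this net and decompose $\int h\,\dif(\upsilon_n-\upsilon)$ into a piece on $K$ (handled by $h-h_i$ contributing $O(\epsilon)$ plus the pointwise $\int h_i\,\dif(\upsilon_n-\upsilon)$) and a tail on $K^c$ (bounded by $4\epsilon$ since $|h|\le 1$). The middle contribution vanishes as $n\to\infty$ by weak convergence applied individually to each of the finitely many $h_i$, so taking the supremum over $h$ delivers $\|\upsilon_n-\upsilon\|_{BL}=O(\epsilon)$ eventually.

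The main obstacle is precisely this converse direction: the supremum in $\|\cdot\|_{BL}$ is over an infinite family, whereas weak convergence supplies only pointwise-in-$h$ information. The uniformity must be manufactured by an external compactness argument. Tightness is what allows the tail on $K^c$ to be controlled independently of the test function, and Arzel\`a-Ascoli is what collapses the infinite family to a finite net on the compact region $K$, where the pointwise weak convergence is finally enough. Neither ingredient can be omitted without strengthening the hypothesis on $\bY$.
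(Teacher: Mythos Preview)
The paper does not actually prove this lemma; it merely cites it as a well-known result, referring to \cite[Theorem 8.3.2]{Bogachev2007book}. So there is no in-paper proof to compare against.

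Your argument is correct and is essentially the standard proof. One minor point worth tightening in the converse direction: the $\epsilon$-net $\{h_1,\dots,h_m\}$ produced by Arzel\`a--Ascoli lives a priori in $C(K)$, but to invoke weak convergence you need each $h_i$ to be bounded continuous on all of $\bY$. This is easily resolved---either take the net to consist of elements of $C_{BL}(\bY)$ whose restrictions to $K$ form the net (total boundedness of $\{h|_K:h\in C_{BL}(\bY)\}$ in $C(K)$ guarantees such a finite subfamily exists), or extend each $h_i$ from $K$ to a bounded Lipschitz function on $\bY$ via the McShane--Whitney formula. With that clarification the proof is complete.
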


Throughout the rest of the paper, we will consistently associate spaces with their respective Borel \(\sigma\)-algebras and consider exclusively measurable mappings. Consequently, notations like \(\mathcal{B}(\mathbb{X})\) and \(\mathcal{B}(\mathcal{P}(\mathbb{X}))\) will be omitted, except where specifically noted.

\subsection{Controlled dynamics of the $N$pG}\label{subsec:Dynamics} 

Throughout the rest of the paper, we will fix $N$, the total number of players in the finite-player game. The scripts indicating this number is omitted for brevity. The only exception is the peripheral result, \cref{lem:ErrorConv}, which connects our non-asymptotic analysis to an asymptotic setting.

For $t=1,\dots,T$, let $P_t:\bX\times\cP(\bX)\times\bA\to\cP(\bX)$ be the transition kernel. 

We define $\Pi$ as the set of $\pi:\bX^N\to\cP(\bA)$, i.e., $\Pi$ is the set of Markovian action kernels. Let $\wt\Pi$ be the set of $\tilde\pi:\bX\times\cP(\bX)\to\cP(\bA)$, i.e., $\wt\Pi$ is the set of symmetric Markovian action kernels. Additionally, $\overline\Pi$ is the set of $\overline\pi:\bX\to\cP(\bA)$, i.e.,  $\overline\Pi$ is the set of oblivious Markovian action kernels \cite{Weintraub05Oblivious}. ``Oblivious'' refers to a player's decision-making process that considers only their own state, ignoring the states of other players. Sometimes, it is convenient to extend elements in $\overline\Pi$ (resp. $\wt\Pi$) to elements in $\wt\Pi$ (resp. $\Pi$) by considering $\overline\pi$ as constant in $\cP(\bA)$ (resp. redacting $\bm x$ to $(x^n,\overline\delta_{\bm x})$ from some $n$, depending on the context). In this sense, we have $\overline\Pi\subseteq\wt\Pi\subseteq\Pi$.

Let $N\in\bN$ be the number of players. For the remainder of the paper, we will fix $N$ and, unless necessary, omit it from the notation concerning the entire $N$pG.  We use $\fp^n=(\fp^n_1,\dots,\fp^n_{T-1})\in\Pi^{T-1}$ to denote the policy of player-$n$, and $\bm\fP=(\fp^1,\dots,\fp^N)\in\Pi^{N\times(T-1)}$ to denote the collection of policies in an $N$pG. Occasionally, we will use $\bm\fP=(\bm\fP_1,\dots,\bm\fP_{T-1})$, where $\bm\fP_t=(\fp^1_t,\dots,\fp^N_t)\in\Pi^N$ -- this notation shows the time index in the policy for all players, while the former one shows the player index for each player, however, the set of action kernels considered is the same in both cases; this notation is also more convenient for compositions of operators, see e.g., \eqref{eq:DefcT} and \eqref{eq:DefcS} later. 

A $\bm\fP\in\Pi^{N\times(T-1)}$ corresponds to an $N$pG scenario with the following dynamics. The position of each player at $t=1$ is drawn independently from $\mathring{\xi}\in\cP(\bX)$. At time $t=2,\dots,T-1$, given the realization of state $\bm x=(x^1,\dots,x^N)$, the generation of the actions of all player is governed by $\bigotimes_{n=1}^N \fp^n_{t,\bm x}$. In particular, for $\wt{\bm\fP}\in\wt{\Pi}^{N\times(T-1)}$, the generation is then governed by $\bigotimes_{n=1}^N\tilde\fp^n_{t,x^n,\overline\delta_{\bm x}}$. Moreover, given the realization of actions $\bm a = (a^1,\dots,a^N)$, the transition of the states of all $N$ players is governed by $\bigotimes_{n=1}^N P_{t,\bm x,\overline\delta_{\bm x},a^n}$. In summary,  we have the following transition dynamics:
\begin{align}\label{eq:NpDyn}
	\bm X_1=(x_1,\dots,x_1),\quad \bm A_t\sim\bigotimes_{n=1}^N\tilde\fp^n_{t,X^n_t,\overline\delta_{\bm X_t}}\,,\quad \bm X_{t+1}\sim\bigotimes_{n=1}^N P_{t,X^n_t,\overline\delta_{\bm X_t}, A^n_t}.
\end{align} 
For notional convenience, in what follows, we use $\bm\lambda=(\lambda^1,\dots,\lambda^N)\in\cP(\bA)^N$ for an $N$-tuple of elements from $\cP(\bA)$.

\subsection{Transition operators}\label{subsec:TransOp}
In this section, we introduce several operators related to state transitions. All measurability issues can be addressed by referring to \cref{lem:IntfMeasurability}, and as such, the related details are not reiterated here.

In order to introduce the transition and expectation operators for $N$pGs, we first define an auxiliary notation. With $t=1,\dots,T-1$ and $(x,\xi,\lambda)\in\bX\times\cP(\bX)\times\cP(\bA)$, we let
\begin{align}\label{eq:DefQ}
Q^\lambda_{t,x,\xi}(B) := \int_{\bA} P_{t,x,\xi,a}(B)\lambda(\dif a), \quad B\in\cB(\bX).
\end{align} 
Consider additionally $\bm x\in\bX^N$ and $\bm\lambda\in\cP(\bA)^N$. By utilizing standard tools in probability (e.g., \cite[Section 4]{Aliprantis2006book}), we obtain
\begin{align}\label{eq:IndIntProdQ}
&\int_{\bX^N} f\left(\bm y\right) \left[\bigotimes_{n=1}^N Q^{\lambda^n}_{t,x^n,\xi}\right]\left(\dif\bm y\right)\nonumber\\
&\quad=\int_{\bA^N} \int_{\bX^N}f\left(\bm y\right)\left[\bigotimes_{n=1}^N P_{t,x^n,\overline\delta_{\bm x},a^n}\right]\left(\dif\bm y\right)   \left[\bigotimes_{n=1}^N\lambda^n\right]\left(\dif\bm a\right),\quad f\in B_b(\bX^N).
\end{align}

We are ready to introduce the transition and expectation operators for $N$pGs. Given $\bm\pi=(\pi^1,\dots,\pi^N)\in\Pi^N$, we define 
\begin{align}\label{eq:DefT}
\bm T^{\bm\pi}_t f\left(\bm x\right) := \int_{\bX^N}f(\bm y) \left[\bigotimes_{n=1}^N Q^{\pi^n_{\bm x}}_{t,x^n,\overline\delta_{\bm x}}\right]\left(\dif\bm y\right),\; t=1,\dots,T-1, \quad f\in B_b(\bX^N).
\end{align}
With $\bm\fP\in\Pi^{N\times(T-1)}$, we continue to define
\begin{align}\label{eq:DefcT}
{\bm \cT}^{\bm\fP}_{s,s}f := f \quad\text{and}\quad {\bm \cT}^{\bm\fP}_{s,t}f\left(\bm x\right) := \bm T^{\bm\fP_s}_s\circ\cdots\circ\bm T^{\bm\fP_{t-1}}_{t-1} f\left(\bm x\right), \;1\le s < t \le T, \quad f\in B_b(\bX^N).
\end{align}
We also define the following expectation operator
\begin{align}\label{eq:DeffT}
\mathring{\bm\bT} f := \int_{\bX^N}f(\bm y)\mathring{\xi}^{\otimes N}(\dif\bm y) \quad\text{and}\quad {\bm\bT}^{\bm\fP}_{t}f := \mathring{\bm\bT} \circ {\bm\cT}^{\bm\fP}_{1,t}f,\; t=1,\dots,T, \quad f\in B_b(\bX^N).
\end{align}
It is clear that ${\bm\bT}^{\bm\fP}_{t}$ is a non-negative linear functional on $ B_b(\bX^N)$ with operator norm of $1$. It follows that ${\bm\bT}^{\bm\fP}_{t}$ is also non-decreasing. Additionally, the definitions provided above are applicable to symmetric/oblivious Markovian policies.

Below we introduce the analogous transition and expectation operators for MFGs. With $\xi\in\cP(\bX)$ and $\tilde\pi\in\wt\Pi$, we define 
\begin{align}\label{eq:DefMeanT}
\overline{T}^{\tilde\pi}_{t,\xi} h(x) := \int_{\bX} h(y) Q^{\tilde\pi_{x,\xi}}_{t,x,\xi}(\dif y),\; t=1,\dots,T-1, \quad h\in B_b(\bX).
\end{align}
Furthermore, with $\Xi=(\xi_1,\cdots,\xi_{T-1})\in\cP(\bX)^T$ and $\tilde\fp=(\tilde\fp_1,\dots,\tilde\fp_{T-1})\in\wt{\Pi}^{T-1}$, we define
\begin{align}\label{eq:DefMeancT}
\overline{\cT}^{\tilde\fp}_{s,s,\Xi}h:=h \quad\text{and}\quad \overline{\cT}^{\tilde\fp}_{s,t,\Xi}h := \overline{T}^{\tilde\fp_s}_{s,\xi_s}\circ\cdots\circ\overline{T}^{\tilde\fp_{t-1}}_{t-1,\xi_{t-1}} h,\; 1\le s<t\le T,\quad h\in B_b(\bX).
\end{align} 
In a similar manner as before, we define the expectation operator in MFG as
\begin{align}\label{eq:DefMeanfT}
\mathring{\overline{\bT}}h:=\int_\bX h(x)\mathring\xi(\dif x) \quad\text{and}\quad \overline{\bT}^{\tilde\fp}_{t,\Xi}h := \mathring{\overline{\bT}} \circ \overline{\cT}^{\tilde\fp}_{1,t,\Xi}h,\, t=1,\dots,T,\quad h\in B_b(\bX).
\end{align}
Clearly, $\overline{\bT}^{\tilde\fp}_{t,\Xi}$ is a non-negative linear functional on $ B_b(\bX)$, and thus $\overline{\bT}^{\tilde\fp}_{t,\Xi}$ is non-decreasing. Moreover, it can be verified via induction that $D\mapsto\overline{\bT}^{\tilde\fp}_{t,\Xi}\1_{D}$ is a probability measure on $\cB(\bX)$, and we denote it by $\overline{\bQ}^{\tilde\fp}_{t,\Xi}$. Clearly, $\overline{\bQ}^{\tilde\fp}_{1,\Xi}=\mathring{\xi}$.

\subsection{Score operators}\label{subsec:ScoreOp}
The operators discussed in this section, combined with the assumptions detailed in \cref{subsec:Assumptions} later, abstractly depict the performance evaluation process that adheres to a finite horizon dynamic programming principle.

For $N$pGs, with $\bm\lambda\in\cP(\bA)^N$, we define an operators $\bm G^{\bm\lambda}_{t}: B_b(\bX^N) \to B_b(\bX^N)$. For MFGs, with $\lambda\in\cP(\bA)$ and $\xi\in\cP(\bX)$, we consider $\overline G^{\lambda}_{t,\xi}: B_b(\bX)\to B_b(\bX)$. Both $\bm G^{\bm\lambda}_t$ and $\overline G^\lambda_{t,\xi}$ are employed to construct mappings that facilitate the backward induction of value functions, transitioning them from time step $t+1$ to time step $t$. Additionally, we consider real-valued functionals $\mathring{\bm\bG}$ and $\mathring{\overline\bG}$ on $B_b(\bX^N)$ and $B_b(\bX)$, respectively. These functionals facilitate the final step of backward induction, yielding a real number as the outcome score. Our abstract formulation with $\bm G^{\bm\lambda}_t$ and $\overline G^\lambda_{t,\xi}$ aims to encompasses not only Bellman operators commonly studied in risk-neutral mean field MDPs (e.g., \cite[(3.1)]{Saldi2018Markov} and \cite[(16)]{Lauriere23Model}), but also risk-averse variant such as the example provided in \cref{subsec:ExmpG}. As a by-product, certain extended form of cost is also allowed, for example, \eqref{eq:ExmpExtendedCost}. However, we note that, while Bellman operators incorporating Kullback–Leibler divergence (e.g., \cite[(9)]{Leahy22Convergence}) can also be cast into our abstraction, the application is currently limited to discrete settings because Kullback–Leibler divergence is not compatible with latter assumptions (\cref{assump:GCont} and \cref{assump:GCont2}) in continuous spaces such as $\bR^d$.

Throughout the paper, we always assume the measurabilities below:
\begin{itemize}
\item[(i)] For any $\bm u\in B_b(\bX^N)$, the mapping $(\bm x,\bm\lambda)\mapsto\bm G^{\bm\lambda}_{t}\bm u(\bm x)$ is $\cB(\bX^N\times\cP(\bA)^N)$-$\cB(\bR)$ measurable.
\item[(ii)] For any $v\in B_b(\bX)$, the mapping $(x,\xi,\lambda)\mapsto \overline G^{\lambda}_{t,\xi} v(x)$ is $\cB(\bX\times\cP(\bX)\times\cP(\bA))$-$\cB(\bR)$ measurable.
\end{itemize}
In most cases, these measurabilities can be verified using standard tools such as listed in \cite[Section 4, Section 11, and Section 18]{Aliprantis2006book}.

$\bm G^{\bm\lambda}_t$ is designed to evaluate the performance of player-1 in an $N$pG. Under certain symmetry assumptions (see \cref{assump:GBasic} (iv) later), $\bm G^{\bm\lambda}_t$ can also be applied to assess other players through permutations. It is crucial that $\bm G^{\bm\lambda}_t$ and $\overline G^\lambda_{t,\xi}$ maintain a specific relationship to facilitate approximating an $N$pG with a MFG. These assumptions will be introduced in \cref{subsec:Assumptions}. For examples of $\bm G^{\bm\lambda}_t$ and $\overline G^\lambda_{t,\xi}$, we refer to \cref{subsec:ExmpG}.

In the $N$pG, we assume all players aim to minimize their scores. More precisely, we let $\bm U\in B_b(\bX)$ be the terminal cost function. With the score operator $\bm\bS^{\bm\fP}_{T}$ introduced later in \eqref{eq:DeffS}, player-$1$ aims to find 
\begin{align}\tag{NpObj}\label{eq:NplayerObj}
\inf_{\fp^1\in\wt{\Pi}^{T-1}_\vartheta}\bm\bS^{\bm\fP}_{T}\bm U,
\end{align}
where $\vartheta$ is a subadditive modular of continuity, and $\wt{\Pi}_\vartheta$ is an subset of $\wt{\Pi}$ such that for any $\tilde\pi\in\wt{\Pi}_\vartheta$ we have $\xi\mapsto\tilde\pi(x,\xi)$ is $\vartheta$-continuous in $\cP(\bX)$ for any $x\in\bX$, where $\cP(\bX)$ and $\cP(\bA)$ are equipped with $d_{BL}$. We call $\tilde\fp\in\wt{\Pi}^{T-1}_\vartheta$ the \textit{$\vartheta$-symmetrically continuous policy}. This objective aligns with the scenario we aim to explore in this paper, where all players employ a $\vartheta$-symmetrically continuous policy. Heuristically, this scenario might arise from practical situations where players have limited accuracy or certainty in perceiving the empirical population distribution, making them reluctant to drastically alter their behavior in response to changes in the population. Besides, to highlight the technical importance of symmetrical continuity, we note that a mild $\vartheta$ is typically required for the mean field approximation to be effective. For an illustrative example, please refer to \cref{subsec:ExmpGame}.

To facilitate the comparison between $N$pGs and MFGs, we consider the following hypothetical objective in MFG from the perspective of the representative player. Let $V\in B_b(\bX\times\cP(\bX))$ be the terminal cost and write $V_\xi(x):=V(x,\xi)$. Given a $\Xi\in\cP(\bX)^{T-1}$ and $\xi_T\in\cP(\bX)$, with the score operator $\overline\bS^{\tilde\fp}_{T,\Xi}$ introduced later in \eqref{eq:DefMeanfS}, the representative player aims to find
\begin{align}\tag{MFObj}\label{eq:MFObj}
\inf_{\tilde\fp\in\wt{\Pi}^{T-1}}\overline\bS^{\tilde\fp}_{T,\Xi} V_{\xi_T},
\end{align}
Above, $\tilde\fp\in\wt{\Pi}^{T-1}$ can be effectively replaced by $\bar\fp\in\overline{\Pi}^{T-1}$ as $\Xi$ and $\xi_{T}$ are fixed. Note that \eqref{eq:MFObj} corresponds to the hypothetical case where the representative player is allowed to revise her policy while other players maintain their original behaviour so that $\Xi$ and $\xi_{T}$ remain unchanged.

In the remainder of this section, we will provide the definitions of the aforementioned score operators.

For the $N$pG, we let $\bm\pi=(\pi^1,\dots,\pi^N)\in\Pi^N$ and $\bm\fP=(\bm\fP_1,\dots,\bm\fP_{T-1})\in\Pi^{N\times(T-1)}$. For $\bm u\in B_b(\bX^N)$ we define
\begin{gather}
\bm S^{\bm\pi}_{t}\bm u({\bm x}) := \bm G^{(\pi^n_{\bm x})_{n=1}^N}_{t} \bm u (\bm x), \quad t=1,\dots,T-1,\label{eq:DefS}\\
\bm\cS^{\bm\fP}_{s,s}\bm u = \bm u \quad\text{and}\quad \bm\cS^{\bm\fP}_{s,t}\bm u := \bm S^{\bm\fP_s}_{s}\circ\cdots\circ \bm S^{\bm\fP_{t-1}}_{t-1}\bm u,\quad 1\le s<t\le T,\label{eq:DefcS}\\
\bm\bS^{\bm\fP}_{t} \bm u := \mathring{\bm\bG} \circ \bm\cS^{\bm\fP}_{1,t}\bm u,\quad t=1,\dots,T.\label{eq:DeffS}
\end{gather} 
We present the associated optimization operators below. Note that these optimization operators are not directly related to \eqref{eq:NplayerObj}, as the constrain in \eqref{eq:NplayerObj} hinders the derivation of dynamic programming principle. Nevertheless, the following operators serve as important auxiliary tools in our analysis. We first define the (one-step) Bellman operator
\begin{align}\label{eq:DefSstar}
\bm S^{*\bm\pi}_t \bm u(\bm x) := \inf_{\lambda\in\cP(\bA)} G^{(\lambda,\pi^2_{\bm x},\dots,\pi^N_{\bm x})}_t \bm u(\bm x), \quad t=1,\dots,T-1.
\end{align}
We note that $\bm S^{*\bm\pi}_t \bm u$ does not depend on $\pi^1$ but we insist on this notation for the sake of neatness. Under suitable conditions (see \cref{lem:Sstar}), we further define: 
\begin{gather}
\bm\cS^{*\bm\fP}_{s,s} \bm u := \bm u \quad\text{and}\quad \bm\cS^{*\bm\fP}_{s,t} \bm u :=  \bm S^{*\bm\fP_s}_s \circ \cdots \circ \bm S^{*\bm\fP_{t-1}}_{t-1} \bm u,\quad 1\le s<t\le T,\label{eq:DefcSstar}\\
\bm\bS^{*\bm\fP}_{t} \bm u := \mathring{\bm\bG}\circ\bm\cS^{*\bm\fP}_{1,t} \bm u,\quad t=1,\dots,T.\label{eq:DeffSstar}
\end{gather}
Again, we note that $\bm\cS^{*\bm\fP}_{s,t}$ and $\bm\bS^{*\bm\fP}_{t}$ are constant in $\fp^1$.

Regarding MFGs, let $\xi\in\cP(\bX)$, $\tilde\pi\in\wt\Pi$, $\Xi=(\xi_1,\dots,\xi_{T-1})\in\cP(\bX)^{T-1}$, and $\tilde\fp=(\tilde\fp_1,\dots,\tilde\fp_{T-1})\in{\wt\Pi}^{T-1}$. To align with previous setting, unless specified otherwise, we set $\xi_1=\mathring\xi$. For $v\in B_b(\bX)$ we define
\begin{gather}
\overline S^{\tilde\pi}_{t,\xi} v(x) := \overline G^{\tilde\pi_{x,\xi}}_{t,\xi} v(x),\quad t=1,\dots,T-1,\label{eq:DefMeanS}\\
\overline \cS^{\tilde\fp}_{s,s,\Xi}v := v\quad\text{and}\quad \overline\cS^{\tilde\fp}_{s,t,\Xi} v := \overline S^{\tilde\fp_s}_{s,\xi_s} \circ \cdots \circ \overline S^{\tilde\fp_{t-1}}_{t-1,\xi_{t-1}} v , \; 1\le s<t\le T,\label{eq:DefMeancS}\\
\overline \bS^{\tilde\fp}_{t,\Xi} := \mathring{\overline\bG} \circ \overline\cS^{\tilde\fp}_{1,t,\Xi} v, \quad t=1,\dots,T. \label{eq:DefMeanfS}
\end{gather}
Note that $\overline\cS^{\tilde\fp}_{s,t,\Xi}$ depend on $\Xi$ only through $\xi_s,\dots,\xi_{t-1}$. In addition, we define the corresponding Bellman operator as 
\begin{align}\label{eq:DefMeanSstar}
\overline S^{*}_{t,\xi} v(x) := \inf_{\lambda\in\cP(\bA)}\overline G^{\lambda}_{t,\xi} v(x),\quad t=1,\dots,T-1.
\end{align}
Under suitable conditions (see \cref{lem:Sstar}), we further define: 
\begin{gather}
\overline\cS^*_{s,s,\Xi} v := v\quad\text{and}\quad \overline\cS^*_{s,t,\Xi} v := \overline S^*_{s,\xi_s} \circ\cdots\circ \overline S^*_{t-1,\xi_{t-1}} v,\; 1\le s<t\le T,\label{eq:DefMeancSstar}\\
\overline\bS^*_{t,\Xi} v := \mathring{\overline\bG} \circ \overline\cS^*_{t,\Xi} v,\quad t=1,\dots,T.\label{eq:DefMeanfSstar}
\end{gather}

Below we make a few obvious observations. Under the setting of \cref{lem:Sstar} (with technical assumptions to be introduced in \cref{subsec:Assumptions}), we have
\begin{align*}
\bm\cS^{*\bm\fP}_{s,t} \bm u \le \bm\cS^{(\fp,\fp^2,\dots,\fp^N)}_{s,t} \bm u,\;\fp\in\bm\Pi \quad\text{and}\quad \overline\cS^*_{s,t,\Xi}v \le \overline\cS^{\tilde\fp}_{s,t,\Xi}v,\;\tilde\fp\in\wt{\bm\Pi}.
\end{align*}
Similar results holds for $\bm\bS^{*\bm\fP}_t$ and $\overline\bS^*_{t,\Xi}$ as well. Moreover, there exist $\fp^*\in\Pi^{T-1}$ and $\overline\fp^*\in\overline{\Pi}^{T-1}$ that attain the optimality above (for fixed $\bm\fP, \bm u$ and $\Xi, v$), that is,
\begin{gather*}
\bm\cS^{*\bm\fP}_{s,t} \bm u=\bm\cS^{(\fp^*,\fp^2,\dots,\fp^N)}_{s,t} \bm u,\quad \bm\bS^{*\bm\fP}_{t} \bm u = \bm\bS^{(\fp^*,\fp^2,\dots,\fp^N)}_{t} \bm u,\\
\overline\cS^*_{s,t,\Xi} v = \overline\cS^{\overline\fp^*}_{s,t,\Xi} v,\quad \overline\bS^*_{t,\Xi} v = \overline\bS^{\overline\fp^*}_{t,\Xi} v.
\end{gather*}

For the rest of this paper, functions from $ B_b(\bX)$, when acted upon by $\bm G^{\bm\lambda}_t$ and related operators, are treated as functions from $B_b(\bX^N)$ that are constant in $x^2,\dots,x^N$.

\subsection{State-action mean field flow}\label{subsec:MFF}
It can be beneficial to describe a scenario in MFG using a flow of state-action distributions. This section is dedicated to explaining this concept. Recall that $\xi^{\psi}$ denotes the marginal distribution of $\psi$ on $\bX$, where $\psi\in\cP(\bX\times\bA)$.

Let $\Psi=(\psi_{1},\dots,\psi_{T-1})\in\cP(\bX\times\bA)^{T-1}$. We say $\Psi$ is a \textit{mean field flow (on $\bX\times\bA$ with initial marginal state distribution $\mathring{\xi}$)}, if $\Psi$ satisfies 
\begin{align}\label{eq:DefMFF}
\xi^{\psi_1}=\mathring{\xi}\quad\text{and}\quad \xi^{\psi_{t+1}}(B) = \int_{\bX\times\bA} P_{t,y,\xi^{\psi_t},a}(B) \psi_t(\dif y\dif a),\; B\in\cB(\bX),\, t=1,\dots,T-2.
\end{align}
For clarification, we point out that, based on the heuristics of the law of large numbers, all of the infinitely many independent players employing the same (randomized) policy will result in a deterministic flow of state-action distributions. This flow should comply with the evolution equation \eqref{eq:DefMFF} imposed on the state marginal. Moreover, these distributions summarize the movements and behaviors of the players and can be viewed as a scenario in MFGs, possibly not at equilibrium.

The policy of the representative player can be characterized by a mean field flow via conditioning. To elucidate this procedure, we first revisit a pertinent notion in current context, that of the regular conditional distribution. Let $\psi\in\cP(\bX\times\bA)$ and consider $Y(x,a):=a$. By \cite[Section 4.9, Theorem 4.44]{Aliprantis2006book}, we have $\cB(\bX\times\bA)=\cB(\bX)\otimes\cB(\bA)$. Additionally, $\cB(\bX)\otimes\set{\emptyset,\bA}$ is a sub-$\sigma$-algebra of $\cB(\bX)\otimes\cB(\bA)$. By \cite[Theorem 10.4.8, Example
10.4.9 and Example 6.5.2]{Bogachev2007book}, under $\psi$, there exists a $\varpi^\psi:(\bX\times\bA,\cB(\bX)\otimes\set{\emptyset,\bA})\to(\cP(\bA),\cE(\cP(\bA)))$ such that
\begin{align*}
\psi({B}\cap\set{Y \in A}) = \int_{\bX\times\bA} \1_{{B}}(y,a) \varpi^\psi_{y,a}(A) \psi(\dif y \dif a),\quad B\in\cB(\bX)\otimes\set{\emptyset,\bA},\; A\in\cB(\bA),
\end{align*}
i.e., $\varpi^\psi$ is a regular conditional distribution of $Y$ given $\cB(\bX)\otimes\set{\emptyset,\bA}$. By definition, for any $A\in\cB(\bA)$, $\varpi^\psi_{y,a}(A)$ is constant in $a\in\bA$. Therefore, we let $\overline\pi^\psi$ be $\varpi^\psi$ restrained on $\bX$ and call $\overline\pi^\psi$ the \textit{action kernel induced by $\psi$}. It follows that $\overline\pi^\psi$ is $\cB(\bX)$-$\cE(\cP(\bA))$ measurable, i.e.  $\overline\pi^{\psi}\in\overline\Pi$. Furthermore, we have 
\begin{align}\label{eq:IndInthpsi}
\psi(B\times A) = \int_{\bX} \1_{B}(y) \int_{\bA} \1_{A}(a)\overline\pi^\psi_{y}(\dif a) \,\xi^\psi(\dif y), \quad B\in\cB(\bX),\;A\in\cB(\bA).
\end{align}
Below, we present a result asserting that \eqref{eq:IndInthpsi} uniquely characterizes the induced action kernel. The proof of this statement can be found in \cref{subsec:Prooflem:pipsiUnique}.
\begin{lemma}\label{lem:pipsiUnique}
If \eqref{eq:IndInthpsi} holds with $\overline\pi^\psi$ replaced by $\hat\pi\in\overline\Pi$, then $\hat\pi(x)=\overline\pi^\psi(x)$ for $\xi^{\psi}$-almost every $x$.
\end{lemma}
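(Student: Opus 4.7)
The plan is to prove the lemma by fixing a set $A \in \cB(\bA)$, comparing the two integral representations of $\psi$ given by $\overline\pi_\psi$ and $\hat\pi$, and then handling the $A$-dependence of the resulting exceptional null set via a separability argument.

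First I would observe that, by definition of $\cE(\Lambda)$ as the evaluation $\sigma$-algebra, for each fixed $A \in \cB(\bA)$ the map $y \mapsto [\hat\pi(y)](A)$ is $\cB(\bX)$-$\cB(\bR)$ measurable, and similarly for $\overline\pi_\psi$. Taking $B \in \cB(\bX)$ arbitrary and setting $\psi(B \times A)$ equal to both of the representations in \eqref{eq:IndInthpsi}, we obtain
\begin{align*}
\int_{B} \bigl([\hat\pi(y)](A) - [\overline\pi_\psi(y)](A)\bigr)\, \xi_{\psi}(\dif y) = 0.
\end{align*}
Since $B \in \cB(\bX)$ is arbitrary, this yields $[\hat\pi(y)](A) = [\overline\pi_\psi(y)](A)$ for $\xi_\psi$-almost every $y$, with the $\xi_\psi$-null exceptional set $N_A$ potentially depending on $A$.

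The main obstacle is turning this $A$-dependent statement into a single full-measure set on which the two kernels agree as probability measures. To handle this, I would invoke separability of $\bA$: since $\bA$ is a complete separable metric space, it admits a countable base of open sets, and closing under finite intersections yields a countable $\pi$-system $\cD \subseteq \cB(\bA)$ that generates $\cB(\bA)$. Then $N := \bigcup_{A \in \cD} N_A$ is a countable union of $\xi_\psi$-null sets and hence still $\xi_\psi$-null. For every $y \in \bX \setminus N$, the probability measures $\hat\pi(y)$ and $\overline\pi_\psi(y)$ agree on the $\pi$-system $\cD$, so by the Dynkin $\pi$-$\lambda$ theorem they coincide on $\sigma(\cD) = \cB(\bA)$, giving $\hat\pi(y) = \overline\pi_\psi(y)$ as elements of $\Lambda$.

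This two-step structure (pointwise equality for each test set, then separability to pass to simultaneous equality for all Borel sets) is the standard recipe for uniqueness of regular conditional distributions, and no further technicalities should arise in our setting because $\Lambda$ carries the evaluation $\sigma$-algebra and $\bA$ is Polish. Hence the entire argument reduces to a clean application of monotone class / $\pi$-$\lambda$ reasoning together with countable base separability.
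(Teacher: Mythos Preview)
Your proof is correct and follows essentially the same route as the paper's: both fix a countable generating family for $\cB(\bA)$ (the paper uses a countable generating sequence and passes to the generated algebra, you use a countable $\pi$-system), intersect the corresponding full-measure sets, and then invoke a uniqueness-of-measures argument (monotone class lemma in the paper, $\pi$-$\lambda$ theorem in your version) to conclude.
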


Subsequently, we denote $\overline\fp^\Psi=(\overline\pi^{\psi_1},\dots,\overline\pi^{\psi_{T-1}})$ and refer to this as the \textit{policy induced by $\Psi$}. Additionally, recall that $\Xi^\Psi = (\xi^{\psi_1}, \dots, \xi^{\psi_{T-1}})$. Below we establish an equivalence between the mean field flow, $\Psi$, and the induced policy, $\overline\fp^\Psi$, using the transition operators defined in \cref{subsec:TransOp}. Furthermore, this result confirms that, regardless of the version of the induced policy, it consistently reproduces the mean field flow from which it is derived. The proof is deferred to \cref{subsec:Prooflem:MFF}.
\begin{lemma}\label{lem:MFF}
Let $\Psi$ be a mean field flow. Then, for any $t=1,\dots,T$, $h\in B_b(\bX)$, and any version of $\overline\fp^\Psi$, we have $\overline\bT^{\overline\fp^\Psi}_{t,\Xi^\Psi}h = \int_{\bX} h(y)\xi^{\psi_t}(\dif y)$, or equivalently, $\overline\bQ^{\overline\fp^\Psi}_{t,\Xi^\Psi}=\xi^{\psi_t}$, where $\overline\bQ^{\tilde\fp}_{t,\Xi}$ is defined below \eqref{eq:DefMeanfT}.
\end{lemma}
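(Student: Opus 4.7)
The plan is to prove the identity by forward induction on $t$, using the recursive structure of the transition operators together with the defining recursion of a mean field flow in \eqref{eq:DefMFF}.

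For the base case $t=1$, we have by definition $\overline\fT^{\overline\fp_\fM}_{1,\emptyset} h = \mathring{\bm T}^1 h = \int_\bX h(y)\mathring{\xi}(\dif y)$, and since $\xi_{\psi_1}=\mathring\xi$, this equals $\int_\bX h(y)\xi_{\psi_1}(\dif y)$. For the inductive step, assume the identity holds for $t$ and fix $h\in B_b(\bX)$. Peeling off the last operator in the composition \eqref{eq:DefMeancT} and using the definition of $\overline\fT$ in \eqref{eq:DefMeanfT}, we obtain the tower identity
\begin{equation*}
\overline\fT^{\overline\fp_\fM}_{t+1,(\xi_{\psi_1},\dots,\xi_{\psi_t})} h \;=\; \overline\fT^{\overline\fp_\fM}_{t,(\xi_{\psi_1},\dots,\xi_{\psi_{t-1}})}\bigl(\overline T^{\overline\pi_{\psi_t}}_{t,\xi_{\psi_t}} h\bigr).
\end{equation*}
Because $\overline\pi_{\psi_t}\in\overline\Pi$ extended canonically to $\wt\Pi$, the function $h' := \overline T^{\overline\pi_{\psi_t}}_{t,\xi_{\psi_t}} h$ lies in $B_b(\bX)$ (boundedness by $\|h\|_\infty$, measurability from the kernel-integration lemma referenced as \cref{lem:IntfMeasurability}), so the inductive hypothesis applies and yields
\begin{equation*}
\overline\fT^{\overline\fp_\fM}_{t+1,(\xi_{\psi_1},\dots,\xi_{\psi_t})} h \;=\; \int_\bX \overline T^{\overline\pi_{\psi_t}}_{t,\xi_{\psi_t}} h(x) \,\xi_{\psi_t}(\dif x).
\end{equation*}

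Next I would unpack $\overline T$ via \eqref{eq:DefMeanT} and \eqref{eq:DefQ}, giving $\overline T^{\overline\pi_{\psi_t}}_{t,\xi_{\psi_t}} h(x) = \int_\bA \int_\bX h(y)[P_t(x,\xi_{\psi_t},a)](\dif y)\,[\overline\pi_{\psi_t}(x)](\dif a)$. Integrating against $\xi_{\psi_t}$ and applying the disintegration identity \eqref{eq:IndInthpsi} — extended from indicator functions of product sets to bounded measurable integrands via simple-function approximation and monotone convergence — converts the iterated $\overline\pi_{\psi_t}$–$\xi_{\psi_t}$ integral into a single integral against $\psi_t$:
\begin{equation*}
\int_\bX \overline T^{\overline\pi_{\psi_t}}_{t,\xi_{\psi_t}} h(x)\,\xi_{\psi_t}(\dif x) \;=\; \int_{\bX\times\bA}\int_\bX h(y)[P_t(x,\xi_{\psi_t},a)](\dif y)\,\psi_t(\dif x \dif a).
\end{equation*}
Finally, the defining recursion \eqref{eq:DefMFF} identifies the right-hand side with $\int_\bX h(y)\xi_{\psi_{t+1}}(\dif y)$, completing the induction. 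The qualifier ``any version of $\overline\fp_\fM$'' is handled at no extra cost: by \cref{lem:pipsiUnique}, distinct versions of $\overline\pi_{\psi_s}$ agree $\xi_{\psi_s}$-a.e., and every intermediate integration in the argument above is taken with respect to some $\xi_{\psi_s}$, so the identities are insensitive to the choice of version.

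The only mildly delicate step is the disintegration extension in the display above: one must justify that $(x,a)\mapsto \int_\bX h(y)[P_t(x,\xi_{\psi_t},a)](\dif y)$ is $\cB(\bX\times\bA)$-measurable and bounded, so that \eqref{eq:IndInthpsi} — which is originally stated for products $B\times A$ — can be invoked against it. Boundedness is immediate from $\|h\|_\infty<\infty$, and measurability follows from the kernel integration machinery already established in the paper. Beyond this, the proof is a direct bookkeeping induction with no fixed-point or compactness ingredients required.
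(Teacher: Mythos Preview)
Your proof is correct and follows essentially the same approach as the paper's: forward induction on $t$, peeling off the last transition operator, applying the induction hypothesis, and then using the disintegration identity \eqref{eq:IndInthpsi} together with the flow recursion \eqref{eq:DefMFF}. The only cosmetic difference is that the paper reduces to indicators $h=\1_B$ at the outset (so that the final step is a direct application of \eqref{eq:DefMFF} without needing to extend it to general $h$), whereas you carry a general $h\in B_b(\bX)$ throughout; your explicit remark on version independence via \cref{lem:pipsiUnique} is a nice addition that the paper leaves implicit.
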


\subsection{An $N$-player scenario and its mean field counterpart}\label{subsec:Scenarios}

For remainder of the paper, we will fix the collection of policies of all players, $\wt{\bm\fP}\in\wt{\Pi}^{N\times(T-1)}$, in the $N$pG, dubbed the $N$-player scenario. 

To construct the corresponding scenario in MFG, we first introduce the auxiliary terms $\overline\Xi=(\overline\xi_1,\dots,\overline\xi_{T-1})\in\cP(\bX)^{T-1}$ and $\overline\xi_T\in\cP(\bX)$ by defining
\begin{align}\label{eq:Defxibar}
\overline\xi_1 := \mathring{\xi},\quad\text{and}\quad \overline\xi_{t} := \frac1{N}\sum_{n=1}^N\overline{\bQ}^{\tilde{\fp}^n}_{t,\overline\Xi},\; t=2,\dots,T,
\end{align} 
where we recall that $\overline\bQ^{\tilde\fp}_{t,\Xi}$ is introduced below \eqref{eq:DefMeanfT}, and we note that $\overline{\bQ}^{\tilde{\fp}^n}_{t,\overline\Xi}$ depends on $\overline\Xi$ only through $(\overline\xi_1,\cdots,\overline\xi_{t-1})$. We then specify the mean field scenario corresponding to $\wt{\bm\fP}$ as a $\overline\Psi=(\overline\psi_1,\dots,\overline\psi_{T-1})\in\cP(\bX\times\bA)^{T-1}$ satisfying\footnote{By a version of Carath\'eodory extension theorem (cf. \cite[Section 10.4, Theorem 10.23]{Aliprantis2006book}), $\overline\psi_t$ exists and is specified uniquely. }
\begin{align}\label{eq:DefPsiBar}
\overline{\psi}_{t}(B\times A) = \frac1N\sum_{n=1}^N\int_{\bX} \int_{\bA} \1_B(y)\1_A(a) \tilde\fp^n_{t,y,\overline\xi_{t}}(\dif a) \;\overline{\bQ}^{\tilde{\fp}^n}_{t,\overline\Xi} (\dif y),\quad B\in\cB(\bX),\,A\in\cB(\bA).
\end{align}

The lemma below shows that $\overline\Psi$ is a bona fide mean field flow. The proof is deferred to \cref{subsec:Prooflem:MeanPsiMFF}.
\begin{lemma}\label{lem:MeanPsiMFF}
With the notations above, we have $\xi^{\overline\psi_t}=\overline\xi_t$ for $t=1,\dots,T-1$, and
\begin{align*}
\overline\xi_T(B)=\int_{\bX\times\bA}  P_{T-1,x,\xi^{\overline\psi_{T-1}},a}(B)\overline\psi_{T-1}(\dif x \dif a), \quad B\in\cB(\bX).
\end{align*} 
Moreover, $\overline\Psi$ is a mean field flow.
\end{lemma}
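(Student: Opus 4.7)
The proof is essentially a direct computation unpacking the definitions, but it is worth being careful about the measure-theoretic manipulations. My plan proceeds in three steps.

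\textbf{Step 1: Marginal identification, $\xi_{\overline\psi^N_t}=\overline\xi^N_t$.} I would take $A=\bA$ in the defining identity for $\overline\psi^N_t$. Since $\varkappa^{\overline\xi^N_t,\tilde\pi^n_t}_{B,\bA}(x) = \1_B(x)\,[\tilde\pi^n_t(x,\overline\xi^N_t)](\bA) = \1_B(x)$, I obtain
\begin{align*}
\xi_{\overline\psi^N_t}(B) = \overline\psi^N_t(B\times\bA) = \frac1N\sum_{n=1}^N \overline\fT^{\tilde\fp^n}_{t,(\overline\xi^N_1,\dots,\overline\xi^N_{t-1})}\1_B = \frac1N\sum_{n=1}^N \overline\fQ^{\tilde\fp^n}_{t,(\overline\xi^N_1,\dots,\overline\xi^N_{t-1})}(B) = \overline\xi^N_t(B),
\end{align*}
by the definition of $\overline\fQ$ immediately following \eqref{eq:DefMeanfT}. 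This gives the marginal claim for every $t=1,\dots,T-1$.

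\textbf{Step 2: Integration formula against $\overline\psi^N_t$.} The core identity I need is: for any bounded $\cB(\bX)\otimes\cB(\bA)$-measurable $f$,
\begin{align*}
\int_{\bX\times\bA} f(x,a)\,\overline\psi^N_t(\dif x\dif a) = \frac1N\sum_{n=1}^N \overline\fT^{\tilde\fp^n}_{t,(\overline\xi^N_1,\dots,\overline\xi^N_{t-1})}\!\left[x\mapsto \int_{\bA} f(x,a)[\tilde\pi^n_t(x,\overline\xi^N_t)](\dif a)\right].
\end{align*}
I would verify this first for indicators $\1_{B\times A}$ (where it reduces to the defining identity of $\overline\psi^N_t$ in \cref{scene}(iii), since $\int_{\bA}\1_{B\times A}(x,a)[\tilde\pi^n_t(x,\overline\xi^N_t)](\dif a)=\varkappa^{\overline\xi^N_t,\tilde\pi^n_t}_{B,A}(x)$), extend to the product $\sigma$-algebra via the monotone class lemma, and then to all bounded measurable $f$ by simple function approximation and monotone/dominated convergence. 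The measurability needed to apply the formula to $f(x,a)=[P_t(x,\overline\xi^N_t,a)](B)$ is guaranteed by the measurability assumption on $P_t$ together with \cref{lem:IntfMeasurability}.

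\textbf{Step 3: Recursion and verification of \eqref{eq:DefMFF}.} Applying Step 2 with $f(x,a)=[P_t(x,\overline\xi^N_t,a)](B)$ and using the definition of $Q$ in \eqref{eq:DefQ}, the inner map becomes $x\mapsto Q^{\tilde\pi^n_t(x,\overline\xi^N_t)}_{t,x,\overline\xi^N_t}(B) = \overline T^{\tilde\pi^n_t}_{t,\overline\xi^N_t}\1_B(x)$. Substituting, composing with the preceding operators, and invoking the semigroup-like structure of $\overline\cT$ from \eqref{eq:DefMeancT}--\eqref{eq:DefMeanfT}, I obtain
\begin{align*}
\int_{\bX\times\bA}[P_t(x,\overline\xi^N_t,a)](B)\,\overline\psi^N_t(\dif x\dif a) = \frac1N\sum_{n=1}^N \overline\fT^{\tilde\fp^n}_{t+1,(\overline\xi^N_1,\dots,\overline\xi^N_t)}\1_B = \overline\xi^N_{t+1}(B).
\end{align*}
Taking $t=T-1$ yields the displayed formula for $\overline\xi^N_T$. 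Taking $t=1,\dots,T-2$, together with Step 1 (which gives $\xi_{\overline\psi^N_t}=\overline\xi^N_t$ so $[P_t(\cdot,\xi_{\overline\psi^N_t},\cdot)]=[P_t(\cdot,\overline\xi^N_t,\cdot)]$), verifies the recursion \eqref{eq:DefMFF}. Combined with $\xi_{\overline\psi^N_1}=\overline\xi^N_1=\mathring\xi$ from Step 1, this shows $\overline\fM^N$ is a mean field flow.

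The only mildly delicate point is Step 2, since one must justify pushing the integration in $a$ through the operator $\overline\fT^{\tilde\fp^n}_{t,\cdot}$; this is routine via the monotone class lemma but deserves explicit mention. Everything else is a straightforward bookkeeping exercise involving the definitions in \cref{subsec:TransOp}.
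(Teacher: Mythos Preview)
Your proposal is correct and follows essentially the same route as the paper's proof: both establish the marginal identity by setting $A=\bA$ and invoking the definition of $\overline\fQ$, both extend the rectangle formula for $\overline\psi^N_t$ to a disintegration formula for general integrands via the monotone class lemma plus simple function approximation, and both then specialize to $f(x,a)=[P_t(x,\overline\xi^N_t,a)](B)$ and recognize the inner integral as $\overline T^{\tilde\pi^n_t}_{t,\overline\xi^N_t}\1_B$ to conclude the recursion. Your presentation is slightly more structured, but the argument is the same.
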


\subsection{Technical assumptions}\label{subsec:Assumptions}
In this section, we consolidate the technical assumptions that are utilized in the forthcoming study. We note that not all of these assumptions are applied concurrently. Instead, they are invoked individually as and when required.

The first assumption regards the compactness of the action domain.
\begin{assumption}\label{assump:ActionDomain}
$\bA$ is compact. 
\end{assumption}
Under \cref{assump:ActionDomain}, we also have that $\cP(\bA)$ is also compact under weak topology due to Prokhorov's theorem (see \cite[Section 15.3, 15.11]{Aliprantis2006book}).

The second assumption regards the uniform tightness of $\overline\bQ^{\tilde\fp}_{t,\Xi}$, where we recall $\overline\bQ^{\tilde\fp}_{t,\Xi}$ is defined below \cref{eq:DefMeanfT}.
\begin{assumption}\label{assump:QTight}
There is a sequence of increasing compact subsets of $\bX$, denoted by $\fK=(K_i)_{i\in\bN}$, such that $(\overline\bQ^{\tilde\fp}_{t,\Xi})_{(t,\Xi,\tilde\fp)\in\set{1,\dots,T}\times\cP(\bX)^{T-1}\times\wt\Pi^{T-1}}$ is uniformly tight with respect to $\fK$ in the following sense
\begin{align*}
\overline\bQ^{\tilde\fp}_{t,\Xi}(K_i^c) \le i^{-1},\quad (t,\Xi,\tilde\fp)\in\set{1,\dots,T}\times\cP(\bX)^{T-1}\times\wt\Pi^{T-1},\quad i\in\bN.
\end{align*}
\end{assumption}
Clearly, \cref{assump:QTight} holds if $\bX$ is compact. When $\bX$ is non-compact, the moment condition may serve as a sufficient condition; see, e.g., \cite[Appendix E]{Hernandez-Lerma1996book}. We refer to \cref{assump:QTightsigma} and \cref{lem:sigmaTight} for additional discussion related to moment functions.

Below, we outline several technical conditions for $\mathring{\bm\bG}$, $\mathring{\overline\bG}$, $\bm G^{\bm\lambda}_t$, and $\overline G^{\lambda}_{t,\xi}$. In particular, \cref{assump:GBasic} supports the abstract formulations introduced in \cref{subsec:ScoreOp}. We refer to \cref{subsec:ExmpG} for related examples.
\begin{assumption}\label{assump:GBasic}
The following is true for any $t\in\set{0,1,\dots,T-1}$, $\bm\lambda\in\cP(\bA)^N$, $\lambda,\lambda'\in\cP(\bA)$, $\xi\in\cP(\bX)$, $\bm u,{\bm u}'\in B_b(\bX^N)$, and $v,v'\in B_b(\bX)$:
\begin{itemize}
\item[(i)]  $\mathring{\bm\bG}$, $\mathring{\overline\bG}$, $\bm G^{\bm\lambda}_t$, and $\overline G^{\lambda}_{t,\xi}$ are non-decreasing (i.e., $\bm G^{\bm\lambda}_{t} \bm u \ge \bm G^{\bm\lambda}_{t} \bm u'$ if $\bm u\ge \bm u'$).
\item[(ii)] There are constants $c_0,c_1>0$ such that 
\begin{align*}
\left\|\bm G^{\bm\lambda}_{t} \bm u\right\|_\infty \le c_0 + c_1\|\bm u\|_\infty, \quad \left\|\overline G^\lambda_{t,\xi} v\right\|_\infty\le c_0 + c_1\|v\|_\infty.
\end{align*}
\item[(iii)] There is a constant $\bar{c}>0$ such that, for any $\bm x\in\bX^N$,
\begin{gather*}
\left|\mathring{\bm G} \bm u - \mathring{\bm G} \bm u'\right| \le \bar{c}\int_{\bX^N} \left|\bm u(\bm y)-\bm u'(\bm y)\right| \mathring{\xi}^{\otimes N}(\dif\bm y),\\
\left|\bm G^{\bm\lambda}_t \bm u(\bm x) - \bm G^{\bm\lambda}_t \bm u'(\bm x)\right| \le \bar{c}\int_{\bX^N} \left|\bm u(\bm y)-\bm u'(\bm y)\right| \left[\bigotimes_{n=1}^N Q^{\lambda^n}_{t,x^n,\overline\delta_{\bm x}}\right](\dif\bm y),
\end{gather*}
and, for any $x\in\bX$, 
\begin{gather*}
\left|\mathring{\overline G} v - \mathring{\overline G} v'\right| \le \bar{c}\int_{\bX} \left|v(y)-v'(y)\right| \mathring{\xi}(\dif y),\\
\left|\overline G^{\lambda}_{t,\xi} v(x) - \overline G^{\lambda}_{t,\xi} v'(x)\right| \le \bar{c}\int_{\bX} \left|v(y)-v'(y)\right| Q^{\lambda}_{t,x,\xi}(\dif y).
\end{gather*}
\item[(iv)] If $\bm u$ satisfies $\bm u(\bm x)=\bm u(\phi \bm x)$ for any $\phi$ that permutes $z^2,\dots,z^N$ in $(z^1,z^2,\dots,z^N)$, then for any such permutation $\phi$ we have
\begin{align*}
\mathring{\bm G} u = \mathring{\bm G} u(\phi\,\cdot),\quad \bm G^{\bm\lambda}_{t}\bm u(\bm x) = \bm G^{\phi\bm\lambda}_{t} \bm u(\phi\bm x).
\end{align*} 
\item[(v)] If there is $v$ such that $u(\bm x) = v(x^1)$ for any $\bm x\in\bX^N$, then
\begin{align*}
\mathring{\bm G} \bm u = \mathring{\overline G} v,\quad \bm G^{\bm\lambda}_{t}\bm u(\bm x) = \overline G^{\lambda^1}_{t,\overline{\delta}_{\bm x}} v(x^1).
\end{align*}
\item[(vi)] For any $\gamma\in(0,1)$, we have 
\begin{align*}
\overline G^{\gamma \lambda + (1-\gamma) \lambda'}_{t,\xi}v \le \gamma \overline G^{ \lambda}_{t,\xi}v + (1-\gamma) \overline G^{ \lambda'}_{t,\xi}v.
\end{align*}
\end{itemize}
\end{assumption}

\cref{assump:GBasic} (i) and (ii) are common for operators involved in performance evaluation. In particular, \cref{assump:GBasic} (ii) imposes boundedness for simplicity. \cref{assump:GBasic} (iii) - (vi) are pivotal for our approximation of $N$pGs using MFGs. \cref{assump:GBasic} (iii) allows us to control the propagation of errors in the value functions when approximating $N$pGs; we refer to \cref{lem:EstDiffS} for further discussion. The symmetries in \cref{assump:GBasic} (iv) and (v) facilitate mean field approximations, although one might consider using approximate equality to relax these conditions. \cref{assump:GBasic} (vi) is designed to encourage randomized action. Reversing this inequality could cause our constructed mean field approximation, as done in \cref{subsec:HeurDerv} or \cref{subsec:Scenarios}, to miss the $N$pE. For more discussion, please refer to \cref{rmk:MFPathExploitability} and the example in \cref{subsec:ExmpConvex}. 

In addition, we would like to highlight the roles of \cref{assump:GBasic} (iii) and (vi) in the existence of MFEs. \cref{assump:GBasic} (iii) is crucial for proving the existence via the Kakutani–Fan–Glicksberg fixed point theorem (e.g., \cite[Section 17.9, Corollary 17.55]{Aliprantis2006book}). Together with the continuity conditions to introduced later, it facilitates the construction of a set-valued map with a closed graph, a requirement of the theorem; see \cref{lem:JointContGSS} and thereafter. \cref{assump:GBasic} (vi) is generally necessary for the existence. For an example supporting this claim, we again refer to \cref{subsec:ExmpConvex}. Lastly, we observe that \cref{assump:GBasic} (vi) permits costs that extend beyond the usual functional form $C(x,\xi,a)$, provided they also satisfy other conditions listed in this section. For example, with predefined $g_1,g_2:\bX\times\cP(\bX)\times\bA\to\bR$, we can incorporate a cost component of the form 
\begin{align}\label{eq:ExmpExtendedCost}
\lambda \mapsto \max_{g\in\{g_1,g_2\}} \int_{\bA} g(x,\xi,a)\lambda(\dif a),
\end{align}
as opposed to $\lambda\mapsto\int_\bA C(x,\xi,a)\lambda(\dif a)$ in the usual setting.

Following are two sets of assumptions concerning the continuities of the transition kernels and the score operators. These two sets of assumptions serve different results and can be considered independently. However, it is ideal when both sets of conditions are met.

The first set of assumptions pertains to the mean field approximation, as presented in \cref{sec:MFApprox}. Recall that the total-variation norm of a finite measure $m$ on $\cB(\bX)$ is defined as
\begin{align*}
\|m\|_{TV} := \sup_{h\in B_b(\bX),\|h\|_\infty\le1}\frac12\int_{\bX}h(y)m(\dif y).
\end{align*}
Accordingly, the total-variation distance between two finite measures is defined as 
\begin{align*}
d_{TV}(m,m') := \|m-m'\|_{TV} = \sup_{h\in B_b(\bX),\|h\|_\infty\le 1}\frac12\left|\int_{\bX}h(y)m(\dif y)-\int_{\bX}h(y)m'(\dif y)\right|.
\end{align*}
\begin{assumption}\label{assump:PCont}
There is a subadditive modulus of continuity $\eta$ such that 
\begin{align*}
\big\|P_t(x,\xi,a) - P_t(x,\xi',a')\big\|_{TV} \le \eta_t\big(d_{\bA}(a,a')\big) + \eta_t\big(\big\|\xi-\xi'\big\|_{BL}\big)
\end{align*}
for any $t=1,\dots,T-1$, $x\in\bX$, $\xi,\xi'\in\cP(\bX)$ and $a,a'\in\bA$.
\end{assumption}
\begin{assumption}\label{assump:GCont}
The following is true for any $t=1,\dots,T-1$, $N\in\bN$, $\bm\lambda,\bm\lambda'\in\cP(\bA)^N$, $\bm u\in B_b(\bX^N)$, $\lambda,\lambda'\in\cP(\bA)$, $\xi,\xi'\in\cP(\bX)$, and $v\in B_b(\bX)$:\footnote{Without loss of generality, we employ the same $c_0, c_1$ from \cref{assump:GBasic}, and maintain $\zeta_t$ for both $N$-play and mean field settings.}
\begin{itemize}
\item[(i)] There are constants $c_0,c_1>0$ and a subadditive modulus of continuity $\zeta$ such that 
\begin{align*}
\left\|\bm G^{\bm\lambda}_t \bm u - \bm G^{\bm\lambda'}_t \bm u\right\|_\infty \le \left(c_0 + c_1\|\bm u\|_\infty\right)\;\zeta(\|\lambda^1-{\lambda'}^1\|_{BL}).
\end{align*} 
\item[(ii)] There are constants $c_0,c_1>0$ and a subadditive modulus of continuity $\zeta$ such that
\begin{align*}
\left\|\overline G^{\lambda}_{t,\xi}v - \overline G^{\lambda'}_{t,\xi'}v\right\|_\infty \le \left(c_0 + c_1\|v\|_\infty\right)\;\big(\zeta(\|\xi-\xi'\|_{BL}) + \zeta(\|\lambda-\lambda'\|_{BL}) \big).
\end{align*}
\end{itemize}
\end{assumption}

Recall the definitions of $\wt\Pi_\vartheta$ and a symmetrically continuous policy from below \cref{eq:NplayerObj}. We say $\wt{\bm\fP}$ is \textit{$\vartheta$-symmetrically continuous} if it belongs to $\wt{\Pi}^{N\times(T-1)}_\vartheta$. The analogous definition also applies to $V\in B_b(\bX\times\cP(\bX))$. 

\begin{assumption}\label{assump:SymCont}
Let $\vartheta,\iota$ be subaddtive modulus of continuity. The $N$-player scenario $\wt{\bm\fP}$ is $\vartheta$-symmetrically continuous. All players are subject to the same terminal cost $\bm U\in B_b(\bX^N)$. Moreover, there is an $\iota$-symmetrically continuous $V\in B_b(\bX\times\cP(\bX))$ such that $\bm U(\bm x)=V(x^1,\overline\delta_{\bm x})$ for all $\bm x\in\bX^N$.
\end{assumption}

It's important to note that \cref{assump:PCont} does not enforce joint weak continuity. Instead, it imposes strong continuity with respect to $(\xi,a)$. Strong continuities are frequently assumed in games where approximations are concerned (cf. \cite[Theorem 2]{Jaskiewicz2020Non}).\footnote{These approximations, in a broad sense, can include cases such as using finite horizon games to approximate infinite horizon games.} We acknowledge that alternative sets of continuity assumptions may be applicable. For instance, in \cite[Theorem 4.1]{Saldi2018Markov}, which parallels our \cref{thm:HeurNoWaste} or \cref{thm:MFConstr}, \cref{assump:PCont} is imposed, but solely with respect to $\xi$. However, they also assume joint weak continuity of $P_t$ and the continuity of the representative player's policy with respect to the state. Determining which continuity assumptions are most appropriate, or whether it is feasible to relax these conditions, may necessitate additional modeling details that are beyond the scope of this study.

\cref{assump:GCont} plays a role similar to \cref{assump:PCont}. In certain settings, \cref{assump:GCont} can be derived as a result of \cref{assump:PCont}. For a concrete example, we refer to \cref{subsec:ExmpG}. In order to emphasize the importance of $\vartheta$ in \cref{assump:SymCont}, we refer to \cref{subsec:ExmpGame} for an example illustrating how an overly rough $\vartheta$ can lead to a vacuous approximation.

Lastly, we repeat that the symmetrically continuous policies in \cref{assump:SymCont} is reasonable in some realistic considerations. For example, when players have limited accuracy or certainty in perceiving the empirical population distribution, they might become reluctant to drastically alter their behavior in response to changes in the population. This concern could be represented by the aforementioned symmetric continuity. 

The following constitutes the second set of assumptions. These assumptions are pivotal when establishing the existence of a MFE through Kakutani–Fan–Glicksberg fixed point theorem (e.g., \cite[Section 17.9, Corollary 17.55]{Aliprantis2006book}). \cref{assump:QTightsigma}, as adopted from \cite[Assumption 1 (c)]{Saldi2018Markov}, implies \cref{assump:QTight}; see \cref{lem:sigmaTight}. Clearly, this assumption holds if $\bX$ is compact.\footnote{By convention, $\inf\emptyset=\infty$} In addition, we refer to \cref{subsec:ExmpQTightSigma} for another example with $\bX=\bR$. \cref{assump:QTightsigma} is crucial in the construction of a set-valued mapping that maps into its own domain, and this mapping will be ultimately used to construct a MFE. We note that \cref{assump:QTightsigma} is also used in \cite{Saldi2020Approximate} for the existence of a MFE. \cref{assump:PCont2} and \cref{assump:GCont2} impose joint continuity on the model's components and are crucial in ensuring that the aforementioned mapping has a closed graph, a condition required by the fixed point theorem.  In \cref{subsec:ExmpG}, we provide an example where \cref{assump:PCont2} under a suitable setting implies \cref{assump:GCont2}.

\begin{assumption}\label{assump:QTightsigma}
There are an increasing sequence of compact subsets of $\bX$, denoted by $\check\fK=(\check K_i)_{i\in\bN}$, a constant $\check c \ge 1$, and a continuous $\sigma:\bX\to\bR$ such that $\inf_{x\in \check K_i^c}\sigma(x) \ge i$ for $i\in\bN$, and
\begin{gather}
\int_{\bX}\sigma(y)\mathring{\xi}(\dif y) \le \check c,\label{eq:MomentCondInit}\\
\int_{\bX}\sigma(y)P_{t,x,\xi,a}(\dif y) \le \check c\sigma(x),\quad (t,x,\xi,a)\in\set{1,\dots,T-1}\times\bX\times\cP(\bX)\times\bA.\label{eq:MomentCond}
\end{gather}
\end{assumption}

\begin{assumption}\label{assump:PCont2}
$(x,\xi,a)\mapsto P_{t,x,\xi,a}$ is weakly continuous in $\bX\times\cP(\bX)\times\bA$.
\end{assumption}

\begin{assumption}\label{assump:GCont2}
 $(x,\xi,\lambda)\mapsto \overline G^{\lambda}_{t,\xi}v(x)$ is continuous in $\bX\times\cP(\bX)\times\cP(\bA)$ for any $t=1,\dots,T-1$ and $v\in C_b(\bX)$.
\end{assumption}

\subsection{Empirical measures under independent sampling}\label{subsec:EmpMeasConc}
In this section, we present a technical lemma, \cref{lem:EmpMeasConc}, concerning the convergence of the empirical measure derived from independent sampling. It's important to note that \cref{lem:EmpMeasConc} does not assume identical distribution. \cref{lem:EmpMeasConc} is inherently required by one of our main results, \cref{thm:MFApprox} (see also \cref{thm:HeurNoMiss}),  which pertains to capturing approximate $N$pEs with approximate MFEs. This necessity arises from our setup where, in the $N$pG, players are allowed to use different policies.

Let $\bY$ be a complete separable metric space. For $j\in\bN$ and $A\subseteq\bY$ we define
\begin{align*}
\fN_j(A) := \min\left\{n\in\bN: C_{BL}(A)\subseteq\bigcup_{i=1}^n B_{j^{-1}}(h_i) \text{ for some } h_1,\dots,h_n\in C_{BL}(A)\right\},
\end{align*} 
i.e., $\fN_j(A)$ is the smallest number of $j^{-1}$-open balls needed to cover $C_{BL}(A)$. If $A$ is compact, by Ascoli-Arzel\'a theorem (cf. \cite[ Chapter 7, Corollary 41]{Royden1988book}) and the fact that compact metric space is totally bounded (cf. \cite[Chapter 7, Proposition 19]{Royden1988book}), $\fN_j(A)$ is finite for $j\in\bN$. 

Let $N\in\bN$ be fixed. Suppose $(Y^n)_{n\in\bN}$ are independent $\bY$-valued random variables and let $\upsilon^n$ be the law of $Y^n$. We denote $\bm Y:=(Y^n)_{n=1}^N$. Consider the empirical measure $\overline\delta_{\bm Y}$ and the corresponding intensity measure $\overline\upsilon := \frac1N\sum_{n=1}^N\upsilon^n$. Note that $\left\|\overline\delta_{\bm Y}-\overline\upsilon\right\|_{BL}$ is $\sA$-$\cB(\bR)$ measurable because $\|\cdot\|_{BL}$ is continuous.  \cref{lem:EmpMeasConc} below provides an upper bound for the expectation of the bounded-Lipschitz distance between the empirical measure under independent sampling and the corresponding intensity measure. The proof of \cref{lem:EmpMeasConc}, which is deferred to \cref{subsec:Prooflem:EmpMeasConc}, primarily involves applying Hoeffding's inequality following a discretization enabled by the assumed uniform tightness. \cref{lem:EmpMeasConc} embodies the concept of a law of large numbers without identical distribution (cf. \cite[Theorem IV.3.2]{Shiryaev2019book}). The lack of a citation in \cref{lem:EmpMeasConc} should not be interpreted as a claim of originality, but rather as an indication that this is a commonly expected result (if not sharper).
\begin{lemma}\label{lem:EmpMeasConc}
Let $(\upsilon^n)_{n=1}^N$, $\overline\upsilon$, and $\bm Y$ be as introduced earlier. Suppose there is an increasing sequence of compact subsets of $\bY$, denoted by $\fA:=(A_i)_{i\in\bN}$, such that $\sup_{n}\upsilon_n(A_i^c)\le i^{-1}$ for all $i\in\bN$, i.e., $(\upsilon^n)_{n\in\bN}$ is uniformly tight with respect to $\fA$. Then, 
\begin{align}\label{eq:EmpMeasConc}
\bE\left(\left\|\overline\delta_{\bm Y}-\overline\upsilon\right\|_{BL}\right) \le \inf_{i,j\in\bN} \left\{ \frac12j^{-1} + i^{-1} + \frac{\sqrt{\pi}\;\fN_{j}(A_i)}{\sqrt{2 N}} \right\} =: \fr_{\fA}(N).
\end{align}
Moreover, we have $\lim_{N\to\infty}\fr_{\fA}(N)=0$ as $N\to\infty$.
\end{lemma}
The convergence rate established above leaves significant room for improvement. It is important to note that the forthcoming main results, while involving $\fr_\fA$, are independent of the specific procedure used to derive $\fr_\fA$. Therefore, any improvement in the convergence rate can directly enhance these results simply by substituting with the improved rate. A better rate will be pursuit elsewhere.

Results akin to \cref{lem:EmpMeasConc}  can be found in \cite{Dudley1969Speed,Fournier2015Rate,Lei2020Convergence,Kloeckner2020Empirical} and the reference therein. These results provides sharper estimations but requires identical distribution and more specific spatial structure. 

\subsection{Error terms}\label{subsec:ErrorTerms}
In this section, we will introduce several error terms, primarily for the sake of notational convenience. 

Recall that $\overline\delta_{\bm x}=\frac1N\sum_{n=1}^N \delta_{x^n}$ for $\bm x=(x^1,\dots,x^N)\in\bX^N$. Let $\overline\xi_t$ and $\overline\psi_t$ be as introduced in \cref{subsec:Scenarios}. we define
\begin{align}\label{eq:DefEmpErr}
{\bm e}_t({\bm x}):=\left\|\overline\delta_{\bm x} - \overline\xi_t\right\|_{BL},\quad t=1,\dots,T.
\end{align}
We additionally define
\begin{align}\label{eq:DefStateActionEmpErr}
\breve {\bm e}_t({\bm x}):=\int_{\bA^N}\left\|\overline\delta_{((x^1,a^1),\dots,(x^N,a^N))}-{\overline\psi}_t\right\|_{BL} \left[\bigotimes_{n=1}^N{\tilde\fp^n_{t,x^n,\overline\delta_{\bm x}}}\right](\dif {\bm a}), \quad t=1,\dots,T-1.
\end{align}
It follows from definition that $\bm\bT^{\wt{\bm\fP}}_t{\bm e}_t$ calculates, in the $N$-player scenario $\wt{\bm\fP}$ at time $t$, the expectation of bounded-Lipschitz distance between the empirical state distribution and $\overline\xi_t$. Similarly, $\bm\bT^{\wt{\bm\fP}}_t\breve{\bm e}_t$ calculates the expectation of bounded-Lipschitz distance between the empirical state-action distribution and $\overline\psi_t$. 

Below we introduce a few more constants
\begin{gather*}
\fe_t\,, \quad \breve\fe_t\,, \quad \fe^0_t\,, \quad t=1,\dots, T,\\
\underline \fE\,, \quad \fE\,, \quad \fE^0\,,\quad \fE^\diamond\,.
\end{gather*}
We defer the detailed definitions of these constants to \cref{subsec:DError}. These constants depend on the total number of players $N$, $\fK$ in \cref{assump:QTight}, $c_0,c_1,\bar{c}$ in \cref{assump:GBasic}, $\eta$ in \cref{assump:PCont}, $\zeta$ in \cref{assump:GCont}, $\vartheta,\iota,\|V\|_\infty$ in \cref{assump:SymCont}, and the convergence rate established in \cref{lem:EmpMeasConc}. It is important to note that values of these constants do not hinge on the particular choice of $\wt{\bm\fP}$.  

The forthcoming lemma offers a qualitative understanding of the aforementioned constants. Unlike the other parts of this paper, in this lemma, we explicitly incorporate the dependence on the total number of players, $N$, into our notations and allow $N$ to approach infinity. We refer to \cref{subsec:DError} for the proof.
\begin{lemma}\label{lem:ErrorConv}
Suppose \cref{assump:ActionDomain} and consider the aforementioned notations. Then, for any $t=1,\dots, T$, we have $\lim_{N\to\infty}{\fe_t}(N)=0$, $\lim_{N\to\infty}{\breve\fe_t}(N)=0$, and $\lim_{N\to\infty}\fe^0_t(N)=0$. Moreover, we have $\lim_{N\to\infty}\underline{\fE}(N)=0$, $\lim_{N\to\infty}{\fE}(N)=0$, $\lim_{N\to\infty}{\fE^0}(N)=0$, and $\lim_{N\to\infty}{\fE^\diamond}(N)=0$. 
\end{lemma}

\begin{remark}
\cref{lem:ErrorConv} implicitly requires that the regularities of the transition and score operators, as imposed in \cref{assump:QTight} - \cref{assump:SymCont}, are preserved as the size of the $N$pG increases.  
\end{remark}

\section{Exploitabilities in $N$pGs and MFGs}\label{sec:Exploitability} 
This section is devoted to the introduction of various notions of exploitabilities and the examination of their interrelationships. Exploitabilities for $N$pG are discussed in \cref{subsec:NplayerExploitability}, while exploitabilities for MFGs are the focus of \cref{subsec:MFexploitability}.

\subsection{Exploitabilities in $N$pGs}\label{subsec:NplayerExploitability}
In light of the setting in \cref{subsec:Scenarios} and \cref{assump:SymCont}, from the player-$1$'s perspective, a natural definition of exploitability emerges as follows. For any $N$-player scenario ${\bm\fP}=(\fp^1,\dots,\fp^N)\in\Pi^{N\times(T-1)}$ and terminal cost function ${\bm U}\in B_b(\bX^N)$. we define
\begin{align}\label{eq:DefSymContEndExploitability}
\bm\cR_{\vartheta}({\bm\fP};{\bm U}):= \bm\bS^{{\bm\fP}}_T {\bm U} - \inf_{\tilde\fp\in\wt{\Pi}_\vartheta}\bm\bS^{(\tilde\fp,\fp^2,\dots,\fp^N)}_{t} {\bm U}.
\end{align}
We note that, while $\bm\cR_{\vartheta}$ is defined for generic ${\bm\fP}$ and ${\bm U}$, our approximation result developed later is limited to ${\bm\fP}$ and ${\bm U}$ that satisfies \cref{assump:SymCont}. Let $\varphi^{1}$ be the identity permeation. For $n=2,\dots,N$, let $\varphi^{n}$ be a permutation such that $\varphi^{n}(z^1,\dots,z^N) = (z^n,z^1,\dots,z^{n-1},z^{n+1},\dots,z^N)$. Under \cref{assump:GBasic} (iv), definition \eqref{eq:DefSymContEndExploitability} also applies to player-$n$ via
\begin{align*}
\bm\cR^n_{\vartheta}({\bm\fP};{\bm U}) := \bm\cR_{\vartheta}(\varphi^{n}{\bm\fP};{\bm U}), \quad n=1,\dots,N.
\end{align*}

Although ${\bm\cR_{\vartheta}}$ is inherently suited to the scenario described in \cref{subsec:Scenarios}, the associated constrained optimization problem presents a significant challenge. To circumvent this, we employ an approximation approach. To aid in this process, we introduce the following auxiliary definitions of exploitability. Suppose \cref{assump:ActionDomain}, \cref{assump:GBasic} (i) (ii), and \cref{assump:GCont} for the validity of the upcoming definitions (see also \cref{lem:Sstar}). We define
\begin{gather}
\bm\cR({\bm\fP}; {\bm U}) := \bm\bS^{{\bm\fP}}_T {\bm U} - \bm\bS^{*{\bm\fP}}_{T} {\bm U},\label{eq:DefEndExploitability} 
\end{gather}
and
\begin{gather}
\bm\fR({\bm\fP}; {\bm U}) := \sum_{t=1}^{T-1} \bar{c}^t \bm\bT^{{\bm\fP}}_t \left(\bm S^{{\bm\fP}_t}_t  \circ \bm\cS^{*{\bm\fP}}_{t+1,T} {\bm U}  - \bm\cS^{*{\bm\fP}}_{t,T} {\bm U}\right),\label{eq:DefStepExploitability} 
\end{gather}
where we recall $\bar{c}$ from \cref{assump:GBasic} (iii). Similarly as before, with \cref{assump:GBasic} (iv), we further define
\begin{align}\label{eq:DefPermExploitabilities}
\bm\cR^n({\bm\fP}; {\bm U}) := \bm\cR(\varphi^{n}{\bm\fP}; {\bm U}\circ\varphi^n),\quad \bm\fR^n({\bm\fP};{\bm U}) := \bm\fR(\varphi^{n}{\bm\fP};{\bm U}\circ\varphi^n), \quad n=1,\dots,N.
\end{align}

Following the discussion in \cref{subsec:HeurDerv}, we refer to $\bm\cR$ and $\bm\fR$ as \textit{end exploitability} and \textit{total (stepwise) exploitability}, respectively. Accordingly, we refer to ${\bm\cR_{\vartheta}}$ as the \textit{end exploitability constrained to $\vartheta$-symmetrically continuous policy}, or simply, the constrained end exploitability. By definition (recall \cref{subsec:ScoreOp}), both end exploitability and total exploitability are non-negative. Technically, the constrained end exploitability could be negative if the individual player employs a policy that does not meet the continuity constrain. 

\begin{remark}
In this remark, we discuss a modeling issue of the end exploitability within a risk-averse framework, which ultimately prompts the exploration of total exploitability. For certain risk-averse performance criterion, such as \eqref{eq:DefExmpGInit} with $J=1$, $w_1=1$, and $\kappa_1=\kappa\in(0,1)$, when combined with specific environments, achieving $0$ end exploitability may not necessarily optimize sample paths that yield superior overall outcomes.  This is in fact an immediate consequence of the robust representation of average value at risk (see, e.g., \cite[(6.70)]{Shapiro2021book}). This phenomenon of under-optimization appears somewhat problematic, as it is typically anticipated that a player would continue to optimize, regardless of a strong initial start. To mitigate this issue, one approach is to modify the criterion by employing a blend with expectation to prevent the negligence of sample paths. Alternatively, considering total exploitability could be a solution, as it demands optimal actions across almost every sample paths to achieve a value of 0.
\end{remark}

Below we summarize the relations between different notions of $N$-player exploitabilities introduced above, the proof of which is deferred to \cref{subsec:Proofprop:EstEndExploitabilityCont} .
\begin{proposition}\label{prop:EstEndExploitabilityCont}
Suppose \cref{assump:ActionDomain}, \cref{assump:GBasic} (i) (ii), and \cref{assump:GCont} for $\bm\cR,\bm\fR,\bm\cR_\vartheta$ to be well-defined. The following is true:
\begin{itemize}
\item[(a)] If \cref{assump:GBasic} (iii) holds, then
\begin{align}\label{eq:EndvsStep}
\bm\cR({\bm\fP}; {\bm U})\le \fR({\bm\fP}; {\bm U}), \quad \bm\fP\in\Pi^{N\times(T-1)},\, \bm U\in B_b(\bX^N).
\end{align}
\item[(b)] Suppose \cref{assump:GBasic} (i). Additionally, assume the existence of a $\underline c>0$ such that, for any ${\bm\lambda}\in\cP(\bA)^N$ and ${\bm u}\ge{\bm u}'$,
\begin{gather*}
\mathring{\bm G}{\bm u} - \mathring{\bm G}{\bm u}' \ge \underline c\int_{\bX^N} \left({\bm u}({\bm y})-{\bm u}'({\bm y})\right) \mathring{\xi}^{\otimes N}(\dif{\bm y}),\\
\bm G^{{\bm\lambda}}_t{\bm u}({\bm x}) - \bm G^{{\bm\lambda}}_t{\bm u}'({\bm x}) \ge \underline c\int_{\bX^N} \left({\bm u}({\bm y})-{\bm u}'({\bm y})\right) \left[\bigotimes_{n=1}^N Q^{\lambda^n}_{t,x^n,\overline\delta_{\bm x}}\right](\dif{\bm y}),
\end{gather*}
then
\begin{align}\label{eq:EndvsStep2}
\bm\cR({\bm\fP}; {\bm U}) \ge \left({\underline c}\middle/{\bar{c}}\right)^{T-1}\bm\fR({\bm\fP}; {\bm U}).
\end{align}
\item[(c)] Suppose \cref{assump:QTight}, \cref{assump:GBasic} (iii) (v), \cref{assump:PCont}, \cref{assump:GCont} and \cref{assump:SymCont}. With $\underline\fE$ introduced in \cref{subsec:ErrorTerms}, we have
\begin{align*}
\left|{\bm\cR_{\vartheta}}(\wt{\bm\fP};{\bm U}) - \bm\cR(\wt{\bm\fP};{\bm U})\right| \le \underline\fE.
\end{align*}
\end{itemize}
\end{proposition}
For an example where $\mathring{\bm G}$ and $\bm G^{\bm\lambda}_t$ satisfy the assumptions in \cref{prop:EstEndExploitabilityCont} (b), we refer to the expressions in \eqref{eq:DefExmpGInit} and \eqref{eq:DefExmpG} with $w_1\in(0,1]$ and $\kappa_1=1$, i.e., we blend average value at risk's with the expectation. This is related to the spectral risk measure \cite{Acerbi2002Spectral}. In this case, we can set $\underline c=w_1$. For another example, we refer to the standard risk-neutral setting as used in \cref{subsec:HeurDerv}, where the expectation of total cost is used as performance criteria (i.e., $w_1=1$ and $\kappa_1=1$ in \eqref{eq:DefExmpGInit} and \eqref{eq:DefExmpG}). Consequently, we have $\underline c=\bar{c}=1$, and thus end exploitability and total exploitability coincide,
\begin{align}\label{eq:IndEndStepExploitability}
\bm\cR({\bm\fP}; {\bm U}) = \bm\fR({\bm\fP}; {\bm U}).
\end{align}

\subsection{Exploitabilities in mean field games}\label{subsec:MFexploitability}
We first recall the notations introduced in \cref{subsec:MFF}. In particular, $\Psi\in\cP(\bX\times\bA)^{T-1}$ and $\Xi^{\Psi}\in\cP(\bX)^{T-1}$ is the corresponding vector of state marginals. For $V\in B_b(\bX\times\cP(\bX))$, we write\footnote{For convenience, we slightly abuse the notation here. This should not be confused with $V_\xi$ introduced above \eqref{eq:MFObj}.}
\begin{align}\label{eq:DefVM}
V_\Psi(x):=V\left(x,\int_{\bX\times\bA} P_{t,y,\xi_t,a}(\cdot) \psi_{T-1}(\dif y\dif a)\right).
\end{align} 
We define the \textit{mean field end exploitability } as
\begin{align}\label{eq:DefMeanEndExploitability}
\overline{\cR}(\Psi;V) := \overline\bS^{{\overline\fp^\Psi}}_{T,\Xi^{\Psi}} V_{\Psi} - \overline\bS^{*}_{T,{\Xi^{\Psi}}} V_{\Psi}.
\end{align}
The \textit{mean field total (stepwise) exploitability} is 
\begin{align}\label{eq:DefMeanStepExploitability}
\overline\fR(\Psi;V) &:= \sum_{t=1}^{T-1} \bar{c}^t \overline\bT^{{\overline\fp^\Psi}}_{t,{\Xi^{\Psi}}} \left( \overline S^{{\overline\pi^{\psi_t}}}_{t,{\xi^{\psi_t}}} \circ \overline{\cS}^{*}_{t+1,T,{\Xi^{\Psi}}} V_{\Psi} - \overline{\cS}^{*}_{t,T,{\overline\Xi^{\Psi}}} V_{\Psi} \right)\nonumber\\
&= \sum_{t=1}^{T-1} \bar{c}^t \int_{\bX} \left(\overline S^{{\overline\pi^{\psi_t}}}_{t,{\xi^{\psi_t}}} \circ \overline{\cS}^{*}_{t+1,T,{\Xi^{\Psi}}} V_{\Psi}(y) - \overline{\cS}^{*}_{t,T,{\overline\Xi^{\Psi}}} V_{\Psi}(y)\right) {\xi^{\psi_t}}(\dif y),
\end{align}
where we have used \cref{lem:MFF} in the last equality. In view of \cref{assump:ActionDomain}, \cref{assump:GBasic} (i) (ii), and \cref{assump:GCont} (see also \cref{lem:Sstar}), both $\overline\fR(\Psi;V)$ and $\overline\cR(\Psi;V)$ are well-defined and non-negative. 
\begin{remark}\label{rmk:MFPathExploitability} 
The process of defining $\overline{\fR}$ entails deriving regular conditional distributions from specific joint distributions, which may be cumbersome in certain instances. In \cref{subsec:ExmpMeanStepExploitability}, we present an example illustrating how a more explicit form of $\overline{G}^{\lambda}_{t,\xi}$ can lead to a simplification of $\overline{\fR}$.
\end{remark}

The lemma below shows $\overline\cR(\Psi;V)$ and $\overline\fR(\Psi;V)$ are version-independent in ${\overline\fp^\Psi}$, the proof of which is deferred to \cref{subsec:Prooflem:VersionIndpi}.
\begin{lemma}\label{lem:VersionIndpi}
Suppose \cref{assump:GBasic} (iii). Consider a mean field system $\Psi$ and $v\in B_b(\bX)$.  Let ${\overline\fp^\Psi}$ and ${\overline\fp^\Psi}'$ be two version of policy induced by $\Psi$. Then,
\begin{align*}
\overline\bS^{{\overline\fp^\Psi}}_{T,{\Xi^{\Psi}}} v = \overline\bS^{{\overline\fp^\Psi}'}_{T,{\Xi^{\Psi}}} v,\quad \int_\bX\overline S^{{\overline\pi^{\psi_t}}}_{t,{\xi^{\psi_t}}} v(y) {\xi^{\psi_t}}(\dif y) = \int_\bX\overline S^{{\overline\pi^{\psi_t}}'}_{t,{\xi^{\psi_t}}} v(y) {\xi^{\psi_t}}(\dif y),\,t=1,\dots,T-1.
\end{align*}
\end{lemma}

Analogous to previous results for $N$pGs, we observe a similar relationship between mean field end exploitability and mean field total exploitability. The proof mirrors the argument used to prove \cref{prop:EstEndExploitabilityCont}, and is therefore omitted here.
\begin{proposition}\label{prop:EstMFEndExploitability}
Suppose \cref{assump:ActionDomain}, \cref{assump:GBasic} (i) (ii), and \cref{assump:GCont} for $\overline\cR$ and $\overline\fR$ to be well-defined. The following is true for any mean field flow $\Psi\in\cP(\bX\times\bA)^{T-1}$ and $V\in B_b(\bX\times\cP(\bA))$:
\begin{itemize}
\item[(a)] If \cref{assump:GBasic} (iii) holds, then 
\begin{align}\label{eq:MeanEndvsStep} 
\overline\cR(\Psi;V) \le \overline\fR(\Psi;V).
\end{align}
\item[(b)] Suppose \cref{assump:GBasic} (i). Additionally, assume the existence of a $\underline c>0$ such that, for any $\lambda\in\cP(\bA)$ and $v\ge{v'}$,
\begin{gather*}
\mathring{\overline G} v - \mathring{\overline G} {v'} \ge \underline c\int_{\bX^N} \left(v(y)-{v'}(y)\right) \mathring{\xi}(\dif y),\\
\overline G^{\lambda}_{t,\xi} v(x) - \overline G^{\lambda}_{t,\xi} {v'}(x) \ge \underline c\int_{\bX} \left(v(y)-{v'}(y)\right) Q^{\lambda}_{t,x,\xi}(u).
\end{gather*}
Then,
\begin{align*}
\overline\cR(\Psi;V) \ge \left({\underline c}\middle/{\bar{c}}\right)^{T-1}\overline\fR(\Psi;V).
\end{align*}
\end{itemize}
\end{proposition}
Similarly as before, for an example where $\mathring{\overline G}$ and $\overline G^\lambda_{t,\xi}$ satisfy the assumptions in \cref{prop:EstMFEndExploitability} (b), we refer to \eqref{eq:DefExmpMeanGInit} and \eqref{eq:DefExmpMeanG} with $w_1\in(0,1]$ and $\kappa_1=1$. In this case, we may set $\underline c=w_1$. 
Additionally, in the standard risk-neutral setting as in \cref{subsec:HeurDerv}, we have $\underline c=\bar{c}=1$, and thus
\begin{align}\label{eq:IndMFEndStepExploitability}
\overline\cR({\Psi}; V) = \overline\fR({\Psi}; V).
\end{align}

\section{Approximation capability of mean field games}\label{sec:MFApprox}

In the following theorem, we examine an $N$pG where all players employ symmetrically continuous policies. The theorem asserts that, given a sufficiently large $N$, the empirical state-action flow can be closely approximated by the corresponding mean field flow. Furthermore, the average of stepwise exploitabilities in the $N$pG dominates the stepwise exploitability of the corresponding MFG up to a minor error term. The proof of this theorem is deferred to \cref{subsec:Proofthm:MFApprox1} and \cref{subsec:Proofthm:MFApprox2}.
\begin{theorem}\label{thm:MFApprox}
Let $\wt{\bm\fP}$ and $\overline\Psi$ be as introduced in \cref{subsec:Scenarios}. Suppose \cref{assump:ActionDomain}, \cref{assump:QTight}, \cref{assump:GBasic}, \cref{assump:PCont}, \cref{assump:GCont} and \cref{assump:SymCont}. Then, with the error terms defined in \cref{subsec:ErrorTerms}, we have
\begin{align}\label{eq:GoodDescription}
\bm\bT^{\wt{\bm\fP}}_{t}\breve {\bm e}_t \le {\breve\fe_t} 
\end{align}
and 
\begin{align}\label{eq:NoMiss}
\overline\fR({\overline\Psi};V) \le \frac{1}{N}\sum_{n=1}^N{\bm\fR^n}(\wt{\bm\fP};{\bm U}) + {\fE}.
\end{align}
\end{theorem}

\begin{remark}\label{rmk:LinearityConcavity}
As seen in the proof of \cref{thm:MFApprox}, if we have equality in \cref{assump:GBasic} (vi),\footnote{This is true in, for example, the risk neutral setup. See also \cref{subsec:HeurDerv} and \cref{subsec:NplayerExploitability}.} we achieve equality instead of inequality in \eqref{eq:EstAveMeanfTS}. As a result, we obtain a stronger version of \cref{thm:MFApprox} that 
\begin{align}\label{eq:RNNpRMFR}
\left|\overline\fR({\overline\Psi};V) - \frac{1}{N}\sum_{n=1}^N{\bm\fR^n}(\wt{\bm\fP};{\bm U})\right| \le {\fE}.
\end{align}
On the other hand, if we assume concavity in \cref{assump:GBasic} (vi), by using in \eqref{eq:EstAveMeanfTS} Jensen's inequality for concave functions, we eventually yield
$$\frac{1}{N}\sum_{n=1}^N{\bm\fR^n}(\wt{\bm\fP};{\bm U}) \le \overline\fR({\overline\Psi};V) + {\fE}.$$ 
This suggests that, in the study of an $N$pG that encourages deterministic actions over randomized actions, approximation via $\overline\Psi$, as constructed in \cref{subsec:Scenarios}, may result in overlooking approximate equilibria in the $N$pG.
\end{remark}

\begin{remark}
We conjecture that \cref{thm:MFApprox} (b) remain trues if $\bm\fR^N$ and $\overline\fR$ are replaced by $\bm\cR^N$ and $\overline\cR$; whether additional assumption is needed remains unknown. Alternatively, we can utilize \cref{prop:EstEndExploitabilityCont} and \cref{prop:EstMFEndExploitability} to derive less sharp estimates.
\end{remark}

The following theorem asserts that any mean field flow with a small mean field scenario can be used to construct a homogeneous open-loop $N$-player scenario, where the exploitability of each player closely approximates the mean field exploitability. We note that \cref{assump:GBasic} (vi) is not required in this context. We refer to \cref{Proofthm:MFConstr} for the proof.
\begin{theorem}\label{thm:MFConstr}
Let $\Psi$ be a mean field flow as defined in \cref{subsec:MFF}. For $n=1,\dots,N$, let ${\overline\fp^\Psi}^n$ be a version of policy induced by $\Psi$. Consider the $N$-player scenario given by $\overline{\bm\fP}:=({\overline\fp^\Psi}^1,\dots,{\overline\fp^\Psi}^N)$. If \cref{assump:ActionDomain}, \cref{assump:QTight}, \cref{assump:GBasic} (i) - (v), \cref{assump:PCont}, \cref{assump:GCont}, and \cref{assump:SymCont} hold, then
$$\left|{\bm\fR^n}(\overline{\bm\fP};{\bm U}) - \overline\fR(\Psi;V)\right| \le{\fE^0}, \quad n=1,\dots,N,$$
and
$$\left|\bm{\cR^n}(\overline{\bm\fP};{\bm U}) - \overline\cR(\Psi;V)\right| \le {\fE^\diamond}, \quad n=1,\dots,N,$$
where $\fE^0$ and $\fE^\diamond$ are introduced in \cref{subsec:ErrorTerms}.
\end{theorem}

\section{Existence of mean field equilibrium}\label{sec:MFE}
Recall that, in our context, a mean field equilibrium is a mean field flow $\Psi$ with $\overline\cR(\Psi;V)=0$. In light of \eqref{eq:MeanEndvsStep},  \cref{thm:ZeroExploitability} below reveals the existence of a mean field equilibrium.  We refer to \cref{subsec:ProofMFEandOthers} for the detailed proof.
\begin{theorem}\label{thm:ZeroExploitability}
Let $V\in C_b(\bX\times\cP(\bX))$. Suppose \cref{assump:ActionDomain}, \cref{assump:GBasic} (i) (ii) (iii) (vi), \cref{assump:QTightsigma}, \cref{assump:PCont2} and \cref{assump:GCont2}. Then, there is a mean field flow $\Psi^*$ such that $\overline\fR(\Psi^*;V)=0$.
\end{theorem}

Below we would like to share some insights we gleaned from the proof of \cref{thm:ZeroExploitability}. These insights pertain to incorporating penalization for randomized action into mean field game theory. The proof of \cref{thm:ZeroExploitability} hinges significantly on \cref{assump:GBasic} (vi).  We note that \cref{assump:GBasic} (vi) encourages the use of randomized actions and is crucial for both the approximation and the existence of equilibrium in the current mean field game theory framework; see the discussion following \cref{assump:GBasic} (vi). On the other hand, the reluctance to embrace randomized actions is commonly observed in real world, prompting a pertinent discussion. In our proof of the existence of an MFE, \cref{assump:GBasic} (vi) is not utilized until the final step. This observation could illuminate potential pathways for incorporating penalization for randomized actions into mean field game theory. To delve deeper, we subsequently highlight an important intermediate result, \cref{prop:GammaFixedpt}, which does not require \cref{assump:GBasic} (vi).

Let $\cP(\bX\times\cP(\bA))$ be the set of probability measures on $\cB(\bX)\otimes\cB(\cP(\bA))$. To connect $\mu\in\cP(\bX\times\cP(\bA))$ with state-action distributions, which are fundamental in constructing mean field flows, we introduce a transformation that collapses the $\cP(\bA)$ component of $\mu$. Specifically, we define $\psi^\mu\in\cP(\bX\times\bA)$ by setting
\begin{align*}
\psi^\mu(B\times A) = \int_{\bX\times\cP(\bA)} \1_{B}(y) \lambda(A) \mu(\dif y\dif\lambda),\quad B\in\cB(\bX),\, A\in\cB(\bA).
\end{align*}
Heuristically, $\mu \in \cP(\bX \times \cP(\bA))$ represents the state-action-kernel distribution, in contrast to $\psi \in \cP(\bX \times \bA)$, which represents the state-action distribution. Note the $\mu$ can be viewed as a statistics that summaries the the state distribution of players, and the (conditional) distribution of action kernels (instead of actions) players at a given state employ. The process of defining $\psi^\mu$ entails averaging the action kernels of players who share the same state, in an expected sense.

In what follows, we let $\xi^\mu$ be the marginal distribution of $\mu$ on $\bX$. Additionally, for $\fm=(\mu_1,\dots,\mu_{T-1})\in\cP(\bX\times\cP(\bA))^{T-1}$, we denote $\Xi^\fm=(\xi^{\mu_1},\dots,\xi^{\mu_{T-1}})$ and $\Psi^\fm=(\psi^{\mu_1},\dots,\psi^{\mu_{T-1}})$. \cref{prop:GammaFixedpt} reveals the existence of an $\fm\in\cP(\bX\times\cP(\bA))^{T-1}$ that satisfies $\xi^{\mu_1}=\mathring\xi$, 
\begin{align}\label{eq:HeurIndGammaMarginal}
\mu_{t+1}(B) = \int_{\bX\times\cP(\bA)}\int_{\bA}P_{t,x,\xi^{\mu_t},a}(B)\lambda(\dif a)\mu_t(\dif x\dif\lambda),\quad B\in\cB(\bX),\, t=1,\dots,{T-2},
\end{align}
and 
\begin{align}\label{eq:HeurIndGammaOpt}
\int_{\bX\times\cP(\bA)} \left(\overline G^{\lambda}_{t,\xi^{\mu_t}} \circ \overline{\cS}^{*}_{t+1,T,\Xi^\fm} V_{\fm}(y) - \overline{\cS}^{*}_{t,T,\Xi^\fm} V_{\fm}(y)\right) \mu_t(\dif y \dif\lambda) = 0,
\end{align}
where, in light of the definition given in \eqref{eq:DefVM}, and with a slight abuse of notation, we denote $V_\fm:=V_{\Psi^\fm}$. This suggests that an `equilibrium', characterized by a flow of state-action-kernel distributions, can exist even when \cref{assump:GBasic} (vi) is not met. Note that, by \eqref{eq:HeurIndGammaMarginal}, $\Psi^\fm$ is a genuine mean field flow; see \eqref{eq:ImpliesMFF} for detailed deduction. Additionally, \eqref{eq:HeurIndGammaOpt} guarantees that $\mu_t$ only assign masses to optimal action kernels. However, \eqref{eq:HeurIndGammaOpt} does not necessarily ensure that $\Psi^\fm$ exhibits zero exploitability. Indeed, without \cref{assump:GBasic} (vi), averaging (optimal) action kernels could potentially degrade performance. We refer to \eqref{eq:ImpliesOpt} for the implied optimality when \cref{assump:GBasic} (vi) is assumed.

In conclusion, we posit that integrating penalization for randomized actions into mean field game theory is viable. However, we advise against adopting the state-action mean field flow notation, or its equivalent, the representative agent framework. A more suitable approach involves considering the broader concept, flows of state-action-kernel distributions. For a related example in discrete spaces, we refer to \cref{subsec:ExmpConvex}.

\section{Proofs}\label{sec:Proofs} 
In \cref{subsec:EmpMeasProc} and \cref{subsec:ApproxExploitability}, we initially present some auxiliary results that lay the groundwork for the proofs of \cref{prop:EstEndExploitabilityCont}, \cref{thm:MFApprox}, and \cref{thm:MFConstr}. Specifically, we first establish a technical result, \cref{prop:ProcConc}, based on \cref{lem:EmpMeasConc}, which characterizes the concentration of empirical state-action distributions at time $t$ in the $N$pG. Leveraging \cref{prop:ProcConc}, we then demonstrate in \cref{prop:EstDiffRMF} that total exploitabilities in the $N$pG can be approximated by quantities computed solely by mean field-type operators. \cref{prop:EstEndExploitabilityCont}, \cref{thm:MFApprox}, and \cref{thm:MFConstr} are subsequently proved in \cref{subsec:Proofprop:EstEndExploitabilityCont} through \cref{Proofthm:MFConstr}. Lastly, \cref{subsec:ProofMFEandOthers} is devoted to the proof of \cref{thm:ZeroExploitability}, which is done independently of the preceding results.


\subsection{Empirical measures in the $N$pG}\label{subsec:EmpMeasProc}

\begin{proposition}\label{prop:ProcConc}
Let $\wt{\bm\fP}$ and $\overline\Xi$ be as introduced in \cref{subsec:Scenarios}. Consider a ${\bm\fP}\in\Pi^{N\times(T-1)}$ that satisfies $\fp^n=\tilde\fp^n$ for $n=2,\dots,N$. If \cref{assump:QTight}, \cref{assump:PCont}, and \cref{assump:SymCont} hold, then
for any $t=1,\dots,T$ and subadditive modulus of continuity $\beta$, we have ${\bm\bT}^{{\bm\fP}}_{t} \circ \beta {\bm e}_t \le \beta({\fe_t})$. 
\end{proposition}

To facilitate proving \cref{prop:ProcConc}, we first introduce some auxiliary notations. For any $\tilde{\bm\pi}=(\tilde\pi^1,\dots,\tilde\pi^N)\in\wt\Pi^N$ and $\xi\in\cP(\bX)$, we define
\begin{gather}
\check{\bm T}^{\tilde{\bm\pi}}_{t,\xi} \bm u({\bm x}):= \int_{\bX^{N}} \bm u({\bm y}) \left[\bigotimes_{n=1}^N Q^{\tilde\pi^n_{x^n, \xi}}_{t,x^n,\xi}\right](\dif{\bm y}), \quad \bm u \in B_b(\bX).
\end{gather}
In contrary to \eqref{eq:NpDyn},  $\check{\bm T}^{\tilde{\bm\pi}}_{t,\xi}$ suggests the following auxiliary dynamics where the empirical population distribution in the $N$pG is replaced by a pre-specified $\xi$,
\begin{align}
\bm A_t\sim\bigotimes_{n=1}^N\tilde\pi^n_{t,X^n_t,\xi}\,,\quad \bm X_{t+1}\sim\bigotimes_{n=1}^N P_{t,X^n_t,\xi, A^n_t}.
\end{align}
With the notations introduced in \cref{subsec:Scenarios}, we further define
\begin{align*}\label{eq:DefMeancTN}
{\check{\bm\cT}}^{\wt{\bm\fP}}_{s,s,\overline\Xi}\bm u=\bm u \quad\text{and}\quad {\check{\bm\cT}}^{\wt{\bm\fP}}_{s,t,{\overline\Xi}} \bm u := {\check{\bm T}}^{\wt{\bm\fP}_s}_{s,\overline\xi_s}\circ\cdots\circ{\check{\bm T}}^{\wt{\bm\fP}_{t-1}}_{t-1,\overline\xi_{t-1}} \bm u,\; 1\le s<t \le T, \quad \bm u\in B_b(\bX^N),
\end{align*}
where we note the definition is also valid for generic $\wt{\bm\fP}\in\wt\Pi^{N\times(T-1)}$ and $\Xi\in\cP(\bX)^{T-1}$, although the generic case will not be considered. Observe that, if $\bm u({\bm x})=\prod_{n=1}^N h_n(x^n)$ for some $h_1,\dots,h_N\in B_b(\bX)$, we have 
\begin{align}\label{eq:DecompMeanT}
{\check{\bm\cT}}^{\wt{\bm\fP}}_{s,t,{\overline\Xi}}\bm u({\bm x}) 
= \prod_{n=1}^N\overline{\cT}^{\fp^n}_{s,t,{\overline\Xi}}h_n(x^n).
\end{align}
This in particular implies that, for fixed ${\bm x}\in\bX^N$, the probability measure on $\cB(\bX^{N})$ characterized by $D\mapsto{\check{\bm\cT}}^{{\bm\fP}}_{s,t,{\overline\Xi}}\1_D({\bm x})$ is the independent product of the probability measures on $\cB(\bX)$ characterized by $B\mapsto \overline{\cT}^{\fp^n}_{s,t,{\overline\Xi}}\1_B(x^n)$, $n=1,\dots,N$.

\begin{lemma}\label{lem:EstDiffMeancTInfMeancT}
Under the setting of \cref{prop:ProcConc}, we have
\begin{align*}
\left|{\check{\bm\cT}}^{\wt{\bm\fP}}_{s,t,{\overline\Xi}} {\bm e}_t({\bm x}) - \inf_{x^n\in\bX}{\check{\bm\cT}}^{\wt{\bm\fP}}_{s,t,{\overline\Xi}} {\bm e}_t({\bm x})\right| \le \frac1{N},\quad  {\bm x}\in\bX^N,\;n=1,\dots,N.
\end{align*}
\end{lemma}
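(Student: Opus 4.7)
The plan is to combine the product structure of $\overline{\bm\cT}^{N,\wt\fP^N}_{s,t,(\overline\xi^N_{s},\dots,\overline\xi^N_{t-1})}$ with the fact that $e^N_t$ has oscillation at most $1/N$ in each single coordinate. The product structure is available precisely because every intermediate operator $\overline{\bm T}^{N,\tilde{\bm\pi}^N_r}_{r,\overline\xi^N_r}$ transitions through the \emph{deterministic} measure $\overline\xi^N_r$ in place of the empirical $\overline\delta(\bm x^N)$, so its kernel $\bigotimes_{n=1}^N Q^{\tilde\pi^n_r(x^n,\overline\xi^N_r)}_{r,x^n,\overline\xi^N_r}$ is a genuine independent product of single-coordinate kernels. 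Starting from \eqref{eq:DecompMeanT} on product functions and extending by a standard monotone-class and simple-function approximation argument, I would obtain, for every $f\in B_b(\bX^N)$,
\[
\overline{\bm\cT}^{N,\wt\fP^N}_{s,t,(\overline\xi^N_{s},\dots,\overline\xi^N_{t-1})} f(\bm x^N) = \int_{\bX^N} f(\bm y^N)\bigotimes_{m=1}^N \nu_m(x^m,\dif y^m),
\]
where $\nu_m(x^m,\cdot)$ denotes the law at time $t$ of the single-particle process driven by $\overline{\cT}^{\tilde\fp^m}_{s,t,(\overline\xi^N_{s},\dots,\overline\xi^N_{t-1})}$ when started from $x^m$. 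Crucially, each $\nu_m(x^m,\cdot)$ depends only on $x^m$ and on the fixed mean-field flow $(\overline\xi^N_r)_r$, not on any other coordinate of $\bm x^N$.

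Next, I would establish the coordinatewise $1/N$-oscillation of $e^N_t$. If $\hat{\bm x}^N$ agrees with $\bm x^N$ in every slot except the $n$-th, then $\overline\delta(\bm x^N)-\overline\delta(\hat{\bm x}^N) = \tfrac1N(\delta_{x^n}-\delta_{\hat x^n})$, and since $\|\delta_{x^n}-\delta_{\hat x^n}\|_{BL}\le 1$, the reverse triangle inequality for $\|\cdot\|_{BL}$ yields $|e^N_t(\bm x^N)-e^N_t(\hat{\bm x}^N)|\le 1/N$.

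Combining these two ingredients, I would fix $n$ together with $(x^m)_{m\ne n}$ and define the partial integral
\[
G(y^n) := \int_{\bX^{N-1}} e^N_t(\bm y^N)\bigotimes_{m\ne n} \nu_m(x^m,\dif y^m).
\]
The coordinatewise oscillation bound on $e^N_t$ immediately gives $\sup_{y^n\in\bX} G(y^n) - \inf_{y^n\in\bX} G(y^n) \le 1/N$. Using the product factorisation established above,
\[
\overline{\bm\cT}^{N,\wt\fP^N}_{s,t,(\overline\xi^N_{s},\dots,\overline\xi^N_{t-1})} e^N_t(\bm x^N) = \int_{\bX} G(y^n)\,\nu_n(x^n,\dif y^n),
\]
so the value lies in $[\inf G,\inf G+1/N]$ for \emph{every} choice of $x^n$, because $\nu_n(x^n,\cdot)$ is a probability measure. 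Taking an infimum over $x^n\in\bX$ while keeping $(x^m)_{m\ne n}$ fixed yields the claimed bound.

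The only non-routine step is the factorisation of $\overline{\bm\cT}^{N,\wt\fP^N}_{s,t,(\overline\xi^N_{s},\dots,\overline\xi^N_{t-1})}$ as a product measure on $\bX^N$ for arbitrary bounded measurable integrands; this is settled by the same simple-function-plus-monotone-convergence argument that takes \eqref{eq:IndIntProdQ} beyond rectangles. The rest of the argument is purely mechanical, and it hinges on the single observation that replacing the empirical flow by the deterministic flow $(\overline\xi^N_r)_r$ decouples the coordinates, so partial integration in all slots except $n$ contracts $e^N_t$ into a function of $y^n$ alone whose oscillation is inherited from that of the original.
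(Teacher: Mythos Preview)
Your argument is correct and follows essentially the same route as the paper. The paper's one-line proof invokes the product structure recorded after \eqref{eq:DecompMeanT}, Lemma~\ref{lem:EstIndepInt} (applied with all but the $n$-th factor unchanged), and the definition of $e^N_t$; your partial-integration step is precisely the special case of Lemma~\ref{lem:EstIndepInt} in which only one coordinate measure varies, combined with the same $1/N$ coordinatewise oscillation bound you derive.
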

\begin{proof}
The statement is an immediate consequence of the independence described in \eqref{eq:DecompMeanT}, \cref{lem:EstIndepInt} and the definition of ${\bm e}_t$ in \cref{subsec:ErrorTerms}. 
\end{proof}

\begin{lemma}\label{lem:EstDiffTMeanT}
Under the setting of \cref{prop:ProcConc}, for $s< t\le T$, we have
\begin{align*}
\left| \bm T^{(\fp^1_t,\tilde\fp^2_t,\dots,\tilde\fp^N_t)}_{s} \circ {\check{\bm\cT}}^{\wt{\bm\fP}}_{s+1,t,{\overline\Xi}} {\bm e}_{t}({\bm x}) - {\check{\bm\cT}}^{\wt{\bm\fP}}_{s,t,{\overline\Xi}} {\bm e}_{t}({\bm x}) \right| \le \frac1N + 2\left((L\vartheta+\eta){\bm e}_s({\bm x}) + \eta(2L^{-1})\right),\quad \bm x\in\bX^N.
\end{align*}
\end{lemma}
\begin{proof}
Recall the definition of $\bm T^{\bm\pi}_t$ from \eqref{eq:DefT} as well as the definitions in \eqref{eq:DefMeancTN}. Notice that
\begin{align*}
\bm T^{(\fp^1_t,\tilde\fp^2_t,\dots,\tilde\fp^N_t)}_{s} \circ {\check{\bm\cT}}^{\wt{\bm\fP}}_{s+1,t,{\overline\Xi}}  {\bm e}_{t}({\bm x}) = \int_{\bX} {\check{\bm\cT}}^{\wt{\bm\fP}}_{s+1,t,{\overline\Xi}} {\bm e}_t(\bm y)\left[Q^{\fp^1_{s,{\bm x}}}_{t,x^1,\overline\delta_{\bm x}}\otimes\bigotimes_{n=2}^N Q^{\tilde\fp^n_{s,x^n,\overline\delta_{\bm x}}}_{s,x^n,\overline\delta_{\bm x}}\right](\dif \bm y).
\end{align*}
Additionally, 
\begin{align*}
&{\check{\bm\cT}}^{\wt{\bm\fP}}_{s,t,{\overline\Xi}} {\bm e}_{t}({\bm x}) = {\check{\bm T}}^{\tilde{\bm\fP}_s}_{s,\overline\xi_{s}}\circ{\check{\bm\cT}}^{\wt{\bm\fP}}_{s+1,t,{\overline\Xi}} {\bm e}_{t}({\bm x}) = \int_{\bX} {\check{\bm\cT}}^{\wt{\bm\fP}}_{s+1,t,{\overline\Xi}} {\bm e}_t(\bm y) \left[\bigotimes_{n=1}^N Q^{\tilde\fp^n_{s,x^n,\overline\xi_{s}}}_{s,x^n,\overline\xi_{s}}\right](\dif \bm y).
\end{align*}
The above together with \cref{lem:EstIndepInt} implies that
\begin{align*}
&\left| \bm T^{(\fp^1_t,\tilde\fp^2_t,\dots,\tilde\fp^N_t)}_{s}\circ{\check{\bm\cT}}^{\wt{\bm\fP}}_{s+1,t,{\overline\Xi}} {\bm e}_{t}({\bm x}) - {\check{\bm\cT}}^{\wt{\bm\fP}}_{s,t,{\overline\Xi}} {\bm e}_{t}({\bm x}) \right|\\
&\quad\le \sup_{y^2,\dots,y^N} \left|\int_{\bX} {\check{\bm\cT}}^{\wt{\bm\fP}}_{s+1,t,{\overline\Xi}} {\bm e}_t({\bm y}) \left( Q^{\fp^1_{s,{\bm x}}}_{s,x^1,\overline\delta_{\bm x}}(\dif y^1) - Q^{\tilde\fp^1_{s,x^n,\overline\xi_{s}}}_{s,x^1,\overline\xi_{s}}(\dif y^1) \right)\right|\\
&\qquad+ \sum_{n=2}^N \sup_{\set{y^1,\dots,y^N}\setminus\set{y^n}} \left|\int_{\bX} {\check{\bm\cT}}^{\wt{\bm\fP}}_{s+1,t,{\overline\Xi}} {\bm e}_t({\bm y}) \left( Q^{\tilde\fp^n_{s,x^n,\overline\delta_{\bm x}}}_{s,x^n,\overline\delta_{\bm x}}(\dif y^n) - Q^{\tilde\fp^n_{s,x^n,\overline\xi_{s}}}_{s,x^n,\overline\xi_{s}}(\dif y^n) \right)\right|\\
&\quad= \sup_{y^2,\dots,y^N} \left|\int_{\bX} \left( {\check{\bm\cT}}^{\wt{\bm\fP}}_{s+1,t,{\overline\Xi}} {\bm e}_t({\bm y}) - \inf_{y^1} {\check{\bm\cT}}^{\wt{\bm\fP}}_{s+1,t,{\overline\Xi}} {\bm e}_t({\bm y}) \right)  \left( Q^{\fp^1_{s,{\bm x}}}_{s,x^1,\overline\delta_{\bm x}}(\dif y^1) - Q^{\tilde\fp^1_{s,x^1,\overline\xi_{s}}}_{s,x^1,\overline\xi_{s}}(\dif y^1) \right)\right|\\
&\qquad+ \sum_{n=2}^N \sup_{\set{y^1,\dots,y^N}\setminus\set{y^n}} \left|\int_{\bX} \left( {\check{\bm\cT}}^{\wt{\bm\fP}}_{s+1,t,{\overline\Xi}} {\bm e}_t({\bm y}) - \inf_{y^n} {\check{\bm\cT}}^{\wt{\bm\fP}}_{s+1,t,{\overline\Xi}} {\bm e}_t({\bm y}) \right)  \left( Q^{\tilde\fp^n_{s,x^n,\overline\delta_{\bm x}}}_{s,x^n,\overline\delta_{\bm x}}(\dif y^n) - Q^{\tilde\fp^n_{s,x^n,\overline\xi_{s}}}_{s,x^n,\overline\xi_{s}}(\dif y^n) \right)\right|.
\end{align*}
It follows from \cref{lem:EstDiffMeancTInfMeancT} and \cref{lem:QCont} that  
\begin{align*}
&\left| \bm T^{(\fp^1_t,\tilde\fp^2_t,\dots,\tilde\fp^N_t)}_{s} \circ {\check{\bm\cT}}^{{\bm\fP}}_{s+1,t,{\overline\Xi}} {\bm e}_{t}({\bm x}) - {\check{\bm\cT}}^{{\bm\fP}}_{s,t,{\overline\Xi}} {\bm e}_{t}({\bm x}) \right|\\
&\quad\le \frac1N + \frac2{N}\sum_{n=2}^N \left( L \|\tilde\fp^n(x^n,\overline\delta_{\bm x})-\tilde\fp^n(x^n,\overline\xi_s)\|_{BL}  + \eta(2L^{-1}) + \eta(\|\overline\delta_{\bm x}-\overline\xi_s\|_{BL}) \right)\\
&\quad\le \frac1N + 2\left(L\vartheta({\bm e}_s({\bm x})) + \eta(2L^{-1}) + \eta({\bm e}_s({\bm x}))\right) = \frac1N + 2\left((L\vartheta + \eta){\bm e}_s({\bm x}) + \eta(2L^{-1})\right).
\end{align*}
\end{proof}

\begin{lemma}\label{lem:EstfTe}
Under the setting of \cref{prop:ProcConc}, 
for $t\ge 2$ and $L>1+\eta(2L^{-1})$ we have
\begin{align*}
{\bm\bT}^{{\bm\fP}}_{t} {\bm e}_t \le  2\sum_{r=1}^{t-1} (L\vartheta + \eta) \circ {\bm\bT}^{{\bm\fP}}_{r}{\bm e}_{r} + (t-1)\left(\frac1{N} + 2\eta(2L^{-1})\right) + \fr_{\fK}(N),
\end{align*}
where $\fr_{\fA}$ is defined in \cref{lem:EmpMeasConc}.
\end{lemma}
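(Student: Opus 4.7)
The plan is to telescope the left-hand side by swapping, one step at a time, the exact $N$-player transition $\bm T^{N,\bm\pi^N_s}_s$ with its mean-field surrogate $\overline{\bm T}^{N,\tilde{\bm\pi}^N_s}_{s,\overline\xi^N_s}$, picking up a controlled one-step error from \cref{lem:EstDiffTMeanT}, and then estimating the fully mean-field residual by invoking \cref{lem:EmpMeasConc}. Concretely, the standard telescoping identity
\begin{align*}
\bm\cT^{N,\fP^N}_{1,t} e^N_t - \overline{\bm\cT}^{N,\wt\fP^N}_{1,t,(\overline\xi^N_1,\dots,\overline\xi^N_{t-1})} e^N_t = \sum_{s=1}^{t-1} \bm\cT^{N,\fP^N}_{1,s} \circ \bigl( \bm T^{N,\bm\pi^N_s}_s - \overline{\bm T}^{N,\tilde{\bm\pi}^N_s}_{s,\overline\xi^N_s} \bigr) \circ \overline{\bm\cT}^{N,\wt\fP^N}_{s+1,t,\cdot} e^N_t
\end{align*}
holds, and pre-composing with $\mathring{\bm T}^N$ turns $\mathring{\bm T}^N\circ\bm\cT^{N,\fP^N}_{1,s}$ into $\bm\fT^{N,\fP^N}_s$ via \eqref{eq:DeffT}.

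Using the positivity of $\bm\fT^{N,\fP^N}_s$ together with the pointwise bound supplied by \cref{lem:EstDiffTMeanT}, each summand is dominated by $\bm\fT^{N,\fP^N}_s \bigl( \frac1N + 2\eta(2L^{-1}) + 2(L\vartheta + \eta) e^N_s \bigr)$. Since $\bm\fT^{N,\fP^N}_s$ is a probability integral (acting as $1$ on constants) and both $L\vartheta$ and $\eta$ are concave, Jensen's inequality yields $\bm\fT^{N,\fP^N}_s \bigl( (L\vartheta + \eta) e^N_s \bigr) \le (L\vartheta + \eta) \bm\fT^{N,\fP^N}_s e^N_s$, producing the double-sum term that appears in the claim.

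It remains to bound $\mathring{\bm T}^N \circ \overline{\bm\cT}^{N,\wt\fP^N}_{1,t,(\overline\xi^N_1,\dots,\overline\xi^N_{t-1})} e^N_t$ by $\fr_{\fK}(N)$. By the product structure recorded in \eqref{eq:DecompMeanT}, under this operator the coordinates $\bm Y^N$ of the state at time $t$ are independent with marginals $\overline\fQ^{\tilde\fp^n}_{t,(\overline\xi^N_1,\dots,\overline\xi^N_{t-1})}$, whose average is precisely $\overline\xi^N_t$ by the recursion in \cref{scene}~(ii). Thus $\mathring{\bm T}^N \circ \overline{\bm\cT}^{N,\wt\fP^N}_{1,t,\cdot} e^N_t = \bE\bigl[\|\overline\delta(\bm Y^N) - \overline\xi^N_t\|_{BL}\bigr]$, and the estimate $\le \fr_{\fK}(N)$ follows directly from \cref{lem:EmpMeasConc}, whose uniform-tightness hypothesis is supplied by \cref{assump:MomentTight} via \cref{lem:sigmaTight}.

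The main technical friction is the Jensen step: one must verify that $\bm\fT^{N,\fP^N}_s$ is genuinely a probability integral and that both $L\vartheta$ and $\eta$ are concave, so that Jensen actually points in the direction needed to extract $(L\vartheta+\eta)\bm\fT^{N,\fP^N}_s e^N_s$ from the telescoped error. The condition $L>1+\eta(2L^{-1})$ is not used inside the present lemma but is the feasibility constraint under which the induction closing the recursion in the definition of $\fe_t(N)$ in \cref{scene}~(iv) has a meaningful solution; here its sole role is to ensure that the ``free'' parameter $L$ is a legitimate choice for subsequent applications.
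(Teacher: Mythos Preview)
Your proof is correct and follows exactly the paper's argument: telescope $\bm\fT^{N,\fP^N}_t e^N_t$ against $\mathring{\bm T}^N\circ\overline{\bm\cT}^{N,\wt\fP^N}_{1,t,\cdot}e^N_t$, bound each increment via \cref{lem:EstDiffTMeanT}, apply Jensen for the concave moduli, and control the residual with \cref{lem:EmpMeasConc} using the tightness from \cref{lem:sigmaTight}. One small correction: the constraint $L>1+\eta(2L^{-1})$ \emph{is} used here, since \cref{lem:EstDiffTMeanT} relies on \cref{lem:QCont}, whose Lipschitz-approximation step (\cref{lem:LipApprox}) requires precisely this feasibility condition on $L$.
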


\begin{proof}
Recall the definitions of ${\bm\bT}^{\bm\fP}_t$ and ${\check{\bm\cT}}^{\wt{\bm\fP}}_t$ in \eqref{eq:DeffT} and \eqref{eq:DefMeancTN}, respectively. Expanding ${\bm\bT}^{{\bm\fP}}_{t} {\bm e}_t$ via telescope sum, we yield
\begin{align*}
{\bm\bT}^{{\bm\fP}}_{t}  {\bm e}_t &\le \sum_{r=1}^{t-1} {\bm\bT}^{{\bm\fP}}_{r} \left| \bm T^{(\fp^1_r, \tilde\fp^2_r,\dots,\tilde\fp^N_r)}_{r} \circ {\check{\bm\cT}}^{\wt{\bm\fP}}_{r+1,t,{\overline\Xi}} {\bm e}_{t} - {\check{\bm\cT}}^{\wt{\bm\fP}}_{r,t,{\overline\Xi}} {\bm e}_{t} \right| + \mathring{\bm T}\circ{\check{\bm\cT}}^{\wt{\bm\fP}}_{1,t,{\overline\Xi}} {\bm e}_t.
\end{align*}
This together with \cref{lem:EstDiffTMeanT} implies
\begin{align*}
{\bm\bT}^{{\bm\fP}}_{t} {\bm e}_t &\le 2\sum_{r=1}^{t-1} {\bm\bT}^{{\bm\fP}}_{r} \circ (L\vartheta+\eta){\bm e}_r ({\bm x}) + (t-1)\left(2\eta(2L^{-1})+\frac1N\right) + \mathring{\bm T} \circ {\check{\bm\cT}}^{\wt{\bm\fP}}_{1,t,{\overline\Xi}} {\bm e}_t\\
&\le 2\sum_{r=1}^{t-1} (L\vartheta+\eta) \circ {\bm\bT}^{{\bm\fP}}_{r} {\bm e}_r ({\bm x}) + (t-1)\left(2\eta(2L^{-1})+\frac1N\right) + \mathring{\bm T} \circ {\check{\bm\cT}}^{\wt{\bm\fP}}_{1,t,{\overline\Xi}} {\bm e}_t
\end{align*} 
where we have use Jensen's inequality (cf. \cite[Section 11.5, Theorem 11.24]{Aliprantis2006book}) in the last line. Finally, recall the definition of $\overline{\bQ}^{\tilde\fp}_{t,\Xi}$ below \eqref{eq:DefMeanfT}. In view of the independence in \eqref{eq:DecompMeanT}, by \cref{lem:EmpMeasConc}, we obtain
\begin{align*}
\mathring{\bm T}\circ{\check{\bm\cT}}^{\wt{\bm\fP}}_{t,{\overline\Xi}} {\bm e}_t = \int_{\bX^N} {\bm e}_t({\bm y}) \left[\bigotimes_{n=1}^N\overline{\bQ}^{\tilde\fp^n}_{t,\overline\Xi}\right](\dif{\bm y}) \le \fr_{\fK}(N),
\end{align*}
which completes the proof.
\end{proof}

We are ready to prove Proposition \cref{prop:ProcConc}.
\begin{proof}[Proof of \cref{prop:ProcConc}]
Let $\fe_t$ be defined in \cref{subsec:ErrorTerms}. For $t=1$, the statement is obvious from \eqref{eq:DeffT} and \cref{lem:EmpMeasConc}. We proceed by induction. Suppose that there is a $t=1,\dots,T-1$ such that ${\bm\bT}^{{\bm\fP}}_{r}{\bm e}_r\le \fe_{r}$ for all $r=1,\dots,t$. By \cref{lem:EstfTe}, for $L>1+\eta(2L^{-1})$ we have
\begin{align*}
{\bm\bT}^{{\bm\fP}}_{t+1} {\bm e}_{t+1} \le 2\sum_{r=1}^{t} \big(L\vartheta(\fe_{r}) + \eta(\fe_{r})\big)  + t\left(\frac1N+2\eta(2L^{-1})\right) + \fr_{\fK}(N),
\end{align*}
thus ${\bm\bT}^{{\bm\fP}}_{t+1} {\bm e}_{t+1}\le\fe_{t+1}$.  Finally, invoking Jensen's inequality (cf. \cite[Section 11.5, Theorem 11.24]{Aliprantis2006book}), we conclude that ${\bm\bT}^{{\bm\fP}}_{t} \circ \beta {\bm e}_{t} \le \beta\circ {\bm\bT}^{{\bm\fP}}_{t} {\bm e}_{t}  \le  \beta(\fe_{t}(N))$. 
\end{proof}

Apart from Proposition \ref{prop:ProcConc}, the following result is also useful.
\begin{proposition}\label{prop:EstDiffNPMean}
Let $\wt{\bm\fP}$ and $\overline\Xi$ be as introduced in \cref{subsec:Scenarios}. If \cref{assump:QTight}, \cref{assump:PCont} and \cref{assump:SymCont} holds, then for any $h\in B_b(\bX)$ and $t=1,\dots,T$, we have 
$$\left|\bm\bT^{\wt{\bm\fP}}_{t}h-\overline\bT^{\tilde\fp^1}_{t,{\overline\Xi}}h\right| \le \|h\|_\infty \fe_{t},$$ 
where $\fe_t$ is defined in \cref{subsec:ErrorTerms}, and when acted by $\bm\bT^{\wt{\bm\fP}}_{t}$, $h$ is treated as a function from $ B_b(\bX^N)$ that is constant in $(x^2,\dots,x^N)$ .
\end{proposition}

\begin{proof}
Recall the definition of $\bm\bT^{\bm\fP}_t$ and $\overline\bT^{\wt{\bm\fP}}_t$ from \eqref{eq:DeffT} and \eqref{eq:DefMeanfT}, respectively. Note that
\begin{align*}
\left|\bm\bT^{\wt{\bm\fP}}_{t} h - \overline\bT^{\tilde\fp^1}_{t,{\overline\Xi}} h\right| &\le \sum_{r=1}^{t-1} \left|\bm\bT^{\wt{\bm\fP}}_{r+1} \circ \overline\cT^{\tilde\fp^1}_{r+1,t,{\overline\Xi}} h - \bm\bT^{\wt{\bm\fP}}_{r} \circ \overline\cT^{\tilde\fp^1}_{r,t,{\overline\Xi}} h\right| \le \sum_{r=1}^{t-1} \bm\bT^{\wt{\bm\fP}}_{r} \left| \left( \bm T^{(\tilde\fp^1_r,\dots,\tilde\fp^N_r)}_r - \overline T^{\tilde\fp^1_r}_{r,\overline\xi_r} \right) \circ \overline\cT^{\tilde\fp^1}_{r+1,t,{\overline\Xi}} h\right|.
\end{align*}
Let $g=\overline\cT^{\tilde\fp^1}_{r+1,t,{\overline\Xi}} h$. Clearly, $\|g\|_\infty\le\|h\|_\infty$. In view of \eqref{eq:DefT} and \eqref{eq:DefMeanT}, for $L>1+\eta(2L^{-1})$, 
\begin{align*}
&\left|\bm T^{(\tilde\fp^1_r,\dots,\tilde\fp^N_r)}_r g ({\bm x}) - \overline T^{\tilde\fp^1_r}_{r,\overline\xi_r} g ({\bm x})\right| = \left|\int_\bX g(y) Q^{\tilde\fp^1_{t,x^1,\overline\delta_{\bm x}}}_{r,x^1,\overline\delta_{\bm x}}(\dif y) - \int_\bX g(y) Q^{\tilde\fp^1_{r,x^1,\overline\xi_t}}_{r,x^1,\overline\xi_r}(\dif y)\right|\\
&\quad\le \|h\|_\infty \left( ( L\vartheta + \eta) {\bm e}_t({\bm x})   + 2\eta(2L^{-1}) \right),
\end{align*}
where we have used \cref{lem:QCont} in the last inequality. It follows from \cref{prop:ProcConc} that for $L>1+\eta(2L^{-1})$,
\begin{align*}
\left|\bm\bT^{\wt{\bm\fP}}_{t} h - \overline\bT^{\tilde\fp^1}_{t,{\overline\Xi}} h\right| \le 2\|h\|_\infty \left( \sum_{r=1}^{t-1}( L\vartheta(\fe_t) + \eta(\fe_t)) + (t-1)\eta(2L^{-1}) \right).
\end{align*}
Taking infimum over $L>1+\eta(2L^{-1})$ on both hand sides above, we conclude the proof.
\end{proof}

\subsection{Approximating $N$-player stepwise exploitability}\label{subsec:ApproxExploitability}
In this section, we  write $V_{\overline\xi_{T}} := V(\cdot,\overline\xi_{T})$.

\begin{lemma}\label{lem:EstfTDiffcS}
Let $t=2,\dots,T-1$. Let $\wt{\bm\fP}$, $\overline\Xi$ and $\overline\xi_T$ be as introduced in \cref{subsec:Scenarios}.  Consider a ${\bm\fP}\in\Pi^{N\times(T-1)}$ such that $\fp^1_r\in\wt{\Pi}$ for $r=t,\dots,T-1$ is $\vartheta^1$-symmetrically continuous, and $\fp^n=\tilde\fp^n$ for $n=2,\dots,N$. Suppose \cref{assump:ActionDomain}, \cref{assump:QTight}, \cref{assump:GBasic} (i)-(iii) (v), \cref{assump:PCont}, \cref{assump:GCont} and \cref{assump:SymCont}. Then,\footnote{For $t\ge 2$, $\overline\cS^{\fp^1}_{t,T,{\overline\Xi}}v$ is understood as $\overline S^{\fp^1_t}_{t,\overline\xi_t}\circ\cdots\circ\overline S^{\fp^1_{T-1}}_{T-1,\overline\xi_{T_1}} v$, and does not depend on $\fp^1_1,\dots,\fp^1_{t-1}$.} 
\begin{align*}
&\bm\bT^{{\bm\fP}}_{t}\left|\bm\cS^{{\bm\fP}}_{t,T} {\bm U} - \overline\cS^{\fp^1}_{t,T,{\overline\Xi}} V_{\overline\xi_{T}}\right| \le \sum_{r=t}^{T-1}  \bar{c}^{r-t}\cC_{T-r}(\|V\|_\infty) \big(\zeta(\vartheta^1(\fe_{r})) + \zeta(\fe_{r}) \big) + \bar{c}^{T-t}\iota({\fe_T}).
\end{align*}
\end{lemma}
\begin{proof}
Observe that
\begin{align*}
&\bm\bT^{{\bm\fP}}_{t}\left|\bm\cS^{{\bm\fP}}_{t,T} {\bm U} - \overline\cS^{\fp^1}_{t,T,{\overline\Xi}} V_{\overline\xi_{T}} \right|\\
&\quad\le \sum_{r=t}^{T-1} \bm\bT^{{\bm\fP}}_{t} \left| \bm\cS^{{\bm\fP}}_{t,r+1} \circ \overline\cS^{\fp^1}_{r+1,T,{\overline\Xi}}V_{\overline\xi_{T}} - \bm\cS^{{\bm\fP}}_{t,r} \circ \overline\cS^{\fp^1}_{r,T,{\overline\Xi}}V_{\overline\xi_{T}} \right| +  \bm\bT^{{\bm\fP}}_{t} \left|\bm\cS^{{\bm\fP}}_{t,T} {\bm U} - \bm\cS^{{\bm\fP}}_{t,T} V_{\overline\xi_{T}}\right|\\
&\quad\le \sum_{r=t}^{T-1} \bar{c}^{r-t}  \bm\bT^{{\bm\fP}}_{r} \left| \bm S^{(\fp^1_t,\dots,\fp^N_t)}_{r} \circ \overline\cS^{\fp^1}_{r+1,T,{\overline\Xi}}V_{\overline\xi_{T}} - \overline\cS^{\fp^1}_{r,T,{\overline\Xi}}V_{\overline\xi_{T}} \right| +  \bar{c}^{T-t}\bm\bT^{{\bm\fP}}_{T}\!\! \circ \iota {\bm e}_T,
\end{align*}
where we have used \eqref{eq:DefcS}, \cref{lem:EstDiffS} and \eqref{eq:DeffT} in last inequality. In addition, due to \eqref{eq:DefS}, \cref{assump:GBasic} (v), \eqref{eq:DefMeanS} and \eqref{eq:DefMeancS}, we have
\begin{align*}
&\left| \left[\bm S^{(\fp^1_t,\dots,\fp^N_t)}_{r} \circ \overline\cS^{\fp^1}_{r+1,T,{\overline\Xi}}V_{\overline\xi_{T}}\right]({\bm x}) - \left[\overline\cS^{\fp^1}_{r,T,{\overline\Xi}}V_{\overline\xi_{T}}\right](x^1) \right|\\
&\quad= \left| \left[\overline G^{\fp^1_{r,x^1,\overline\delta_{\bm x}}}_{r,\overline\delta_{\bm x}} \circ \overline\cS^{\fp^1}_{r+1,T,{\overline\Xi}}V_{\overline\xi_{T}}\right](x^1) - \left[\overline G^{\fp^1_{r,x^1,\overline\xi_r}}_{r,\overline\xi_r}\circ\overline\cS^{\fp^1}_{r+1,T,{\overline\Xi}}V_{\overline\xi_{T}}\right](x^1) \right|\\
&\quad\le \cC_{T-r}(\|V\|_\infty)\left(\zeta(\vartheta^1(\|\overline\delta_{\bm x}-\overline\xi_r\|_{BL})) + \zeta(\|\overline\delta_{\bm x}-\overline\xi_r\|_{BL})\right),
\end{align*}
where we have used \cref{assump:GCont} (ii) and \cref{lem:EstcS} in the last inequality. Note $\ell\mapsto\zeta(\vartheta^1(\ell))+\zeta(\ell)$ is also a subadditive modulus of continuity.  
The above together with \cref{prop:ProcConc} completes the proof.
\end{proof}

\begin{proposition}\label{prop:EstfTDiffcSstar}
Let $\wt{\bm\fP}$, $\overline\Xi$ and $\overline\xi_T$ be as introduced in \cref{subsec:Scenarios}.  Consider a ${\bm\fP}\in\Pi^{N\times(T-1)}$ that satisfies $\fp^n=\tilde\fp^n$ for $n=2,\dots,N$. Suppose \cref{assump:ActionDomain}, \cref{assump:QTight}, \cref{assump:GBasic} (i)-(iii) (v), \cref{assump:PCont}, \cref{assump:GCont} and \cref{assump:SymCont}. Then, we have $\bm\bT^{{\bm\fP}}_{T}\left|{\bm U} - V_{\overline\xi_T} \right| \le \iota({\fe_T})$, and
\begin{align*}
&\bm\bT^{{\bm\fP}}_{t} \left| \bm\cS^{*{\bm\fP}}_{t,T} {\bm U} -  \overline\cS^{*}_{t,T,{\overline\Xi}}V_{\overline\xi_{T}} \right| \le (T+1-t)\left(\sum_{r=t}^{T-1} \bar{c}^{r-t} \cC_{T-r}(\|V\|_\infty) \zeta(\fe_{r}) + \bar{c}^{T-t}\iota({\fe_T})\right),\quad t=1,\dots,T-1.
\end{align*}
\end{proposition}

\begin{proof}
For $t=T$, owing Proposition \ref{prop:ProcConc} and the hypothesis that $V$ is $\iota$-symmetrically continuous, we have $\bm\bT^{{\bm\fP}}_{T}\left|{\bm U} - V_{\overline\xi_T} \right| \le \iota({\fe_T})$, which proves the statement for $t=T$. We proceed by backward induction. Suppose the statement is true at $t+1$ for some $t=1,\dots,T-1$. In view of \cref{lem:Sstar}, let $\fp^*\in\Pi^{T-1}$ and $\overline\fp^*\in\overline{\Pi}^{T-1}$ be the optimal policies attaining $\bm\bS^{*\fP}_TV$ and $\overline\bS^{*}_{T,(\overline\xi_{1},\dots,\overline\xi_{T-1})}V_{\overline\xi_{T}}$, respectively. It follows that
\begin{align*}
\bm\cS^{*{\bm\fP}}_{t,T} {\bm U}({\bm x}) - \overline\cS^{*}_{t,T,{\overline\Xi}}V_{\overline\xi_{T}}(x^1) &\le \bm\cS^{(\overline\fp^*,\tilde\fp^2,\dots,\tilde\fp^N)}_{t,T} {\bm U}({\bm x}) - \overline\cS^{\fp^*}_{t,T,{\overline\Xi}}V_{\overline\xi_{T}}(x^1)\\
&\le \left|\bm\cS^{(\overline\fp^*,\tilde\fp^2,\dots,\tilde\fp^N)}_{t,T} {\bm U}({\bm x}) - \overline\cS^{\fp^*}_{t,T,{\overline\Xi}}V_{\overline\xi_{T}}(x^1)\right|.
\end{align*}
Meanwhile,
\begin{align*}
\bm\cS^{*{\bm\fP}}_{t,T} {\bm U}({\bm x}) - \overline\cS^{*}_{t,T,{\overline\Xi}}V_{\overline\xi_{T}}(x^1) 
&\ge \bm S^{(\fp^*_t,\tilde\fp^2_t,\dots,\tilde\fp^N_t)}_{t} \circ \overline\cS^{*}_{t+1,T,{\overline\Xi}} V_{\overline\xi_{T}}({\bm x}) - \overline\cS^{*}_{t,T,{\overline\Xi}}V_{\overline\xi_{T}}(x^1)\\
&\qquad- \left|\bm\cS^{*{\bm\fP}}_{t,T} {\bm U}({\bm x}) - \bm S^{(\fp^*_t,\tilde\fp^2_t,\dots,\tilde\fp^N_t)}_{t} \circ \overline\cS^{*}_{t+1,T,{\overline\Xi}} V_{\overline\xi_{T}}({\bm x})\right|,
\end{align*}
For the first term in the right hand side above, in view of \cref{assump:GBasic} (v), \cref{assump:GCont} (ii) and \cref{lem:EstcS}, we further yield
\begin{align*}
&\left[\bm S^{(\fp^*_t,\tilde\fp^2_t,\dots,\tilde\fp^N_t)}_{t} \circ \overline\cS^{*}_{t+1,T,{\overline\Xi}} V_{\overline\xi_{T}}\right] ({\bm x}) = \left[\overline G^{\fp^*_{t,{\bm x}}}_{t,\overline\delta_{\bm x}} \circ \overline\cS^{*}_{t+1,T,{\overline\Xi}} V_{\overline\xi_{T}}\right] (x^1)\\
&\quad\ge \left[\overline G^{\fp^*_{t,{\bm x}}}_{t,\overline\xi_t} \circ \overline\cS^{*}_{t+1,T,{\overline\Xi}} V_{\overline\xi_{T}}\right] (x^1) -  \left(c_0 + c_1\cC_{T-(t+1)}(\|V\|_\infty)\right)\zeta(\|\overline\delta_{\bm x}-\overline\xi_{t}\|_{BL})\\
&\quad\ge \left[\overline\cS^{*}_{t+1,T,{\overline\Xi}} V_{\overline\xi_{T}}\right](x^1) - \cC_{T-t}(\|V\|_\infty)\zeta({\bm e}_t({\bm x})).
\end{align*}
Therefore,
\begin{align*}
\bm\cS^{*{\bm\fP}}_{t,T} {\bm U}({\bm x}) - \overline\cS^{*}_{t,T,{\overline\Xi}}V_{\overline\xi_{T}}(x^1) \ge - \cC_{T-t}(\|V\|_\infty)\zeta {\bm e}_t({\bm x}) - \left|\bm\cS^{*{\bm\fP}}_{t,T} {\bm U}({\bm x}) - \bm S^{(\fp^*_t,\tilde\fp^2_t,\dots,\tilde\fp^N_t)}_{t} \circ \overline\cS^{*}_{t+1,T,{\overline\Xi}} V_{\overline\xi_{T}}({\bm x})\right|.
\end{align*}
Note additionally that for any $c,\underline c,\bar{c}\in\bR$ with $\underline c\le 0\le \bar{c}$ and $\underline c \le c \le \bar{c}$, we have $|c|\le|\underline c|+|\bar{c}|$. By combining this with the estimation above, we yield
\begin{align*}
\bm\bT^{{\bm\fP}}_{t} \left| \bm\cS^{*{\bm\fP}}_{t,T} {\bm U} -  \overline\cS^{*}_{t,T,{\overline\Xi}}V_{\overline\xi_T} \right| \le I_1 + I_2,
\end{align*}
where
\begin{gather*}
I_1 := \bm\bT^{{\bm\fP}}_{t}\left|\bm\cS^{(\overline\fp^*,\tilde\fp^2,\dots,\tilde\fp^N)}_{t,T} {\bm U} - \overline\cS^{\overline\fp^*}_{t,T,{\overline\Xi}} V_{\overline\xi_{T}} \right|\\
I_2 := \cC_{T-t}(\|V\|_\infty)\bm\bT^{{\bm\fP}}_{t} \circ \zeta {\bm e}_t + \bm\bT^{{\bm\fP}}_{t} \left|\bm\cS^{*{\bm\fP}}_{t,T} {\bm U} - \bm S^{(\fp^*_t,\tilde\fp^2_t,\dots,\tilde\fp^N_t)}_{t} \circ \overline\cS^{*}_{t+1,T,{\overline\Xi}} V_{\overline\xi_{T}}\right|.
\end{gather*}
As for $I_1$, by defining $\fp^1\oplus_{t}\overline\fp^*:=(\fp^1,\dots,\fp^1_{t-1},\overline\fp^*_t,\dots,\overline\fp^*_{T-1})$ if $t\ge 2$ and $\fp^1\oplus_{t}\overline\fp^*:=\overline\fp^*$ if $t=1$, we have $\bm\bT^{{\bm\fP}}_{t} = \bm\bT^{(\fp^1\oplus_{t}\overline\fp^*,\tilde\fp^2,\dots,\tilde\fp^N)}_{t}$ and $\bm\cS^{\overline\fp^*,{\bm\fP}}_{t,T} = \bm\cS^{(\fp^1\oplus_{t}\overline\fp^*,\tilde\fp^2,\dots,\tilde\fp^N)}_{t,T}$. This together with \cref{lem:EstfTDiffcS} provides an estimation on $I_1$ (note $\vartheta^1\equiv 0$ in this case). As for $I_2$, by  \cref{prop:ProcConc}, \eqref{eq:DeffT}, \cref{assump:GBasic} (iii) and the induction hypothesis, we yield\footnote{For the sake of neatness, we set $\sum_{r=T}^{T-1}=0$.}
\begin{align*}
I_2 &\le \cC_{T-t}(\|V\|_\infty)\zeta({\fe_t}) - \bar{c}\bm\bT^{{\bm\fP}}_{t} \circ \bm T^{(\fp^*_t,\tilde\fp^2_t,\dots,\tilde\fp^N_t)}_{t} \left| \bm\cS^{*{\bm\fP}}_{t+1,T} {\bm U} -  \overline\cS^{*}_{t+1,T,{\overline\Xi}}V_{\overline\xi_{T}} \right|\\
&\le (T-t)\left(\cC_{T-t}(\|V\|_\infty)\zeta({\fe_t}) - \sum_{r=t+1}^{T-1} \bar{c}^{r-t} \cC_{T-r}(\|V\|_\infty) \zeta(\fe_{r}) + c_1^{T-t}\iota({\fe_T})\right)\\
&= (T-t)\left(\sum_{r=t}^{T-1} \bar{c}^{r-t} \cC_{T-r}(\|V\|_\infty) \zeta(\fe_{r}) + \overline{C}^{T-t}\iota({\fe_T})\right).
\end{align*}
By combining the estimates of $I_1$ and $I_2$, the proof is complete.
\end{proof}

\begin{proposition}\label{prop:EstDiffRMF}
Let $\wt{\bm\fP}$, $\overline\Xi$ and $\overline\xi_T$ be as introduced in \cref{subsec:Scenarios}. Suppose \cref{assump:ActionDomain}, \cref{assump:QTight}, \cref{assump:GBasic} (i)-(iii) (v), \cref{assump:PCont}, \cref{assump:GCont} and \cref{assump:SymCont}. Then, with $\fE$ defined in \cref{subsec:ErrorTerms}, we have 
\begin{align*}
\left|\bm\fR(\wt{\bm\fP}; {\bm U}) - \sum_{t=1}^{T-1} \bar{c}^t \overline\bT^{\tilde\fp^1}_{t,{\overline\Xi}} \left( \overline S^{\tilde\fp^1}_{t,\overline \xi_t} \circ \overline\cS^{*}_{t+1,T,{\overline\Xi}} V_{\overline\xi_{T}} - \overline\cS^{*}_{t,T,{\overline\Xi}} V_{\overline\xi_{T}} \right)\right| \le {\fE}.
\end{align*}
\end{proposition}

\begin{proof}
Let $t=1,\dots,T-1$. Observe that, by \eqref{eq:DeffT} and \cref{lem:EstDiffS},
\begin{align*}
\left|\bm\bT^{\wt{\bm\fP}}_t \circ \bm S^{\wt{\bm\fP}_t}_t  \circ \bm\cS^{*\wt{\bm\fP}}_{t+1,T} {\bm U} - \bm\bT^{\wt{\bm\fP}}_t \circ \bm S^{\wt{\bm\fP}_t}_t  \circ \overline\cS^{*}_{t+1,T,{\overline\Xi}} V_{\overline\xi_{T}}\right| \le \bar{c} \bm\bT^{\wt{\bm\fP}}_{t+1}\left| \bm\cS^{*\wt{\bm\fP}}_{t+1,T} {\bm U} - \overline\cS^{*}_{t+1,T,{\overline\Xi}} V_{\overline\xi_{T}}\right|.
\end{align*}
In addition, by \eqref{eq:DefS} and \cref{lem:QCont}, we have
\begin{align*}
&\left| \bm S^{\wt{\bm\fP}_t}_t  \circ \overline\cS^{*}_{t+1,T,{\overline\Xi}}V_{\overline\xi_{T}} ({\bm x}) - \overline S^{\tilde\fp^1_t}_{t,\overline \xi_t} \circ \overline\cS^{*}_{t+1,T,{\overline\Xi}}V_{\overline\xi_{T}} ({\bm x}) \right|\\
&\quad= \left| \overline G^{\tilde\fp^1_{t,x^1,\overline\delta_{\bm x}}}_{t,\overline\delta_{\bm x}}  \circ \overline\cS^{*}_{t+1,T,{\overline\Xi}} V_{\overline\xi_{T}} ({\bm x}) - \overline G^{\tilde\fp^1_{t,x^1,\overline\xi_t}}_{t,\overline\xi_t}  \circ \overline\cS^{*}_{t+1,T,{\overline\Xi}} V_{\overline\xi_{T}} ({\bm x}) \right| \le \cC_{T-t}(\|V\|_\infty)(\zeta\vartheta+\zeta){\bm e}_t({\bm x}).
\end{align*}
By combining the above, we yield
\begin{multline*}
\left|\bm\bT^{\wt{\bm\fP}}_t \circ \bm S^{\wt{\bm\fP}_t}_t  \circ \bm\cS^{*\wt{\bm\fP}}_{t+1,T} {\bm U} - \bm\bT^{\wt{\bm\fP}}_t \circ \overline S^{\tilde\fp^1_t}_{t,{\overline\xi_t}} \circ \overline\cS^{*}_{t+1,T,{\overline\Xi}} V_{\overline\xi_{T}} ({\bm x})\right|\\
\le \bar{c} \bm\bT^{\wt{\bm\fP}}_{t+1}\left| \bm\cS^{*\wt{\bm\fP}}_{t+1,T} {\bm U} - \overline\cS^{*}_{t+1,T,{\overline\Xi}}V_{{\overline\Psi}}\right| + \cC_{T-t}(\|V\|_\infty) \bm\bT^{\wt{\bm\fP}}_t \circ (\zeta\vartheta+\zeta){\bm e}_t.
\end{multline*}
This together with \cref{prop:EstfTDiffcSstar} and \cref{prop:ProcConc} implies that
\begin{align*}
& \left| \bm\bT^{{\bm\fP}}_t \left(\bm S^{\wt{\bm\fP}_t}_t  \circ \bm\cS^{*{\bm\fP}}_{t+1,T} {\bm U}  - \bm\cS^{*{\bm\fP}}_{t,T} {\bm U}\right) - \bm\bT^{\wt{\bm\fP}}_t \left( \overline S^{\tilde\fp^1}_{t,{\overline\xi_t}} \circ \overline\cS^{*}_{t+1,T,{\overline\Xi}} V_{\overline\xi_{T}} - \overline\cS^{*}_{t,T,{\overline\Xi}} V_{\overline\xi_{T}} \right) \right| \\
&\quad\le \bar{c} (T-t)\left(\sum_{r=t+1}^{T-1} \bar{c}^{r-(t+1)} \cC_{T-r}(\|V\|_\infty) \zeta(\fe_{r}) + \bar{c}^{T-(t+1)}\iota({\fe_T})\right)\\
&\qquad\resizebox{0.92\hsize}{!}{$ + \cC_{T-t}(\|V\|_\infty) \big(\zeta\vartheta({\fe_t})+\zeta({\fe_t})\big) + (T+1-t) \left(\sum_{r=t}^{T-1} \bar{c}^{r-t} \cC_{T-r}(\|V\|_\infty) \zeta(\fe_{r}) + \bar{c}^{T-t} \iota({\fe_T})\right) $}.
\end{align*}
Finally, recall the definition of $\bm\fR$ and $\fE$ from \eqref{eq:DefStepExploitability} and \cref{subsec:ErrorTerms}, respectively, the proof is complete.
\end{proof}

\subsection{Proof of \cref{lem:VersionIndpi}}\label{subsec:Prooflem:VersionIndpi}
\begin{proof}[Proof of \cref{lem:VersionIndpi}]
Due to \cref{lem:pipsiUnique}, $\int_\bX\overline S^{{\overline\pi^{\psi_t}}}_{t,{\xi^{\psi_t}}} v(y) {\xi^{\psi_t}}(\dif y) = \int_\bX\overline S^{{\overline\pi^{\psi_t}}'}_{t,{\xi^{\psi_t}}} v(y) {\xi^{\psi_t}}(\dif y)$. Next, in view of \eqref{eq:DefMeanfS}, \cref{lem:EstDiffS} and \cref{lem:MFF}, expanding the left hand side below by telescope sum, we yield
\begin{align*}
\left|\overline\bS^{{\overline\fp^\Psi}'}_{T,{\Xi^{\Psi}}} v - \overline\bS^{{\overline\fp^\Psi}}_{T,{\Xi^{\Psi}}} v\right| 
&\le \sum_{t=1}^{T-1} \left|\overline\bS^{{\overline\fp^\Psi}}_{t,{\Xi^{\Psi}}}\circ\overline\cS^{{\overline\fp^\Psi}'}_{t,T,{\Xi^{\overline\Psi}}} v - \overline\bS^{{\overline\fp^\Psi}}_{t+1,{\Xi^{\overline\Psi}}}\circ\overline\cS^{{\overline\fp^\Psi}'}_{t+1,T,{\Xi^{\Psi}}} v\right| \\
&\le \sum_{t=1}^{T-1} \bar{c}^{t} \overline\bT^{{\overline\fp^\Psi}}_{t,{\Xi^{\Psi}}} \left|\overline\cS^{{\overline\fp^\Psi}'}_{t,T,{\Xi^{\overline\Psi}}} v - \overline S^{{\overline\pi^{\psi_t}}}_{t,{\xi^{\psi_t}}} \circ \overline\cS^{{\overline\fp^\Psi}'}_{t+1,T,{\Xi^{\Psi}}} v\right|\\
&= \sum_{t=1}^{T-1} \bar{c}^{t}\int_{\bX} \left|\overline\cS^{{\overline\fp^\Psi}'}_{t,T,{\Xi^{\overline\Psi}}} v(y) - \overline S^{{\overline\pi^{\psi_t}}}_{t,{\xi^{\psi_t}}} \circ \overline\cS^{{\overline\fp^\Psi}'}_{t+1,T,{\Xi^{\Psi}}} v(y)\right| {\xi^{\psi_t}}(\dif y) = 0.
\end{align*}
\end{proof}

\subsection{Proof of \cref{prop:EstEndExploitabilityCont}}\label{subsec:Proofprop:EstEndExploitabilityCont} 
\begin{proof}[Proof of \cref{prop:EstEndExploitabilityCont}(a)]
Note that 
\begin{align}\label{eq:EndExploitabilityTelescope}
\bm\cR({\bm\fP}; {\bm U}) &= \sum_{t=1}^{T-1} \left(\bm\bS^{{\bm\fP}}_{t+1} \circ \bm\cS^{*{\bm\fP}}_{t+1,T} {\bm U}  - \bm\bS^{{\bm\fP}}_{t} \circ \bm\cS^{*{\bm\fP}}_{t,T} {\bm U}\right)  = \sum_{t=1}^{T-1} \left(\bm\bS^{{\bm\fP}}_{t} \circ \bm S^{\bm\fP_t}_t \circ \bm\cS^{*{\bm\fP}}_{t+1,T} {\bm U}  - \bm\bS^{{\bm\fP}}_{t} \circ \bm\cS^{*{\bm\fP}}_{t,T} {\bm U}\right).
\end{align}
Then, by \cref{lem:EstDiffS}, we yields.
\begin{align*}
\bm\cR({\bm\fP}; {\bm U}) \le \sum_{t=1}^{T-1} \bar{c}^t \bm\bT^{{\bm\fP}}_t \left(\bm S^{{\bm\fP}_t}_t \circ \bm\cS^{*{\bm\fP}}_{t+1,T} {\bm U} - \bm\cS^{*{\bm\fP}}_{t,T} {\bm U}\right) = \bm\fR({\bm\fP}; {\bm U}).
\end{align*}
\end{proof}

\begin{proof}[Proof of \cref{prop:EstEndExploitabilityCont}(b)]
With a similar induction as in the proof of \cref{lem:EstDiffS}, for any ${\bm u}\ge{\bm u}'$ we have
\begin{align*}
\bm\bS^{{\bm\fP}}_{t}{\bm u} - \bm\bS^{{\bm\fP}}_{t}{\bm u}' \ge \underline c^{t} \bm\bT^{{\bm\fP}}_{t} \left(u -{\bm u}'\right), \quad t=1,\dots,T.
\end{align*}
This together with \eqref{eq:EndExploitabilityTelescope} implies that
\begin{align*}
\bm\cR({\bm\fP}; {\bm U}) \ge \sum_{t=1}^{T-1} \underline c^t \bm\bT^{{\bm\fP}}_t \left(\bm S^{{\bm\fP}_t}_t \circ \bm\cS^{*{\bm\fP}}_{t+1,T} {\bm U} - \bm\cS^{*{\bm\fP}}_{t,T} {\bm U}\right) \ge \left({\underline c}\middle/{\bar{c}}\right)^{T-1}\bm\fR({\bm\fP}; {\bm U}).
\end{align*}
\end{proof}

\begin{proof}[Proof of \cref{prop:EstEndExploitabilityCont}(c)]
To start with, observe that
\begin{align}\label{eq:EstEndExploitabilityCont}
&\left|{\bm\cR_{\vartheta}}(\wt{\bm\fP};\bm U) - \left( \overline\bS^{\tilde\fp^1}_{T,{\overline\Xi}} V_{\overline\xi_{T}} - \overline\bS^{*}_{T,{\overline\Xi}} V_{\overline\xi_{T}}  \right)\right|\nonumber\\
&\quad= \left| \left( \bm\bS^{\wt{\bm\fP}}_T {\bm U} - \inf_{\tilde\fp\in\wt{\Pi}_\vartheta}\left\{\bm\bS^{(\tilde\fp,\tilde\fp^2,\dots,\tilde\fp^N)}_{t} {\bm U} \right\} \right) - \left( \overline\bS^{\tilde\fp^1}_{T,{\overline\Xi}} V_{\overline\xi_{T}} - \inf_{\tilde\fp\in\wt{\Pi}_\vartheta}\left\{\overline\bS^{\tilde\fp}_{T,{\overline\Xi}} V_{\overline\xi_{T}}  \right\} \right) \right|\nonumber \\
&\quad\le \left| \bm\bS^{\wt{\bm\fP}}_T \bm U - \overline\bS^{\tilde\fp^1}_{T,{\overline\Xi}} V_{\overline\xi_{T}} \right| + \sup_{\tilde\fp\in\wt\Pi_\vartheta}\left| \bm\bS^{(\tilde\fp,\tilde\fp^2,\dots,\tilde\fp^N)}_T \bm U - \overline\bS^{\tilde\fp}_{T,{\overline\Xi}} V_{\overline\xi_{T}} \right| \nonumber\\
&\quad\le 2\left(\sum_{r=1}^{T-1}  \bar{c}^{r}\cC_{T-r}(\|V\|_\infty) \big(\zeta(\vartheta(\fe_{r})) + \zeta(\fe_{r}) \big) + \bar{c}^{T}\iota({\fe_T})\right),
\end{align}
where we have used \cref{assump:GBasic} (v), \eqref{eq:EstDifffS} in \cref{lem:EstDiffS} with $t=1$, and \cref{lem:EstfTDiffcS} in the last inequality. 

In order to proceed, in view of \cref{lem:Sstar}, we let $\fp^*$ and $\bar\fp^*$ be the optimal policies that attains $\bm\bS^{\wt{\bm\fP}}{\bm U}$ and $ \overline\bS^{*}_{T,{\overline\Xi}} V_{\overline\xi_{T}}$, respectively. Then,
\begin{align*}
&\left|\bm\cR(\wt{\bm\fP};\bm U) - \left( \overline\bS^{\tilde\fp^1}_{T,\overline\Xi} V_{\overline\xi_{T}} - \overline\bS^{*}_{T,\overline\Xi} V_{\overline\xi_{T}} \right)
\right| \le \left|\bm\bS^{\wt{\bm\fP}}_T {\bm U} - \overline\bS^{\tilde\fp^1}_{T,\overline\Xi} V_{\overline\xi_{T}} \right| + \left| \bm\bS^{*\wt{\bm\fP}}_{T} {\bm U} - \overline\bS^{*}_{T,\overline\Xi} V_{\overline\xi_{T}} \right|.
\end{align*}
By \cref{assump:GBasic} (v), \eqref{eq:EstDifffS} in \cref{lem:EstDiffS} with $t=1$, \cref{lem:EstfTDiffcS}, and \cref{prop:EstfTDiffcSstar}, we yield
\begin{align*}
&\left|\bm\cR(\wt{\bm\fP};V) - \left( \overline\bS^{\tilde\fp^1}_{T,{\overline\Xi}} V_{\overline\xi_{T}} - \overline\bS^{*}_{T,{\overline\Xi}} V_{\overline\xi_{T}} \right)
\right|\\
&\quad \le \sum_{r=1}^{T-1}  \bar{c}^{r}\cC_{T-r}(\|V\|_\infty) \big(\zeta(\vartheta(\fe_{r})) + \zeta(\fe_{r}) \big) + \bar{c}^{T}\iota({\fe_T}) + T\left(\sum_{r=1}^{T-1} \bar{c}^{r} \cC_{T-r}(\|V\|_\infty) \zeta(\fe_{r}) + \bar{c}^{T}\iota({\fe_T})\right)
\end{align*}
This together with \eqref{eq:EstEndExploitabilityCont} and triangle inequality completes the proof.
\end{proof}

\subsection{Proof of \eqref{eq:GoodDescription} in \cref{thm:MFApprox}}\label{subsec:Proofthm:MFApprox1}
\begin{proof}[Proof of \eqref{eq:GoodDescription}]
Recall the definition of $\breve{\bm e}_t$ from \eqref{eq:DefStateActionEmpErr}. The statement for $t=1$ follows immediately from \cref{lem:EmpMeasConc}. We now suppose $t\ge 2$. Note for any $\set{(x^k,a^k)}_{k=1}^N$, we have
\begin{align}\label{eq:BLBound}
\left\|\overline\delta_{(x^k,a^k)_{k=1}^N}-{\overline\psi}_t\right\|_{BL} - \inf_{(x^n,a^n)}\left\|\overline\delta_{(x^k,a^k)_{k=1}^N}-{\overline\psi}_t\right\|_{BL} \le \frac1N,\quad n=1,\dots,N,
\end{align}
and 
$(x^n,a^n) \mapsto \left\|\overline\delta_{(x^k,a^k)_{k=1}^N}-{\overline\psi}_t\right\|_{BL}$ is $N^{-1}$-Lipschitz continuous. We define 
\begin{align*}
\bar {\bm e}_t({\bm x}) := \int_{\bA^N}\left\|\overline\delta_{(x^k,a^k)_{k=1}^N}-{\overline\psi}_t\right\|_{BL} \left[\bigotimes_{n=1}^N\tilde\fp_t^n(x^n,\overline\xi_t)\right](\dif {\bm a}).
\end{align*}
The above together with \cref{lem:EstIndepInt} and the \ref{assump:SymCont} that $\tilde\fp^n$'s are $\vartheta$-symmetrically continuous, implies
\begin{align*}
&\left|\breve {\bm e}_t({\bm x}) - \bar {\bm e}_t({\bm x})\right|\\
&\quad\le \sum_{n=1}^N \sup_{\set{a_1,\dots,a_N}\setminus\set{a_n}}\left|\int_\bY \left\|\overline\delta_{(x^k,a^k)_{k=1}^N}-{\overline\psi}_t\right\|_{BL}\left({\tilde\fp^n_{t,x^n,\overline\delta_{\bm x}}}(\dif a^n) - \tilde\fp_t^n(x^n,\overline\xi_t)(\dif a^n)\right) \right|\\
&\quad\le \frac2N\sum_{n=1}^N\left\|{\tilde\fp^n_{t,x^n,\overline\delta_{\bm x}}} - \tilde\fp_t^n(x^n,\overline\xi_t)\right\|_{BL} \le 2\vartheta({\bm e}_t({\bm x})),\quad{\bm x}\in\bX^N,
\end{align*}
where we have replaced the integrand by the left hand side of \eqref{eq:BLBound} to get the second inequality. In view of the estimation above and \cref{prop:ProcConc}, what is left to bound is $\bm\bT^{\wt{\bm\fP}}_t\bar e^t$; the procedure is similar to the proof \cref{prop:ProcConc}. We continue to finish the proof for the sake of completeness. To this end we fix arbitrarily $i\in\set{1,\dots,N}$ and consider $\hat{\bm x}=(\hat x^1,\dots,\hat x^N)\in\bX^N$ such that $\hat x^k=x^k$ except the $i$-th entry. Note that
\begin{align*}
\left|\bar {\bm e}_t(\hat{\bm x}) - \int_{\bA^N}\left\|\overline\delta_{(x^k,a^k)_{k=1}^N}-{\overline\psi}_t\right\|_{BL} \left[\bigotimes_{k=1}^N\tilde\fp_t^k(\hat x^k,\overline\xi_t)\right](\dif {\bm a})\right| \le \frac1N.
\end{align*}
Then, by triangle inequality and a similar reasoning as above, we yield
\begin{align*}
&\left|\bar {\bm e}_t({\bm x}) - \bar {\bm e}_t(\hat{\bm x})\right|\\
&\quad\le \frac1N + \sup_{(x^k,a^k),k\neq i}\left|\int_{\bA^N}\left\|{\overline\delta_{(x^n,a^n)_{n=1}^N}}-{\overline\psi}_t\right\|_{BL} \left({\tilde\fp^i_{t,x^i,\overline\xi_t}}(\dif a^i)-{\tilde\fp^i_{t,\hat x^i,\overline\xi_t}}(\dif a^i)\right)\right| \le \frac{3}{2N}. 
\end{align*}
Recall the definitions of $\overline{\bm T}$ and $\check{\bm\cT}$ from and below \eqref{eq:DefMeancTN}. Since the above is true for any $i\in\set{1,\dots,N}$, by \eqref{eq:DecompMeanT} and \cref{lem:EstIndepInt}, we have
\begin{align}\label{eq:EstMeancTinfMeancT}
\left|{\check{\bm\cT}}^{\wt{\bm\fP}}_{s,t,{\overline\Xi}}\bar {\bm e}_t({\bm x}) - \inf_{x^i\in\bX}{\check{\bm\cT}}^{\wt{\bm\fP}}_{s,t,{\overline\Xi}}\bar {\bm e}_t({\bm x})\right| \le \frac3{2N},\quad {\bm x}\in\bX^N,\;i=1,\dots,N.
\end{align}
Next, recall the definition of $\bm\bT$ from \eqref{eq:DeffT} and note that 
\begin{align*}
\bm\bT^{\wt{\bm\fP}}_{t}\bar {\bm e}_t \le \mathring{\bm T} \circ \overline{\bm \cT}^{\wt{\bm\fP}}_{1,t,{\overline\Xi}}\bar {\bm e}_t + \sum_{r=1}^{t-1} \bm\bT^{\wt{\bm\fP}}_{r} \left| \bm T^{\wt{\bm\fP}}_{r+1}  \circ \check{\bm\cT}^{\wt{\bm\fP}}_{r+1,t,{\overline\Xi}}\bar {\bm e}_t - \check{\bm\cT}^{\wt{\bm\fP}}_{r,t,{\overline\Xi}}\bar {\bm e}_t \right|.
\end{align*}
Due to \cref{lem:EmpMeasConc} and \cref{lem:sigmaTight}, for the first term in the right hand side above, we have $\mathring{\bm T} \circ \overline{\bm T}^{\wt{\bm\fP}}_{1,t,{\overline\Xi}}\bar {\bm e}_t \le \fr_{\fK\times\bA}(N)$. We proceed to estimate the quantity with absolute sign in the second term, 
\begin{align*}
&\left| \bm T^{\wt{\bm\fP}}_{r} \circ \check{\bm\cT}^{\wt{\bm\fP}}_{r+1,t,{\overline\Xi}}\bar {\bm e}_t ({\bm x}) - \check{\bm\cT}^{\wt{\bm\fP}}_{r,t,{\overline\Xi}}\bar {\bm e}_t ({\bm x}) \right|\\
&\quad = \left| \int_{\bX^N} \check{\bm\cT}^{\wt{\bm\fP}}_{r+1,t,{\overline\Xi}}\bar {\bm e}_t({\bm y}) \left( \left[\bigotimes Q^{{\tilde\fp^n_{t,x^n,\overline\delta_{\bm x}}}}_{t,x^n,\overline\delta_{\bm x}}\right](\dif{\bm y}) - \left[\bigotimes Q^{{\tilde\fp^n_{t,x^n,\overline\xi_t}}}_{t,x^n,\overline\xi_t}\right](\dif{\bm y}) \right) \right|\\
&\quad\le \sum_{n=1}^N \sup_{\set{y^1,\dots,y^N}\setminus\set{y^N}} \left| \int_{\bX^N} \check{\bm\cT}^{\wt{\bm\fP}}_{r+1,t,{\overline\Xi}}\bar {\bm e}_t({\bm y}) \left( Q^{{\tilde\fp^n_{t,x^n,\overline\delta_{\bm x}}}}_{t,x^n,\overline\delta_{\bm x}}(\dif y^n) - Q^{{\tilde\fp^n_{t,x^n,\overline\xi_t}}}_{t,x^n,\overline\xi_t}(\dif y^n) \right) \right|
\end{align*}
where we have used \cref{lem:EstIndepInt} in the last inequality. It follows from \cref{lem:QCont} and \eqref{eq:EstMeancTinfMeancT} that, for $L\ge 1+\eta(2L^{-1})$, 
\begin{align*}
\left| \bm T^{\wt{\bm\fP}}_{r} \circ \check{\bm\cT}^{\wt{\bm\fP}}_{r+1,t,{\overline\Xi}}\bar {\bm e}_t ({\bm x}) - \check{\bm\cT}^{\wt{\bm\fP}}_{r,t,{\overline\Xi}}\bar {\bm e}_t ({\bm x}) \right| \le 3 \left((L\vartheta+\eta)e_t({\bm x})+\eta(2L^{-1})\right).
\end{align*}
Finally, by combining the above, we yield 
\begin{align*}
\bm\bT^{\wt{\bm\fP}}_{t}\bar {\bm e}_t \le \fr_{\fK\times\bA}(N) + 3\sum_{r=1}^{t-1} \bm\bT^{\wt{\bm\fP}}_{r} \circ (L\vartheta+\eta)e_{t-1} + 3(t-1)\eta(2L^{-1}).
\end{align*}
Invoking \cref{prop:ProcConc}, the proof is complete.
\end{proof}

\subsection{Proof of \eqref{eq:NoMiss} in \cref{thm:MFApprox}}\label{subsec:Proofthm:MFApprox2}
\begin{proof}[Proof of \eqref{eq:NoMiss}] 
Recall the definition of ${\bm\fR^n}$ and $\overline\fR$ from \cref{eq:DefStepExploitability} and \eqref{eq:DefMeanStepExploitability}, respectively. In view of \cref{lem:MFF}, below we always use $\overline\xi_t$ and $\overline\Xi$ instead of $\xi^{\overline\psi_t}$ and $\Xi^{\overline\psi_t}$. By \cref{prop:EstDiffRMF}, we have
\begin{align}\label{eq:EstDiffRMF}
\left|{\bm\fR^n}({\bm\fP}; {\bm U}) - \sum_{t=1}^{T-1} \bar{c}^t \overline\bT^{\tilde\fp^n}_{t,{\overline\Xi}} \left( \overline S^{\tilde\fp^n}_{t,{\overline\xi_t}} \circ \overline\cS^{*}_{t+1,T,{\overline\Xi}} V_{{\overline\Psi}} - \overline\cS^{*}_{t,T,{\overline\Xi}} V_{{\overline\Psi}} \right)\right| \le {\fE}.
\end{align}
Therefore, it is sufficient to show
\begin{align}\label{eq:IneqSuff}
\frac{1}{N}\sum_{n=1}^N\sum_{t=1}^{T-1} \bar{c}^t \overline\bT^{\tilde\fp^n}_{t,{\overline\Xi}} \left( \overline S^{\tilde\fp^n}_{t,{\overline\xi_t}} \circ \overline\cS^{*}_{t+1,T,{\overline\Xi}} V_{{\overline\Psi}} - \overline\cS^{*}_{t,T,{\overline\Xi}} V_{{\overline\Psi}} \right) \ge \overline\fR(\overline\Psi;V).
\end{align}
To this end notice that, by \cref{lem:MeanPsiMFF},
\begin{align}\label{eq:IndAveMeanfTS} 
\frac1N\sum_{n=1}\overline\bT^{\tilde\fp^n}_{t,{\overline\Xi}} \circ \overline\cS^{*}_{t,T,{\overline\Xi}}V_{{\overline\Psi}} = \int_{\bX} \overline\cS^{*}_{t,T,{\overline\Xi}}V_{{\overline\Psi}}(y)\,\xi^{{\overline\psi}_t}(\dif y) =  \overline\bT^{\overline\fp^{\overline\Psi}}_{t,\overline\Xi} \circ  \overline{\cS}^{*}_{t,T,\overline\Xi} V_{{\overline\Psi}}
\end{align}
Next, let $[N]:=\set{1,\dots,N}$ and consider the probability space $([N]\times\bX,2^{[N]}\otimes\cB(\bX),\rho)$, where $\rho$ satisfies $\rho(\set{n}\times B) := \frac1N \overline\bT^{\tilde\fp^n}_{t,\overline\Xi}\1_B$ for any $B\in\cB(\bX)$. We write $\rho^n(B):=\rho(\set{n}\times B)$. Let $q$ be the regular conditional probability of $Z(n,x):=n$ given $\set{[N],\emptyset}\otimes\cB(\bX)$. Note $q$ is constant in $n$ and $q_x$ can be treated as a $N$-dimensional probability vector. We subsequently write $q^n(x):=q_x(\set{n})$. By partial averaging property\footnote{In terms of expectation under $\rho$, it writes $\bE\left(H \1_{\set{n}}(Z) \right) = \bE\left(H \bE\left(\1_{\set{n}}(Z)\big|\set{[N],\emptyset}\otimes\cB(\bX)\right) \right)$, where $H(n,x)=h(x)$.} 
and \cref{lem:MeanPsiMFF}, for $n\in [N]$, $q^n$ satisfies
\begin{align}\label{eq:IndPAP}
\frac1N \overline\bT^{\tilde\fp^n}_{t,\overline\Xi}h = \sum_{k=1}^N \int_{\bX}h(y) q^n(y) \rho^k(\dif y)  = \int_{\bX}h(y) q^n(y) \overline\xi_t(\dif y),\; h\in B_b(\bX).
\end{align}
For $\tilde\pi^1,\dots,\tilde\pi^N\in\wt\Pi$, let $\left[\sum_{n=1}^N q^n\tilde\pi^n\right]_{y,\xi}:=\sum_{n=1}^N q^n(y)\tilde\pi^n_{y,\xi}$ and note that it is $\cB(\bX\times\cP(\bX))$-$\cE(\cP(\bA))$ measurable. Recall the definition of $\overline S^{\tilde{\bm\pi}}_{t,\xi}$ from \eqref{eq:DefMeanS}. It follows from \eqref{eq:IndPAP}, \cref{assump:GBasic} (vi) and generalized Jensen's inequality \cite{Ting1975Generalized} that 
\begin{align}\label{eq:EstAveMeanfTS}
\frac1N\sum_{n=1}^N \overline\bT^{\tilde\fp^n}_{t,\overline\Xi} \circ \overline S^{\tilde\fp^n_t}_{t,\overline\xi_t} \circ \overline\cS^{*}_{t+1,T,\overline\Xi}V_{{\overline\Psi}} &= \sum_{n=1}^N\int_{\bX}   \overline S^{\tilde\fp^n_t}_{t,\overline\xi_t} \circ \overline\cS^{*}_{t+1,T,\overline\Xi}V_{{\overline\Psi}}(y)\; q^n(y) \overline\xi_t(\dif y)\nonumber\\
&\quad\ge \int_{\bX} \overline S^{\sum_{n=1}^N q^n \tilde\fp^n_t}_{t,\xi^{{\overline\psi}_t}} \circ \overline\cS^{*}_{t+1,T,\overline\Xi}V_{{\overline\Psi}}(y) \overline\xi_t(\dif y).
\end{align}
On the other hand, note that for any $B\in\cB(\bX)$,
\begin{align*}
&\int_{\bX} \1_{B}(y) \int_{\bA} \1_{A}(a) \left[\sum_{n=1}^N q^n(y)\tilde\fp^n_{t,y,\overline\xi_t}\right](\dif a)\, \overline\xi_t(\dif y)\\
&\quad= \sum_{n=1}^N \int_{\bX} \1_{B}(y)\tilde\fp^n_{t,y,\overline\xi_t}(A)\; q^n(y)  \overline\xi_t(\dif y) = \frac1N\sum_{n=1}^N \int_{\bX} \1_{B}(y)\tilde\fp^n_{t,y,\overline\xi_t}(A)\; \overline\bQ^{\tilde\fp^n}_{t,\overline\Xi}(\dif y) = {\overline\psi}_t(B\times A),
\end{align*}
where in the second equality we have used \eqref{eq:IndPAP} and definition of $\overline\bQ^{\tilde\fp^n}_{t,\Xi}$ from below \eqref{eq:DefMeanfT}, and the last equality follows from the definition of $\overline\Psi$ in \eqref{eq:DefPsiBar}. Therefore, by \cref{lem:pipsiUnique},  $y\mapsto\left[\sum_{n=1}^N q^n(y)\tilde\fp^n_{t,y,\overline\xi_t}\right]$ is a version of $\overline\pi^{{\overline\psi}_t}$. This together with \eqref{eq:EstAveMeanfTS}, \cref{lem:VersionIndpi} and \cref{lem:MFF} implies that 
\begin{align}\label{eq:EstAveMeanfTSRe} 
\frac1N\sum_{n=1} \overline\bT^{\tilde\fp^n}_{t,\overline\Xi} \circ \overline S^{\tilde\fp^n}_{t,\overline\xi_t} \circ \overline\cS^{*}_{t+1,T,\overline\Xi}V_{{\overline\Psi}} \ge \overline\bT^{\overline\fp^{\overline\Psi}}_{t,\overline\Xi} \circ \overline S^{\overline\pi^{\overline{\psi}_t}}_{t,\overline\xi_t} \circ \overline{\cS}^{*}_{t+1,T,\overline\Xi} V_{{\overline\Psi}}.
\end{align}
Finally, by combining \eqref{eq:IndAveMeanfTS} and \eqref{eq:EstAveMeanfTSRe}, we have verified \eqref{eq:IneqSuff}.
\end{proof}

\subsection{Proof of \cref{thm:MFConstr}}\label{Proofthm:MFConstr}
\begin{proof}[Proof of \cref{thm:MFConstr}]
To start with note that $\overline{\bm\fP}^\Psi$ is $0$-symmetrically continuous. Let $\overline\Xi$ and ${\overline\Psi}$ be induced by $\overline{\bm\fP}^\Psi$ as in \cref{subsec:Scenarios} but with $\wt{\bm\fP}=\overline{\bm\fP}^{\Psi}$.

We claim that ${\overline\Psi}=\Psi$.
Recall the definition of $\overline\bQ^{\tilde\fp}_{t,\overline\Xi}$ from below \eqref{eq:DefMeanfT}. By \cref{lem:MFF}, we have $\overline\bQ^{{\overline\fp^\Psi}^n}_{t,{\Xi^{\Psi}}}=\xi^{\psi_{t}}$ for $n=1,\dots,N$ and $t=1,\dots,T-1$.  It follows from \eqref{eq:Defxibar} and induction that $\overline\xi_{t}={\xi^{\psi_t}}$ for all $n$ and $t$. In addition, by \cref{lem:MeanPsiMFF}, $\xi^{\overline\psi_t}=\overline\xi_t=\xi^{\psi_t}$ and ${\overline\Psi}$ is a mean field flow. This together with the homogeneous policies and \eqref{eq:DefPsiBar} implies $\overline\psi_t(B\times A)=\psi_t(B\times A)$ for $B\in\cB(\bX)$ and $A\in\cB(\bA)$ for each $t$. An application of monotone class lemma (\cite[Section 4.4, Lemma 4.13]{Aliprantis2006book}) shows $\overline\psi_t=\psi_t$ for each $t$, and thus ${\overline\Psi}=\Psi$.

Recall the definitions of $\bm\fR^n$ and $\overline\fR$ from \eqref{eq:DefPermExploitabilities} and \eqref{eq:DefMeanStepExploitability}, respectively, invoking \cref{prop:EstDiffRMF}, we yield 
\begin{align*}
\left|{\bm\fR^n}(\overline{\bm\fP}^\Psi;{\bm U}) - \overline\fR(\Psi;V)\right| = \left|{\bm\fR^n}(\overline{\bm\fP}_\Psi;{\bm U}) - \overline\fR({\overline\Psi};V)\right| \le {\fE^0},
\end{align*}
where $\fE^0$ is introduced in \cref{subsec:ErrorTerms}.

In order to finish the proof, by \eqref{eq:DeffS}, \eqref{eq:DeffSstar}, \eqref{eq:DefMeanfS} and \eqref{eq:DefMeanfSstar}, we have
\begin{align*}
\left|\bm{\cR^n}(\overline{\bm\fP}^\Psi;{\bm U}) - \overline\cR(\Psi;V)\right| &\le \left|\mathring{\bm G} \circ\bm\cS^{\overline{\bm\fP}^\Psi}_{1,T}{\bm U} - \mathring{\bm G} \circ \overline\cS^{{\overline\fp^\Psi}}_{T,{\Xi^{\Psi}}}  \right| + \left| \mathring{\bm G} \circ \bm\cS^{*,\overline{\bm\fP}^\Psi}_{1,T}{\bm U} - \mathring{\bm G} \circ \overline\cS^{*}_{T,{\Xi^{\Psi}}} \right|.
\end{align*}
Due to what is proved above, we can replace $\Xi^\Psi$ with $\overline\Xi$. Invoking \cref{assump:GBasic} (iii), \cref{lem:EstfTDiffcS} and \cref{prop:EstfTDiffcSstar} (with $\vartheta^1\equiv0$ and $\vartheta\equiv 0$ thus $\fe_t=\fe^0_t$), the proof is complete.
\end{proof}

\subsection{Proofs of \cref{thm:ZeroExploitability}}\label{subsec:ProofMFEandOthers}

To prepare for the proof of \cref{thm:ZeroExploitability}, we introduce a few more notations. Note that $\cP(\bA)$ with weak topology is a Polish space because $\bA$ is (cf. \cite[Section 15.3, Theorem 15.15]{Aliprantis2006book}). It follows that $\bX\times\cP(\bA)$ is also Polish. We let $\cP(\bX\times\cP(\bA))$ be the set of probability measure on $\cB(\bX)\otimes\cB(\cP(\bA))$ endowed with weak topology. Consider $\fm=(\mu_1,\dots,\mu_{T-1})\in\cP(\bX\times\cP(\bA))^{T-1}$. We equip $\cP(\bX\times\cP(\bA))^{T-1}$ with product topology. 

For $\mu\in\cP(\bX\times\cP(\bA))$, we let ${\psi^\mu}\in\cP(\bX\times\bA)$ satisfy  
\begin{align}\label{eq:Defpsifu}
{\psi^\mu}(B\times A) = \int_{\bX\times\cP(\bA)} \int_{\bA} \1_{B}(y) \1_{A}(a) \lambda(\dif a)  \mu(\dif y \dif\lambda), \quad B\in\cB(\bX),\, A\in\cB(\bA),
\end{align}
and denote ${\Psi^\fm} = (\psi^{\mu_1},\dots,\psi^{\mu_{T-1}})$. Thanks to Carath\'eodory extension theorem (cf. \cite[Section 10.4, Theorem 10.23]{Aliprantis2006book}), it is valid to define a measure on $\cB(\bX\times\bA)$ by imposing \eqref{eq:Defpsifu}. Using monotone class lemma \cite[Section 4.4, Lemma 4.13]{Aliprantis2006book}, simple function approximation \cite[Section 4.7, Theorem 4.36]{Aliprantis2006book} and monotone convergence \cite[Section 11.4, Theorem 11.18]{Aliprantis2006book}, for any $f\in B(\bX\times\bA)$ we have
\begin{align}\label{eq:PsiUpsilon}
\int_{\bX\times\bA}f(x,a){\psi^\mu}(\dif x\dif a) = \int_{\bX\times\cP(\bA)} \int_{\bA} f(x,a) \lambda(\dif a)  \mu(\dif a \dif\lambda).
\end{align}
It is well-known that $\cP(\bX\times\cP(\bA))^{T-1}$ endowed with product of weak topologies is a locally convex topological vector space \cite[Section 5.13, Theorem 5.73]{Aliprantis2006book}.

Recall the notations introduced in \cref{assump:QTightsigma}. In what follows, we let $\check K_{t,i} := \check K_{\lceil\check c^t i\rceil}$ and $\check\fK_t:=(\check K_{t,i})_{i\in\bN}$. We let ${\bM}_0$ be a subset of $\cP(\bX\times\cP(\bA))^{T-1}$ such that for any $\fm=(\mu_1,\dots,\mu_{T-1})\in{\bM}_0$. Note that $\xi^{\mu_1}=\mathring\xi$ and $\xi^{\mu_t}$ is $\check\fK_t$-tight for $t\ge 2$ by \cref{lem:sigmaTight}.\footnote{We continue using $\xi$ for the marginal measure on $\cB(\bX)$.} In addition, under \cref{assump:ActionDomain}, due to the compactness of $\cP(\bA)$, we yield that $\mu_t$ is $\check\fK\times\set{\cP(\bA)}$-tight, for $t=1,\dots,T-1$. It follows that $\bM_0$ is $(\check\fK_T\times\set{\cP(\bA)})^{T-1}$-uniformly tight, and thus pre-compact due to Prokhorov's theorem \cite[Section 15.5. Theorem 15.22]{Aliprantis2006book}. Therefore, the completion of $\bM_0$, denoted by ${\bM}$, is compact. We also note that ${\bM}_0$ is convex and so is ${\bM}$. 

In what follows, we consider $\fm=(\mu_1,\dots,\mu_{T-1})$ and $\fn=(\nu_1,\dots,\nu_{T-1})$. For $\fm\in{\bM}$, we define
\begin{align}
\Gamma\fm &:= \bigg\{\fn\in{\bM}:\;  \xi^{\nu_{t+1}}(B) = \int_{\bX\times\cP(\bA)}\int_{\bA}P_{t,x,\xi^{\mu_t},a}(B)\lambda(\dif a)\mu_t(\dif x\dif\lambda), B\in\cB(\bX),\label{eq:IndGammaMarginal}\\
&\qquad \int_{\bX\times\cP(\bA)} \left(\overline G^{\lambda}_{t,\xi^{\mu_t}} \circ \overline{\cS}^{*}_{t+1,T,{\Xi^\fm}} V_{\fm}(y) - \overline{\cS}^{*}_{t,T,{\Xi^\fm}} V_{\fm}(y)\right) \nu_t(\dif y \dif\lambda) = 0  \bigg\}, \label{eq:IndGammaOpt}
\end{align}
where we recall \eqref{eq:DefVM} and \eqref{eq:PsiUpsilon}, and define with a sight abuse of notation
\begin{align}\label{eq:DefVfu}
V_\fm(x) := V_{{\Psi^\fm}}(x) = V\left(x,\int_{\bX\times\cP(\bA)} \int_{\bA} P_{T-1,y,\xi_{\mu_{T-1}},a}(\cdot)\lambda(\dif a)\; \mu_{T-1}(\dif y\dif\lambda)\right).
\end{align}
We are aware that $\Psi^{\fn}$ need not to be a mean field flow even if $\fn\in\Gamma\fm$.

The lemmas below will prove useful.
\begin{lemma}\label{lem:JointContGSS}
Suppose \cref{assump:ActionDomain}, \cref{assump:GBasic} (i) (ii) (iii), \cref{assump:PCont2} and \cref{assump:GCont2}. Let $V\in C_b(\bX\times\cP(\bX))$. Then, $(x,\mu_{T-1})\mapsto V_\fm(x)$ is continuous. Moreover, for $t=1,\dots,T-1$, both
\begin{gather*}
(\xi,\lambda,\mu_{t+1},\dots,\mu_{T-1},x)\mapsto\overline G^{\lambda}_{t,\xi} \circ \overline{\cS}^{*}_{t+1,T,{\Xi^\fm}} V_{\fm}(x)\\
(\mu_{t},\dots,\mu_{T-1},x)\mapsto \overline{\cS}^{*}_{t,T,{\Xi^\fm}} V_{\fm}(x)
\end{gather*}
are continuous.
\end{lemma}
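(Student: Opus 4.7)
I would begin with the first statement. Since $\upsilon\mapsto\xi_\upsilon$ is weakly continuous from $\cP(\bX\times\Lambda)$ to $\Xi$, it suffices to show that the measure
\[
M(\upsilon_{T-1}):=\int_{\bX\times\Lambda}\int_{\bA}[P_{T-1}(y,\xi_{\upsilon_{T-1}},a)](\cdot)\,\lambda(\dif a)\,\upsilon_{T-1}(\dif y\dif\lambda)
\]
depends weakly continuously on $\upsilon_{T-1}$; then joint continuity of $V\in C_b(\bX\times\Xi)$ transfers to $V_\fv(x)=V(x,M(\upsilon_{T-1}))$. For any $h\in C_b(\bX)$, $(y,\xi,a)\mapsto\int_\bX h(z)[P_{T-1}(y,\xi,a)](\dif z)$ is bounded continuous by \cref{assump:PCont2}, and since $\bA$ is compact (\cref{assump:ActionDomain}), the averaged map $\Phi_h(y,\xi,\lambda):=\int_\bA\int_\bX h(z)[P_{T-1}(y,\xi,a)](\dif z)\,\lambda(\dif a)$ is jointly bounded continuous. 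Given $\upsilon_{T-1}^n\to\upsilon_{T-1}$ weakly, the sequence is tight by \cref{assump:MomentTight} combined with compactness of $\Lambda$, and $\xi_{\upsilon_{T-1}^n}\to\xi_{\upsilon_{T-1}}$. Splitting $\int\Phi_h(y,\xi_{\upsilon_{T-1}^n},\lambda)\,\upsilon_{T-1}^n(\dif y\dif\lambda)$ into a tight core (on which $\Phi_h(\cdot,\xi_{\upsilon_{T-1}^n},\cdot)\to\Phi_h(\cdot,\xi_{\upsilon_{T-1}},\cdot)$ uniformly by Heine--Cantor) plus a uniformly small tail yields $M(\upsilon_{T-1}^n)\to M(\upsilon_{T-1})$ weakly.

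\textbf{Backward induction.} For the remaining two displays I would proceed by backward induction on $t$ from $T$ down to $1$, the base case $t=T$ being $\overline\cS^{*}_{T,T,\emptyset}V_\fv=V_\fv$ handled above. Given that the third display holds at $t+1$, I would first prove the second display at $t$ and then deduce the third display at $t$ by passing to the infimum over $\Lambda$. Write $v_\fv(x):=\overline\cS^{*}_{t+1,T,(\xi_{\upsilon_{t+1}},\dots,\xi_{\upsilon_{T-1}})}V_\fv(x)$; by the inductive hypothesis $v_\fv$ is jointly continuous, and by \cref{assump:GBasic}~(ii) applied along the composition it is uniformly bounded in $\fv$. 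Along a convergent sequence $(\xi^n,\lambda^n,\fv^n,x^n)\to(\xi,\lambda,\fv,x)$, I would decompose
\[
\bigl|\overline G^{\lambda^n}_{t,\xi^n}v_{\fv^n}(x^n)-\overline G^{\lambda}_{t,\xi}v_{\fv}(x)\bigr|\le\bigl|\overline G^{\lambda^n}_{t,\xi^n}(v_{\fv^n}-v_\fv)(x^n)\bigr|+\bigl|\overline G^{\lambda^n}_{t,\xi^n}v_\fv(x^n)-\overline G^{\lambda}_{t,\xi}v_\fv(x)\bigr|.
\]
The second summand vanishes by \cref{assump:GCont2}. \cref{assump:GBasic}~(iii) bounds the first summand by $\overline C\int|v_{\fv^n}-v_\fv|\,\dif Q^{\lambda^n}_{t,x^n,\xi^n}$. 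The inductive hypothesis delivers $v_{\fv^n}\to v_\fv$ uniformly on compact subsets of $\bX$ (joint continuity on a compact neighbourhood of $(\fv,x)$ in $\fU\times\bX$), while $\{Q^{\lambda^n}_{t,x^n,\xi^n}\}$ is tight by \cref{assump:MomentTight} (since $\int\sigma\,\dif Q^{\lambda^n}_{t,x^n,\xi^n}\le\sigma(x^n)$ remains bounded along $x^n\to x$). A tight-core-plus-tail split drives the first summand to zero, establishing the second display at $t$. The third display at $t$ then follows from $\overline\cS^{*}_{t,T,(\xi_{\upsilon_t},\dots)}V_\fv(x)=\inf_{\lambda\in\Lambda}\overline G^\lambda_{t,\xi_{\upsilon_t}}v_\fv(x)$: the integrand is jointly continuous by the second display combined with weak continuity of $\upsilon_t\mapsto\xi_{\upsilon_t}$, so Berge's maximum theorem applied on the compact $\Lambda$ (\cref{assump:ActionDomain}) yields continuity of the infimum.

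\textbf{Main obstacle.} The core difficulty is that the value function at each step depends on the mean field flow $\fv$ through multiple channels simultaneously --- through every $\xi_{\upsilon_s}$ with $s\ge t$ and through $V_\fv$ via $\upsilon_{T-1}$ --- and plain weak convergence of $\fv^n$ is too weak to pass directly through the operators $\overline G^{\lambda}_{t,\xi}$. The crucial device for overcoming this is the coupling of the Lipschitz-in-$L^1$ estimate of \cref{assump:GBasic}~(iii) with the uniform moment tightness of \cref{assump:MomentTight}: together they convert the uniform-on-compacts convergence of the value function (which is all one obtains from joint continuity and precompactness) into integrated convergence against the transition kernel $Q^{\lambda^n}_{t,x^n,\xi^n}$, whose tail mass is controlled by $\sigma$. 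Without the moment condition this reduction would fail, as mass in $Q$ could escape along the sequence.
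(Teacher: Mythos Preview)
Your proof is correct and follows essentially the same route as the paper: backward induction, the triangle-inequality split into a term handled by \cref{assump:GCont2} and a term bounded via \cref{assump:GBasic}~(iii) by $\overline C\int|v_{\fv^n}-v_\fv|\,dQ^{\lambda^n}_{t,x^n,\xi^n}$, followed by Berge's theorem for the infimum. The paper packages your ``tight-core-plus-tail'' argument into its \cref{lem:ConvInvVaryingMeas} (which passes through $\delta_{y^n}\otimes\mu^n\Rightarrow\delta_{y^0}\otimes\mu^0$), but the content is the same.

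One correction to your ``Main obstacle'' paragraph: the lemma does \emph{not} assume \cref{assump:MomentTight}, and the paper's proof does not use it. The tightness you need---of $\{\upsilon_{T-1}^n\}$ in the first step and of $\{Q^{\lambda^n}_{t,x^n,\xi^n}\}$ in the inductive step---already follows from weak convergence alone (Prokhorov in a Polish space, using \cref{lem:QCont2} for the latter). So the claim that ``without the moment condition this reduction would fail'' is not right; the crucial ingredients are \cref{assump:GBasic}~(iii) together with the weak continuity of $Q$ from \cref{assump:PCont2}, not the moment bound.
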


\begin{proof}
In what follows, we consider $\fm^k$ and $\fm^0$ such that $\mu^k_{t}$ converge weakly to $\mu^0_{t}$ as $k\to\infty$.\footnote{In the current setting where we aim to prove the existence of MFE, superscript is no longer related to players in the $N$pG.} Note that weak convergence of a sequence of joint probability measures implies weak convergence of the marginal probability measures. Therefore, $\lim_{k\to\infty}\xi^{\mu^k_t}=\xi^{\mu^0_t}$. 

We first show the continuity of $(x,\mu_{T-1})\mapsto V_\fm(x)$. Note that under \cref{assump:PCont2}, for any $h\in C_b(\bX)$, $(y,\xi,a)\mapsto\int_{\bX} h(z) P_{T-1,y,\xi,a}(\dif z)$ is continuous. By \cref{lem:ConvInvVaryingMeas}, we yield the continuity of $(y,\xi,\lambda)\mapsto\int_{\bA} \int_{\bX} h(z) P_{T-1,y,\xi,a}(\dif z) \lambda(\dif a)$. By \cref{lem:ConvInvVaryingMeas} again, we have 
\begin{align*}
&\lim_{k\to\infty}\int_{\bX\times\cP(\bA)}\int_{\bA} \int_{\bX} h(z)P_{T-1,y,\xi^{\mu_{T-1}^k},a}(\dif z)\; \lambda(\dif a)\; \mu_{T-1}^k(\dif y\dif\lambda)\\
&\quad= \int_{\bX\times\cP(\bA)}\int_{\bA} \int_{\bX} h(z)P_{T-1,y,\xi^{\mu_{T-1}^0},a}(\dif z)\; \lambda(\dif a)\; \mu_{T-1}^0(\dif y\dif\lambda).
\end{align*}
This proves the continuity of $(x,\mu_{T-1})\mapsto V_\fm(x)$. 

Next, we proceed by backward induction. Suppose for some $t=1,\dots,T-1$ we have $(\mu_{t+1},\dots,\mu_{T-1},x) \mapsto \overline{\cS}^{*}_{t+1,T,{\Xi^\fm}} V_{\fm}(x)$ is continuous, where we recall $\overline{\cS}^{*}_{T,T,{\Xi^\fm}}$ is the identity operator. Below we also consider $x^k$ and $\xi^k$ such that $\lim_{k\to\infty} x^k=x^0$ and $\lim_{k\to\infty} \xi^k=\xi^0$. Then, by  \cref{assump:GBasic} (iii),
\begin{align*}
&\left| \overline G^{\lambda^0}_{t,\xi^0} \circ \overline{\cS}^{*}_{t+1,T,{\Xi^{\fm^0}}} V_{\fm^0}(x^0) - \overline G^{\lambda^k}_{t,\xi^k} \circ \overline{\cS}^{*}_{t+1,T,{\Xi^{\fm^k}}} V_{\fm^k}(x^k) \right|\\
&\quad\le \left| \overline G^{\lambda^0}_{t,\xi^0} \circ \overline{\cS}^{*}_{t+1,T,{\Xi^{\fm^0}}} V_{\fm^0}(x^k) - \overline G^{\lambda^k}_{t,\xi^k} \circ \overline{\cS}^{*}_{t+1,T,{\Xi^{\fm^0}}} V_{\fm^0}(x)(x^k) \right| \\
&\qquad+ \bar{c} \int_{\bX} \left| \overline{\cS}^{*}_{t+1,T,{\Xi^{\fm^0}}} V_{\fm^0}(y) - \overline{\cS}^{*}_{t+1,T,{\Xi^{\fm^k}}} V_{\fm^k}(y) \right| Q^{\lambda^k}_{t,x^k,\xi^k}(\dif y).
\end{align*}
Due to the induction hypothesis, Assumption \ref{assump:GCont2}, \cref{lem:QCont2} and \cref{lem:ConvInvVaryingMeas}, the right hand side above vanishes as $k\to\infty$. We have shown the continuity of $(\xi,\lambda,\mu_{t+1},\dots,\mu_{T-1},x)\mapsto\overline G^{\lambda}_{t,\xi} \circ \overline{\cS}^{*}_{t+1,T,{\Xi^\fm}} V_{\fm}(x)$. 

Finally, by combining the above with \cref{assump:ActionDomain} and \cite[Section 17.5, Lemma 17.29 and Lemma 17.30]{Aliprantis2006book}, we obtain the continuity of $(\mu_{t},\dots,\mu_{T-1},x)\mapsto \overline{\cS}^{*}_{t,T,{\Xi^\fm}} V_{\fm}(x)$.
\end{proof}

\begin{proposition}\label{prop:GammaFixedpt}
Suppose \cref{assump:ActionDomain}, \cref{assump:GBasic} (i) (ii) (iii), \cref{assump:QTightsigma}, \cref{assump:PCont2} and \cref{assump:GCont2}. Then, the set of fixed points of $\Gamma$ is non-empty and compact.
\end{proposition}
\begin{proof}
We will apply Kakutani–Fan–Glicksberg fixed point theorem \cite[Section 17.9, Corollary 17.55]{Aliprantis2006book}. Below we fix $\fm\in{\bM}$ arbitrarily and verify the conditions of the theorem. 

Note that ${\bM}$ is compact by definition.

We then argue that $\Gamma\fm$ is non-empty. Thanks to \cref{assump:QTightsigma} and \cref{lem:sigmaTight}, the right hand side of \eqref{eq:IndGammaMarginal}, as a probability on $\cB(\bX)$, is $\check\fk_{t+1}$-tight. In view of Lemma \ref{lem:JointContGSS} and measurable maximum theorem (cf. \cite[Section 18.3, Theorem 18.19]{Aliprantis2006book}), let $\overline\pi^*_t:\bX\to\cP(\bA)$ be an optimal selector that attains $\inf_{\lambda\in\cP(\bA)}\overline G^{\lambda}_{t,\xi^{\mu_t}} \circ \overline{\cS}^{*}_{t+1,T,{\Xi^\fm}} V_{\fm}(y)$ for $y\in\bX$. In view of Carath\'eodory extension theorem \cite[Section 10.4, Theorem 10.23]{Aliprantis2006book}, let $\nu_t\in\cP(\bX\times\cP(\bA))$ be characterized by $B\times\Lambda \mapsto \int_{\bX} \1_{B}(x)\1_{\Lambda}(\overline\pi^*_t(x)) \xi_t(\dif x)$ for $B\in\cB(\bX)$ and $\Lambda\in\cB(\cP(\bA))$. Consequently, this $\nu_t$ belongs to $\bM$ and satisfies both \eqref{eq:IndGammaMarginal} and \eqref{eq:IndGammaOpt}.  

Next, we verify that $\Gamma\fm$ is convex. To see this, let $\gamma\in(0,1)$, and $\fn^1,\fn^2 \in \Gamma\fm$. We consider a $\fn$ satisfying $\nu_t=\gamma\nu_t^1+(1-\gamma)\nu_t^2$ for $t=1,\dots,T-1$. Clearly, $\nu_t$ satisfies \eqref{eq:IndGammaMarginal} as \eqref{eq:IndGammaMarginal} specifies the state marginal uniquely. It follows from the linearity with respect to the integrating measure that $\nu_t$ also satisfies \eqref{eq:IndGammaOpt}.

Finally, we show that $\Gamma$ has a closed graph. To this end let $(\fm^k)_{k\in\bN}\subseteq{\bM}$, $\fn^k\in\Gamma\fm^k$ for $k\in\bN$, and suppose $\fm^k$ and $\fn^k$ converges to $\fm^0$ and $\fn^0$, respectively, as $k\to\infty$. For any $h\in C_b(\bX)$, invoking \cref{assump:PCont2} and \eqref{eq:IndGammaMarginal}, then applying \cref{lem:ConvInvVaryingMeas} twice, we yield
\begin{align*}
\int_{\bX}h(y)\xi^{\nu^{0}_{t+1}}\!(\dif y) &= \lim_{k\to\infty}\int_{\bX}h(y)\xi_{\nu^{k}_{t+1}}\!(\dif y) = \lim_{n\to\infty} \int_{\bX\times\cP(\bA)}\int_{\bA} \int_{\bX}h(y)P_{t,x,\xi^{\mu^k_t},a}(\dif y)\lambda(\dif a)\mu^k_t(\dif x\dif\lambda)\\
&= \int_{\bX\times\cP(\bA)}\int_{\bA} \int_{\bX}h(y)P_{t,x,\xi^{\mu^0_t},a}(\dif y)\lambda(\dif a)\mu^0_t(\dif x\dif\lambda),
\end{align*}
i.e., \eqref{eq:IndGammaMarginal} is also true for $\fm^0$ and $\fn^0$. By combining \cref{lem:JointContGSS} and \cref{lem:ConvInvVaryingMeas}, we have verified \eqref{eq:IndGammaOpt} for $\fm^0$ and $\fn^0$. The proof is complete.
\end{proof}

Let $Y(x,\lambda):=\lambda$ for $(x,\lambda)\in\bX\times\cP(\bA)$ and $Z^\mu$ be the Bochner conditional expectation of $Y$ given $\cB(\bX)\otimes\set{\emptyset,\cP(\bA)}$ under $\mu\in\cP(\bX\times\cP(\bA))$. We refer to \cite[Chapter 2, Theorem 3.1 and Definition 3.2]{Scalora1958Abstract} for the validity and definition of Bochner conditional expectation. Clearly, $Z^\mu$ is constant in $\lambda\in\cP(\bA)$. Recall the definition of induced action kernel from \cref{subsec:MFF}. The next lemma will be useful.
\begin{lemma}\label{lem:UpsilonInducedpi}
We have $\overline\pi^{{\psi^\mu}}=Z^\mu$ for $\xi_{\mu}$ almost every $x\in\bX$.
\end{lemma}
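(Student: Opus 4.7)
The plan is to invoke Lemma \ref{lem:pipsiUnique}: it characterizes $\overline\pi_{\psi_\upsilon}$ uniquely (up to $\xi_{\psi_\upsilon}$-null sets) as the $\cB(\bX)$-$\cE(\Lambda)$ measurable kernel that satisfies \eqref{eq:IndInthpsi}. So it suffices to verify that $Z^\upsilon$, regarded as a map from $\bX$ to $\Lambda$, (i) is $\cB(\bX)$-$\cE(\Lambda)$ measurable, (ii) takes values in $\Lambda$ for $\xi_\upsilon$-a.e. $x$, and (iii) satisfies \eqref{eq:IndInthpsi} with $\overline\pi_{\psi_\upsilon}$ replaced by $Z^\upsilon$ (noting that $\xi_{\psi_\upsilon}=\xi_\upsilon$, which follows from taking $A=\bA$ in \eqref{eq:Defpsifu}).

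The verification of (iii) is the core point. Viewing $\Lambda$ as a subset of the Banach space of finite signed measures on $\cB(\bA)$ under total variation, the Bochner conditional expectation $Z^\upsilon$ is characterized by the property that, for every bounded $\cB(\bX)\otimes\set{\emptyset,\Lambda}$-measurable scalar function $f$,
\begin{align*}
\int_{\bX\times\Lambda} f(y,\lambda)\,\lambda\;\upsilon(\dif y\dif\lambda) = \int_{\bX\times\Lambda} f(y,\lambda)\,Z^\upsilon(y,\lambda)\;\upsilon(\dif y\dif\lambda),
\end{align*}
as an equality in the Banach space. For each fixed $A\in\cB(\bA)$, the evaluation $\mu\mapsto \mu(A)$ is a bounded linear functional, and hence commutes with Bochner integration. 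Applying it to both sides with $f=\1_B$, $B\in\cB(\bX)$, and using that $Z^\upsilon$ is constant in $\lambda$, gives
\begin{align*}
\psi_\upsilon(B\times A) = \int_{\bX\times\Lambda}\1_B(y)\lambda(A)\,\upsilon(\dif y\dif\lambda) = \int_\bX \1_B(y)\,[Z^\upsilon(y)](A)\,\xi_\upsilon(\dif y),
\end{align*}
which is precisely \eqref{eq:IndInthpsi} with $\hat\pi = Z^\upsilon$.

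For (i), since $\cE(\Lambda)$ is by definition generated by the evaluation maps $\lambda\mapsto\int h\,\dif\lambda$ for bounded $\cB(\bA)$-measurable $h$, measurability of $Z^\upsilon$ with respect to $\cE(\Lambda)$ reduces to measurability of each scalar $y\mapsto [Z^\upsilon(y)](A)$, which holds by the same commuting-with-evaluation argument (yielding an ordinary scalar conditional expectation). For (ii), applying the displayed identity with $A=\bA$ and $A\in\cB(\bA)$ arbitrary shows that the sets $\{y:[Z^\upsilon(y)](\bA)\ne 1\}$ and $\{y:[Z^\upsilon(y)](A)<0\}$ are $\xi_\upsilon$-null, so $Z^\upsilon(y)\in\Lambda$ almost surely, after modification on a null set. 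Lemma \ref{lem:pipsiUnique} then yields $\overline\pi_{\psi_\upsilon}=Z^\upsilon$ $\xi_\upsilon$-a.e. The only real subtlety is the interplay between the abstract Bochner definition and the kernel formulation, which is handled uniformly by the observation that set-evaluations $\mu\mapsto\mu(A)$ are bounded linear and thus commute with Bochner conditional expectation.
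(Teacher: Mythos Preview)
Your argument is correct in spirit and is essentially the dual of the paper's proof: the paper shows that $\overline\pi_{\psi_\upsilon}$ satisfies the defining property of the Bochner conditional expectation (via \cite[Chapter 2, Theorem 3.1]{Scalora1958Abstract}) and therefore coincides with $Z^\upsilon$; you instead show that $Z^\upsilon$ satisfies the defining property \eqref{eq:IndInthpsi} of the induced kernel and invoke \cref{lem:pipsiUnique}. Both routes rest on the same key observation, namely that the evaluations $\mu\mapsto\mu(A)$ (or $\mu\mapsto\int h\,d\mu$) are bounded linear functionals and hence commute with Bochner integration/conditional expectation.

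The practical difference is that the paper's direction avoids your steps (i)--(ii): since $\overline\pi_{\psi_\upsilon}$ is by construction $\cB(\bX)$--$\cE(\Lambda)$ measurable and $\Lambda$-valued, the paper only needs to check a single integral identity. Your direction forces you to check that $Z^\upsilon$ lands in $\Lambda$ almost surely, and your verification of (ii) has a small gap: from $[Z^\upsilon(y)](A)\ge 0$ $\xi_\upsilon$-a.e.\ for each fixed $A\in\cB(\bA)$ you cannot directly conclude $Z^\upsilon(y)\ge 0$ as a measure a.e., because the exceptional null set depends on $A$ and there are uncountably many $A$. This is easily repaired using that $\bA$ is compact metric: pick a countable dense set $(f_n)\subset C(\bA)_+$; then a signed measure $\mu$ is nonnegative iff $\int f_n\,d\mu\ge 0$ for all $n$, and for each $n$ the commuting argument gives $\int f_n\,d[Z^\upsilon(y)]\ge 0$ a.e., so the intersection over $n$ is still a full-measure set. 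With that fix your proof is complete and equivalent to the paper's.
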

\begin{proof}
In view of  \cite[Chapter 2, Theorem 3.1]{Scalora1958Abstract}, it is sufficient to prove the partial averaging principle in Bochner's sense,
\begin{align*}
\int_{B\times\cP(\bA)} Y(y,\lambda) \mu(\dif y \dif\lambda) = \int_{B\times\cP(\bA)} \overline\pi^{{\psi^\mu}}_y \mu(\dif y \dif\lambda) = \int_{B} \overline\pi^{{\psi^\mu}}_y \xi^{{\psi^\mu}}(\dif y),\quad B\in\cB(\bX),
\end{align*}
where the second equality is due to the fact that $y\mapsto\overline\pi^{{\psi^\mu}}_y$ is constant in $\lambda$. The integrals above are indeed well-defined in Bochner's sense due to \cite[Section 11.8, Theorem 11.44]{Aliprantis2006book}. To proceed, notice that, for any $h\in B_b(\bA)$,
\begin{align*}
&\int_{\bA} h(a)\left[\int_{B} \overline\pi^{{\psi^\mu}}_y \xi^{{\psi^\mu}}(\dif y)\right](\dif a) = \int_{B} \int_{\bA}h(a) \overline\pi^{{\psi^\mu}}_y (\dif a) \xi_{{\psi^\mu}}(\dif y) = \int_{B\times\bA} h(a) {\psi^\mu}(\dif y \dif a) \\
&\quad = \int_{\bX\times\cP(\bA)} \int_{\bA} h(a) \lambda(\dif a)  \mu(\dif y \dif\lambda) = \int_{\bA} h(a)\left[\int_{B\times\cP(\bA)} Y(y,\lambda) \mu(\dif y \dif\lambda)\right](\dif a),
\end{align*}
where we have used \cite[Section 11.8, Lemma 11.45]{Aliprantis2006book} in the first and last equality, \eqref{eq:PsiUpsilon} in the third. Since $h\in B_b(\bA)$ is arbitrary, the proof is complete.
\end{proof}

We are in the position to prove \cref{thm:ZeroExploitability}.
\begin{proof}[Proof of \cref{thm:ZeroExploitability}]
In view of \cref{prop:GammaFixedpt}, we consider $\fm^*\in{\bM}$ such that $\fm^*=\Gamma\fm^*$ and let $\Psi^*=\Psi^{\fm^*}$ in the sense of \eqref{eq:Defpsifu}. We will show that $\Psi^*$ is a MFF with zero exploitability. 

To start with, note that, by definition, $\xi^{\psi^*_t}=\xi^{\psi^{\mu^*_t}}=\xi^{\mu^*_t}$ for $t=1,\dots,T-1$. Then, by \eqref{eq:IndGammaMarginal} and \eqref{eq:PsiUpsilon}, we have
\begin{align}\label{eq:ImpliesMFF}
\xi^{\psi^*_{t+1}}(B) &= \xi^{\mu^*_{t+1}}(B) = \int_{\bX\times\cP(\bA)}\int_{\bA}P_{t,y,\xi^{\mu^*_t},a}(B)\lambda(\dif a)\mu^*_t(\dif y\dif\lambda) = \int_{\bX\times\bA} P_{t,y,\xi^{\psi^*_t},a}(B) \psi^*_t(\dif y\dif a)
\end{align}
for any $B\in\cB(\bX)$, i.e., $\Psi^*$ satisfies \eqref{eq:DefMFF} and is a mean field flow. 

Note that $\xi^{\mu^*_t}=\xi^{\psi^*_t}$, $V_{\fm^*}=V_{\Psi^*}$, and $\Xi^{\fm^*}=\Xi^{\Psi^*}$ by definition. In view of \eqref{eq:DefMeanStepExploitability} and \eqref{eq:IndGammaOpt}, it remains to show  
\begin{align}\label{eq:ImpliesOpt}
\int_{\bX\times\cP(\bA)} \overline G^{\lambda}_{t,\xi^{\psi^*_t}} \circ \overline{\cS}^{*}_{t+1,T,\Xi^{\Psi^*}} V_{\Psi^*}(y)  \mu^*_t(\dif y \dif\lambda) \ge \int_{\bX} \overline S^{\overline\pi^{\psi^*_t}}_{t,\xi^{\psi^*_t}} \circ \overline{\cS}^{*}_{t+1,T,\Xi^{\Psi^*}} V_{\Psi^*}(y) \xi^{\psi^*_t}(\dif y).
\end{align}
for arbitrarily fixed $t=1,\dots,T-1$. Let $Y(x,\lambda):=\lambda$ for $(x,\lambda)\in\bX\times\cP(\bA)$ and $Z^{\mu^*_t}$ be the Bochner conditional expectation of $Y$ given $\cB(\bX)\otimes\set{\emptyset,\cP(\bA)}$ under $\mu^*_t$. In view of \cref{assump:GBasic} (vi), by generalized Jensen's inequality \cite{Ting1975Generalized},  
\begin{align*}
\int_{\bX\times\cP(\bA)} \! \overline G^{\lambda}_{t,\xi^{\psi^*_t}} \!\! \circ\! \overline{\cS}^{*}_{t+1,T,\Xi^{\Psi^*}} V_{\Psi^*}(y)  \mu^*_t(\dif y \dif\lambda) \ge \int_{\bX} \overline G^{Z^{\mu^*_t}(y)}_{t,\xi^{\psi^*_t}} \!\! \circ\! \overline{\cS}^{*}_{t+1,T,\Xi^{\Psi^*}} V_{\Psi^*}(y)  \xi^{\psi^*_t}(\dif y).
\end{align*}
Finally, we conclude the proof by observing that, by \cref{lem:UpsilonInducedpi}, $Z^{\mu^*_t}=\overline\pi^{\psi^*_t}$ for $\xi^{\mu^*_t}=\xi^{\psi^*_t}$ almost every $x\in\bX$.
\end{proof}

\appendix

\newpage

\section{Examples}\label{sec:Exmp}

\subsection{No-one-get-it game}\label{subsec:ExmpGame}
Consider a deterministic $N$pG with $T=3$ and $\bX=\bA=\set{0,1}$.  All players start from $0$ at $t=1$. At $t=1,2$, by choosing $a\in\bA$, each player will transit to the corresponding state for the next epoch. If a player arrives at state $1$, the player must stay in state $1$ till the end of the game, i.e., state $1$ is an absorption state. At $t=2$, if a player reaches state $1$, a cost of $-1$ (i.e., a reward of $1$) incurs; $0$ otherwise. At $t=3$, each player is penalized by a cost of the size $10\,\,\times$ the portion of players on the same level. All costs are constant in actions. Players are allowed to use randomized policy. The performance criteria is the expectation of sum of the costs. 

Let us consider a scenario where all players use the following policy:
\begin{align*}
\tilde\fp_1(0,\delta_0) = 0, \quad \tilde\fp_2(0,(1-p)\delta_0+p\delta_1) = \begin{cases}0.5\delta_0+0.5\delta_1,&p=0\\ \delta_1, &p\in(0,1]\end{cases}.
\end{align*}
The above policy implies that every player chooses to stay on state $0$ for $t=2$. At $t=2$, if no player is at state $1$, all players randomly go to state $0$ or $1$ with even chances for $t=3$. But if there is at least one player at state $1$ at $t=2$, the other players  go to state $1$ at $t=3$, inducing massive penalization to all players.

In this $N$-player scenario, all players have $0$ exploitabilities, i.e., it is an equilibrium. Note that the policy can be made continuous in $p$ by interpolating over $p=0$ and $p=\frac1n$, but the modular of continuity has a steep slope of size $\frac{N}2$ near $0$. This scenario can not be well approximated in the sense of \cref{subsec:HeurDerv}. Indeed, when lifted to MFG via \eqref{eq:HeurDefMFF}, we have
\begin{align*}
\overline\psi_1=\delta_0\otimes\delta_0,\quad \overline\psi_2 = \delta_0 \otimes (0.5\delta_0 + 0.5\delta_1),\quad \overline\xi_3 = 0.5\delta_0 + 0.5\delta_1.
\end{align*}
The population distribution concentrates on state $0$ at $t=1,2$, and distributes evenly on states $0$ and $1$ at $t=3$ while the induced policy is to stay at state $0$ at $t=1,2$ and going randomly to either $0$ or $1$ with even chances. Consequently, the representative player has incentive to reach $1$ at $t=2$, as the population state distribution is unaffected in this mean field setting. The resulting mean field exploitability is $1$, indicating a vacuous mean field approximation.

\subsection{An illustrating example for \cref{assump:GBasic} (vi)}\label{subsec:ExmpConvex}

In this example, we consider a single-period ($T=2$) MFG with $\bX=\bA=\set{0,1}$. Suppose all players start from the same position $x_1=0$. Upon realizing an action $a$, a player will be transmitted to the corresponding state at $t=2$, regardless of other players, i.e., $P_{1,\bm x, a}=\delta_a$. Note that, under such setting, all (state-action) mean field flows take the form of $\delta_0\otimes\lambda$, where $\lambda=(p,1-p)\in\cP(\bA)$. For convenience, we let $p\in[0,1]$ represent the representative player's policy and the associated mean field flow. The corresponding state distribution at $t=2$ is then given by $(p,1-p)$.

Consider a MFG that imposes a congestion penalty at time $t=2$. For $i\in\bX$ and $\xi=(p,1-p)\in\cP(\bX)$, we specify the terminal cost as $V(i,\xi)=p\1_{\set{0}}(i) + (1-p)\1_{\set{1}}(i)$. Furthermore, we consider an entropic cost that violates \cref{assump:GBasic} (vi): $c_1(p)= - p\ln(p) - (1-p)\ln(1-p)$. Notably, this cost penalizes randomized actions at $t=1$. Suppose all players aim to find 
\begin{align*}
\argmin_{p\in[0,1]}\bE\left[C(p)+V(X_2,(p,1-p))\right],
\end{align*}
where $X_2\sim\textsf{Binomial}(1-p)$.

We claim that, in this MFG setting, there is no mean field equilibrium in terms of mean field flow. Indeed, for any $p\in[0,1]$
\begin{align*}
\overline\cR(p) = - p\ln(p) - (1-p)\ln(1-p) + p^2 + (1-p)^2 - \min\set{p,1-p} > 0,
\end{align*}
because both the entropic cost $C(p)$ and $p^2 + (1-p)^2 - \min\set{p,1-p}$ are non-negative for $p\in[0,1]$, and $p^2 + (1-p)^2=\min\set{p,1-p}$ only at $p=\frac12$ while $C(\frac12)=-\ln(\frac12)>0$. However, it is possible to construct $N$pE under a similar setting. For example, let $N$ be even. Then, the $N$-player scenario where player-$n$ employ action $(n\bmod2)$ deterministically is an equilibrium.

Lastly, to connect with the discussion following \cref{thm:ZeroExploitability}, we can heuristically construct an equilibrium situation in the mean field game setting, and relate this to the idea of state-action-kernel distribution. Consider a situation where half of the (infinitely many) players deterministically move to $x_2=0$, while the remaining half deterministically move to $x_2=1$. This situation is an equilibrium. Moreover, it admits a representation in $\cP(\bX\otimes\cP(\bA))$. That is,
\begin{align*}
    \delta_0 \otimes \left( \frac12 \delta_{\delta_0} + \frac12 \delta_{\delta_1} \right).
\end{align*}
Above, the $\delta_0$ to the left of $\otimes$ indicates the initial state marginal. The parenthesis to the right of $\otimes$ provides a statistical summary of the action-kernel employed by players (located at $x_1=0$): half of the players use a deterministic policy symbolized by $\delta_0$ (which entails a deterministic move to $x_2 = 0$), whereas the remaining half use a deterministic policy symbolized by $\delta_1$.

\subsection{Examples of scores operators}\label{subsec:ExmpG}
Consider a cost function $C:\bX\times\cP(\bX)\times\bA\to\bR$ with $\|C\|_\infty\le c_0$. Let $J>0$ be an integer. Consider a probability vector $(w_1,\dots,w_J)$, and $\kappa_j\in(0,1]$ for $j=1,\dots,J$. We define
\begin{align}
\mathring{\bm G}{\bm u} &:=  \sum_{j=1}^J w_j\inf_{q\in\bR}\left\{ q+\kappa_j^{-1}\int_{\bX^N} \left({\bm u}({\bm y})-q\right)_{+} \left[\bigotimes_{n=1}^N\mathring{\xi}\right](\dif{\bm y})  \right\} \label{eq:DefExmpGInit}\\
\bm G^{{\bm\lambda}}_t{\bm u}({\bm x}) &:= \int_{\bA} C(x,\overline\delta_{\bm x},a^1)\lambda^1(\dif a^1),\nonumber\\
&\quad + \int_{\bA^N}\!\!\left( \sum_{j=1}^J\!w_j\!\inf_{q\in\bR}\left\{ \resizebox{0.45\hsize}{!}{$ q+\kappa_j^{-1}\!\!\!\int_{\bX^N}  \left({\bm u}({\bm y})-q\right)_{+}\! \left[\bigotimes_{n=1}^N\! P_{t,x^n,\overline\delta_{\bm x},a^n}\right](\dif{\bm y}) $} \right\} \right) \left[\bigotimes_{n=1}^N\lambda^n\right](\dif{\bm a}\!)\label{eq:DefExmpG},
\end{align}  
and 
\begin{align}
\mathring{\overline G} v &:=  \sum_{j=1}^J w_j \inf_{q\in\bR}\left\{ q+\kappa_j^{-1}\int_{\bX} \left(v(y)-q\right)_{+} \mathring{\xi}(\dif y)  \right\},\label{eq:DefExmpMeanGInit}\\
\overline G^{\lambda}_{t,\xi} v(x) &:= \int_{\bA} c(x,\xi,a)\lambda(\dif a) + \int_{\bA}\left( \sum_{j=1}^J w_j \inf_{q\in\bR}\left\{ q+\kappa_j^{-1}\int_{\bX} \left(v(y)-q\right)_{+} P_{t,x,\xi,a}(\dif y)  \right\}\right) \lambda (\dif a).\label{eq:DefExmpMeanG}
\end{align}
Note that
\begin{align*}
{\bm u} \mapsto \inf_{q\in\bR}\left\{ \resizebox{0.5\hsize}{!}{$ q+\kappa^{-1}\int_{\bX^N}  \left({\bm u}({\bm y})-q\right)_{+} \left[\bigotimes_{n=1}^N P_{t,x^n,\overline\delta_{\bm x},a^n}\right](\dif{\bm y}) $} \right\}
\end{align*}
and 
\begin{align*}
v \mapsto \inf_{q\in\bR}\left\{ q+\kappa^{-1}\int_{\bX}  \left(v(y)-q\right)_{+} P_{t,x,\xi,a}(\dif y)  \right\}
\end{align*}
are the average value at risks (cf. \cite[Section 6.2.4]{Shapiro2021book}) of the random variables ${\bm y}\mapsto{\bm u}({\bm y})$ and $y\mapsto v(y)$ under the probabilities $\bigotimes_{n=1}^N P_{t,x^n,\overline\delta_{\bm x},a^n}$ and $P_{t,x,\xi,a}$, respectively. Additionally, $\inf_{q\in\bR}$ in $\bm G^{{\bm\lambda}}_t{\bm u}$ can be replaced by $\inf_{q\in[-\|{\bm u}\|_\infty,\|{\bm u}\|_\infty]}$.The analogue holds true for $\overline G^{\lambda}_{t,\xi} v$. Additionally, $w_j$ and $\kappa_j$ may depend on $(t,x,\xi)$, although this possibility is not considered here for simplicity.

For illustration, by setting $J=1$ and $\kappa_1=1$, we recover the case of risk neutral decision making. In particular, with definition \eqref{eq:DeffS}, we have 
\begin{align*}
\bm\bS^{\bm{\bm\fP}}_{T}{\bm u} = \bE\left[\sum_{t=1}^{T-1} C\left(X^1_t,\overline\delta_{(X^1_t,\dots,X^N_t)}, A^1_t\right) +{\bm u}\left(X^1_t,\dots,X^N_t\right)\right],
\end{align*}
where under $\bP$,
\begin{gather*}
\left(X^1_1,\dots,X^N_1\right)\sim\mathring{\xi}^{\otimes N}\,,\quad
\left(A^1_t,\dots,A^N_t\right) \sim \bigotimes_{n=1}^N\fp^n_{t,(X^1_t,\dots,X^N_t)}\,,\quad
\left(X^1_{t+1},\dots,X^N_{t+1}\right) \sim \bigotimes_{n=1}^N P_{t,X^n_t,\overline\delta_{(X^1_t,\dots,X^N_t)},A^n_t}\,.
\end{gather*}
Similarly, with definition \eqref{eq:DefMeanfS}, given $\Xi=(\xi_1,\dots,\xi_{T-1})\in\cP(\bX)^{T-1}$ and $\xi_T\in\cP(\bX)$, we have
\begin{align*}
\overline\bS^{\tilde\fp}_{T,\Xi} v = \overline\bE\left[\sum_{t=1}^{T-1}C\left(X_t,\xi_{t}, A_t\right) + v(\xi_{T})\right],
\end{align*}
where under $\overline\bP$,
\begin{gather*}
X_1\sim{\xi}_1\,,\quad
A_t\sim\tilde\fp_{t,X_t,\xi_t}\,,\quad
X_{t+1}\sim P_{t,X_t,\xi_t,A_t}\,.
\end{gather*}

For the remainder of this example, we will provide some discussion on when the aforementioned score operators satisfies the assumptions in \cref{subsec:Assumptions}.

In view of \eqref{eq:DefExmpGInit} - \eqref{eq:DefExmpMeanG}, it is straightforward to verify \cref{assump:GBasic} (i) (ii) (iv) (v) (vi). 
Below we verify \cref{assump:GBasic} (iii). It is sufficient to consider the simplified case where $J=1$ and $\kappa_1=\kappa\in(0,1]$.  Observe that 
\begin{align*}
&\left|\bm G^{{\bm\lambda}}_t{\bm u}({\bm x}) - \bm G^{{\bm\lambda}}_t{\bm u}'({\bm x})\right|\\
&\quad= \left| \int_{\bA^N}\inf_{q\in\bR}\left\{ \resizebox{0.5\hsize}{!}{$ q+\kappa^{-1}\int_{\bX^N}  \left({\bm u}({\bm y})-q\right)_{+} \left[\bigotimes_{n=1}^N P_{t,x^n,\overline\delta_{\bm x},a^n}\right](\dif{\bm y}) $} \right\} \left[\bigotimes_{n=1}^N\lambda^n\right](\dif{\bm a}) \right.\\
&\qquad - \left. \int_{\bA^N} \inf_{q\in\bR}\left\{ \resizebox{0.5\hsize}{!}{$ q+\kappa^{-1}\int_{\bX^N}  \left({\bm u}'({\bm y})-q\right)_{+} \left[\bigotimes_{n=1}^N P_{t,x^n,\overline\delta_{\bm x},a^n}\right](\dif{\bm y}) $} \right\} \left[\bigotimes_{n=1}^N\lambda^n\right](\dif{\bm a}) \right|\\
&\quad\le  \int_{\bA^N} \inf_{q\in\bR}\left\{ \resizebox{0.6\hsize}{!}{$ q+\kappa^{-1}\int_{\bX^N}  \left( \left|{\bm u}({\bm y})-{\bm u}'({\bm y})\right|-q\right)_{+} \left[\bigotimes_{n=1}^N P_{t,x^n,\overline\delta_{\bm x},a^n}\right](\dif{\bm y}) $} \right\} \left[\bigotimes_{n=1}^N\lambda^n\right](\dif{\bm a})
\end{align*}
due to the convexity of average value at risk (cf. \cite[Section 6.2.4 and Section 6.3]{Shapiro2021book}). By taking $q=0$ and invoking \eqref{eq:IndIntProdQ}, we continue to obtain
\begin{align*}
&\left|\bm G^{{\bm\lambda}}_t{\bm u}({\bm x}) - \bm G^{{\bm\lambda}}_t{\bm u}'({\bm x})\right|\\
&\quad\le  \kappa^{-1}\int_{\bA^N} \int_{\bX^N}  \left|{\bm u}({\bm y})-{\bm u}'({\bm y})\right| \left[\bigotimes_{n=1}^N P_{t,x^n,\overline\delta_{\bm x},a^n}\right](\dif{\bm y})   \left[\bigotimes_{n=1}^N\lambda^n\right](\dif{\bm a})\\
&\quad= \kappa^{-1} \int_{\bX^N} \left|{\bm u}({\bm y})-{\bm u}'({\bm y})\right| \left[\bigotimes_{n=1}^N Q^{\lambda^n}_{t,x^n,\overline\delta_{\bm x}}\right](\dif{\bm y}).
\end{align*}
Letting $\bar{c}=\kappa^{-1}$, we have verified \cref{assump:GBasic} (iii) for $\bm G^{\bm\lambda}_t$. The assumption for $\overline G$ can be verified with similar argument.

For \cref{assump:GCont} to hold, we additionally assume \cref{assump:PCont} and that, there is a subadditive modulus of continuity $\zeta_0$ such that 
$$|C(x,\xi, a) - C(x,\xi, a')| \le \zeta_0(d_\bA(a,a')),\quad (a,a')\in\bA^2,\,(x,\xi)\in\bX\times\cP(\bX).$$ 
With similar reasoning leading to \cref{lem:QCont}, for $L>1+\eta(2L^{-1})$ we have
\begin{align*}
\left|\int_\bA c(x,\xi,a) \lambda^1(\dif a) - \int_\bA c(x,\xi,a) {\lambda'}^1(\dif a)\right| \le 2c_0(L\|\lambda^1-{\lambda'}^1\|_{BL}+\zeta_0(2L^{-1})).
\end{align*} 
Next, by \cref{assump:PCont} and \cref{lem:EstIndepInt}, 
\begin{align*}
a^1\mapsto q+\kappa^{-1}\int_{\bX^N}  \left({\bm u}({\bm y})-q\right)_{+} \left[\bigotimes_{n=1}^N P_{t,x^n,\overline\delta_{\bm x},a^n}\right](\dif{\bm y})
\end{align*}
has the modular of continuity of $2\kappa^{-1}\|{\bm u}\|_\infty \eta(d_\bA(a,a'))$. This modular holds regardless of $q\in[-\|{\bm u}\|_\infty,\|{\bm u}\|_\infty]$, ${\bm x}\in\bX^N$ and $a^2,\dots,a^N\in\bA$. Morevoer, this modulus preserves after acted by $\inf_{q\in[-\|{\bm u}\|_\infty,\|{\bm u}\|_\infty]}$. 
Let $\hat\zeta$ be a subadditive modular of continuity with $\hat\zeta(\ell)\ge\max\set{\zeta_0(\ell),\eta(\ell)}$ for $\ell\in\bR_+$ (e.g., $\hat\zeta=\zeta_0 + \eta$). It follows from \eqref{eq:IndIntProdQ} and \cref{lem:QCont} that 
\begin{align*}
\left\|\bm G^{{\bm\lambda}}_t{\bm u} - \bm G^{{\bm\lambda}'}_t{\bm u}\right\|_\infty \le 2(c_0+2\kappa^{-1}\|{\bm u}\|_\infty)\inf_{L>1+\hat\zeta(2L^{-1})}\left(L\|\lambda^1-{\lambda'}^1\|_{BL} + \hat\zeta(2L^{-1})\right).
\end{align*}
We define $\zeta(\ell):=\inf_{L>1+\hat\zeta(2L^{-1})}\left\{L\ell + \hat\zeta(2L^{-1})\right\}$. Clearly, $\zeta$ is non-decreasing. Moreover, for $\ell$ small enough, letting $L(\ell)=\ell^{-\frac12}$ we yield $\zeta(\ell)\le \ell^{\frac12} + \zeta_0(2\ell^{\frac12}) \xrightarrow[\ell\to0+]{}0$. As for the subadditivity, note that $\hat\zeta$ is continuous (due to the concavity) and $L\ell + \hat\zeta(2L^{-1})$ tends to infinity as $L\to\infty$, we have $\inf_{L>1+\hat\zeta(2L^{-1})}\left\{L\ell + \hat\zeta(2L^{-1})\right\}$ is attainable for $\ell\in\bR_+$. Let $L_1^*$ and $L_2^*$ be the points of minimum for $\inf_{L>1+\hat\zeta(2L^{-1})}\left\{L\ell_1 + \hat\zeta(2L^{-1})\right\}$ and $\inf_{L>1+\hat\zeta(2L^{-1})}\left\{L\ell_2 + \hat\zeta(2L^{-1})\right\}$, respectively. Without loss of generality we assume $L_1^*\le L_2^*$. Thus,
\begin{align*}
\zeta(\ell_1)+\zeta(\ell_2) \ge L_1^*(\ell_1+\ell_2) + \zeta_0(2L_1^{*-1}) \ge \zeta(\ell_1+\ell_2).
\end{align*}
The above verifies \cref{assump:GCont} (i). \cref{assump:GCont} (ii) can be verified by similar reasoning with an additional application of triangle inequality.

Finally, to validate \cref{assump:GCont2}, we assume \cref{assump:PCont2} and that $C$ is jointly continuous. It follows immediately from \cref{assump:GCont2} and \cref{lem:ConvInvVaryingMeas} that 
\begin{align*}
(x,\xi,\lambda) \mapsto \int_{\bA} C(x,\xi,a)\lambda(\dif a)
\end{align*}
is continuous. Next, notice that $\inf_{q\in\bR}$ in \eqref{eq:DefExmpMeanG} can be replaced by $\inf_{q\in[-\|v\|_\infty,\|v\|_\infty}$. This together with \cref{assump:PCont2} and \cite[Section 17.5, Lemma 17.29 and Lemma 17.30]{Aliprantis2006book} implies that
\begin{align*}
(x,\xi,a)\mapsto\inf_{q\in\bR}\left\{ q+\kappa^{-1}\int_{\bX} \left(v(y)-q\right)_{+} P_{t,x,\xi,a}(\dif y)  \right\}
\end{align*}
is continuous. The rest follows from \cref{lem:ConvInvVaryingMeas} again.

\subsection{Ancillary example for \cref{assump:QTightsigma}}\label{subsec:ExmpQTightSigma}
Let $\bX=\bR$ and consider $f:\bX\times\cP(\bX)\times\bA\to[-1,1]$. Consider the transition kernels represented by a dummy random process $X=(X_t)_{t=1,\dots,T}$
\begin{align*}
X_{t+1} = X_t + f(X_t,\xi,a)  + Z_t,\quad t=1,\dots,T-1
\end{align*}
with $X_1=0$, where $Z_1,\dots,Z_{T-1}\stackrel{i.i.d.}{\sim}\cN(0,1)$. Additionally, let $\sigma(x) = x^2+1$ for $x\in\bR$ and $\check c=4$. Clearly, \eqref{eq:MomentCondInit} holds true. For \eqref{eq:MomentCond}, we have 
\begin{align*}
\int_{\bX}\sigma(y)P_{t,x,\xi,a}(\dif y) &= \int_{\bR} (y^2+1) \frac{1}{\sqrt{2\pi}} \exp\left(-\frac{(y+x+f(x,\xi,a))^2}{2}\right)\dif y\\
&= \int_{\bR} (y-(x+f(x,\xi,a)))^2\frac{1}{\sqrt{2\pi}} \exp\left(-\frac{(y)^2}{2}\right)\dif y + 1 \\
&= (x+f(x,\xi,a))^2 + 2 \le x^2 + 2|x| + 3 \le \check c \sigma(x).
\end{align*}

\subsection{Simplifying $\overline\fR$}\label{subsec:ExmpMeanStepExploitability}
This example compliments \cref{rmk:MFPathExploitability}.  Recall from \eqref{eq:DefS} and \eqref{eq:DefMeanStepExploitability} that
\begin{align*}
\overline{\fR}(\Psi;V) = \sum_{t=1}^{T-1} \overline{C}^t \int_{\bX} \left(\overline G^{\overline{\fp}_{\psi_t}(y)}_{t,{\xi^{\psi_t}}} \circ \overline{\cS}^{*}_{t+1,T,{\Xi^{\Psi}}} V_{\Psi}(y) - \overline{\cS}^{*}_{t,T,{\overline\Xi^{\Psi}}} V_{\Psi}(y)\right) {\xi^{\psi_t}}(\dif y).
\end{align*}
Let $\overline G$ be defined in \eqref{eq:DefExmpMeanG}. For convenience, we write $V^*_{t+1}:=\overline{\cS}^{*}_{t+1,T,{\Xi^{\Psi}}} V_{\Psi}$. Then, 
\begin{align*}
&\int_{\bX} \overline G^{{\overline\pi^{\psi_t}}(y)}_{t,{\xi^{\psi_t}}} V^*_{t+1}(y) {\xi^{\psi_t}}(\dif y)\\
&\quad= \int_{\bX}\int_{\bA} c(x,\xi,a)\left[{\overline\pi^{\psi_t}}(y)\right](\dif a){\xi^{\psi_t}}(\dif y)\\
&\qquad + \int_{\bX}\int_{\bA}\inf_{q\in\bR}\left\{ q+\kappa^{-1}\int_{\bX} \left(V^*_{t+1}(y)-q\right)_{+} \left[P_t(x,\xi,a)\right](\dif y)  \right\} \left[{\overline\pi^{\psi_t}}(y)\right](\dif a) {\xi^{\psi_t}}(\dif y)\\
&\quad= \int_{\bX\times\bA} c(x,\xi,a)\psi_t(\dif y \dif a)\\
&\qquad + \int_{\bX\times\bA}\inf_{q\in\bR}\left\{ q+\kappa^{-1}\int_{\bX} \left(V^*_{t+1}(y)-q\right)_{+} \left[P_t(x,\xi,a)\right](\dif y)  \right\} \psi_t(\dif y \dif a).
\end{align*}

\section{Technical results}\label{sec:Lemmas}

\subsection{Consequences of various assumptions}
The following dynamic programming principle is an immediate consequence of measurable maximal theorem (e.g., \cite[Section 18.3, Theorem 18.19]{Aliprantis2006book}) and the fact that $\cB(\cP(\bA))=\cE(\cP(\bA))$ due to \cref{lem:sigmaAlgBE}.
\begin{lemma}\label{lem:Sstar}
Under \cref{assump:ActionDomain}, \cref{assump:GBasic} (i) (ii) and \cref{assump:GCont}, the following is true for any $t=1,\dots,T-1$, $\xi\in\cP(\bX)$, $N\in\bN$, $\bm\pi=(\pi^1,\dots,\pi^N)\in\Pi^N$, ${\bm u}\in B_b(\bX^N)$ and $v\in B_b(\bX)$:
\begin{itemize}
\item[(a)] $\bm S^{*\bm\pi}_t{\bm u}$ is $\cB(\bX^N)$-$\cB(\bR)$ measurable and there is a $\pi^{*}:(\bX^N,\cB(\bX^N))\to(\cP(\bA),\cE(\cP(\bA)))$ such that $\bm S^{(\pi^{*},\pi^2,\dots,\pi^N)}_t{\bm u} = \bm S^{*\bm\pi}_t{\bm u}$;
\item[(b)] $\overline S^*_{t,\xi}v$ is $\cB(\bX)$-$\cB(\bR)$ measurable, and there is a $\overline\pi^*:(\bX,\cB(\bX))\to(\cP(\bA),\cE(\cP(\bA)))$ such that $\overline S^{\overline\pi^*}_{t,\xi} v = \overline S^*_{t,\xi} v$.
\end{itemize}
Alternatively, under \cref{assump:ActionDomain}, \cref{assump:GBasic} (i) (ii) and \cref{assump:GCont2}, statement (b) holds true for $v\in C_b(\bX)$.
\end{lemma}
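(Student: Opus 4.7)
The plan is a routine application of the measurable maximum theorem (cf. \cite[Section 18.3, Theorem 18.19]{Aliprantis2006book}) combined with the identification $\cB(\Lambda)=\cE(\Lambda)$ provided by Lemma \ref{lem:sigmaAlgBE}. For part (a), I fix $\bm x^N\in\bX^N$ and regard $\lambda\mapsto \bm G^{N,(\lambda,\pi^2(\bm x^N),\dots,\pi^N(\bm x^N))}_t u^N(\bm x^N)$ as the objective minimized over $\lambda\in\Lambda$. Under Assumption \ref{assump:ActionDomain}, $\bA$ is compact and Polish, so $\Lambda$ is compact under the weak topology (cf. \cite[Section 15.3, 15.11]{Aliprantis2006book}); Assumption \ref{assump:GCont}(i) gives continuity of this objective in $\lambda$, so the infimum is attained. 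The standing measurability hypothesis (i) of Section \ref{subsec:ScoreOp} provides joint $\cB(\bX^N\times\Lambda^N)$-$\cB(\bR)$ measurability of $(\bm x^N,\bm\lambda^N)\mapsto \bm G^{N,\bm\lambda^N}_tu^N(\bm x^N)$, and $\pi^2,\dots,\pi^N\in\Pi$ are $\cB(\bX^N)$-$\cE(\Lambda)=\cB(\bX^N)$-$\cB(\Lambda)$ measurable by Lemma \ref{lem:sigmaAlgBE}. Composing, the map $(\bm x^N,\lambda)\mapsto \bm G^{N,(\lambda,\pi^2(\bm x^N),\dots,\pi^N(\bm x^N))}_t u^N(\bm x^N)$ is a Carath\'eodory function on $\bX^N\times\Lambda$. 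The measurable maximum theorem then delivers $\cB(\bX^N)$-$\cB(\bR)$ measurability of $\bm S^{*N,\bm\pi^N}_t u^N$ together with a $\cB(\bX^N)$-$\cB(\Lambda)$ measurable selector $\pi^*$; applying Lemma \ref{lem:sigmaAlgBE} once more yields $\pi^*\in\Pi$, and by construction $\bm S^{N,(\pi^*,\pi^2,\dots,\pi^N)}_t u^N = \bm S^{*N,\bm\pi^N}_t u^N$.

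Part (b) is the same argument one dimension lower. Fixing $\xi\in\Xi$, Assumption \ref{assump:GCont}(ii) gives continuity of $\lambda\mapsto\overline G^{\lambda}_{t,\xi}v(x)$, while the standing measurability hypothesis (ii) of Section \ref{subsec:ScoreOp} gives joint $\cB(\bX\times\Lambda)$-$\cB(\bR)$ measurability of $(x,\lambda)\mapsto\overline G^{\lambda}_{t,\xi}v(x)$ (holding $\xi$ fixed). Compactness of $\Lambda$ and the measurable maximum theorem again produce the $\cB(\bX)$-$\cB(\bR)$ measurability of $\overline S^*_{t,\xi}v$ and a measurable selector $\overline\pi^*\in\overline\Pi$ satisfying $\overline S^{\overline\pi^*}_{t,\xi}v=\overline S^*_{t,\xi}v$.

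Finally, for the alternative hypothesis using Assumption \ref{assump:GCont2} in place of Assumption \ref{assump:GCont}, the only change is that continuity of $\lambda\mapsto\overline G^{\lambda}_{t,\xi}v(x)$ is now extracted from the joint continuity in $(x,\xi,\lambda)$, which necessitates $v\in C_b(\bX)$; the rest of the Carath\'eodory/measurable-maximum scheme proceeds verbatim. There is no genuine obstacle in this proof: the only subtlety worth flagging is the routine translation between the Borel $\sigma$-algebra $\cB(\Lambda)$ (on which the measurable selector produced by \cite[Theorem 18.19]{Aliprantis2006book} lands) and the evaluation $\sigma$-algebra $\cE(\Lambda)$ (required by the definitions of $\Pi$, $\wt\Pi$, $\overline\Pi$), and this is handled once and for all by Lemma \ref{lem:sigmaAlgBE}.
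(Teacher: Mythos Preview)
Your proposal is correct and follows exactly the approach the paper takes: the paper's proof is a one-line invocation of the measurable maximum theorem \cite[Theorem 18.19]{Aliprantis2006book} together with the identification $\cB(\Lambda)=\cE(\Lambda)$ from Lemma~\ref{lem:sigmaAlgBE}, and you have simply spelled out the Carath\'eodory verification in detail.
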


Recall the definition of $\overline{\bQ}^{\tilde\fp}_{t,\Xi}$ from below \eqref{eq:DefMeanfT}. The next lemma pertains to the relationship between \cref{assump:QTight} and \cref{assump:QTightsigma}. 
\begin{lemma}\label{lem:sigmaTight}
\cref{assump:QTightsigma} implies \cref{assump:QTight} with $K_i=\check K_{\lceil\check c^T i\rceil},\, i\in\bN$.
\end{lemma}
\begin{proof}
We first show that $\overline{\bT}^{\tilde\fp}_{t,\Xi}\sigma \le \check c^t$ for $t=1,\cdots,T$.  For $t=1$, this follows immediately from \eqref{eq:DefMeanfT} and \eqref{eq:MomentCondInit}. We proceed by induction. Suppose the statement is true for some $t=1,\cdots,T-1$. Then, by \eqref{eq:DefMeanT} and \eqref{eq:MomentCond},
\begin{align*}
\overline{T}^{\tilde\fp}_{t,\xi_t}\sigma(x) = \int_{\bA}\int_{\bX}\sigma(y)P_{t,x,\xi,a}(\dif y)\tilde\fp_{t,x,\xi_t}(\dif a) \le \check c\int_{\bA}\sigma(x)\tilde\fp_{t,x,\xi_t}(\dif a) = \check c\sigma(x).
\end{align*}
Consequently, by \eqref{eq:DefMeanfT} again, we obtain
\begin{align*}
\overline{\bT}^{\tilde\fp}_{t+1,\Xi}\sigma = \overline{\bT}^{\tilde\fp}_{t,\Xi}\circ \overline T^{\tilde\fp}_{t,\xi_t}\sigma \le \check c\overline{\bT}^{\tilde\fp}_{t,\Xi}\sigma\le \check c^{t+1}.
\end{align*} 
The above proves $\overline{\bT}^{\tilde\fp}_{t,\Xi}\sigma \le \check c^{t}$ for $t=1,\cdots,T$. To finish the proof, we consider $h(x)=i\1_{\check K_{\lceil\check c^T i\rceil}^c}(x)$. Note that $h\le\check c^{-T}\sigma$ by the setting in \cref{assump:QTightsigma}. Due to the linearity and monotonicity of $\overline{\bT}^{\tilde\fp}_{t,\Xi}$, we have
\begin{align*}
i\; \overline{\bT}^{\tilde\fp}_{t,\Xi}\1_{\check K_{\lceil\check c^T i\rceil}^c} = \overline{\bT}^{\tilde\fp}_{t,\Xi} h \le \check c^{-T}\overline{\bT}^{\tilde\fp}_{t,\Xi}\sigma \le 1.
\end{align*}
It follows that $\overline{\bT}^{\tilde\fp}_{t,\Xi}\1_{\check K_{\lceil\check c^T i\rceil}^c} \le i^{-1}$. The proof is complete.
\end{proof}

Below are some important bounds.
\begin{lemma}\label{lem:EstcS}
Suppose \cref{assump:GBasic} (ii). For any $\xi_1,\dots,\xi_{T-1}\in\cP(\bX)$, $\tilde\fp\in\wt{\Pi}$ and $v\in B_b(\bX)$, we have
\begin{gather*}
\left\|\overline\cS^{\tilde\fp}_{s,t,(\xi_s,\dots,\xi_{t-1})} v\right\|_\infty \le \cC_{t-s}(\|v\|_\infty),\quad 1\le s<t\le T,\\
\left|\overline\bS^{\tilde\fp}_{t,(\xi_1,\dots,\xi_{t-1})} v\right| \le \cC_{t}(\|v\|_\infty),\quad t=1,\dots,T,
\end{gather*}
where $\cC_{r}(z) := c_0\sum_{k=1}^{r} c_1^{k-1} + c_1^{r} z$ for $r\in\bN_+$ and note $\cC_{r+1}(z) = c_0 + c_1 \cC_{r}(z)$.
\end{lemma}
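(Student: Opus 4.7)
The plan is a straightforward backward induction on $s$ (equivalently, induction on the number of composed operators $t-s$), using only the linear growth bound from \cref{assump:GBasic} (ii) and the recursion $\fC_{r+1}(z) = C_0 + C_1\fC_r(z)$.

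For the first inequality, I would fix $t$ and induct on $t-s \ge 1$. The base case $s = t-1$ gives $\overline\cS^{\tilde\fp}_{t-1,t,(\xi_{t-1})} v = \overline S^{\tilde\pi_{t-1}}_{t-1,\xi_{t-1}} v$, and since $\overline S^{\tilde\pi_{t-1}}_{t-1,\xi_{t-1}} v(x) = \overline G^{\tilde\pi_{t-1}(x,\xi_{t-1})}_{t-1,\xi_{t-1}} v(x)$ by \eqref{eq:DefMeanS}, \cref{assump:GBasic} (ii) yields $\|\overline S^{\tilde\pi_{t-1}}_{t-1,\xi_{t-1}} v\|_\infty \le C_0 + C_1\|v\|_\infty = \fC_1(\|v\|_\infty)$. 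For the inductive step, assuming $\|\overline\cS^{\tilde\fp}_{s+1,t,(\xi_{s+1},\dots,\xi_{t-1})} v\|_\infty \le \fC_{t-s-1}(\|v\|_\infty)$, the definition \eqref{eq:DefMeancS} gives $\overline\cS^{\tilde\fp}_{s,t,(\xi_s,\dots,\xi_{t-1})} v = \overline S^{\tilde\pi_s}_{s,\xi_s}\bigl(\overline\cS^{\tilde\fp}_{s+1,t,(\xi_{s+1},\dots,\xi_{t-1})} v\bigr)$, and applying \cref{assump:GBasic} (ii) once more bounds the sup-norm by $C_0 + C_1\fC_{t-s-1}(\|v\|_\infty) = \fC_{t-s}(\|v\|_\infty)$, using the recursion for $\fC$.

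For the second inequality, I would use the first together with the definition \eqref{eq:DefMeanfS}. For $t=1$ we have $\overline\fS^{\tilde\fp}_{1,\emptyset} v = \mathring{\overline G} v$, and applying \cref{assump:GBasic} (ii) to $\mathring{\overline G}$ (which, per the footnote in the statement of that assumption, is treated as $\overline G^\lambda_{0,\xi}$) gives $|\overline\fS^{\tilde\fp}_{1,\emptyset} v| \le C_0 + C_1\|v\|_\infty = \fC_1(\|v\|_\infty)$. For $t \ge 2$, the bound $|\overline\fS^{\tilde\fp}_{t,(\xi_1,\dots,\xi_{t-1})} v| \le C_0 + C_1 \|\overline\cS^{\tilde\fp}_{1,t,(\xi_1,\dots,\xi_{t-1})} v\|_\infty$ combined with the first inequality (for $s=1$) gives $\fC_t(\|v\|_\infty)$ via the same recursion.

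There is no real obstacle here; the statement is essentially a bookkeeping consequence of iterating the linear bound \cref{assump:GBasic} (ii) along the finite horizon composition, and the choice of $\fC_r$ is precisely designed to make the induction close. The only thing worth being careful about is treating the initial score operator $\mathring{\overline G}$ uniformly with the transition operators $\overline G^\lambda_{t,\xi}$ via the footnote convention.
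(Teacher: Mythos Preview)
Your proposal is correct and is precisely the intended argument. The paper in fact states this lemma without proof, treating the iterated application of \cref{assump:GBasic} (ii) together with the recursion $\fC_{r+1}(z)=C_0+C_1\fC_r(z)$ as immediate; your write-up fills in exactly those routine details.
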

\begin{proof}
The statement can be established through a straightforward backward inductive argument. The details are therefore omitted.
\end{proof}

\begin{lemma}\label{lem:EstDiffS}
Suppose \cref{assump:GBasic} (ii) (iii). For any  ${\bm\fP}\in\Pi^{N\times(T-1)}$, ${\bm u},{\bm u}'\in B_b(\bX^N)$, and ${\bm x}\in\bX^N$, we have
\begin{gather}
\left|\bm\cS^{{\bm\fP}}_{s,t}{\bm u} - \bm\cS^{{\bm\fP}}_{s,t}{\bm u}'\right| \le \bar{c}^{t-s} \bm\cT^{{\bm\fP}}_{s,t} \left|u -{\bm u}'\right|, \quad 1\le s\le t\le T,\label{eq:EstDiffcS}\\
\left|\bm\bS^{{\bm\fP}}_{t}{\bm u} - \bm\bS^{{\bm\fP}}_{t}{\bm u}'\right| \le \bar{c}^{t} \bm\bT^{{\bm\fP}}_{t} \left|u -{\bm u}'\right|, \quad t=1,\dots,T.\label{eq:EstDifffS}
\end{gather}
For any $\Xi=(\xi_1,\dots,\xi_{T-1})\in\cP(\bX)^{T-1}$, $\tilde\fp\in\bm{\wt\Pi}$, and $v,v'\in B_b(\bX)$, we have 
\begin{gather*}
\left|\overline\cS^{\tilde\fp}_{s,t,\Xi} v - \overline\cS^{\tilde\fp}_{s,t,\Xi} v'\right| \le \bar{c}^{t-s}\overline\bT^{\tilde\fp}_{s,t,\Xi}|v-v'|,\quad 1\le s<t\le T,\\
\left|\overline\bS^{\tilde\fp}_{t,\Xi} v - \overline\bS^{\tilde\fp}_{t,\Xi} v'\right| \le \bar{c}^t\overline\bT^{\tilde\fp}_{t,\Xi}|v-v'|,\quad t=1,\dots,T.
\end{gather*}
\end{lemma}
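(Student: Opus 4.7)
The plan is to handle the $N$-player statements first by induction on $t-s$, then deduce the $\overline{\fS}$ statements from the $\overline{\cS}$ statements by one extra application of \cref{assump:GBasic} (iii); the MFG analogues are identical up to notation.

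For the base case $s=t$, both $\bm\cS^{N,\fP^N}_{s,s}$ and $\bm\cT^{N,\fP^N}_{s,s}$ are the identity by \eqref{eq:DefcS} and the convention below \eqref{eq:DefcT}, so \eqref{eq:EstDiffcS} holds trivially. For the inductive step, suppose \eqref{eq:EstDiffcS} holds for indices $(s+1,t)$ with $t-s-1 = k$. Unfolding \eqref{eq:DefcS} gives $\bm\cS^{N,\fP^N}_{s,t} u^N = \bm S^{N,\bm\pi_s^N}_s \circ \bm\cS^{N,\fP^N}_{s+1,t} u^N$, and likewise for $\hat u^N$. Applying \eqref{eq:DefS} and the pointwise bound from \cref{assump:GBasic} (iii) at time $s$ with $\bm\lambda^N = (\pi_s^n(\bm x^N))_{n=1}^N$, then substituting the induction hypothesis under the integral, yields
\begin{align*}
\left|\bm\cS^{N,\fP^N}_{s,t} u^N(\bm x^N) - \bm\cS^{N,\fP^N}_{s,t} \hat u^N(\bm x^N)\right|
&\le \overline C \int_{\bX^N} \left|\bm\cS^{N,\fP^N}_{s+1,t} u^N(\bm y^N) - \bm\cS^{N,\fP^N}_{s+1,t} \hat u^N(\bm y^N)\right| \left[\bigotimes_{n=1}^N Q^{\pi_s^n(\bm x^N)}_{s,x^n,\overline\delta(\bm x^N)}\right](\dif\bm y^N)\\
&\le \overline C^{t-s} \bm T^{N,\bm\pi^N_s}_s \bm\cT^{N,\fP^N}_{s+1,t} |u^N - \hat u^N|(\bm x^N),
\end{align*}
and this last expression equals $\overline C^{t-s} \bm\cT^{N,\fP^N}_{s,t} |u^N - \hat u^N|(\bm x^N)$ by \eqref{eq:DefT} and \eqref{eq:DefcT}, closing the induction.

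For \eqref{eq:EstDifffS}, I take $s=1$ in \eqref{eq:EstDiffcS}, apply the $\mathring{\bm G}^N$ bound in \cref{assump:GBasic} (iii) to the pair $\bm\cS^{N,\fP^N}_{1,t} u^N, \bm\cS^{N,\fP^N}_{1,t} \hat u^N$, and use the monotonicity of integration to bring \eqref{eq:EstDiffcS} under the integral; this produces the factor $\overline C \cdot \overline C^{t-1} = \overline C^t$ together with the operator $\mathring{\bm T}^N \circ \bm\cT^{N,\fP^N}_{1,t}$, which is $\bm\fT^{N,\fP^N}_t$ by \eqref{eq:DeffT}. The MFG inequalities are proved by the same induction, replacing the composition rule \eqref{eq:DefcS} by \eqref{eq:DefMeancS}, the pointwise bound on $\bm G^N$ by the $\overline G$ bound in \cref{assump:GBasic} (iii), and $\bm T^{N,\bm\pi^N_s}_s$ by $\overline T^{\tilde\pi_s}_{s,\xi_s}$ from \eqref{eq:DefMeanT}; the final step uses the $\mathring{\overline G}$ bound together with \eqref{eq:DefMeanfT}.

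No step is truly an obstacle here: the argument is a mechanical induction once one unfolds the compositional definitions in \eqref{eq:DefcS}--\eqref{eq:DeffS} and \eqref{eq:DefMeancS}--\eqref{eq:DefMeanfS}. The only care needed is notational: one must track that the kernel $\bigotimes_{n=1}^N Q^{\pi_s^n(\bm x^N)}_{s,x^n,\overline\delta(\bm x^N)}$ appearing on the right of \cref{assump:GBasic} (iii) is exactly the kernel defining $\bm T^{N,\bm\pi^N_s}_s$, so that the telescoped bound collapses to $\bm\cT^{N,\fP^N}_{s,t}$ rather than a generic product measure.
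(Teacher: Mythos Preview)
Your proof is correct and follows essentially the same inductive strategy as the paper: both use \cref{assump:GBasic} (iii) to peel off one factor of $\overline C$ per step and recognize the resulting integral as the corresponding transition operator. The only cosmetic difference is that you unfold the composition from the left ($\bm\cS_{s,t}=\bm S_s\circ\bm\cS_{s+1,t}$) and apply \cref{assump:GBasic} (iii) before the induction hypothesis, whereas the paper unfolds from the right ($\bm\cS_{s,t+1}=\bm\cS_{s,t}\circ\bm S_t$) and applies the induction hypothesis first; both orderings yield the same bound.
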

\begin{proof}
The statement is primarily a consequence of \cref{assump:GBasic} (iii), while \cref{assump:GBasic} (ii) is needed for the operators to be well-defined. For $t=s$, $\bm\cS^{{\bm\fP}}_{s,s}$ and $\bm\cT^{{\bm\fP}}_{s,s}$ are the identity operators and thus
\begin{align*}
\left|\bm\cS^{{\bm\fP}}_{s,s}{\bm u}({\bm x}) - \bm\cS^{{\bm\fP}}_{s,s}({\bm x}){\bm u}'({\bm x})\right| =  \bm\cT^{{\bm\fP}}_{s,s} \left|u -{\bm u}'\right| ({\bm x}).
\end{align*}
We  proceed by induction.  Suppose \eqref{eq:EstDiffcS} is true for some $1\le s\le t<T$. Then, by \eqref{eq:DefcS}
\begin{align*}
\left|\bm\cS^{{\bm\fP}}_{s,t+1}{\bm u} - \bm\cS^{{\bm\fP}}_{s,t+1}{\bm u}'\right| &= \left|\bm\cS^{{\bm\fP}}_{s,t}\circ\bm S^{(\fp^n_t)_{n=1}^N}_{t}{\bm u} - \bm\cS^{{\bm\fP}}_{s,t}\circ\bm S^{(\fp^n_t)_{n=1}^N}_{t}{\bm u}'\right| \\
&\le \bar{c}^{t-s} \bm\cT^{{\bm\fP}}_{s,t} \left|\bm S^{(\fp^n_t)_{n=1}^N}_t{\bm u} - \bm S^{(\fp^n_t)_{n=1}^N}_t{\bm u}'\right| \le \bar{c}^{t+1-s} \bm\cT^{{\bm\fP}}_{s,t+1} \left|{\bm u} -{\bm u}'\right|,
\end{align*}
where we have used the induction hypothesis in the first inequality, \cref{assump:GBasic} (iii) and \eqref{eq:DeffT} in the second inequality. This proves \eqref{eq:EstDiffcS}. By combining \eqref{eq:EstDiffcS} and \cref{assump:GBasic} (iii), we yield \eqref{eq:EstDifffS}. A similar reasoning finishes the rest of the proof for score operators in mean field settings.
\end{proof}

The next two lemmas are consequence of the continuity assumed in \cref{assump:PCont}.
\begin{lemma}\label{lem:QCont}
Under \cref{assump:PCont}, for any $h\in B_b(\bX)$, $\xi,\xi'\in\cP(\bX)$, $\lambda,\lambda'\in\cP(\bA)$ and $L\ge 1+\eta(2L^{-1})$, we have
\begin{align*}
\left|\int_{\bX}h(y)Q_{t,x,\xi}^{\lambda}(\dif y) - \int_{\bX}h(y)Q_{t,x,\xi'}^{\lambda'}(\dif y)\right| \le 2\|h\|_\infty\left( L\|\lambda-\lambda'\|_{BL} + \eta(2L^{-1}) + \eta(\|\xi-\xi'\|_{BL}) \right).
\end{align*}
\end{lemma}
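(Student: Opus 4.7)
My plan is to split the difference via the triangle inequality as
\begin{align*}
\int_\bX h(y)\,Q^\lambda_{t,x,\xi}(\dif y) - \int_\bX h(y)\,Q^{\lambda'}_{t,x,\xi'}(\dif y)
= \int_\bA g_\xi(a)\,(\lambda-\lambda')(\dif a) + \int_\bA \big(g_\xi(a)-g_{\xi'}(a)\big)\,\lambda'(\dif a),
\end{align*}
where $g_\xi(a):=\int_\bX h(y)[P_t(x,\xi,a)](\dif y)$ and similarly for $g_{\xi'}$. The second piece is immediate from \cref{assump:PCont} together with the bound $|\int_\bX f(y)\,m(\dif y)|\le 2\|f\|_\infty\|m\|_{TV}$ coming from the definition of $\|\cdot\|_{TV}$: we get $|g_\xi(a)-g_{\xi'}(a)|\le 2\|h\|_\infty\eta(\|\xi-\xi'\|_{BL})$ and thus a total contribution of $2\|h\|_\infty\eta(\|\xi-\xi'\|_{BL})$. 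The same TV estimate shows $g_\xi$ has modulus of continuity $2\|h\|_\infty\eta$ in $a$, but $g_\xi$ is not Lipschitz, so the first piece cannot be controlled by $\|\lambda-\lambda'\|_{BL}$ directly; this is the only nontrivial work.

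For the first piece I would approximate $g_\xi$ by the inf-convolution with Lipschitz scale deliberately matched to $\|h\|_\infty$,
\begin{align*}
g_L(a) := \inf_{a'\in\bA}\big\{g_\xi(a') + \|h\|_\infty L\,d_\bA(a,a')\big\}.
\end{align*}
Standard manipulations give that $g_L$ is $\|h\|_\infty L$-Lipschitz, $g_L\le g_\xi$, and $\|g_L\|_\infty\le\|g_\xi\|_\infty\le\|h\|_\infty$. To bound the error $g_\xi-g_L\ge 0$, I would split the supremum defining $g_\xi(a)-g_L(a)$ on whether $d_\bA(a,a')\le 2L^{-1}$: in that regime, $g_\xi(a)-g_\xi(a')\le 2\|h\|_\infty\eta(2L^{-1})$ and the penalty is nonnegative; outside it, the crude bound $g_\xi(a)-g_\xi(a')\le 2\|h\|_\infty$ is beaten by $\|h\|_\infty L\,d_\bA(a,a')>2\|h\|_\infty$, so the expression is nonpositive. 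These two cases together give $\|g_\xi-g_L\|_\infty\le 2\|h\|_\infty\eta(2L^{-1})$.

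To finish, since $L\ge 1+\eta(2L^{-1})\ge 1$, the rescaled function $g_L/(\|h\|_\infty L)$ is $1$-Lipschitz and bounded by $1/L\le 1$, hence lies in $C_{BL}(\bA)$; the definition of $\|\cdot\|_{BL}$ then gives $\big|\int_\bA g_L(a)\,(\lambda-\lambda')(\dif a)\big|\le 2\|h\|_\infty L\,\|\lambda-\lambda'\|_{BL}$. For the remainder, the nonnegativity of $g_\xi-g_L$ forces both $\int(g_\xi-g_L)\,\lambda(\dif a)$ and $\int(g_\xi-g_L)\,\lambda'(\dif a)$ into $[0,\,2\|h\|_\infty\eta(2L^{-1})]$, so their difference has absolute value at most $2\|h\|_\infty\eta(2L^{-1})$. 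Adding the three contributions recovers exactly $2\|h\|_\infty\big(L\|\lambda-\lambda'\|_{BL}+\eta(2L^{-1})+\eta(\|\xi-\xi'\|_{BL})\big)$.

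The delicate point is the choice of Lipschitz scale $\|h\|_\infty L$ in the inf-convolution rather than the naive $L$: it is precisely this scaling that makes $g_L/(\|h\|_\infty L)$ simultaneously $1$-Lipschitz and bounded by $1$, which is what produces the clean prefactor $2\|h\|_\infty L$ in front of $\|\lambda-\lambda'\|_{BL}$ (working against $C_{BL}$ rather than $C_{L\text{-}BL}$ saves an extra factor of $L$ that would otherwise appear). Using the sign of $g_\xi-g_L\ge 0$ in the remainder to avoid an additional factor of $2$ is the other tight step needed to match the target constants; the condition $L\ge 1+\eta(2L^{-1})$ only enters to ensure $L\ge 1$ for the rescaling step, and is otherwise the natural constraint under which subsequent applications of the lemma will be optimized.
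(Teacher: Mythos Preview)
Your proof is correct and follows essentially the same approach as the paper's: the same triangle split into the $\lambda$- and $\xi$-variation pieces, the $\xi$-piece handled directly via the TV bound, and the $\lambda$-piece via Lipschitz regularization of $a\mapsto\int_\bX h\,dP_t(x,\xi,a)$. The only differences are cosmetic---the paper normalizes $\|h\|_\infty=1$ upfront and uses a sup-convolution (its \cref{lem:LipApprox}) rather than your inf-convolution, which is why it genuinely needs $L\ge 1+\eta(2L^{-1})$ to fit the regularized function into $C_{L\text{-}BL}$, whereas your route (with $\|g_L\|_\infty\le\|h\|_\infty$ automatic) only needs $L\ge 1$.
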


\begin{proof}
Without loss of generality, we assume $\|h\|_\infty=1$. Notice that
\begin{align*}
&\left|\int_{\bX}h(y)Q_{t,x,\xi}^{\lambda}(\dif y) - \int_{\bX}h(y)Q_{t,x,\xi'}^{\lambda'}(\dif y)\right|\\
&\quad=\left|\int_{\bA}\int_{\bX}h(y)P_{t,x,\xi,a}(\dif y)\lambda(\dif a) - \int_{\bA}\int_{\bX}h(y)P_{t,x,\xi',a}(\dif y)\lambda'(\dif a)\right|\\
&\quad\le \left|\int_{\bA}\int_{\bX}h(y)P_{t,x,\xi,a}(\dif y)\lambda(\dif a) - \int_{\bA}\int_{\bX} h(y)P_{t,x,\xi,a}(\dif y)\lambda'(\dif a)\right|\\
&\qquad+ \left|\int_{\bA}\int_{\bX}h(y)P_{t,x,\xi,a}(\dif y)\lambda'(\dif a) - \int_{\bA}\int_{\bX} h(y)P_{t,x,\xi',a}(\dif y)\lambda'(\dif a)\right|\\
&\quad=: I_1 + I_2.
\end{align*}
Regarding $I_1$, we define $$H_{x,\xi}(a):=\int_{\bX}h(y)P_{t,x,\xi,a}(\dif y).$$ By \cref{assump:PCont}, we have $\left|H_{x,\xi}(a)-H_{x,\xi}(a')\right| \le \eta(d_{\bA}(a,a'))$, i.e., $H_{x,\xi}$ has a modular of continuity being $\eta$. For $L\ge 1+\eta(2L^{-1})$, we let $H^L_{x,\xi}$ be defined as in \cref{lem:LipApprox} with $g=H_{x,\xi}$. Note that $\|H^L_{x,\xi}\|_\infty\le 1+\eta(2L^{-1}) \le L$. Then, 
\begin{align*}
I_1 &\le \left|\int_{\bA} H^L_{x,\xi}(a)\lambda(\dif a) - \int_{\bA} H^L_{x,\xi}(a)\lambda'(\dif a)\right| + 2\eta\left(2L^{-1}\right) \\
&\le  2\|\lambda-\lambda'\|_{L-BL} + 2\eta\left(2L^{-1}\right) = 2L\|\lambda-\lambda'\|_{BL} + 2\eta\left(2L^{-1}\right).
\end{align*}
Finally, regarding $I_2$, by \cref{assump:PCont} again we yield
\begin{align*}
I_2 &\le \sup_{x\in\bX, a\in\bA}\left|\int_{\bX} h(y)P_{t,x,\xi,a}(\dif y) - \int_{\bX} h(y)P_{t,x,\xi',a}(\dif y)\right| \le 2\eta(\|\xi-\xi'\|_{BL}). 
\end{align*}
The proof is complete.
\end{proof}

We recall the definition of $Q^\lambda_{t,x,\xi}$ from \eqref{eq:DefQ}.
\begin{lemma}\label{lem:QCont2}
Under \cref{assump:PCont2}, $(x,\xi,\lambda)\mapsto Q^{\lambda}_{t,x,\xi}$ is weakly continuous. 
\end{lemma}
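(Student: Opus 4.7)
}

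The plan is to verify weak continuity directly via test functions. Take a sequence $(x_k,\xi_k,\lambda_k)\to(x_0,\xi_0,\lambda_0)$ in $\bX\times\Xi\times\Lambda$, fix an arbitrary $h\in C_b(\bX)$, and show
\begin{align*}
\int_{\bX}h(y)\,Q^{\lambda_k}_{t,x_k,\xi_k}(\dif y)\;\longrightarrow\;\int_{\bX}h(y)\,Q^{\lambda_0}_{t,x_0,\xi_0}(\dif y).
\end{align*}
By the definition \eqref{eq:DefQ} of $Q$, the integral equals $\int_{\bA}g(x_k,\xi_k,a)\,\lambda_k(\dif a)$, where
\begin{align*}
g(x,\xi,a):=\int_{\bX}h(y)\,[P_t(x,\xi,a)](\dif y).
\end{align*}

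The first step is to check that $g$ is jointly continuous and bounded on $\bX\times\Xi\times\bA$. Boundedness by $\|h\|_\infty$ is immediate. For continuity, suppose $(x_m,\xi_m,a_m)\to(x,\xi,a)$; by Assumption \ref{assump:PCont2} the measures $P_t(x_m,\xi_m,a_m)$ converge weakly to $P_t(x,\xi,a)$, so since $h\in C_b(\bX)$, $g(x_m,\xi_m,a_m)\to g(x,\xi,a)$ by the very definition of weak convergence.

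The second step is to pass the limit through the integral against the varying measures $\lambda_k$. Because $g$ is jointly continuous and bounded, and $(x_k,\xi_k)\to(x_0,\xi_0)$ while $\lambda_k\to\lambda_0$ weakly in $\Lambda$, the result stated as Lemma \ref{lem:ConvInvVaryingMeas} (convergence of integrals of continuous functions against weakly convergent measures, with continuously varying integrand) yields
\begin{align*}
\int_{\bA}g(x_k,\xi_k,a)\,\lambda_k(\dif a)\;\longrightarrow\;\int_{\bA}g(x_0,\xi_0,a)\,\lambda_0(\dif a),
\end{align*}
which is exactly what is needed.

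This argument is essentially a two-line application of continuity in the outer variables plus a standard varying-measure lemma; I do not anticipate a real obstacle. The only point that deserves care is the joint continuity of $g$: it must be checked not just in $a$ (with $x,\xi$ fixed) but simultaneously in all three arguments, so that the combination of continuous convergence of the integrand $a\mapsto g(x_k,\xi_k,a)$ to $a\mapsto g(x_0,\xi_0,a)$ and weak convergence $\lambda_k\to\lambda_0$ can be fed into Lemma \ref{lem:ConvInvVaryingMeas}.
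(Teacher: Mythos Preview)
Your proposal is correct and follows essentially the same approach as the paper: rewrite $\int_\bX h\,\dif Q^{\lambda}_{t,x,\xi}$ as $\int_\bA g(x,\xi,a)\,\lambda(\dif a)$, observe that $g$ is jointly continuous and bounded by \cref{assump:PCont2}, and invoke \cref{lem:ConvInvVaryingMeas}. The paper is slightly more explicit about justifying the rewrite for general $h$ (via simple function approximation and monotone convergence, since \eqref{eq:DefQ} only gives it for indicators), whereas you are more explicit about why $g$ is jointly continuous; both versions are fine.
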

\begin{proof}
Due to simple function approximation (cf. \cite[Section 4.7, Theorem 4.36]{Aliprantis2006book}) and monotone convergence, we have 
\begin{align*}
\int_{\bX}h(y)Q^{\lambda}_{t,x,\xi}(\dif y) = \int_{\bA}\int_{\bX} h(y) P_{t,x,\xi,a}(\dif y) \lambda(\dif a), \quad h\in B_b(\bX).
\end{align*}
The rest follows from \cref{lem:ConvInvVaryingMeas}.
\end{proof}

\subsection{Auxiliary technical results}


\begin{lemma}\label{lem:IntfMeasurability}
Let $(\bX,\sX)$ and $(\bY,\sY)$ be measurable spaces. Consider nonnegative $f:(\bX\times\bY,\sX\otimes\sY)\to(\bR,\cB(\bR))$ and $M:(\bX,\sX)\to(\cP,\cE(\cP))$, where $\cP$ is the set of probability measure on $\sY$ and $\cE(\cP)$ is the corresponding evaluation $\sigma$-algebra, i.e., $\cE(\cP)$ is the $\sigma$-algebra generated by sets $\set{\zeta\in\cP:\int_{\bY}f(y)\zeta(\dif y) \in B}\,$ for any real-valued bounded $\sY$-$\cB(\bR)$ measurable $f$ and $B\in\cB(\bR)$. Then, $x\mapsto\int_\bY f(x,y)\,{M_x}(\dif y)$ is $\sX$-$\cB(\bR)$ measurable. 
\end{lemma}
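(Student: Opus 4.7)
The plan is a textbook four-step monotone-class argument: (1) indicators of measurable rectangles, (2) indicators of arbitrary elements of $\sX\otimes\sY$ via Dynkin's $\pi$-$\lambda$ theorem, (3) nonnegative simple functions by linearity, (4) general nonnegative measurable $f$ by the monotone convergence theorem. The only nontrivial input beyond these generalities is the measurability of the map $x\mapsto [M(x)](A)$ for a fixed $A\in\sY$, which must be extracted from the definition of the evaluation $\sigma$-algebra $\cE(\cP)$.

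First I would verify the base case. By taking $f=\1_A$ in the definition of $\cE(\cP)$, the evaluation map $\mathrm{ev}_A\colon\zeta\mapsto\zeta(A)=\int_\bY \1_A\,\dif\zeta$ is $\cE(\cP)$-$\cB(\bR)$ measurable for every $A\in\sY$. Since $M$ is $\sX$-$\cE(\cP)$ measurable by hypothesis, the composition $x\mapsto [M(x)](A)$ is $\sX$-$\cB(\bR)$ measurable. Hence for a rectangle $D=B\times A$ with $B\in\sX$ and $A\in\sY$, the section-wise integral satisfies
\begin{align*}
\int_\bY \1_{B\times A}(x,y)\,[M(x)](\dif y)=\1_B(x)\,[M(x)](A),
\end{align*}
which is a product of two $\sX$-measurable functions and hence $\sX$-measurable.

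Next I would run the Dynkin step. Let $\cC:=\{B\times A:B\in\sX,\,A\in\sY\}$, which is a $\pi$-system generating $\sX\otimes\sY$, and let
\begin{align*}
\cD:=\left\{D\in\sX\otimes\sY:\; x\mapsto\int_\bY \1_D(x,y)\,[M(x)](\dif y)\text{ is }\sX\text{-}\cB(\bR)\text{ measurable}\right\}.
\end{align*}
The previous paragraph gives $\cC\subseteq\cD$. Clearly $\bX\times\bY\in\cD$ since the integrand is $1$. If $D_1\subseteq D_2$ with $D_1,D_2\in\cD$, then the integral of $\1_{D_2\setminus D_1}$ is the difference of two $\sX$-measurable functions, so $D_2\setminus D_1\in\cD$. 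If $(D_k)_{k\in\bN}\subseteq\cD$ is increasing with union $D$, then $\1_{D_k}\uparrow\1_D$ pointwise and the monotone convergence theorem gives $\int\1_D(x,\cdot)\,\dif M(x)=\lim_k \int\1_{D_k}(x,\cdot)\,\dif M(x)$, which is a pointwise limit of $\sX$-measurable functions, hence $\sX$-measurable. Thus $\cD$ is a $\lambda$-system containing the $\pi$-system $\cC$, and Dynkin's theorem yields $\sX\otimes\sY=\sigma(\cC)\subseteq\cD$.

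Finally, for any nonnegative $\sX\otimes\sY$-$\cB(\bR)$ measurable $f$, use simple function approximation (cf.\ \cite[Section 4.7, Theorem 4.36]{Aliprantis2006book}) to write $f=\lim_k f_k$ with $f_k\uparrow f$, where each $f_k=\sum_{i=1}^{m_k}c_{k,i}\1_{D_{k,i}}$ with $c_{k,i}\ge 0$ and $D_{k,i}\in\sX\otimes\sY$. By linearity of the integral and the previous step, $x\mapsto\int_\bY f_k(x,y)\,[M(x)](\dif y)$ is $\sX$-measurable for every $k$, and by monotone convergence (cf.\ \cite[Section 11.4, Theorem 11.18]{Aliprantis2006book}) these converge pointwise to $x\mapsto\int_\bY f(x,y)\,[M(x)](\dif y)$, which is therefore $\sX$-$\cB(\bR)$ measurable. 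The main (mild) obstacle is really just the bookkeeping at the very first step—making sure one exploits the definition of $\cE(\cP)$ with $f=\1_A$ so that $M$ being $\sX$-$\cE(\cP)$ measurable transfers to measurability of $x\mapsto [M(x)](A)$; after that the argument is the standard Carath\'eodory/Dynkin machinery.
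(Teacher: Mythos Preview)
Your proof is correct and follows essentially the same route as the paper: establish measurability for indicators of rectangles via the definition of $\cE(\cP)$, extend to all of $\sX\otimes\sY$ by a monotone-class argument, then pass to general nonnegative $f$ by simple-function approximation and monotone convergence. The only cosmetic difference is that you invoke Dynkin's $\pi$-$\lambda$ theorem directly, whereas the paper first checks closure under differences and disjoint unions (to get the algebra generated by rectangles) and then applies the monotone class lemma; these are interchangeable standard tools.
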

\begin{proof}
We first consider $f(x,y)=\1_D(x,y)$, where $D\in\sX\otimes\sY$. Let $\sD$ consist of $D\in\sX\otimes\sY$ such that $x\mapsto\int_\bY f(x,y) {M_x}(\dif y)$ is $\sX$-$\cB(\bR)$ measurable. Note $\set{A\times B:A\in\sX,B\in\sY}\subseteq\sD$ because $\int_\bY \1_{A\times B}(x,y) {M_x}(\dif y) = \1_{A}(x) \, {M_x}(B)$ and 
\begin{align*}
\set{x\in\bX:{M_x}(B)\in C} = \set{x\in\bX:{M_x}\in\set{\zeta\in\cP:\zeta(B)\in C}}\in\sX,\quad C\in\cB([0,1]).
\end{align*}
If $D^1,D^2\in\sD$ and $D^1\subseteq D^2$, then
\begin{align*}
\int_{\bY}\1_{D^2\setminus D^1}(x,y)\,{M_x}(\dif y) = \int_{\bY}\1_{D^2}(x,y)\,{M_x}(\dif y) - \int_{\bY}\1_{D^1}(x,y)\,{M_x}(\dif y)
\end{align*}
is also $\sX$-$\cB(\bR)$ measurable. Similarly, if $D^1,D^2\in\sD$ are disjoint, then $D^1\cup D^2\in\sD$. The above shows that the algebra generated by $\set{A\times B:A\in\sX,B\in\sY}$ is included by $\sD$. Now we consider an increasing sequence $(D^n)_{n\in\bN}\subseteq\sD$ and set $D^0=\bigcup_{n\in\bN}D^0$, then by monotone convergence \cite[Section 11.4, Theorem 11.18]{Aliprantis2006book}, 
\begin{align*}
\lim_{n\to\infty}\int_{\bY}\1_{D^n}(x,y)\,{M_x}(\dif y) = \int_{\bY}\1_{D^0}(x,y)\,{M_x}(\dif y),\quad x\in\bX.
\end{align*}
It follows from \cite[Section 4.6, 4.29]{Aliprantis2006book} that $D^0\in\sD$. Invoking monotone class lemma \cite[Section 4.4, Lemma 4.13]{Aliprantis2006book}, we yield $\sD=\sX\otimes\sY$.

Now let $f$ be any non-negative measurable function. Note that $f$ can be approximated by a sequence of simple function $(f^n)_{n\in\bN}$ such that $f^n\uparrow f$ \cite[Section 4.7, Theorem 4.36]{Aliprantis2006book}. Since $\sD=\sX\otimes\sY$, for $n\in\bN$, $x\mapsto\int_\bY f^n(x,y) {M_x}(\dif y)$ is $\sX$-$\cB(\bR)$ measurable. Finally, by monotone convergence \cite[Section 11.4, Theorem 11.18]{Aliprantis2006book} and the fact that pointwise convergence preserves measurability \cite[Section 4.6, Lemma 4.29]{Aliprantis2006book}, we conclude the proof.
\end{proof}


Let $\bY$ be a topological space. Let $\cB(\bY)$ be the Borel $\sigma$-algebra of $\bY$, and $\cP$ be the set of probability measures on $\cB(\bY)$. We endow $\cP$ with weak topology and let $\cB(\cP)$ be the corresponding Borel $\sigma$-algebra. Let $\cE(\cP)$ be the $\sigma$-algebra on $\cP$ generated by sets $\set{\zeta\in\cP:\zeta(A)\in B},\,A\in\cB(\bY),\,B\in\cB([0,1])$. Equivalently, $\cE(\cP)$ is the $\sigma$-algebra generated by sets $\set{\zeta\in\cP:\int_{\bY}f(y)\zeta(\dif y) \in B}\,$ for any real-valued bounded $\cB(\bY)$-$\cB(\bR)$ measurable $f$ and $B\in\cB(\bR)$. The lemma below regards the equivalence of $\cB(\cP)$ and $\cE(\cP)$.
\begin{lemma}\label{lem:sigmaAlgBE}
If $\bY$ is a separable metric space, then $\cB(\cP)=\cE(\cP)$.
\end{lemma}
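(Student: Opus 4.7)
I would prove the two inclusions separately.

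For $\cE(\cP)\subseteq\cB(\cP)$, it suffices to show that $\zeta\mapsto\zeta(A)$ is $\cB(\cP)$-measurable for every $A\in\cB(\bY)$; the extension to $\zeta\mapsto\int f\,\dif\zeta$ for bounded measurable $f$ then follows by uniform simple-function approximation. For any closed $F\subseteq\bY$, the functions $g_n(y):=(1-n\,d_\bY(y,F))_+$ belong to $C_b(\bY)$ and decrease pointwise to $\1_F$, so $\zeta(F)=\inf_n\int g_n\,\dif\zeta$; each $\zeta\mapsto\int g_n\,\dif\zeta$ is continuous by the very definition of the weak topology, hence $\zeta\mapsto\zeta(F)$ is Borel on $\cP$ as a countable infimum of continuous maps. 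The collection $\cD:=\{A\in\cB(\bY):\zeta\mapsto\zeta(A)\text{ is Borel on }\cP\}$ is a Dynkin system (closed under complements via $\zeta(A^c)=1-\zeta(A)$ and under countable disjoint unions by $\sigma$-additivity) containing the $\pi$-system of closed sets, so Dynkin's $\pi$-$\lambda$ theorem yields $\cD=\cB(\bY)$.

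For $\cB(\cP)\subseteq\cE(\cP)$, the plan is to produce a countable family of evaluation maps that already generates the weak topology on $\cP$. Since $\bY$ is separable metric, a classical metrization result (see e.g.\ Parthasarathy's theorem on separability of $\cP(\bY)$ under the weak topology, or Dudley's metrization via the bounded-Lipschitz distance) provides a countable collection $\{h_n\}\subseteq C_b(\bY)$ such that the weak topology coincides with the topology induced by
\begin{equation*}
\rho(\zeta,\zeta'):=\sum_{n=1}^{\infty}2^{-n}\bigg(1\wedge\Big|\int h_n\,\dif\zeta-\int h_n\,\dif\zeta'\Big|\bigg).
\end{equation*}
Concretely, one can take the $h_n$ to be Lipschitz bumps of the form $(1-j\,d_\bY(\cdot,y_k))_+$ indexed by $j\in\bN$ and $y_k$ ranging over a countable dense subset of $\bY$. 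Because $\cP$ is separable under $\rho$ (finitely supported rational-weight measures on a countable dense subset of $\bY$ are $\rho$-dense), $\cB(\cP)$ is generated by $\rho$-open balls of rational radii centred at a countable dense family of measures. Every such ball has the form $\{\zeta:\rho(\zeta,\zeta_0)<r\}$, and $\rho(\cdot,\zeta_0)$ is a pointwise-convergent countable series in the functionals $\zeta\mapsto\int h_n\,\dif\zeta$, each of which is $\cE(\cP)$-measurable by definition; consequently every such ball lies in $\cE(\cP)$.

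The only substantive ingredient is the existence of the countable generating family $\{h_n\}$, and this is where separability of $\bY$ is genuinely used. I would verify the metrization by Portmanteau: convergence of $\int h_{k,j}\,\dif\zeta_m$ to $\int h_{k,j}\,\dif\zeta$ for every index forces $\liminf_m\zeta_m(U)\ge\zeta(U)$ for every open $U\subseteq\bY$, since in a separable metric space every open set is a countable union of open balls and these balls can be approximated from below by the bump functions above. Combining the two inclusions then gives $\cB(\cP)=\cE(\cP)$.
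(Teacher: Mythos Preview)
Your proposal is correct and considerably more self-contained than the paper's argument. The paper dispatches both inclusions in two lines: for $\cE(\cP)\subseteq\cB(\cP)$ it simply cites \cite[Section 15.3, Theorem 15.13]{Aliprantis2006book}, which is exactly the statement your Dynkin argument on closed sets proves from scratch; for $\cB(\cP)\subseteq\cE(\cP)$ it observes that the sets $\{\zeta:\int f\,\dif\zeta\in U\}$ with $f\in C_b(\bY)$ and open $U\subseteq\bR$ form a subbase for the weak topology and concludes directly, implicitly using that $\cP$ is second countable when $\bY$ is separable metric. Your explicit metrization argument is precisely the justification for that second-countability step which the paper leaves to the reader. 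So the two proofs share the same skeleton; yours fills in what the paper outsources to a reference and to a suppressed standard fact.

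One small technical correction in your concrete construction: the family $h_{k,j}=(1-j\,d_\bY(\cdot,y_k))_+$ does not approximate $\1_{B(y_k,1/j)}$ from below in the way your Portmanteau sketch requires, since increasing $j'$ in $h_{k,j'}$ shrinks the support toward $\{y_k\}$ rather than filling out a fixed ball. A family such as $\big(m(1/j-d_\bY(\cdot,y_k))\big)_+\wedge 1$, indexed additionally by $m\in\bN$, does increase to $\1_{B(y_k,1/j)}$ as $m\to\infty$ and makes the verification go through. Since you already invoke the metrization theorem for the existence of a suitable countable family, this is a cosmetic fix rather than a gap in the strategy.
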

\begin{proof}
Notice that the weak topology of $\cP$ is generated by sets $\set{\zeta\in\cP:\int_{\bY}f(y)\zeta(\dif y)\in U}$ for any $f\in C_b(\bY)$ and open $U\subseteq\bR$. Therefore, $\cB(\cP)\subseteq\cE(\cP)$. On the other hand, by \cite[Section 15.3, Theorem 15.13]{Aliprantis2006book}, for any bounded real-valued  $\cB(\bY)$-$\cB(\bR)$ measurable $f$ and any $B\in\cB(\bR)$, we have $\set{\zeta\in\cP:\int_{\bY}f(y)\zeta(\dif y) \in B}\in\cB(\cP)$. The proof is complete.
\end{proof}

\begin{lemma}\label{lem:ConvInvVaryingMeas}
Let $\bY$ and $\bZ$ separable metric spaces. Let $\bY\times\bZ$ be endowed with product Borel $\sigma$-algebra $\cB(\bY)\otimes\cB(\bZ)$. Let $f\in\ell^\infty(\bY\times\bZ,\cB(\bY)\otimes\cB(\bZ))$ be continuous. Let $(y^n)_{n\in\bN}\subset\bY$ converges to $y^0\in\bY$ and let $(\upsilon^n)_{n\in\bN}$ be a sequence of probability measure on $\cB(\bZ)$ converging weakly to $\upsilon^0$. Then,
\begin{align*}
\lim_{n\to\infty}\int_\bZ f(y^n,z)\, \upsilon^n(\dif z) = \int_\bZ f(y^0,z)\, \upsilon^0(\dif z).
\end{align*}
\end{lemma}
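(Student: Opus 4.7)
The plan is to reduce everything to the triangle-inequality split
\begin{align*}
\left|\int_\bZ f(y^n,z)\mu^n(\dif z) - \int_\bZ f(y^0,z)\mu^0(\dif z)\right| \le A_n + B_n,
\end{align*}
where $A_n := \int_\bZ \bigl(f(y^n,z) - f(y^0,z)\bigr)\mu^n(\dif z)$ and $B_n := \int_\bZ f(y^0,z)\mu^n(\dif z) - \int_\bZ f(y^0,z)\mu^0(\dif z)$, and to show that each term vanishes as $n\to\infty$.

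The term $B_n$ is immediate: since $f$ is jointly continuous and bounded, the section $z\mapsto f(y^0,z)$ lies in $C_b(\bZ)$, so $B_n\to 0$ follows directly from the definition of weak convergence $\mu^n\to\mu^0$ (equivalently, from the Portmanteau theorem). The work is entirely in handling $A_n$, where the integrand depends on $n$.

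For $A_n$, my plan is to exploit uniform tightness. In the Polish setting relevant to the paper (and in which this lemma is invoked), Prokhorov's theorem implies that any weakly convergent sequence of Borel probability measures is uniformly tight. Thus for every $\varepsilon>0$ there exists a compact $K\subseteq\bZ$ with $\sup_{n}\mu^n(K^c)\le\varepsilon$. The set $Y^\star:=\{y^n:n\in\bN\}\cup\{y^0\}$ is compact in $\bY$ as a convergent sequence together with its limit, so $Y^\star\times K$ is compact in $\bY\times\bZ$. Joint continuity of $f$ therefore forces uniform continuity on $Y^\star\times K$, and since $(y^n,z)\to(y^0,z)$ uniformly in $z\in K$, there exists $N$ such that $|f(y^n,z)-f(y^0,z)|\le\varepsilon$ for all $n\ge N$ and all $z\in K$. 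Splitting the integral across $K$ and $K^c$ gives
\begin{align*}
|A_n| \le \varepsilon\,\mu^n(K) + 2\|f\|_\infty\,\mu^n(K^c) \le \varepsilon + 2\|f\|_\infty\varepsilon,\qquad n\ge N,
\end{align*}
and letting $\varepsilon\downarrow 0$ yields $A_n\to 0$.

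The only mild obstacle is that the statement is phrased over separable (not necessarily complete) metric spaces, in which case individual Borel measures need not be tight, so Ulam/Prokhorov cannot be invoked verbatim. In every application within the paper the ambient space is Polish, so this causes no issue; if one wishes to handle the separable case, a clean alternative is the Skorokhod representation theorem (valid when the limit measure is concentrated on a separable set, automatic here): pick $Z^n\sim\mu^n$ with $Z^n\to Z^0$ almost surely, observe $f(y^n,Z^n)\to f(y^0,Z^0)$ a.s.\ by joint continuity, and conclude via bounded convergence. Either route completes the argument.
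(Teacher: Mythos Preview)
Your argument is correct. The $B_n$ term is immediate, and for $A_n$ your tightness-plus-uniform-continuity route (in the Polish case) or the Skorokhod alternative (for general separable $\bZ$) both close the gap cleanly.

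The paper, however, takes a shorter and rather slick route that avoids the $A_n/B_n$ split entirely: it observes that $\delta_{y^n}\to\delta_{y^0}$ weakly, invokes \cite[Theorem 2.8 (ii)]{Billingsley1999book} to conclude $\delta_{y^n}\otimes\mu^n\to\delta_{y^0}\otimes\mu^0$ weakly on $\bY\times\bZ$, and then simply integrates the bounded continuous $f$ against these product measures. This product-measure trick works directly for separable metric spaces (Billingsley's result does not require completeness), so the paper sidesteps the tightness issue you flagged without needing Skorokhod as a fallback. Your approach is more elementary in that it does not appeal to convergence of product measures, and it makes the mechanism (uniform control on compacts, tails controlled by tightness) fully explicit; the paper's approach buys brevity and handles the stated separable generality in one stroke.
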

\begin{proof}
To start with, note that $(\delta_{y^n})_{n\in\bN}$ converges weakly to $\delta_{y^0}$. Therefore, by \cite[Theorem 2.8 (ii)]{Billingsley1999book}, $(\delta_{y^n}\otimes\upsilon^n)_{n\to\bN}$ converges to $\delta_{y^0}\otimes\upsilon^0$. It follows that 
\begin{align*}
\lim_{n\to\infty}\int_\bZ f(y^n,z)\, \upsilon^n(\dif z) &= \lim_{n\to\infty}\int_{\bY\times\bZ} f(y,z)\, \delta_{y^n}\otimes\upsilon^n(\dif y\,\dif z)\\
&= \int_{\bY\times\bZ} f(y,z)\, \delta_{y^0}\otimes\upsilon^0(\dif y\,\dif z) = \int_\bZ f(y^0,z)\, \upsilon^0(\dif z).
\end{align*}
The proof is complete.
\end{proof}

\begin{lemma}\label{lem:LipApprox}
Let $\bY$ be a metric space. Suppose $g:\bY\to\bR$ is a continuous function satisfying $|g(y)-g(y')| \le \beta(d(y,y'))$ for any $y,y'\in\bY$
for some non-decreasing $\beta:\overline{\bR}_+\to\overline{\bR}_+$. Then, for any $L>0$, $g_L(y):=\sup_{q\in\bA}\set{g(q) - L d(y,q)}$ is a $L$-Lipschitz continuous function $g_L$ satisfying $\|g-g_L\|_\infty \le \beta(2L^{-1}\|g\|_\infty)$.
\end{lemma}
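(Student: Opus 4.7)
The plan is to use the standard Pasch--Hausdorff (inf-convolution) construction
\[
g_L(y) := \inf_{z\in\bY}\bigl\{g(z) + L\,d(y,z)\bigr\}.
\]
Since $g$ is bounded, the infimum is finite, and taking $z=y$ immediately yields $g_L(y) \le g(y)$. The $L$-Lipschitz property of $g_L$ is the usual one-line argument: for any $y,y'\in\bY$ and any $z$, the triangle inequality gives $g(z) + L\,d(y,z) \le g(z) + L\,d(y',z) + L\,d(y,y')$, and taking infimum over $z$ on both sides followed by swapping the roles of $y$ and $y'$ yields $|g_L(y)-g_L(y')| \le L\,d(y,y')$.

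The remaining content of the lemma is the lower bound $g_L(y) \ge g(y) - \beta(2L^{-1}\|g\|_\infty)$, which I would establish by a two-case split on $d(y,z)$, with threshold $r_0 := 2L^{-1}\|g\|_\infty$. In the near regime $d(y,z)\le r_0$, the hypothesis $|g(y)-g(z)|\le \beta(d(y,z))$ and monotonicity of $\beta$ give
\[
g(z) + L\,d(y,z) \ge g(y) - \beta(d(y,z)) \ge g(y) - \beta(r_0).
\]
In the far regime $d(y,z) > r_0$, we discard the Lipschitz information and use only boundedness: $g(z) + L\,d(y,z) > -\|g\|_\infty + 2\|g\|_\infty = \|g\|_\infty \ge g(y)$. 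Taking the infimum over $z$ yields $g_L(y) \ge g(y) - \beta(r_0)$, which combined with $g_L \le g$ gives $\|g-g_L\|_\infty \le \beta(2L^{-1}\|g\|_\infty)$.

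There is no real obstacle here; the only mild subtlety is making sure the two cases splice together correctly, and that $\beta$ need only be non-decreasing (no continuity or subadditivity required) for the argument to go through, since the bound $\beta(d(y,z))\le \beta(r_0)$ uses only monotonicity. The construction also doesn't require $\bY$ to be complete or separable, only that $d$ is a metric.
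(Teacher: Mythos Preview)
Your proof is correct. The paper uses the dual construction---the sup-convolution $g_L(y) := \sup_q\{g(q) - L\,d(y,q)\}$ rather than your inf-convolution---so that $g_L \ge g$ instead of $g_L \le g$, but the substantive content is identical: both arguments hinge on the observation that points $q$ (or $z$) at distance greater than $2L^{-1}\|g\|_\infty$ from $y$ cannot affect the optimization because the linear penalty $L\,d(y,\cdot)$ then exceeds $2\|g\|_\infty$ and pushes the candidate value outside the range of $g$. Your explicit two-case split is arguably cleaner than the paper's presentation, which compresses the same reasoning into a single chain of inequalities; neither approach requires anything beyond monotonicity of $\beta$ and boundedness of $g$.
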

\begin{proof}
Because
\begin{align*}
g_L(y) \ge \sup_{q\in\bA}\set{g(q) - L(d(y',q) + d_{\bA}(y,y') )}  = g_L(y') - L d_{\bA}(y,y'),
\end{align*}
$g_L$ is $L$-Lipschitz continuous $L d_{\bA}(y,y')$. Note $g\le g_L$ dy definition. Finally,
\begin{align*}
g(y) & = \sup_{q\in B_{2L^{-1}\|g\|_\infty}(y)}\left\{g(q) - (g(q)-g(y)) \right\} \ge \sup_{q\in B_{2L^{-1}\|g\|_\infty}(y)}\left\{g(q) - \beta\left(2L^{-1}\|g\|_\infty\right)\right\}  \\
&\ge \sup_{q\in B_{2L^{-1}\|g\|_\infty}(y)}\left\{g(q) - Ld_\bA(y,y')\right\} - \beta\left(2L^{-1}\|g\|_\infty\right) = g_L(y) - \beta\left(2L^{-1}\|g\|_\infty\right),
\end{align*}
where we have used in the last equality the fact that $q$ must stay within the $2L^{-1}\|g\|_\infty$-ball centered at $y$ to be within $[-\|g\|_\infty,\|g\|_\infty]$. The proof is complete.
\end{proof}

\begin{lemma}\label{lem:EstIndepInt}
Let $(\bY,\sY)$ be a measurable space. Let $m^1,\dots,m^N, {m^1}',\dots,{m^N}'$ be probability measures on $\sY$. Then, for any $\bm u\in\ell^\infty(\bY^N,\sY)$, we have 
\begin{align*}
&\left|\int_{\bY^N} \bm u({\bm y}) \left[\bigotimes_{n=1}^N m^n\right](\dif{\bm y}) -  \int_{\bY^N} f({\bm y}) \left[\bigotimes_{n=1}^N {m^n}'\right](\dif{\bm y})\right|\\
&\quad\le \sum_{n=1}^N \sup_{\set{y_1,\dots,y_N}\setminus\set{y_n}}\left|\int_\bY \bm u(y_1,\dots,y_N)m^n(\dif y_n) - \int_\bY f(y_1,\cdots,y_N){m^n}'(\dif y_n)\right|.
\end{align*}
\end{lemma}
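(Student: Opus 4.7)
\textbf{Proof plan for \cref{lem:EstIndepInt}.} The plan is a standard one-coordinate-at-a-time telescoping argument, swapping $\upsilon_n$ for $\upsilon_n'$ in the product measure sequentially and controlling each swap by Fubini/Tonelli together with the supremum on the right-hand side.

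First, I would introduce the hybrid product measures $\mu_0,\mu_1,\dots,\mu_N$ on $\sY^{\otimes N}$ defined by
\begin{align*}
\mu_k := \upsilon_1'\otimes\cdots\otimes\upsilon_k'\otimes\upsilon_{k+1}\otimes\cdots\otimes\upsilon_N, \quad k=0,\dots,N,
\end{align*}
so that $\mu_0=\bigotimes_{n=1}^N\upsilon_n$ and $\mu_N=\bigotimes_{n=1}^N\upsilon_n'$. These are well-defined probability measures on $(\bY^N,\sY^{\otimes N})$ and the integrals $\int_{\bY^N} f\, d\mu_k$ make sense because $f\in\ell^\infty(\bY^N,\sY^{\otimes N})$. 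Writing
\begin{align*}
\int_{\bY^N} f\, d\mu_0 - \int_{\bY^N} f\, d\mu_N = \sum_{k=1}^N \left(\int_{\bY^N} f\, d\mu_{k-1} - \int_{\bY^N} f\, d\mu_k\right),
\end{align*}
the triangle inequality reduces the claim to bounding each of the $N$ summands by one of the suprema on the right-hand side of the lemma.

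Next, for each fixed $k$, the measures $\mu_{k-1}$ and $\mu_k$ agree on all coordinates except the $k$-th, where $\mu_{k-1}$ uses $\upsilon_k$ and $\mu_k$ uses $\upsilon_k'$. By Tonelli's theorem (cf.\ the same reference \cite[11.6, Theorem 11.27]{Aliprantis2006book} invoked earlier in the paper), I can iteratively factor the integral so that
\begin{align*}
\int_{\bY^N} f\, d\mu_{k-1} - \int_{\bY^N} f\, d\mu_k = \int_{\bY^{N-1}} D_k(y_1,\dots,y_{k-1},y_{k+1},\dots,y_N)\, \nu_k(d\bm y^{N-1}),
\end{align*}
where $\nu_k=\bigotimes_{j<k}\upsilon_j'\otimes\bigotimes_{j>k}\upsilon_j$ is a probability measure on $\bY^{N-1}$ and
\begin{align*}
D_k := \int_\bY f(y_1,\dots,y_N)\,\upsilon_k(dy_k) - \int_\bY f(y_1,\dots,y_N)\,\upsilon_k'(dy_k).
\end{align*}
Taking absolute values inside the integral, using $|\int h\, d\nu_k|\le \nu_k(\bY^{N-1})\sup|h|=\sup|h|$ since $\nu_k$ is a probability measure, bounds the $k$-th summand precisely by $\sup_{\{y_1,\dots,y_N\}\setminus\{y_k\}}|D_k|$. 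Summing over $k=1,\dots,N$ yields the claim.

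There is essentially no obstacle: the only points requiring care are (i) ensuring each $D_k$ is a measurable function of the remaining coordinates so that Tonelli applies, which follows because $f\in\ell^\infty(\bY^N,\sY^{\otimes N})$ and integration against a fixed measure preserves joint measurability, and (ii) making sure the sign structure in the telescoping sum is right so that the triangle inequality gives the stated bound without extra factors. Both are routine.
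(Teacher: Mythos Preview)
Your proposal is correct and follows essentially the same approach as the paper: a one-coordinate-at-a-time telescoping of the product measure, with each swap bounded via Fubini and a supremum over the remaining coordinates. The paper writes out the $N=2$ case explicitly and then remarks that the general case follows by the analogous telescope sum, which is precisely your hybrid-measure argument.
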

\begin{proof}
For $N=2$, we have 
\begin{align*}
&\left|\int_{\bY^2} \bm u(y_1,y_2)m^1(\dif y_1)m^2(\dif y_2) - \int_{\bY^2} \bm u(y_1,y_2){m^1}'(\dif y_1){m^2}'(\dif y_2)\right|\\
&\quad \begin{multlined}[b] \le \left|\int_{\bY^2} \bm u(y_1,y_2)m^1(\dif y_1)m^2(\dif y_2) - \int_{\bY^2} \bm u(y_1,y_2)m^1(\dif y_1){m^2}'(\dif y_2)\right|\\
+ \left|\int_{\bY^2} \bm u(y_1,y_2)m^1(\dif y_1){m^2}'(\dif y_2) - \int_{\bY^2} \bm u(y_1,y_2){m^1}'(\dif y_1){m^2}'(\dif y_2)\right| \end{multlined}\\
&\quad \begin{multlined}[b] = \left|\int_{\bY} \left(\int_{\bY} \bm u(y_1,y_2){m^2}(\dif y_2) -  \int_{\bY} \bm u(y_1,y_2){m^2}'(\dif y_2)\right){m^1}(\dif y_1) \right|\\
 + \left|\int_{\bY} \left(\int_{\bY} \bm u(y_1,y_2)m^1(\dif y_1) -  \int_{\bY} \bm u(y_1,y_2){m^1}'(\dif y_1)\right){m^2}'(\dif y_2) \right| \end{multlined}\\
&\quad\le \sup_{y_1\in\bY}\left|\int_{\bY} \bm u(y_1,y_2)m^2(\dif y_2) -  \int_{\bY}\bm u(y_1,y_2){m^2}'(\dif y_2)\right| + \sup_{y_2\in\bY}\left|\int_{\bY} \bm u(y_1,y_2)m^1(\dif y_1) -  \int_{\bY}\bm u(y_1,y_2){m^1}'(\dif y_1)\right|.
\end{align*}
The proof for $N>2$ follows analogously by considering the corresponding telescoping sum.
\end{proof}

\section{Supplementary to \cref{sec:Setup}}\label{sec:Suppl}

\subsection{Proof of \cref{lem:pipsiUnique}}\label{subsec:Prooflem:pipsiUnique}
\begin{proof}[Proof of \cref{lem:pipsiUnique}]
Note that $\cB(\bA)=\sigma((A_n)_{n\in\bN})$ for some $A_n\subseteq\bA,\,n\in\bN$, i.e., $\cB(\bA)$ is countably generated (cf. \cite[Example 6.5.2]{Bogachev2007book}). This together with \eqref{eq:IndInthpsi} implies that 
\begin{align*}
\xi^\psi\left(\left\{x\in\bX:\overline\pi^\psi_x(A_n)=\hat\pi_x(A_n),\; n\in\bN \right\}\right) = 1.
\end{align*}
Then, with probability $1$, $\overline\pi^\psi$ and $\hat\pi$ coincide in the algrebra generated by $(A_n)_{n\in\bN}$. By monotone class lemma (\cite[Section 4.4, Lemma 4.13]{Aliprantis2006book}), we have $\overline\pi^\psi_x(A_n)=\hat\pi_x(A_n)$ for all $n\in A_n$ implies $\overline\pi^\psi_x(B)=\hat\pi_x(B)$ for all $B\in\cB(\bA)$. It follows that
\begin{align*}
\left\{x\in\bX:\overline\pi^\psi_x(A_n)=\hat\pi_x(A_n),\; n\in\bN \right\} = \left\{x\in\bX:\overline\pi^\psi_x=\hat\pi_x \right\},
\end{align*}
and thus $\xi^\psi(\set{x:\overline\pi^\psi_x=\hat\pi_x})=1$. 
\end{proof}

\subsection{Proof of \cref{lem:MFF}}\label{subsec:Prooflem:MFF}
\begin{proof}[Proof of \cref{lem:MFF}]
It is sufficient to consider $h(x)=\1_B(x)$, where $B\in\cB(\bX)$. Note for $t=1$, by \eqref{eq:DefMeanfT} and \eqref{eq:DefMFF}, we have $\overline\bT^{\overline\fp^\Psi}_{1,\emptyset}\1_B=\mathring\xi(B)=\xi_{1}(B)$. We proceed by induction. Suppose $\overline\bT^{\overline\fp^\Psi}_{t,\Xi^\Psi}\1_B=\xi^{\psi_t}(B)$ for $B\in\cB(\bX)$  for some $t=1,\dots,T-1$. By \eqref{eq:DefMeanfT} and the induction hypothesis implies that 
\begin{align*}
&\overline\bT^{\overline\fp^\Psi}_{t+1,\Xi^\Psi}\1_B =  \overline\bT^{\overline\fp^\Psi}_{t,\Xi^\Psi} \circ \overline T^{\overline\pi^{\psi_t}}_{t,\xi^{\psi_t}} \1_B = \int_{\bX} T^{\overline\pi^{\psi_t}}_{t,\xi^{\psi_t}} \1_B (y) \, \xi^{\psi_{t}}(\dif y)\\
&\quad= \int_{\bX} \int_{\bA} P_{t,y,\xi^{\psi_t},a}(B) \overline\pi^{\psi_t}_y(\dif a) \, \xi^{\psi_{t}}(\dif y) = \int_{\bX\times\bA} P_{t,y,\xi^{\psi_t},a}(B) \, \psi_t(\dif y\dif a) = \xi^{\psi_{t+1}}(B),
\end{align*}
where in the last equality we have used the fact that, for any $h'\in\ell^\infty(\bX\times\bA,\cB(\bX\times\bA))$ and $\psi\in\cP(\bX)$,
\begin{align*}
\int_{\bX\times\bA} h'(y,a) \psi(\dif y\dif a) = \int_{\bX} \int_{\bA} h'(y,a)\overline\pi^\psi_y(\dif a)\, \xi^\psi(\dif y)
\end{align*}
due to a combination of \eqref{eq:IndInthpsi}, monotone class lemma (\cite[Section 4.4, Lemma 4.13]{Aliprantis2006book}), simple function approximation (cf. \cite[Section 4.7, Theorem 4.36]{Aliprantis2006book}), and monotone convergence (cf. \cite[Section 11.4, Theorem 11.18]{Aliprantis2006book}). The proof is complete.
\end{proof}

\subsection{Proof of \cref{lem:MeanPsiMFF}}\label{subsec:Prooflem:MeanPsiMFF}
\begin{proof}[Proof of \cref{lem:MeanPsiMFF}]
The first statement that $\xi^{\overline\psi_t}=\overline\xi_t$ is an immediate consequence of the definition of $\overline\bQ^{t,\tilde\fp}_\Xi$ below \eqref{eq:DefMeanfT}. What is left to prove regards the mean field flow defined in \eqref{eq:DefMFF}:
\begin{align*}
\overline\xi_{t+1}(B) = \int_{\bX\times\bA} P_{t,y,\xi^{\overline\psi_t},a}(B)\; {\overline\psi}_t(\dif y\dif a),\quad B\in\cB(\bX).
\end{align*}
Notice that by definition,
\begin{align*}
\overline{\psi}_{t}(B\times A) = \frac1N\sum_{n=1}^N\int_{\bX} \int_{\bA} \1_B(y)\1_A(a) \tilde\fp^n_{t,y,\overline\xi_{t}}(\dif a) \;\overline{\bQ}^{\tilde{\fp}^n}_{t,\overline\Xi} (\dif y).
\end{align*}
Then, by monotone class lemma (\cite[Section 4.4, Lemma 4.13]{Aliprantis2006book}), 
simple function function approximation (cf. \cite[Section 4.7, Theorem 4.36]{Aliprantis2006book}) and monotone convergence (cf. \cite[Section 11.4, Theorem 11.18]{Aliprantis2006book}), for any $B\in\cB(\bX)$ we have
\begin{align*}
&\int_{\bX\times\bA} \left[P_t(y,{\xi^{\psi_t}},a)\right](B) \;{\overline\psi}_t(\dif y\dif a)\\
&\quad = \frac1N\sum_{n=1}^N \int_{\bX}\int_{\bA} \left[P_t(y,\overline\xi_{t},a)\right](B) \;\left[\tilde\fp^n_t(y,\overline\xi_{t})\right](\dif a)\;\overline{\bQ}^{\tilde{\fp}^n}_{t,(\overline\xi_1,\cdots,\overline\xi_{t-1})} (\dif y)\\
&\quad=  \frac1N\sum_{n=1}^N  \int_{\bX}\overline T^{\tilde\fp^n_t}_{t,\overline\xi_t}\1_B(y)\;\overline{\bQ}^{\tilde{\fp}^n}_{t,(\overline\xi_1,\cdots,\overline\xi_{t-1})} (\dif y) =  \frac1N\sum_{n=1}^N  \overline\bT^{\tilde\fp^n}_{t,(\overline\xi_1,\cdots,\overline\xi_{t-1})} \circ \overline T^{\tilde\fp^n_t}_{t,\overline\xi_t}\1_B \\
&\quad= \frac1N\sum_{n=1}^N \overline{\bQ}^{\tilde{\fp}^n}_{t+1,(\overline\xi_1,\cdots,\overline\xi_{t})}(B) = \overline\xi_{t+1}(B),
\end{align*}
where we have also used \eqref{eq:IndIntProdQ} in the first equality, and \eqref{eq:DefMeanT}, \eqref{eq:DefMeanfT} in the second last equality. This completes the proof.
\end{proof}

\subsection{Details on error terms introduced in \cref{subsec:ErrorTerms}}\label{subsec:DError}
We recall that $\fK$  are introduced in \cref{assump:QTight}, $c_0,c_1,\bar{c}$ in \cref{lem:EmpMeasConc} \cref{assump:GBasic}, $\eta$ in \cref{assump:PCont}, $\zeta$ in \cref{assump:GCont}, $\vartheta,\iota,V$ in \cref{assump:SymCont}, and $\fr_\fA$ in \cref{lem:EmpMeasConc}.  Additionally, we let $\cC_{r}(z) := c_0\sum_{k=1}^{r} c_1^{k-1} + c_1^{r} z$.

We let $\fe_1(N):=\fr_{\fK}(N)$, and
\begin{align*}
\fe_{t}(N)&:=2\inf_{L>1+\eta(2L^{-1})}\left\{\sum_{r=1}^{t-1}(L\vartheta(\fe_{r})+\eta(\fe_{r})) + \eta(2L^{-1}) \right\} + \frac{t}{N} + \fr_{\fK}(N),\quad t=2,\dots,T.
\end{align*}
Additionally, we define
\begin{align*}
{\breve\fe_t} := 2\vartheta({\fe_t}) + \frac32{\fe_t} + \fr_{\fk\times\bA}(N)-\fr_{\fk}(N),
\end{align*}
where $\fk\times\bA=(K_i\times\bA)_{i\in\bN}$. We also define
\begin{align*}
\fe^0_1(N):=\fe_1(N),\quad\text{and}\quad \fe^0_{t}(N) &:=2\sum_{r=1}^{t-1}\eta(\fe^0_r(N)) + \frac{t}{N} + \fr_{\fK}(N),\; t=2,\dots,T.
\end{align*}
Note that $\fe^0_t=\fe_{t}$ if $\vartheta\equiv 0$.

Based on $\fe_t$ and $\fe^0_t$, we further define
\begin{gather*}
\underline{\fE}(N) := \sum_{r=1}^{T-1}  \bar{c}^{r}\cC_{T-r}(\|V\|_\infty) \big(3\zeta(\vartheta(\fe_{r})) + (3+T)\zeta(\fe_{r}) \big) + (3+T)\bar{c}^{T}\iota({\fe_T}),\nonumber\\
\begin{aligned}
{\fE}(N) &:= (1+\bar{c}) \sum_{t=1}^{T-1} (T+1-t)\left( \sum_{r=t}^{T-1} \bar{c}^{r-t} \cC_{r,T}(\|V\|_\infty) \zeta({\fe_t}) + \bar{c}^{T-t}\iota({\fe_T}) \right)\nonumber\\
&\quad+ \bar{c} \iota({\fe_T}) + \sum_{t=1}^{T-1} \cC_{T-t}(\|V\|_\infty)\big(\zeta(\vartheta({\fe_t}))+\zeta({\fe_t}) + \fe_{t}(N)\big),\nonumber
\end{aligned}\\
\begin{aligned}
{\fE^0}(N) &:= (1+\bar{c}) \sum_{t=1}^{T-1} (T+1-t)\left( \sum_{r=t}^{T-1} \bar{c}^{r-t} \cC_{r,T}(\|V\|_\infty) \zeta(\fe^0_t(N)) + \bar{c}^{T-t}\iota(\fe^0_T(N)) \right)\nonumber\\
&\quad+ \bar{c} \iota(\fe^0_T(N)) + \sum_{t=1}^{T-1} \cC_{T-t}(\|V\|_\infty)\big(\zeta(\fe^0_t(N)) + \fe^0_{t}(N)\big),\nonumber
\end{aligned}\\
{\fE^\diamond}(N) := (T+1)\left(\sum_{r=1}^{T-1} \bar{c}^{r} \cC_{T-r}(\|V\|_\infty) \zeta(\fe^0_r(N)) + \bar{c}^{T-t}\iota(\fe^0_T(N))\right).\nonumber
\end{gather*}
Above, we note that $\fE^0=\fE$ if $\vartheta\equiv 0$.

\begin{proof}[Proof of \cref{lem:ErrorConv}]
We will only prove $\lim_{N\to\infty}\fe_{t}(N)=0$ for $t=1,\dots,T$, as the rest of the proof follows automatically. In view of \cref{lem:EmpMeasConc}, the claim is obviously true for $t=1$. We will proceed by induction. Suppose for some $t\ge 1$ we have $\lim_{N\to\infty}\fe_{t}(N)=0$. For sufficiently large $N$, we let $$L(N):=\left(\max_{r=1,\dots,t}\set{\vartheta(\fe_{r})}\right)^{-\frac12}.$$ 
Then, by \cref{lem:EmpMeasConc} and the induction hypothesis, we have
\begin{align*}
\fe_{t+1}(N) \le \frac{2t}{L(N)}  + 2\sum_{r=1}^t \eta(\fe_{r}) + t\left(\frac1N+2\eta\left(\frac{2}{L(N)}\right)\right) + \fr_{\fK}(N) \xrightarrow[]{N\to\infty}0.
\end{align*}
\end{proof}

\subsection{Proof of \cref{lem:EmpMeasConc}}\label{subsec:Prooflem:EmpMeasConc}
\begin{proof}[Proof of \cref{lem:EmpMeasConc}]
To start with, we point out a useful observation
\begin{align}\label{eq:EstEMeandelta}
\bE\left[\overline\delta_{\bm Y}(A_i^c)\right]=\frac1N\sum_{n=1}^N\bE\left[\1_{A_i^c}(Y^n)\right]= \frac1N\sum_{n=1}^N\upsilon^n(A_i^c)\le i^{-1}.
\end{align}
Next, by the uniform tightness of $(\upsilon^n)_{n\in\bN}$, we have
\begin{align}\label{eq:deltaintensityBLUpperBound}
\left\|\overline\delta_{\bm Y}-\overline\upsilon\right\|_{BL} &\le \sup_{h\in C_{BL}(\bY)} \frac12\left|\int_{A_i}h(y)\overline\delta_{\bm Y}(\dif y) - \int_{A_i}h(y)\overline\upsilon(\dif y) \right| + \frac12\left(\overline\delta_{\bm Y}(A_i^c) + i^{-1}\right).
\end{align}
Regarding the first term in the right hand side of \eqref{eq:deltaintensityBLUpperBound}, we observe
\begin{align}\label{eq:supCYsupCK}
\sup_{h\in C_{BL}(\bY)}\left|\int_{A_i}h(y) \overline\delta_{\bm Y}(\dif y) - \int_{A_i}h(y)\overline\upsilon(\dif y) \right| 
&\quad\le \sup_{h\in C_{BL}(A_i)}\left|\int_{A_i}h(y)\overline\delta_{\bm Y}(\dif y) - \int_{A_i}h(y)\overline\upsilon(\dif y) \right|\nonumber\\
&\quad\le \max_{h\in\fc_{j}(A_i)}\left|\int_{A_i}h(y)\overline\delta_{\bm Y}(\dif y) - \int_{A_i}h(y)\overline\upsilon(\dif y) \right| + j^{-1},
\end{align}
where $\fc_{j}(A_i)$ is a finite set with cardinality of $\fN_{j}(A_i)$ such that $C_{BL}(A_i)\subseteq\bigcup_{h\in\fc_{j}(A_i)}B_{j^{-1}}(h)$. By \eqref{eq:deltaintensityBLUpperBound}, \eqref{eq:supCYsupCK} and \eqref{eq:EstEMeandelta}, we have 
\begin{align}\label{eq:EstExpectedBL}
&\bE\left[\left\|\overline\delta(\bm Y)-\overline\upsilon\right\|_{BL}\right]\nonumber\\
&\quad\le \bE\left[\sup_{h\in C_{BL}(A_i)}\frac12\left|\int_{A_i}h(y)\delta_{\bm Y}(\dif y) - \int_{A_i}h(y)\overline\upsilon(\dif y) \right|\right] + \frac12\left(\bE\left( \delta_{\bm Y}(A_i^c) \right) + i^{-1}\right)\nonumber\\
&\quad\le \bE\left(\max_{h\in\fc_{j}(A_i)}\frac12\left|\int_{A_i}h(y)\delta_{\bm Y}(\dif y) - \int_{A_i}h(y)\overline\upsilon(\dif y)\right|\right) + \frac12j^{-1} + i^{-1},
\end{align}
In order to proceed, for $\varepsilon>0$ and $h\in\fc_{j}(A_i)$, we invoke Heoffding's inequality to yield
\begin{align*}
&\bP\left[\frac12\left|\int_{A_i}h(y)\delta_{\bm Y}(\dif y) - \int_{A_i}h(y)\overline\upsilon(\dif y) \right| \ge \varepsilon\right] \\
&\quad= \bP\left[\frac12\left|\sum_{n=1}^N\1_{A_i}(Y_n)h(Y_n) - \sum_{n=1}^N\bE\left[\1_{A_i}(Y_n)h(Y_n)\right] \right| \ge N\varepsilon \right] \le 2\exp\left(-2N\varepsilon^2\right).
\end{align*}
Thus,
\begin{align*}
\bP\left[\max_{h\in\fc_{j}(A_i)}\frac12\left|\int_{K_i}h(y)\delta_{\bm Y}(\dif y) - \int_{A_i}h(y)\overline\upsilon(\dif y)\right| \ge \varepsilon\right] \le 2 \fN_{j}(A_i) \exp\left(-2N\varepsilon^2\right). 
\end{align*}
It follows from the fact that $\bE[Z]=\int_{\bR_+}(1-F_{Z}(r))\dif r$ for any non-negative random variable $Z$ that
\begin{align*}
\bE\left[\max_{h\in\fc_{j}(A_i)}\frac12\left|\int_{K_i}h(y)\delta_{\bm Y}(\dif y) -  \int_{A_i}h(y)\overline\upsilon(\dif y)\right|\right] \le 2\fN_{j}(A_i) \int_{\bR_+} \exp(-2Nr^2) \dif r = \frac{\sqrt{\pi}\;\fN_{j}(A_i)}{\sqrt{2 N}}.
\end{align*}
This together with \eqref{eq:EstExpectedBL} proves \eqref{eq:EmpMeasConc}.

Finally, in order to show $\lim_{N\to\infty}\fr_\fA(N)=0$, it is sufficient to construct non-decreasing $i(N),j(N)$ such that
\begin{align*}
\hat\fr_\fA(N) := \frac12j(N)^{-1} + i(N)^{-1} + \frac{\sqrt{\pi}\;\fN_{j(N)}(A_{i(N)})}{\sqrt{2 N}} \xrightarrow[N\to\infty]{} 0.
\end{align*}
We set $i(1)=j(1)=1$ and construct $i(N),j(N)$ in a way such that they increase simultaneously with increments of size $1$. We define $N_0:=1$ and $N_\ell:=\min\set{N>N_{\ell-1}:i(N)\neq i(N-1)}$ for $\ell\in\bN$, i.e., $N_\ell$ is the $N$ where $i(N),j(N)$ increase for the $\ell$-th time. Notice that $\frac{\fN_{j}(A_{i})}{2\sqrt{2\pi N}}$ can be arbitrarily small as long as $i,j$ are fixed and $N$ is large enough, we construct $i(N),j(N)$ such that
\begin{align*}
N_{\ell} = \min\left\{N>N_{\ell-1}: \frac{\sqrt{\pi}\;\fN_{j(N)}(A_{i(N)})}{\sqrt{2N}} \le \frac1\ell \right\}.
\end{align*}
Therefore, $\hat\fr_\fA(N)$ is decreasing in $N=N_{\ell},\dots,N_{\ell+1}-1$ and $\hat\fr_\fA(N_\ell)\le \frac32\ell^{-1} + \ell^{-1} = \frac52\ell^{-1}$. The proof is complete.
\end{proof}

\newpage

\begin{minipage}{0.9\linewidth}
\section{Glossary of notations}\label{sec:GoN}
\vspace{.1in}
\begin{center}
\resizebox{0.9\columnwidth}{!}{
\begin{tabular}[ht]{c c} 
\hline
\bf{Notations} & \makecell{\bf{Definitions}}\\
\hline
$\beta,\eta,\zeta,\vartheta,\iota$ & \makecell{ Subadditive modulus of continuity. \\  See \cref{subsec:Notation}, \cref{assump:PCont}, \cref{assump:GCont} and \cref{assump:SymCont}. }\\
$\bX, \cP(\bX)$ & \makecell{ State space, and the set of Borel-probabilities on $\bX$.  See \cref{subsec:Notation}.}\\
$\bA,\cP(\bA)$ & \makecell{ Action domain, and the set of Borel-probabilities on $\bA$.  See Section \cref{subsec:Notation}.}\\
$\cP(\bX\times\bA)$ & \makecell{ The set of Borel-probabilities on $\bX\times\bA$.  See \cref{subsec:Notation}.}\\
$\xi^\psi,\xi^{\mu}$ & \makecell{ Marginal distributions of $\xi$ and $\mu$ on $\bX$, respectively. \\  See \cref{subsec:Notation} and \cref{subsec:ProofMFEandOthers}.}\\
$\delta_y$ & \makecell{ Dirac measure at $y$.  See \cref{subsec:Notation}.}\\
$\overline{\delta}_{{\bm y}}$ & \makecell{Defined as $\frac{1}{N}\sum_{n=1}^n\delta_{y^n}$.  See \cref{subsec:Notation}.}\\
$\|\cdot\|_{L-BL}$, $\|\cdot\|_{BL}$ & \makecell{ Bounded-Lipschitz norm parameterized by $L$, and the abbreviation of $\|\cdot\|_{1-BL}$.\\ See \cref{subsec:Notation}. }\\ 
$\mathring\xi,P_t$ & \makecell{ Initial distribution and transition kernel. See \cref{subsec:Dynamics}. }\\ 
$\Pi, \wt\Pi, \overline\Pi$ & \makecell{ The sets of Markovian , symmetric, and oblivious action kernels. See \cref{subsec:Dynamics}. }\\ 
$N$ & \makecell{ The number of players. See \cref{subsec:Dynamics}. }\\ 
$\fp^n,{\bm\fP}$ & \makecell{ The policy of player-$n$, and the policies of all $N$ players. See \cref{subsec:Dynamics}. }\\ 
$Q^\lambda_{t,x,\xi}, \overline\bQ^{\tilde\fp}_{t,x,\xi}$ & \makecell{ Auxiliary probabilities. See \eqref{eq:DefQ} and below \eqref{eq:DeffT}. }\\ 
$\bm T^{\bm\fP}_t, \bm\cT^{\bm\fP}_{s,t}, \mathring{\bm\bT}, \bm\bT^{\bm\fP}_t$ & \makecell{ Transition operators for $N$pGs. See \eqref{eq:DefT}, \eqref{eq:DefcT}, and \eqref{eq:DeffT}. }\\ 
$\overline T_{t,\xi}, \overline\cT^{\tilde\fp}_{t,\Xi}, \mathring{\overline\bT}, \overline\bT^{\tilde\fp}_{t,\Xi}$ & \makecell{ Transition operators for mean field games. See \eqref{eq:DefMeanT}, \eqref{eq:DefMeancT}, and \eqref{eq:DefMeanfT}. }\\ 
${\bm U}, V, V_{\xi}, V_{\Psi}$ & \makecell{ Terminal cost functions. See above \cref{eq:NplayerObj}, above \eqref{eq:MFObj}, and \eqref{eq:DefVM}. }\\ 
$\mathring{\bm G}, \bm G^{\bm\lambda}_t, \mathring{\overline G}, \overline G^\lambda_{t,\xi}$ & \makecell{ Auxiliary operators for score operators. See \cref{subsec:ScoreOp}. }\\ 
$\bm S^{\bm\pi}_t, \bm\cS^{\bm\fP}_{s,t}, \bm\bS^{\bm\fP}_t$ & \makecell{ Score operators for $N$pGs. See \eqref{eq:DefS}, \eqref{eq:DefcS}, and \eqref{eq:DeffS}. }\\ 
$\bm S^{*\bm\pi}_t, \bm\cS^{*\bm\fP}_{s,t}, \bm\bS^{*\bm\fP}_t$ & \makecell{ Optimization operators for $N$pGs. See \eqref{eq:DefSstar}, \eqref{eq:DefcSstar}, and \eqref{eq:DeffSstar}. }\\ 
$\overline S^{\pi}_{t,\xi}, \overline\cS^{\tilde\fp}_{s,t,\Xi}, \overline\bS^{\tilde\fp}_{t,\Xi}$ & \makecell{ Score operators for mean field games. See \eqref{eq:DefMeanS}, \eqref{eq:DefMeancS} and \eqref{eq:DefMeanfS}. }\\ 
$\overline S^{*}_{t,\xi}, \overline\cS^{*}_{s,t,\Xi}, \overline\bS^*_{t,\Xi}$ & \makecell{ Optimization operators for mean field games. See \eqref{eq:DefMeanSstar}, \eqref{eq:DefMeancSstar} and \eqref{eq:DefMeanfSstar}. }\\ 
$\xi^\psi,\overline\pi^{\psi},\Xi^\Psi,\overline\fp^{\Psi}$ & \makecell{ Induced term related to state marginals and action kernels.\\ See \cref{subsec:MFF}. }\\ 
$\wt{\bm\fP},\overline\Xi,{\overline\Psi}$ & \makecell{ The $N$-player scenario of interest, an auxiliary vector of state distributions,\\ and the mean field scenario corresponding to $\wt{\bm\fP}$.\\ See \cref{subsec:Scenarios}. }\\ 
$c_0, c_1, \bar{c}, \cC$ & \makecell{ Constants and an auxiliary function for scaling.\\ See \cref{assump:GBasic}, \cref{assump:GCont} and \cref{lem:EstcS}. }\\ 
$\|\cdot\|_{TV}$ & \makecell{ Total-variation norm. See above \cref{assump:PCont}. }\\ 
$\fr_\fA$ & \makecell{ Auxiliary rate function. See \cref{lem:EmpMeasConc}}\\
$\fK$ & \makecell{ A sequence of increasing compact subsets. See \cref{assump:QTight}. }\\ 
$\bm\cR_\vartheta,\bm\cR,\bm\fR,\overline\cR,\overline\fR$ & \makecell{ Various notions of exploitability. See \cref{sec:Exploitability}. }\\ 
${\bm e}_t,\breve{\bm e}_t$ & \makecell{ Functions for evaluating empirical concentrations. See \cref{eq:DefEmpErr} and \cref{eq:DefStateActionEmpErr}. }\\ 
\makecell{$\fe_t,\breve\fe_t,\fe^0_t$\\$\underline\fE,\fE,\fE^0,\fE^{\diamond}$} & \makecell{ Constants related to approximation errors. See \cref{subsec:ErrorTerms} and \cref{subsec:DError}. }\\ 
$\check{\bm T}^{\tilde{\bm\pi}}_{t,\xi}, \check{\bm\cT}^{\wt{\bm\fP}}_{s,t,\overline\Xi}$ & \makecell{ Auxiliary transition operators. See \eqref{eq:DefMeancTN}. }\\
$\cP(\bX\times\cP(\bA))$ & \makecell{ The set of Borel-probabilities on $\bX\times\cP(\bA)$. See \cref{subsec:ProofMFEandOthers}. }\\ 
$\bM_0,{\bM}$ & \makecell{ Subsets of $\cP(\bX\times\cP(\bA))$. See below \cref{eq:PsiUpsilon}, above \eqref{eq:IndGammaMarginal}. }\\ 
$\xi^\mu,{\psi^\mu},\Xi^\mu,\Psi^\mu$ & \makecell{ Auxiliary induced terms related to state marginals and action kernels.\\
See \cref{subsec:ProofMFEandOthers}. }\\
$\Gamma$ & \makecell{ Auxiliary set-valued map for Kakutani fixed point theorem.\\ See \eqref{eq:IndGammaMarginal} and \eqref{eq:IndGammaOpt}. }\\
\hline
\end{tabular}
}
\end{center}
\end{minipage}

\newpage

\bibliographystyle{alpha}
\bibliography{refs}

@article{huang2007large,
  title={Large-population cost-coupled {LQG} problems with nonuniform agents: individual-mass behavior and decentralized $\epsilon$-{N}ash equilibria},
  author={Huang, Minyi and Caines, Peter E and Malham{\'e}, Roland P},
  journal={IEEE transactions on automatic control},
  volume={52},
  number={9},
  pages={1560--1571},
  year={2007},
  publisher={IEEE}
}

@book{Carmona2018book,
  title={Probabilistic Theory of Mean Field Games with Application I: Mean Field FBSDEs, Control, and Games},
  author={Carmona, Ren\'e and Delarue, Francois},
  year={2018},
  publisher={Springer}
}

@book{Shapiro2021book,
  title={Lectures on Stochastic Programming: Modeling and Theory, Third Edition},
  author={Shapiro, A. and Dentcheva, D. and Ruszczynski, A},
  year={2021},
  publisher={Springer}
}

@article{Ruszczynski2010Risk,
  title={Risk-averse dynamic programming for Markov decision processes},
  author={Ruszczy\'nski, A.},
  journal={Mathematical Programming, Series B},
  volume={125},
  pages={235--261},
  year={2010}
}

@article{Jovanovic1988Anonymous,
  title={Anonymous sequential games},
  author={Jovanovic, B. and Rosenthal, R. W.},
  journal={Journal of Mathematical Economics},
  volume={17},
  pages={77--87},
  year={1988}
}

@article{Saldi2018Markov,
  title={Markov--Nash Equilibria in Mean-Field Games with Discounted Cost},
  author={Saldi, N. and Başar, T. and Raginsky, M.},
  journal={SIAM Journal on Control and Optimization},
  volume={56},
  number={6},
  pages={4256–-4287},
  year={2018}
}

@book{Hernandez-Lerma1996book,
  title={Discrete-Time Markov Control Processes: Basic Optimality Criteria},
  author={Hern\'andez-Lerma, O. and  Lasserre, J. B. },
  year={1996},
  publisher={Springer}
}

@book{Bogachev2007book,
  title={Measure Theory Volume II},
  author={Bogachev, V. I.},
  year={2007},
  publisher={Springer-Verlag Berlin Heidelberg}
}

@book{Billingsley1999book,
  title={Convergence of Probability Measures},
  author={Billingsley, P.},
  year={1999},
  publisher={John Wiley \& Sons, Inc}
}

@book{Aliprantis2006book,
  title={Infinite Dimensional Analysis: A Hitchhiker’s Guide},
  author={Aliprantis, C. D. and Border, K. C.},
  year={2006},
  publisher={Springer-Verlag Berlin Heidelberg}
}

@article{Chu2014Markov,
  title={Markov decision processes with iterated coherent risk measures},
  author={Chu, S. and Zhang, Y.},
  journal={International Journal of Control},
  volume={88},
  number={11},
  pages={2286--2293},
  year={2014}
}

@article{Ting1975Generalized,
  title={A Generalized Jensen's inequality},
  author={Ting, O. T. and Yip, K. W.},
  journal={Pacific Journal of Mathematics},
  volume={58},
  number={1},
  pages={},
  year={1975}
}

@article{Scalora1958Abstract,
  title={Abstract martinagle convergence theorem},
  author={Scalora, F. S.},
  journal={ProQuest Dissertations Publishing},
  volume={},
  number={},
  pages={},
  year={1958}
}

@book{Royden1988book,
  title={Real Analysis},
  author={Royden, H. L. },
  year={1988},
  publisher={Macmillian Publishing Company, New York}
}

@article{Jaskiewicz2020Non,
  title={Non-zero-sum stochastic games},
  author={Ja\'skiewicz, A. and Nowak, A. S.},
  editor={Başar, T. and Zaccour, G.},
  journal={Handbook of Dynamic Game Theory},
  volume={},
  number={},
  pages={},
  year={2020}
}

@article{Dudley1969Speed,
  title={The speed of mean Glivenko-Cantelli convergence},
  author={Dudley, R. M.},
  journal={The Annals of Mathematical Statistics},
  volume={40},
  number={1},
  pages={40--50},
  year={1969}
}

@article{Lei2020Convergence,
  title={Convergence and concentration of empirical measures under Wasserstein distance in unbounded functional spaces},
  author={Lei, J},
  journal={Bernoulli},
  volume={26},
  number={1},
  pages={767--798},
  year={2020}
}

@article{Fischer2017Connection,
  title={On the connection between symmetric $N$-player games and mean field games},
  author={Fischer, M.},
  journal={Annals of Applied Probability},
  volume={27},
  number={2},
  pages={757--810},
  year={2017}
}

@article{Lacker2016General,
  title={A general characterization of the mean field limit for stochastic differential games},
  author={Lacker, D.},
  journal={Probability Theory Related Field},
  volume={165},
  number={},
  pages={581--648},
  year={2016}
}

@article{Lacker2020Converrgence,
  title={On the convergence of closed-loop Nash equilibria to the mean field game limit},
  author={Lacker, D.},
  journal={Annals of Applied Probability},
  volume={30},
  number={4},
  pages={1693--1761},
  year={2020}
}

@book{Cardaliaguet2015book,
  title={The master equation and the convergence problem in mean field games},
  author={Cardaliaguet, L. and Delarue, F. and Lasry, J. M. and Lions, P. L.},
  year={2015},
  publisher={Arxiv:1509.02505}
}

@article{Hu2022Recent,
  title={Recent developments in machine learning methods for stochastic control and games},
  author={Hu, R. and Lauri\`ere, M.},
  journal={hal-03656245},
  volume={},
  number={},
  pages={},
  year={2022}
}

@article{Guo2022General,
  title={A general framework for learning mean-field games},
  author={Guo, X. and Hu, A. and Xu, R. and Zhang, J},
  journal={Mathematics of Operations Research},
  volume={},
  number={},
  pages={},
  year={2022}
}

@article{Guo2022MF,
  title={{MF-OMO}: an optimization formulation of mean-field games},
  author={Guo, X. and Hu, A. and Zhang, J},
  journal={arXiv:2206.09608},
  volume={},
  number={},
  pages={},
  year={2022}
}

@article{Iseri2022Set,
  title={Set values for mean field games},
  author={Iseri, M. and Zhang, J},
  journal={Transactions of the American Mathematical Society},
  volume={},
  number={377},
  pages={7117-7174},
  year={2024}
}

@article{Lacker2022Closed,
  title={Closed-loop convergence for mean field games with common noise},
  author={Lacker, D. and Flem, L. L.},
  journal={arXiv:2107.03273},
  volume={},
  number={},
  pages={},
  year={2022}
}

@article{Lasry2007Mean,
  title={Mean Field Games},
  author={Larsry, J.-M. and Lions, P.-L.},
  journal={Japanese Journal of Mathematics},
  volume={2},
  number={1},
  pages={229-260},
  year={2007}
}

@article{Huang2006Large,
  title={Large population stochastic dymaic games: closed-loop McKean-Vlasov systems and the Nash certainty equivalience principle},
  author={Huang, M. and Malhem\'e, R. P. and Caines, P. E.},
  journal={Communications in Information and Systems},
  volume={6},
  number={3},
  pages={221-252},
  year={2006}
}

@article{Dumitrescu2022Linear,
  title={Linear programming fictitious play algorithm for mean field games with optimal stopping and absorption},
  author={Dumitrescu, R. and Leutscher , M. and Tankov, P.},
  editor={},
  journal={arXiv:2202.11428},
  volume={},
  number={},
  pages={},
  year={2022}
}

@article{Perrin2020Fictitious,
  title={Fictitious Play for Mean Field Games: Continuous Time Analysis and Applications},
  author={Perrin, S. and Perolat, J. and Lauri\`ere, M. and Geist, M. and Elie, R. and Pietquin, Q.},
  editor={},
  journal={34th Conference on Neural Information Processing Systems},
  volume={},
  number={},
  pages={},
  year={2020}
}

@article{Cui2021Approximately,
  title={Approximately solving mean field games via entropy-regularized deep reinforcement learning},
  author={Cui, K. and Koeppl, K.},
  editor={},
  journal={Proceedings of the 24th International Conference on Artificial Intelligence and Statistics},
  volume={130},
  number={},
  pages={},
  year={2021}
}

@article{Bonnans2021Generalized,
  title={Generalized conditional gradient and learning in potential mean field games},
  author={Bonnans, J. F. and Lavigne, P. and Pfeiffer, L.},
  editor={},
  journal={arXiv:2109.05785},
  volume={},
  number={},
  pages={},
  year={2021}
}

@article{Bensoussan2016Linear,
  title={Linear-quadratic mean field games},
  author={Bensoussan, A. and Sung, K. C. J. and Yam, S. C. P. and Yung, S. P.},
  editor={},
  journal={Journal of Optimization Theory and Applications},
  volume={169},
  number={496--529},
  pages={},
  year={2016}
}

@article{Firoozi2021Epsilon,
  title={$\varepsilon$-Nash equilibria for major–Minor {LQG} mean field games With partial observations of all agents},
  author={Firoozi, D. and Caines, P. },
  editor={},
  journal={IEEE Transactions on Automatic Control},
  volume={66},
  number={6},
  pages={},
  year={2021}
}

@article{Ma2020Linear,
  title={Linear quadratic mean field games with a major player: the multi-scale approach},
  author={Ma, Y. and Huang, M.},
  editor={},
  journal={Automatica},
  volume={133},
  number={},
  pages={},
  year={2020}
}

@article{Firoozi2022Exploratory,
  title={Exploratory {LQG} mean field games with entropy regulations},
  author={Firoozi, D. and Jaimungal, S.},
  editor={},
  journal={Automatica},
  volume={139},
  number={},
  pages={},
  year={2022}
}

@article{Moon2017Risk,
  title={Linear Quadratic Risk-Sensitive and Robust Mean Field Games},
  author={Moon, J. and Başar, T. },
  editor={},
  journal={IEEE Transactions on Automatic Control},
  volume={62},
  number={3},
  pages={},
  year={2017}
}

@article{Caines2018Mean,
  title={Mean field games},
  author={Caines, P. and Huang, M. and Malham\'e, R. P.},
  editor={Başar, T. and Zaccour, G.},
  journal={Handbook of Dynamic Game Theory},
  volume={},
  number={},
  pages={},
  year={2018}
}

@article{Saldi2020Approximate,
  title={Approximate Markov-Nash equilibria for discrete-time risk-sensitive mean-field games},
  author={Saldi, N. and Başar, T. and Raginsky, M.},
  editor={},
  journal={Mathematics on Operation Research},
  volume={45},
  number={4},
  pages={},
  year={2020}
}

@article{Saldi2022Partially,
  title={Partially observed discrete-time risk-sensitive mean field games},
  author={Saldi, N. and Başar, T. and Raginsky, M.},
  editor={},
  journal={Dynamic Games and Applications},
  volume={},
  number={},
  pages={},
  year={2022}
}

@article{Bonnans2021Discrete,
  title={Discrete time mean field games with risk averse agents},
  author={Bonnans, N. and Lavigne, P. and Pfeiffer, L.},
  editor={},
  journal={ESAIM: Control, Optimisation and Calculus of Variations},
  volume={27},
  number={44},
  pages={},
  year={2021}
}

@article{Fudenberg1988Open,
  title={Open-loop and closed-loop equilibria in dynamic games with many players},
  author={Fudenberg, D. and Lavine, D. K.},
  editor={},
  journal={Journal of Economic Theory},
  volume={44},
  number={1},
  pages={},
  year={1988}
}

@article{Campi2018N,
  title={$N$-player games and mean field games with absorption},
  author={Campi, L. and Fischer, M.},
  editor={},
  journal={Annuals of Applied Probability},
  volume={28},
  number={4},
  pages={2188--2242},
  year={2018}
}

@article{Djete2021Mean,
  title={Mean field games of controls: on the convergence of Nash equilibria},
  author={Djete, M. F.},
  editor={},
  journal={arXiv:2006.12993},
  volume={},
  number={},
  pages={},
  year={2021}
}

@article{Djete2021Large,
  title={Large population games with interactions through controls and common noise: convergence results and equivalence between open–loop and closed–loop controls},
  author={Djete, M. F.},
  editor={},
  journal={arXiv:2108.02992},
  volume={},
  number={},
  pages={},
  year={2021}
}

@article{Lauriere2022Convergence,
  title={Convergence of large population games to mean field games with interaction through controls},
  author={Lauri\`ere, M. and Tangpi, L.},
  editor={},
  journal={arXiv:2108.02992},
  volume={},
  number={},
  pages={},
  year={2022}
}

@article{Kloeckner2020Empirical,
  title={Empirical measures: regularity is a counter-curse to dimensionality},
  author={Beno\^it R. Kloeckner},
  editor={},
  journal={ESAIM: Probability and Statistics },
  volume={24},
  number={},
  pages={408-434},
  year={2020}
}

@article{Fournier2015Rate,
  title={On the rate of convergence in Wasserstein distance of the empirical measure},
  author={Nicolas Fournier and Arnaud Guillin},
  editor={},
  journal={Probability Theory and Related Fields},
  volume={162},
  number={},
  pages={707-738},
  year={2015}
}

@article{Acciaio2021Cournot,
  title={Cournot--Nash Equilibrium and Optimal Transport in a Dynamic Setting},
  author={Beatrice Acciaio and Julio Backhoff Veraguas and Junchao Jia},
  editor={},
  journal={SIAM Journal on Control and Optimization},
  volume={59},
  number={3},
  pages={},
  year={2021}
}

@book{Shiryaev2019book,
  title={Probability-2},
  author={A. N. Shiryaev},
  year={2019},
  publisher={Springer}
}

@article{Sanjari2024Optimality,
  title={Optimality of Decentralized Symmetric Policies for Stochastic Teams with Mean-Field Information Sharing},
  author={S. Sanjari and N. Saldi and S. Y\:uksel},
  editor={},
  journal={arXiv:2404.04957},
  volume={},
  number={},
  pages={},
  year={2024}
}

@article{Bayraktar2020Non,
  title={On non-uniqueness in mean field games},
  author={E. Bayraktar and X. Zhang},
  editor={},
  journal={Proceedings of the American Mathematical Society},
  volume={148},
  number={},
  pages={},
  year={2020}
}

@article{Cecchin2022Selection,
  title={Selection by vanishing common noise for potential
finite state mean field games},
  author={A. Cecchin and F. Delarue},
  editor={},
  journal={Communications in Partial Differential Equations},
  volume={47},
  number={1},
  pages={89-168},
  year={2022}
}

@article{Dianetti2023Unifying,
  title={A Unifying Framework for Submodular Mean Field Games},
  author={J. Dianetti and G. Ferrari and M. Fischer and M. Nendel},
  editor={},
  journal={Mathematics of Operations Research},
  volume={48},
  number={3},
  pages={1679-1710},
  year={2023}
}

@article{Acerbi2002Spectral,
  title={Spectral measures of risk: A coherent representation of subjective risk aversion},
  author={Carlo Acerbi},
  journal={Journal of Banking and Finance},
  volume={26},
  number={},
  pages={1505–1518},
  year={2002}
}

@article{Hu24MF,
  title={MF-OML: Online Mean-Field Reinforcement Learning with Occupation Measures for Large Population Games},
  author={A. Hu and J. Zhang},
  journal={arXiv:2405.00282},
  volume={},
  number={},
  pages={},
  year={2024}
}

@article{Majumdar2019How,
  title={How Should a Robot Assess Risk? {T}owards an Axiomatic Theory of Risk in Robotics},
  author={Anirudha Majumdar and Marco Pavone},
  journal={Robotics Research},
  volume={},
  number={},
  pages={75-84},
  year={2019}
}

@article{Wang2022Risk,
  title={Risk-averse autonomous systems: A brief history and recent developments from the perspective of optimal control},
  author={Yuheng Wang and Margaret P. Chapman},
  journal={Artificial Intelligence},
  volume={311},
  number={},
  pages={},
  year={2022}
}

@article{Coache2023conditionally,
  title={Conditionally elicitable dynamic risk measures for deep reinforcement learning},
  author={Coache, Anthony and Jaimungal, Sebastian and Cartea, {\'A}lvaro},
  journal={SIAM Journal on Financial Mathematics},
  volume={14},
  number={4},
  pages={1249-1289},
  year={2023}
}

@article{Lauriere23Model,
  title={Model-free mean-field reinforcement learning: Mean-field MDP and mean-field Q-learning},
  author={R. Carmona and M. Laurière and Z. Tan},
  journal={The Annals of Applied Probability},
  volume={33},
  number={6B},
  pages={5334-5381},
  year={2023}
}

@article{Leahy22Convergence,
  title={Convergence of policy gradient for entropy regularized MDPs with neural network approximation in the mean-field regime},
  author={J. M. Leahy and B. Kerimkulov and D. Siska and L. Szpruch},
  journal={International Conference on Machine Learning},
  volume={},
  number={},
  pages={5334-5381},
  year={2022}
}

@article{Saldi20Approximate,
  title={Approximate {M}arkov-{N}ash Equilibria for Discrete-Time Risk-Sensitive Mean-Field Games},
  author={N. Saldi and T. Başar and M. Raginsky},
  journal={Mathematics of Operations Research},
  volume={45},
  number={4},
  pages={1596-1620},
  year={2020}
}

@article{Weintraub05Oblivious,
	title={Oblivious Equilibrium: A Mean Field Approximation for Large-Scale Dynamic Games},
	author={Gabriel Y. Weintraub and Lanier Benkard and Benjamin Van Roy},
	journal={Advances in Neural Information Processing Systems 18 },
	volume={},
	number={},
	pages={},
	year={2005}
}

\end{document}